\def\rr{{\mathbb R}}
\def\rn{{{\rr}^n}}
\def\zz{{\mathbb Z}}
\def\cn{{\mathbb N}}
\def\D{{\mathscr D}}
\def\ccc{{\mathscr C}}
\def\fz{\infty}
\def\supp{{\mathop\mathrm{\,supp\,}}}
\def\dist{{\mathop\mathrm {\,dist\,}}}
\def\diam{{\mathop\mathrm {\,diam\,}}}
\def\loc{{\mathop\mathrm{\,loc\,}}}
\def\Lip{{\mathop\mathrm{\,Lip\,}}}
\def\lz{\lambda}
\def\ez{\varepsilon}
\def\kz{\kappa}
\def\gz{{\gamma}}
\def\E{\mathscr{E}}
\def\hs{\hspace{0.3cm}}
\def\dint{\displaystyle\int}
\def\r{\right}
\def\lf{\left}
\def\la{\langle}
\def\ra{\rangle}
\newtheorem{thm}{Theorem}[section]
\newtheorem{lem}[thm]{Lemma}
\newtheorem{prop}[thm]{Proposition}
\newtheorem{cor}[thm]{Corollary}
\newtheorem{defn}[thm]{Definition}
\newtheorem{rem}[thm]{Remark}
\numberwithin{equation}{section}
\begin{document}
\arraycolsep=1pt
\author{Thierry Coulhon, Renjin Jiang, Pekka Koskela and Adam Sikora} \arraycolsep=1pt
\title{{\bf Gradient estimates for heat kernels and
harmonic functions}
 \footnotetext{\hspace{-0.35cm} 2010 {\it Mathematics
Subject Classification}. Primary 53C23; Secondary 31C25; 58J05; 58J35; 31C05; 31E05; 35K08; 43A85.
\endgraf{
{\it Key words and phrases: harmonic functions, heat kernels, Li-Yau estimates, Poisson equation, Poincar\'e inequality, Riesz transform}
\endgraf}}}

\date{\today}
\maketitle
\begin{center}
\begin{minipage}{11.5cm}\small
{\noindent{\bf Abstract}.  Let  $(X,d,\mu)$ be a doubling metric measure space endowed with a Dirichlet form $\E$ deriving from a ``carr\'e du
champ".  Assume that $(X,d,\mu,\E)$ supports a  scale-invariant $L^2$-Poincar\'e inequality. In  this article, we study the following properties of harmonic functions, heat kernels and Riesz transforms for $p\in (2,\infty]$:

(i)  $(G_p)$:   $L^p$-estimate  for the gradient of the associated heat semigroup;

(ii) $(RH_p)$: $L^p$-reverse H\"older inequality for the gradients of
harmonic functions;

(iii) $(R_p)$: $L^p$-boundedness of the Riesz transform ($p<\infty$);

(iv) $(GBE)$: a generalised Bakry-\'Emery condition.

We show that,  for $p\in (2,\infty)$, (i), (ii) (iii) are equivalent, while for $p=\infty$, (i), (ii), (iv) are equivalent.
 Moreover, some of these equivalences still hold under weaker conditions than the $L^2$-Poincar\'e inequality.

Our result  gives  a characterisation of Li-Yau's gradient estimate of heat kernels for $p=\infty$,
 while for $p\in (2,\infty)$
it is a substantial improvement  as well as a generalisation of earlier results by Auscher-Coulhon-Duong-Hofmann \cite{acdh} and Auscher-Coulhon \cite{ac05}.
Applications to isoperimetric inequalities and Sobolev inequalities are given.
Our results apply to Riemannian and sub-Riemannian manifolds as well as to non-smooth spaces, and to degenerate elliptic/parabolic
equations in these settings. }\end{minipage}
\end{center}

\vspace{0.2cm}
\tableofcontents

\section{Introduction}

\subsection{Background and main results}

\hskip\parindent
On complete Riemannian manifolds and on more general metric measure spaces endowed with a Dirichlet form, Gaussian heat kernel upper and lower estimates have been well understood  since the works of Saloff-Coste \cite{sal2},  Grigor'yan \cite{gri92}, Sturm \cite{st1,st2,st3}, see also
\cite{bcs15,bcf14} and references therein. Together these estimates imply the doubling volume property and the  H\"older regularity of the heat kernel (see \cite{DP} for a new and direct proof of the latter fact). A fundamental and non-trivial consequence of the known characterisation of these estimates, in terms of the doubling volume property and a scale-invariant $L^2$-Poincar\'e inequality (see \cite{sal2, sal, SA}), is that they are stable under quasi-isometries.

By contrast, the matching upper estimate
$$|\nabla_x h_t(x,y)|\le \frac{C}{\sqrt{t}V(y,\sqrt t)}\exp\left\{-c\frac{ d^2(x,y)}{t}\right\} \leqno(GLY_{\infty})$$
(see Theorem \ref{main-har-heat-infty} below) of the gradient of the heat kernel is only known to hold in  very specific cases: on manifolds with non-negative Ricci curvature \cite{ly86}, on Lie groups with polynomial volume growth \cite{sal3}, and on covering manifolds with polynomial volume growth \cite{dng04a,dng04b}.  There have also been  many efforts to derive upper bounds of the gradient of the heat kernel by using
probabilistic methods including coupling and  derivation of Bismut type formulae,
but only for small time (i.e. essentially  local results) unless one assumes non-negativity of the curvature; see \cite{Cr91,Pi02,qia95,ST98,TW98} and references
therein.

No handy global characterisation exists for $(GLY_\infty)$ (see however \cite[Theorem 4.2]{CS1} in  the polynomial volume growth case). Note that no equivalent property can exhibit invariance under quasi-isometry: the example of divergence form operators with bounded measurable coefficients  shows that the Lipschitz character of the heat kernel  is not generic and not stable under quasi-isometry. However,  non-negative curvature is too restrictive a  sufficient condition, since it is very unstable under perturbations of any kind. Moreover, it is desirable to find a common reason that would explain why the property holds in the above three families of examples.
Such a condition was introduced  in \cite[Theorem 3.2]{ji15} and \cite[Theorem 3.1]{jky14}, where it is proven that a certain quantitative Lipschitz regularity of Cheeger-harmonic functions implies an upper estimate of the gradient of the heat kernel. We shall see in Section \ref{Ex}  that it is relatively easy to obtain such regularity of harmonic functions in the aforementioned settings.

In the present paper, we first give a converse to this implication, and follow with an $L^p$-version of this equivalence which can  be seen as an $L^\infty$ one.
An important motivation for the study of pointwise estimates of the gradient of the heat kernel is that they open up the way to the boundedness of Riesz transforms on $L^p$ for all $p\in (1,+\infty)$ (see \cite{acdh}). Further, it was discovered in \cite{acdh} that the weaker $L^p$-version of these estimates governs the boundedness of Riesz transforms on $L^p$ in an interval $(2,p_0)$, for $2<p_0<+\infty$. Details will be given below.

To summarise, we give characterisations of these pointwise and integrated estimates for the gradient of the heat kernel in terms of estimates for the gradients of harmonic functions. In other words, we eliminate time. This is a first step towards a geometric understanding of these estimates, and we expect this will enable one to treat new examples.

Let us now fix our setting.
Let $X$ be a locally compact, separable, metrisable, and connected
space equipped with a  Borel measure $\mu$ that is finite on compact sets and strictly positive on non-empty open sets.
Consider a strongly local and regular Dirichlet form $\E$ on $L^2(X,\mu)$ with dense domain $\D\subset L^2(X,\mu)$ (see \cite{fot} or \cite{GSC} for precise definitions).  According to Beurling and Deny \cite{bd59},  such a form can be written as
$$\E (f, g)= \int_X \,d\Gamma(f,g)$$
for all $f, g\in \D$, where $\Gamma$ is a measure-valued non-negative
 and symmetric bilinear form defined by the formula
$$\int_X\varphi\,d\Gamma(f,g):=\frac12\left[\E (f,\varphi g) + \E (g,\varphi f)-\E (fg,\varphi)\right]$$
for all $f,g\in \D\cap L^\infty(X,\mu)$ and $\varphi\in \D\cap \ccc_0(X).$
Here and in what follows, $\ccc(X)$ denotes the space of
continuous functions  on $X$ and $\ccc_0(X)$ the space of functions
in $\ccc(X)$ with compact support. We shall assume in addition that $\E$  admits a ``{\it carr\'e du champ}",
meaning that $\Gamma(f,g)$ is absolutely continuous with respect to $\mu$,
for all $f, g\in \D$. In what follows, for
simplicity of notation, we  will denote by  $\langle \nabla f,\nabla
g\rangle$  the energy density
$\frac{\,d\Gamma(f,g)}{\,d\mu}$, and by $|\nabla f|$  the square root
of $\frac{\,d\Gamma(f,f)}{\,d\mu}$.

Since $\E$ is strongly local, $\Gamma$ is local and satisfies the
Leibniz rule and the chain rule; see \cite{fot}. Therefore we can
define $\E(f, g)$ and $\Gamma(f, g)$ locally. Denote by $\D_\loc$
the collection of all $f\in L^2_\loc(X)$ for which, for each
relatively compact set $K\subset X$, there exists a function
$h\in\D$ such that $f = h$ almost everywhere on $K$.  The intrinsic
(pseudo-)distance on $X$ associated to $\E$ is then defined by
$$d(x, y):=\sup\left\{f(x)-f(y):\, f\in\D_\loc\cap\ccc(X),\, |\nabla f|\le 1\mbox{ a.e.}\right\}.$$

In this paper, we always assume  that $d$ is indeed a distance (meaning that for $x\not= y$, $0<d(x,y)<+\infty$)
 and that the topology induced by $d$ is equivalent to the original
topology on $X$.
Moreover, we assume that $(X,d)$ is a complete metric space.
Under this assumption,   $(X, d)$  is a geodesic length space; see  for instance
\cite{st1,ags3,GSC}.

To summarise the above situation, we shall say that $(X,d,\mu,\E)$  is a  Dirichlet metric measure  space endowed with a ``{\it carr\'e du champ}", in short a Dirichlet metric measure space.

The domain $\D$ endowed with the norm
$\sqrt{\|f\|^2_{2}+\E(f,f)}$ is a Hilbert space which we denote by $W^{1,2}(X,\mu,\E)$, in short $W^{1,2}(X)$. For an open set $U\subset X$,
the local Sobolev space $W_{\loc}^{1,2}(U)$
is defined to be the collection of all  functions $f$ such that for any compact set $K\subset U$
there exists $F\in\D$ satisfying $f=F$ a.e. on $K$.
For each $p\ge 2$, the Sobolev space  $W^{1,p}(U)$
is then defined as the collection of all  functions $f\in W_{\loc}^{1,2}(U)$ satisfying $f,\,|\nabla f|\in L^p(U)$;
see Appendix \ref{appendix-domain} for the existence of $|\nabla f|$.
The space $W^{1,p}_0(U)$
is defined to be the closure in $W^{1,p}(X)$ of functions in $W^{1,p}(X)$ with compact support in $U$.
Then each Lipschitz function with compact support in $U$
belongs to $W^{1,p}_0(U)$ for any $p\in [2,\infty]$; see Appendix \ref{appendix-domain}.

Corresponding to such a Dirichlet form $\E$, there exists an operator  denoted by $\mathcal{L} $,
acting on a dense domain $\mathscr{D}(\mathcal{L})$ in
$L^2(X,\mu)$, $\mathscr{D}(\mathcal{L})\subset W^{1,2}(X)$, such that for all
$f\in \mathscr{D}(\mathcal{L})$ and each
$g\in W^{1,2}(X)$,
$$\int_X f(x)\mathcal{L} g(x)\,d\mu(x)=\mathscr{E}(f,g).$$
The opposite $-\mathcal{L}$ of $\mathcal{L} $ is the infinitesimal generator of the heat semigroup $H_t=e^{-t\mathcal{L}}$, $t>0$.

Let $B(x,r)$ denote the open ball with center $x$ and radius $r$ with
respect to the distance $d$, and set $CB(x,r):=B(x,Cr).$
%
%
For simplicity we write $V(x,r):=\mu(B(x,r))$ for $x\in X$ and $r>0$.
We say that the metric measure space $(X,d,\mu)$ satisfies the volume doubling property if there exists a constant $C_D>1$
such that for
every $x\in X$ and all $r>0$,
$$V(x,2r)\le C_D V(x,r).\leqno(D)$$
If $(X,d,\mu,\E)$  is a Dirichlet  metric measure space endowed with a ``{\it carr\'e du champ}" and $(X,d,\mu)$ satisfies $(D)$, we say that $(X,d,\mu,\E)$  is a  doubling Dirichlet  metric measure space endowed with a ``{\it carr\'e du champ}", in short a doubling Dirichlet metric measure space.
It easily follows from $(D)$ that there exist  $Q>0$ and $C_Q>0$ depending only on $C_D$ such that for
every $x\in X$ and all $0<r<R$,
$$V(x,R)\le C_Q\lf(\frac{R}{r}\r)^QV(x,r).\leqno(D_Q)$$
Notice that $(X,d,\mu)$ satisfies $(D)$ if and only if it satisfies $(D_Q)$  for some $Q>0$.
Moreover, since $(D_Q)$  implies $(D_{\tilde Q})$ for each $\tilde Q>Q$, we shall assume without loss of generality that $Q\ge 2$.

One says that the  local Sobolev inequality $(LS_q)$, $q>2$, holds on $(X,d,\mu,\E)$  if for every ball $B=B(x,r)$ and each $f\in W^{1,2}_0(B)$,
$$\left(\fint_{B}|f|^q\,d\mu\right)^{2/q}\le C_{LS}\left(\fint_{B}|f|^2\,d\mu+\frac{r^2}{V(x,r)}\E(f,f)\right).\leqno(LS_q)$$
Under the volume doubling property $(D)$, it is known that   $(LS_q)$, for some $q>2$, is equivalent to
the assumption that the heat semigroup $H_t=e^{-t\mathcal{L}}$ has a  kernel $h_t$, called the heat kernel, which satisfies an  upper Gaussian bound
$$h_t(x,y)\le
  \frac{C}{V(x,{\sqrt t})}\exp\lf\{-c\frac{d^2(x,y)}{t}\r\}, \forall\,t>0, \mbox{for a.e. }x,y\in X,\leqno(UE)
$$
see \cite{bcs15}.

%
We say that $(X,d,\mu,\E)$ supports a local $L^p$-Poincar\'e
inequality, $p\in [2,\infty)$, if for all $r_0>0$ there exists
$C_P(r_0)>0$
 such that, for all $0<r<r_0$ and for every ball $B=B(x,r)$ and each $f\in W^{1,p}(B)$,
$$
\fint_{B(x,r)}|f-f_B|\,d\mu\le
C_P(r_0)r\lf(\fint_{B(x,r)}|\nabla f|^p\,d\mu\r)^{1/p}.\leqno(P_{p,\loc})$$
Similarly, $(P_{\infty,\loc})$ requires for each $f\in W^{1,\infty}(B)$ that
$$
\fint_{B(x,r)}|f-f_B|\,d\mu\le
C_P(r_0)r\||\nabla f|\|_{L^\infty(B)}.\leqno(P_{\infty,\loc})$$
Further, if there exists a constant $C_P>0$ such that the above inequalities hold for every ball $B(x_0,r)$ and each $f\in W^{1,p}(B)$
with $C_P(r_0)$ replaced by $C_P$, then we say that  $(X,d,\mu)$ supports a scale-invariant $L^p$-Poincar\'e inequality, $(P_p)$, $p\in [2,\infty]$.

 Obviously,  inequalities $(P_p)$ as well as $(P_{p,\loc})$ are weaker and weaker as $p$ increases.
 Since $(X,d)$ is geodesic, our Poincar\'e inequalities $(P_p)$ and  $(P_{p,\loc})$
 have self-improving properties for $2\le p<\infty$ by \cite{kz08}, see Appendix \ref{appendix-poincare} for the precise
  statement in our setting. This fails, in general, for $(P_\infty)$ and $(P_{\infty,\loc})$, see \cite{dsw12,djs12}.
We also note that $(P_2)$ together with $(D)$ implies $(LS_q)$ for some $q\in (2,\infty]$ but the converse is not true; see \cite{bcs15,hak}.

By Sturm \cite{st1,st2,st3} (see Saloff-Coste \cite{sal,sal2} and
Grigor'yan \cite{gri92} for earlier results on Riemannian
manifolds), on a metric measure space $(X,d,\mu)$ endowed with a
strongly local and regular Dirichlet form $\E$,  $(D)$ together with $(P_2)$ are equivalent to
the requirement that the heat semigroup $H_t=e^{-t\mathcal{L}}$ has a  heat kernel $h_t$  that satisfies
the Li-Yau estimate
$$ \frac{C^{-1}}{V(x,{\sqrt t})}\exp\lf\{-\frac{d^2(x,y)}{ct}\r\}\le h_t(x,y)\le
  \frac{C}{V(x,{\sqrt t})}\exp\lf\{-c\frac{d^2(x,y)}{t}\r\},\leqno(LY)$$
for all    $t>0, \mbox{ a.e.}\,x,y\in X$.
This estimate was originally obtained in \cite{ly86}
on Riemannian manifolds with non-negative Ricci curvature. Moreover,
$(UE)$ is equivalent to a parabolic Harnack inequality
for solutions to the heat equation. The parabolic
Harnack inequality obviously implies an elliptic Harnack inequality, which had been
obtained earlier under doubling and Poincar\'e by Biroli and Mosco \cite{bm93,bm}.
Furthermore, Hebisch and Saloff-Coste \cite{hs01}  showed that  an  elliptic Harnack inequality
also implies a parabolic one if one has $(D)$ and $(UE)$ (see also \cite{bcf14}).

A consequence of the elliptic Harnack inequality is that harmonic functions are H\"older in space, and a consequence of the parabolic Harnack inequality is that the heat kernel is H\"older in time and space. It follows from the above that this is the case if
$(D)$ and $(P_2)$ hold.

However, in general, $(D)$ and $(P_2)$ are not  sufficient   for Lipschitz
regularity of harmonic functions or heat kernels. This phenomenon  already occurs in the case of
uniformly elliptic operators of divergence form with non-smooth coefficients in  Euclidean space, see for instance \cite{cp98,shz05}.
Even in a smooth setting, additional assumptions are required in order to ensure proper pointwise estimates for gradients of harmonic functions or heat kernels.

Yau's gradient estimate for positive harmonic functions (cf. Yau \cite{ya75}, Cheng-Yau \cite{chy})
states that on non-compact Riemannian manifolds with Ricci curvature bounded below by $-K$, $K\ge 0$, it holds that
$$\sup_{x\in B(x_0,r)}|\nabla \log u(x)|\le C\left(\frac{1}{r}+\sqrt{K}\right), \leqno (Y_{\infty})$$
for every ball  $B(x_0,r)$ and every positive  harmonic function $u$  on $B(x_0,2r)$.
Li-Yau's gradient estimate for heat kernels (c.f. Li and Yau \cite{ly86})
on Riemannian manifolds with non-negative Ricci curvature states that 
$$|\nabla_x h_t(x,y)|\le \frac{C}{\sqrt{t}V(y,\sqrt t)}\exp\left\{-c\frac{ d^2(x,y)}{t}\right\}, \forall\,t>0, \,x,y\in X. \leqno (GLY_{\infty})$$
These two gradient estimates are fundamental tools in geometric analysis and related fields, and there have been many efforts  afterwards to generalise them to different settings, see  for instance \cite{CS1, dm05,dng04a,dng04b,el10,gm14,hl2,ji15,lh06,qzz13,qia95,sal3,zhang06,zz11,zz16}.

Let us review some of these generalisations.
Saloff-Coste \cite{sal3} obtained $(GLY_{\infty})$ on Lie groups with polynomial growth.
Dungey \cite{dng04a,dng04b} obtained $(GLY_{\infty})$ on Riemannian covering manifolds with polynomial growth.
On Heisenberg type groups, Driver and Melcher \cite{dm05}  and Hu and Li \cite{hl2} obtained a Bakry-\'Emery type inequality,
which implies  $(GLY_{\infty})$. Zhang \cite{zhang06} obtained Yau's gradient estimate $(K=0)$ on Riemannian manifolds of non-negative
Ricci curvature  modulo a small
perturbation.  In recent years,  in a series of works \cite{bhllmy15,gm14,ji14,ji15,zz11,zz16},
Yau's gradient estimate for harmonic functions and Li-Yau's gradient estimate for heat kernels  (and  their local versions) have
been further generalised to  metric measure spaces and graphs satisfying suitable curvature assumptions;
we refer the reader to
\cite{ags1,ags3,cc1,cc2,cc3,eks13,honda1,lv09,stm4,stm5} for
recent developments of lower Ricci curvature bounds and related calculus on metric measure spaces.
Our aim in the present paper  is to  characterise heat kernel gradient bounds  without making any curvature assumptions. One can summarise our results  by saying that we reduce $(GLY_{\infty})$
to a condition that is easily seen to be equivalent to $(Y_{\infty})$ with $K=0$.


%
%
%

The conjunction of \cite[Theorem 1.4]{acdh} and \cite[Corollary 2.2]{CS1} shows that
 $(D)$ and   $(GLY_{\infty})$ yield  the boundedness of the Riesz transform on $L^p$:
 $$\||\nabla \mathcal{L}^{-1/2} f|\|_p\le C \|f\|_p,\ \forall f\in L^p(X,\mu)\leqno (R_p)$$
 for all $p\in (1,+\infty)$. On the other hand,  under $(D)$ and $(UE)$, $(GLY_{\infty})$ is known  to be equivalent to the boundedness of the gradient of the heat semigroup:
$$\||\nabla H_t|\|_{\infty\to\infty}\le \frac{C}{\sqrt t} \leqno (G_{\infty})$$
(see \cite[p.919]{acdh} and \cite[Theorem 4.11]{CS}).
However, there are examples such as conical manifolds (cf. \cite{lh99})
and uniformly elliptic operators (cf.  \cite{shz05} and  \cite{cp98}) where $(R_p)$ only holds for $p$ in a finite interval $(1,p_0)$, $2<p_0<\infty$. It was discovered  in \cite{acdh} that a natural substitute for $(GLY_{\infty})$ or $(G_{\infty})$ is
the $L^{p_0}$-boundedness of the gradient of the heat semigroup together with the estimate
$$\||\nabla H_t|\|_{p_0\to p_0}\le \frac{C}{\sqrt t}, \leqno (G_{p_0})$$
$2<p_0<\infty$, which by \cite{acdh} implies $(R_p)$
for all $1<p<p_0$ under $(D)$ and $(P_2)$. Above and in what follows,
 $\|\cdot\|_{p\to p}$ denotes the (sublinear or linear) operator norm from $L^p(X,\mu)$
to $L^p(X,\mu)$ for  $p\in [1,\infty]$.
Note conversely that  $(R_p)$ easily implies $(G_p)$  for any $p\in (1,\infty)$.
Note also that $(G_p)$ is equivalent to the validity of the estimate
$\||\nabla f|\|_p^2\lesssim \| f\|_p\|\mathcal{L} f\|_p$ for all  $f\in \mathscr{D}(\mathcal{L})$
with $f, \, \mathcal{L}f\in L^p(X,\mu)$ (see \cite[Prop. 3.6]{CS1}).

Observe that  $(G_2)$ always holds. Indeed, it follows from spectral theory that for each $f\in L^2(X,\mu)$,
$$\|\mathcal{L} H_t f\|_2\le \frac{C}{t}\|f\|_2,$$
and hence
$$\||\nabla H_tf|\|_2^2=\langle H_tf, \mathcal{L} H_tf\rangle\le \frac{C}{t}\|f\|_2^2,$$
i.e. $(G_2)$ holds (here $<\cdot,\cdot>$ denotes the bracket in $L^2$).

By interpolation with $(G_2)$, $(G_\infty)$ implies $(G_p)$ for all $p\in(2,\infty)$.
Finally, if $(X,d,\mu,\E)$ satisfies $(D)$ and $(UE)$, in particular if it satisfies $(D)$ and $(P_2)$, then it follows from the above results that $(GLY_\infty)$ implies $(G_p)$ for all $p\in (2,\infty)$.

Our main results below give a characterisation of $(G_p)$ for each $2<p\le \infty$ in terms of estimates for gradients of harmonic functions.
For $p=\infty$, this can be seen as a gradient version of the equivalence between elliptic and parabolic Harnack inequalities under $(D)$ and $(P_2)$, cf. \cite{hs01,bcf14}.
Before we state these results, let us recall some terminology.

Let $\Omega\subseteq X$ be a domain. For $g\in L^2(\Omega)$, a Sobolev function $f\in
W^{1,2}(\Omega)$ is called a solution to $\mathcal{L} f=g$ in $\Omega$ if
\begin{equation}
\int_\Omega\langle \nabla f,\nabla \varphi\rangle\,d\mu=\int_\Omega
g(x)\varphi(x)\,d\mu(x), \ \ \forall \varphi\in W_0^{1,2}(\Omega).
\end{equation}
If  $\mathcal{L} u=0$ in $\Omega$, then we say that $u$ is harmonic
in $\Omega$.

\begin{defn} Let $(X,d,\mu,\E)$ be a  Dirichlet metric measure space and let $p\in (2,\infty).$ We say that  the quantitative reverse
$L^p$-H\"older inequality for gradients of harmonic functions holds
if  there exists $C>0$ such that, for every ball $B$ with radius $r$ and every function $u$ that is harmonic in $2B$,
$$\lf(\fint_B|\nabla u|^{p}\,d\mu\r)^{1/p}\le \frac {C}r \fint_{2B}|u|\,d\mu.
\leqno (RH_{p})$$
Analogously,  $(RH_\infty)$ requires that
$$|||\nabla u|||_{L^{\infty}(B)}\le \frac {C}r \fint_{2B}|u|\,d\mu.$$
\end{defn}
Note that $(RH_{p})$ implies $(RH_{q})$ for $q<p$. In \cite{ji15,jky14}, $(RH_\infty)$ was used to prove isoperimetric inequalities and gradient upper  estimates for heat kernels.
We shall see in Lemma \ref{rh-yau} below that, under $(D)$ and $(P_2)$,
$(RH_{\infty})$ is equivalent to Yau's gradient estimate $(Y_{\infty})$ with $K=0$.
See \cite{chy,ji14,ya75,zz11} for more
about  $(Y_{\infty})$ and $(RH_\infty)$. Actually, a more natural formulation for the reverse $L^p$-H\"older inequality for gradients of harmonic functions is
$$\lf(\fint_B|\nabla u|^{p}\,d\mu\r)^{1/p}\le
C\lf(\fint_{2B}|\nabla u|^{2}\,d\mu\r)^{1/2}, \leqno(\widetilde{RH}_p)$$
if $u$ is harmonic on $2B$; see \cite{ac05,shz05}. In general, $(\widetilde{RH}_p)$ is stronger than $(RH_p)$. Indeed, as soon as $(D)$ and $(UE)$ hold, the Caccioppoli inequality (Lemma \ref{l2.5} below) together with Proposition \ref{l2.3} and a simple covering argument
gives the implication $(\widetilde{RH}_p)\Longrightarrow (RH_p)$. However, $({RH}_p)$ is equivalent to  $(\widetilde{RH}_p)$, if in addition one has $(P_2)$.

We shall see in Example 4 from Section \ref{rmms} that
there exist Riemannian manifolds where $(RH_p)$ and $(G_p)$ hold for some $p>2$, but $(\widetilde{RH}_p)$ does not hold. This is why we have to characterise $(G_p)$ in terms of $(RH_p)$ instead of $(\widetilde{RH}_p)$ in Theorem \ref{main-har-heat} below.

Our first main result gives a characterisation of pointwise estimates for the gradient of the heat kernel.
\begin{thm}\label{main-har-heat-infty}
Let $(X,d,\mu,\E)$ be a non-compact
doubling Dirichlet metric measure space endowed with a ``{\it carr\'e du champ}".  Assume that $(X,d,\mu,\E)$ satisfies $(UE)$ and $(P_{\infty,\loc})$. Then the following statements are equivalent:

(i) $(RH_{\infty})$ holds.

(ii)  There exist $C,c>0$ such that  $$|\nabla_x h_t(x,y)|\le \frac{C}{\sqrt{t}V(y,\sqrt t)}\exp\left\{-c\frac{ d^2(x,y)}{t}\right\} \leqno (GLY_{\infty})$$
for all $t>0$ and a.e. $x,\,y\in X$.

(iii) The gradient of the heat semigroup $|\nabla H_t|$ is bounded on $L^{\infty}(X)$ for each $t>0$ with
$$\||\nabla H_t|\|_{\infty\to\infty}\le \frac{C}{\sqrt t}.\leqno (G_{\infty})$$

(iv) There exist $C,c>0$ such that
$$|\nabla H_tf(x)|^2\le CH_{ct}(|\nabla f|^2)(x) \leqno (GBE)$$
for every $f\in W^{1,2}(X)$, all $t>0$  and a.e. $x\in X$.
\end{thm}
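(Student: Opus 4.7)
My plan is to reduce to the classical equivalence (ii) $\Leftrightarrow$ (iii), which under $(D)$ and $(UE)$ is essentially due to \cite{CS,acdh}: integrating the pointwise Gaussian kernel bound of $(GLY_\infty)$ against $\|f\|_\infty$ with doubling yields $(G_\infty)$, while $(G_\infty)$ is upgraded to $(GLY_\infty)$ by a Davies--Gaffney / finite-speed-of-propagation argument. Given this, the two remaining equivalences to establish are (iii) $\Leftrightarrow$ (i) and (iii) $\Leftrightarrow$ (iv).

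For (iii) $\Rightarrow$ (i), I would fix a harmonic function $u$ on $2B$ with $B=B(x_0,r)$ and a Lipschitz cutoff $\psi\in W^{1,\infty}_0(2B)$ with $\psi\equiv 1$ on $\frac{3}{2}B$, $|\nabla\psi|\lesssim 1/r$, $|\mathcal{L}\psi|\lesssim 1/r^2$. Since $\mathcal{L}u=0$ on $\supp\psi$, $\mathcal{L}(\psi u)=u\mathcal{L}\psi-2\langle\nabla u,\nabla\psi\rangle$ is supported in the annulus $A:=\supp(\nabla\psi)\subset 2B\setminus\frac{3}{2}B$. Applying Duhamel's identity $\psi u=H_{t}(\psi u)+\int_0^{t} H_s\mathcal{L}(\psi u)\,ds$ with $t=r^2$ gives, for $x\in B$,
\[
\nabla u(x)=\nabla H_{r^2}(\psi u)(x)+\int_0^{r^2}\nabla H_s\mathcal{L}(\psi u)(x)\,ds.
\]
The first term is estimated pointwise via $(GLY_\infty)$ and doubling to $\lesssim r^{-1}\fint_{2B}|u|\,d\mu$. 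For the integral term, $\mathcal{L}(\psi u)$ is supported at distance $\gtrsim r$ from $x$, so the Gaussian factor $e^{-cr^2/s}$ in $(GLY_\infty)$ absorbs the singular $s^{-1/2}$ as $s\to 0^+$; the amplitude of $\mathcal{L}(\psi u)$ on $A$ is controlled by combining the Moser-type sup bound $\|u\|_{L^\infty(A)}\lesssim \fint_{2B}|u|$ (from $(UE)$) with Caccioppoli for $\nabla u$ on the annulus, producing the same bound $\lesssim r^{-1}\fint_{2B}|u|$. Together these yield $(RH_\infty)$.

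For (i) $\Rightarrow$ (ii), I would follow the scheme of \cite{ji15,jky14}: for $y\in X$ and $x_0\in X$ fixed, set $B=B(x_0,\sqrt{t})$ and solve the Dirichlet problem on $2B$ with boundary data $h_t(\cdot,y)|_{\partial(2B)}$, extracting a harmonic approximant $\tilde u$; the error $v:=h_t(\cdot,y)-\tilde u$ solves $\mathcal{L}v=-\partial_t h_t(\cdot,y)$ with zero boundary data. Apply $(RH_\infty)$ to $\tilde u$: combining with the Gaussian upper bound $(UE)$ for $|h_t(\cdot,y)|$ on $2B$ feeds the factor $e^{-cd^2(x_0,y)/t}$ into $\fint_{2B}|\tilde u|$, giving the correct Gaussian decay for $|\nabla \tilde u(x_0)|$. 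The gradient of $v$ is of lower order and is handled by $L^2$-elliptic theory combined with the parabolic derivative estimate $\|\partial_t h_t(\cdot,y)\|_{L^\infty(2B)}\lesssim (tV(y,\sqrt t))^{-1}e^{-cd^2/t}$, which is standard from $(UE)$ and analyticity of the semigroup. Summing the two contributions yields $(GLY_\infty)$.

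For the link with (iv), the direction (iii) $\Rightarrow$ (iv) starts from $\int\nabla_x h_{t/2}(x,y)\,d\mu(y)=0$ to write $\nabla H_t f(x)=\int\nabla_x h_{t/2}(x,y)\bigl(H_{t/2}f(y)-H_{t/2}f(x)\bigr)\,d\mu(y)$, then applies Cauchy--Schwarz against a Gaussian weight (using $(GLY_\infty)$ and $(UE)$) and a telescoping semigroup identity to convert $|H_{t/2}f(y)-H_{t/2}f(x)|^2$ into $H_{ct}(|\nabla f|^2)(x)$. The harder converse (iv) $\Rightarrow$ (iii) proceeds by spatial localization: for $f\in L^\infty$, write $f=\sum_k f_k$ with $f_k$ supported in balls $B_k$ of radius $\sqrt{t}$, apply $(GBE)$ to $H_{t/2}f_k$, and combine with the universal $L^2$-bound $\||\nabla H_{t/2}f_k|\|_2^2\lesssim V(B_k)\|f_k\|_\infty^2/t$ from $(G_2)$ together with the $(UE)$-bound on $h_{ct/2}$, to obtain $|\nabla H_t f_k(x)|\lesssim \|f_k\|_\infty/\sqrt t$ when $x$ is close to $B_k$; the Gaussian decay of the heat kernel absorbs the contributions from distant $B_k$. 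The main obstacle I foresee is precisely this last direction: $(GBE)$ on its own yields no time-decay under iteration along the semigroup, and the factor $1/\sqrt t$ of $(G_\infty)$ genuinely emerges only from the interplay of $(GBE)$ with $(G_2)$, $(UE)$, and a scale-$\sqrt t$ partition of $f$.
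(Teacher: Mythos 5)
Your overall cycle $(i)\Rightarrow(ii)\Rightarrow(iii)\Rightarrow(i)$ together with $(iii)\Leftrightarrow(iv)$ is a valid logical skeleton, and the step $(ii)\Leftrightarrow(iii)$ is indeed classical. The essential gap is in your $(iii)\Rightarrow(i)$ step. Your Duhamel argument requires a Lipschitz cutoff $\psi$ with a \emph{pointwise} bound $|\mathcal{L}\psi|\lesssim r^{-2}$, so that $\psi u\in\mathscr{D}(\mathcal{L})$ and $\mathcal{L}(\psi u)$ is an $L^\infty$ function on the annulus. Such ``second-order'' cutoffs are not available in the generality of a strongly local Dirichlet form with ``{\it carr\'e du champ}'': the intrinsic distance produces Lipschitz cutoffs with $|\nabla\psi|\le 1/r$, but $\mathcal{L}\psi$ is then only a signed Radon measure, and in fact $\psi u\in\mathscr{D}(\mathcal{L})$ generally fails. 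This is precisely why the paper proves $(G_\infty)\Rightarrow(RH_\infty)$ not by Duhamel but via the reproducing formula $u=\Phi(tr\sqrt{\mathcal{L}})u$ for harmonic $u$ (Lemma \ref{mvp}), using finite propagation speed and $L^2$ spectral multiplier bounds (Theorem \ref{heat-har}), a route that never needs cutoff functions with controlled $\mathcal{L}\psi$. Even on manifolds, where smooth cutoffs exist, a second problem in your argument remains: to control the amplitude of $\mathcal{L}(\psi u)=u\mathcal{L}\psi-2\langle\nabla u,\nabla\psi\rangle$ you invoke Caccioppoli for $\nabla u$ on the annulus, but Caccioppoli gives only an $L^2$ bound, while your pointwise Duhamel estimate asks for $L^\infty$ control of $\langle\nabla u,\nabla\psi\rangle$ — and an $L^\infty$ bound on $\nabla u$ is exactly the conclusion $(RH_\infty)$ you are trying to prove.

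There are also smaller issues in the remaining steps. For $(i)\Rightarrow(ii)$ your decomposition $h_t(\cdot,y)=\tilde u+v$ is close to the paper's, but the claim that $|\nabla v(x_0)|$ is ``handled by $L^2$-elliptic theory'' together with the $L^\infty$ bound on $\partial_t h_t$ is not enough: $L^2$-theory only controls $\int_B|\nabla v|^2$, and the passage to a \emph{pointwise} estimate for $\nabla v$ is exactly what the paper's Theorem \ref{infinite-har} achieves via a chain-of-balls potential estimate — an argument that itself relies on $(RH_\infty)$. As for $(iii)\leftrightarrow(iv)$, the paper instead shows $(GBE)\Leftrightarrow(GLY_\infty)$ by invoking \cite[Lemma 3.3]{acdh} (for the converse it first deduces $(P_2)$ from $(GLY_\infty)+(UE)$). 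Your direct $(iv)\Rightarrow(iii)$ argument via a scale-$\sqrt t$ partition $f=\sum_k f_k$ has an unresolved step: for a ball $B_k$ far from $x$, the function $|\nabla H_{t/2}f_k|^2$ is not supported near $B_k$, so the Gaussian decay of $h_{ct/2}(x,\cdot)$ over $B_k$ does not by itself control $H_{ct/2}(|\nabla H_{t/2}f_k|^2)(x)$; you would need separate off-diagonal decay (e.g.\ Davies–Gaffney for $\nabla H_{t/2}$) to make that absorption rigorous. Routing through $(GLY_\infty)$ as the paper does avoids this entirely.
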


The main novelty here is the implication $(GLY_\infty)\Longrightarrow (RH_\infty)$.
Indeed, $(RH_\infty)\Longrightarrow (GLY_\infty)$ follows from ideas in the proof
of \cite[Theorem 3.2]{ji15}.
Moreover it is easy to see that $(G_\infty)$ is equivalent to
\begin{equation}\label{infty}\sup_{t>0,\ x\in M\ }\sqrt t \int_M \, |\nabla_x\, h_t(x,y)|\, d\mu(y) <+\infty;
\end{equation}
therefore  $(GLY_\infty)\Longrightarrow (G_\infty)$ follows by integration using $(D)$ (see \cite[p.919]{acdh}).
Then  the reasoning in \cite[p.919]{acdh} and \cite[Theorem 4.11]{CS} yields the converse implication.
The equivalence $(GLY_\infty)\Longleftrightarrow (GBE)$
follows by a version of \cite[Lemma 3.3]{acdh} and \cite[Theorem 3.4]{bcf14}. In the sequel, we shall call  condition $(iv)$
a generalised  Bakry-\'Emery condition $(GBE)$.


Theorem \ref{main-har-heat-infty} admits a direct corollary.
\begin{cor}\label{main-cor-infity}
Let $(X,d,\mu,\E)$ be a non-compact doubling Dirichlet
 metric measure space endowed with a ``{\it carr\'e du champ}".  Assume that $(X,d,\mu,\E)$ satisfies $(P_2)$.
 Then the conditions $(RH_{\infty})$, $ (GLY_{\infty})$,
 $(G_\infty)$ and $(GBE)$ are mutually equivalent.
\end{cor}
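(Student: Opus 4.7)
The plan is simply to verify that the hypotheses of Theorem \ref{main-har-heat-infty} are met under the assumptions of the corollary; once this is done, the four-fold equivalence transfers directly. Recall that Theorem \ref{main-har-heat-infty} requires, beyond the doubling Dirichlet structure, the two conditions $(UE)$ and $(P_{\infty,\loc})$.

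For the upper Gaussian estimate $(UE)$, I would invoke the result of Sturm (and of Saloff-Coste, Grigor'yan in the Riemannian case) recalled in the introduction, namely that on a doubling Dirichlet metric measure space, $(D)$ together with $(P_2)$ is equivalent to the two-sided Li--Yau estimate $(LY)$, whose upper half is exactly $(UE)$. Hence the non-compact doubling Dirichlet space under consideration automatically satisfies $(UE)$.

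For the local $L^{\infty}$-Poincar\'e inequality $(P_{\infty,\loc})$, I would use the monotonicity noted right after the definition: the inequalities $(P_p)$ and $(P_{p,\loc})$ become weaker as $p$ increases, since $\bigl(\fint_B |\nabla f|^p \, d\mu\bigr)^{1/p}$ is non-decreasing in $p$ by Jensen's inequality. Therefore $(P_2)$ implies $(P_\infty)$, which trivially implies $(P_{\infty,\loc})$.

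Having verified the two hypotheses, the conclusion is immediate: Theorem \ref{main-har-heat-infty} yields that $(RH_\infty)$, $(GLY_\infty)$, $(G_\infty)$ and $(GBE)$ are mutually equivalent in this setting. There is no real obstacle here; the only subtlety worth mentioning in the write-up is the indirect route for $(UE)$ via Sturm's characterisation, since $(UE)$ is not assumed directly in the statement of the corollary.
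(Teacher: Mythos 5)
Your proposal is correct and follows exactly the same route as the paper's own (very brief) proof: $(P_2)$ together with $(D)$ gives $(UE)$ via Sturm's characterisation of the two-sided Li--Yau estimate, and $(P_2)\Rightarrow(P_\infty)\Rightarrow(P_{\infty,\loc})$ by Jensen's inequality, so Theorem \ref{main-har-heat-infty} applies directly. No issues.
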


In sufficiently smooth settings,   the assumption $(P_{\infty,\loc})$  is automatically satisfied and we obtain the following.
\begin{cor}\label{main-cor-manifold}
Let $(M,g)$ be a non-compact  Riemannian manifold.   Assume that the Dirichlet metric measure space
associated to the Laplace-Beltrami operator satisfies $(D)$ and $(UE)$.
 Then the conditions $(RH_{\infty})$, $ (GLY_{\infty})$,
 $(G_\infty)$ and $(GBE)$ are mutually equivalent.
\end{cor}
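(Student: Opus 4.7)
The plan is to deduce the corollary directly from Theorem \ref{main-har-heat-infty}. The hypotheses $(D)$ and $(UE)$ of that theorem are already assumed by the corollary; only the local $L^\infty$-Poincar\'e inequality $(P_{\infty,\loc})$ is missing. Once this is verified in the Riemannian setting, the mutual equivalence of $(RH_\infty)$, $(GLY_\infty)$, $(G_\infty)$ and $(GBE)$ follows immediately from the theorem. So the entire argument reduces to establishing $(P_{\infty,\loc})$ on an arbitrary non-compact Riemannian manifold with $(D)$ and $(UE)$.

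To verify $(P_{\infty,\loc})$, I would rely on the smoothness of $(M,g)$. On a Riemannian manifold the intrinsic distance associated with the Dirichlet form $\E$ coincides with the geodesic distance $d_g$, and the carr\'e-du-champ density $|\nabla f|$ coincides $\mu$-a.e.\ with the pointwise norm of the classical Riemannian gradient. Consequently every $f \in W^{1,\infty}(B)$ admits a locally Lipschitz representative. Fix $r_0>0$, a ball $B=B(x,r)$ with $r<r_0$, and $f \in W^{1,\infty}(B)$. For $r_0$ below the convexity radius, the ball $B$ is geodesically convex, so any two points $y_1,y_2 \in B$ are joined by a minimising geodesic lying in $B$. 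Integrating $|\nabla f|$ along this geodesic yields
$$|f(y_1)-f(y_2)| \le d(y_1,y_2)\,\bigl\||\nabla f|\bigr\|_{L^\infty(B)} \le 2r\,\bigl\||\nabla f|\bigr\|_{L^\infty(B)},$$
and averaging in both variables delivers
$$\fint_B|f-f_B|\,d\mu \le C\,r\,\bigl\||\nabla f|\bigr\|_{L^\infty(B)},$$
which is $(P_{\infty,\loc})$ with a constant independent of the centre $x$.

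The main obstacle is the uniform choice of $r_0$ and the constant $C_P(r_0)$: on a complete non-compact Riemannian manifold without two-sided curvature bounds, the convexity radius can degenerate at infinity, so small balls need not be geodesically convex everywhere. The resolution is to notice that Hopf--Rinow provides, for any $y_1,y_2 \in B$, a minimising geodesic that is automatically contained in the enlarged ball $3B$, giving a \emph{weak} form of $(P_{\infty,\loc})$ with $\||\nabla f|\|_{L^\infty(3B)}$ on the right-hand side; one then upgrades to the stated form either by exploiting that the proof of Theorem \ref{main-har-heat-infty} only invokes $(P_{\infty,\loc})$ in ways that are robust under such a dilation (using $(D)$ to absorb the enlargement factor), or by combining local convexity with the doubling condition to control the oscillation of $f$ on $B$ by its pointwise Lipschitz constant there. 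This quasi-convexity vs.\ strict-convexity issue is the sole manifold-specific point in the argument; everything else is supplied by Theorem \ref{main-har-heat-infty}.
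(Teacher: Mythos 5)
The paper does \emph{not} attempt to verify $(P_{\infty,\loc})$ on a general non-compact Riemannian manifold, which is the crux of your plan. Instead, it observes that the only place $(P_{\infty,\loc})$ enters the proof of the implication $(RH_\infty)\Rightarrow(GLY_\infty)$ in Theorem~\ref{main-har-heat-infty} is through Lemma~\ref{lip-har-add} (the Lipschitz estimate for harmonic functions), and it then establishes the conclusion of that lemma directly: since harmonic functions are smooth on a Riemannian manifold, one integrates $|\nabla v|$ along geodesics to get $\fint_B\fint_B|v(x)-v(y)|\,d\mu\,d\mu\le Cr\||\nabla v|\|_{L^\infty(B)}$, and this combined with $(RH_\infty)$ gives Lemma~\ref{lip-har-add}. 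The remaining implications $(GLY_\infty)\Rightarrow(G_\infty)\Rightarrow(RH_\infty)$ and $(GBE)\Leftrightarrow(GLY_\infty)$ are checked to need only $(D)$ and $(UE)$ (plus, for $(GLY_\infty)\Rightarrow(GBE)$, the fact that $(D)$ and $(GLY_\infty)$ already force $(P_2)$). So the manifold hypothesis is used only to obtain smoothness of harmonic functions, not to establish the full $L^\infty$-Poincar\'e inequality for general $W^{1,\infty}$ functions.

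Your plan is therefore both stronger than necessary and incomplete as written. You correctly identify the obstacle — small balls need not be geodesically convex uniformly, so a direct path-integration gives at best a weak $(P_{\infty,\loc})$ with the gradient measured on a dilated ball — but neither of your proposed remedies is carried out. The first (``the proof of Theorem~\ref{main-har-heat-infty} only invokes $(P_{\infty,\loc})$ in ways robust under such a dilation'') is in spirit the paper's actual argument, but to make it rigorous you would need to trace through the proof and identify the exact place $(P_{\infty,\loc})$ is used, which is precisely Lemma~\ref{lip-har-add}; you gesture at this without doing it. The second (self-improving the weak form to the strong form via chaining and $(D)$) is plausible but has a class-of-functions issue: the weak estimate you derive requires $f\in W^{1,\infty}(3B)$, whereas the definition of $(P_{\infty,\loc})$ asks the inequality for $f\in W^{1,\infty}(B)$ only, so the chaining/Whitney argument must be set up carefully on sub-balls whose dilates stay inside $B$, and you leave this entirely implicit. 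The paper's route is cleaner because harmonic functions, being smooth and with $L^\infty$-bounded gradient furnished by $(RH_\infty)$ on a \emph{larger} ball, avoid the dilation issue from the outset.
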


\begin{rem}\label{remark-infty}\rm
 Note that in Theorem \ref{main-har-heat-infty} and Corollary \ref{main-cor-manifold}, we did not require $(P_2)$ or $(LY)$. However, they follow as a consequence of $(UE)$ together with
 $(GLY_\infty)$ or $(RH_\infty)$; cf. \cite{CS1,bcf14}.
\end{rem}

Note that, when $C=c=1$, $(GBE)$ is the classical Bakry-\'Emery condition
$$|\nabla H_tf(x)|^2\le H_t(|\nabla f|^2)(x),\leqno (BE)$$
which, on manifolds, is known to be equivalent to  non-negativity of Ricci curvature; see  \cite{BGL} and also \cite{bak1,be2,Wa04}.
This equivalence has been further generalised to metric measure spaces with non-negative Ricci
curvature  ($RCD^\ast(0,N)$ spaces) in \cite{ags3,ams15,eks13}.

{ On Lie groups of polynomial growth, Saloff-Coste \cite{sal3} obtained
$(GLY_\infty)$ for the heat kernels; see also \cite{Al92}.}
More generally, on sub-Riemannian manifolds satisfying Baudoin-Garofalo's
curvature-dimension inequality
$CD(\rho_1, \rho_2, \kappa, d)$ with $\rho_1 \ge 0$, $\rho_2 > 0$, $\kappa\ge 0$ and  $  d\ge  2$,
it is known that the gradient of the heat kernel satisfies the pointwise
inequality $(GLY_\infty)$ (cf. \cite[Theorem 4.2]{bg13}).
Therefore, by Theorem \ref{main-har-heat-infty}, we see that
$(RH_\infty)$, $(GBE)$ and
$(G_\infty)$ hold on the aforementioned spaces; see Section \ref{Ex} for more
examples.

As far as $L^p$-estimates for the gradient of the heat kernel are concerned, we have the following characterisation.

\begin{thm}\label{main-har-heat}
Let $(X,d,\mu,\E)$ be a non-compact doubling Dirichlet
 metric measure space endowed with a ``{\it carr\'e du champ}".   Assume that $(X,d,\mu,\E)$ satisfies $(UE)$ and $(P_{2,\loc})$.
 Let $p\in (2,\infty)$. Then the following statements are equivalent:

(i) $(RH_{p})$ holds.

(ii)  There exists $\gz>0$ such that $$\int_X|\nabla_x h_t(x,y)|^{p}\exp\left\{\gz d^2(x,y)/t\right\}\,d\mu(x)\le \frac{C}{t^{{p}/2}V(y,\sqrt t)^{{p}-1}} \leqno (GLY_{p})$$
for all $t>0$ and a.e. $y\in X$.

(iii) The gradient of the heat semigroup, $|\nabla H_t|$, is bounded on $L^{p}(X,\mu)$ for each $t>0$ with
$$\||\nabla H_t|\|_{p\to p}\le \frac{C}{\sqrt t}.\leqno (G_{p})$$
\end{thm}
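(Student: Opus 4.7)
The plan is to close the cycle of implications $(iii)\Rightarrow(ii)\Rightarrow(iii)$ --- which is a standard parabolic off-diagonal game in the spirit of \cite{acdh} --- and then bridge $(i)\Leftrightarrow(iii)$, where the substance of the theorem lies.

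For $(iii)\Rightarrow(ii)$, I will invoke Davies' exponential perturbation method. Conjugating the semigroup with $e^{\pm\alpha\phi}$ for $\phi$ bounded Lipschitz with $|\nabla\phi|\le 1$ and exploiting $(G_p)$ together with $(UE)$, one derives $(GLY_p)$ by taking $\phi$ to approximate $d(\cdot,y)$ and optimizing $\alpha$ against $t$; this follows the pattern of \cite[Lemma 3.3]{acdh}. For $(ii)\Rightarrow(iii)$, I will write $\sqrt{t}\nabla H_t f(x)$ as a kernel integral against $f$, decompose the $y$-space into dyadic annuli $A_k=\{2^k\sqrt{t}\le d(x,y)<2^{k+1}\sqrt{t}\}$, apply Hölder using the exponential weight in $(GLY_p)$ to absorb the $2^k$ factors, and sum the annular contributions using $(D)$.

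The heart of the proof is $(iii)\Leftrightarrow(i)$. For $(iii)\Rightarrow(i)$: given $u$ harmonic on $2B$ with $B=B(x_0,r)$, set $t=r^2$, pick a cutoff $\chi$ with $\chi\equiv 1$ on $3B$ and $\supp\chi\subset 4B$, and compare $u$ with $H_t(\chi u)$ on $B$. One bounds $\|\nabla H_t(\chi u)\|_{L^p(B)}$ directly by $(G_p)$, then controls $\|\chi u\|_{L^p(4B)}$ by $\fint_{2B}|u|$ via a local $L^p$ mean-value principle for harmonic functions (Proposition \ref{l2.3}, which follows from $(UE)$). The error $\nabla(u-H_t(\chi u))$ on $B$ is handled via Duhamel's formula together with the Caccioppoli inequality (Lemma \ref{l2.5}) and $(UE)$-based off-diagonal bounds, exploiting that $\chi u=u$ on $3B$ and that $\mathcal{L}u=0$ there. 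For $(i)\Rightarrow(iii)$: cover $X$ by balls $B_j=B(x_j,\sqrt{t})$ with bounded overlap; on each $2B_j$ replace $H_tf$ by the harmonic function $v_j$ with the same boundary values, apply $(RH_p)$ to obtain $\|\nabla v_j\|_{L^p(B_j)}\ls (\sqrt{t})^{-1}\fint_{4B_j}|H_tf|$, and estimate the correction $H_tf-v_j\in W^{1,2}_0(2B_j)$ from the equation $\mathcal{L}(H_tf-v_j)=-\partial_t H_tf$ by $L^2$-energy methods, bootstrapping to $L^p$ using heat-kernel off-diagonal estimates.

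The main obstacle will lie in the elliptic/parabolic transfer under the weakened hypothesis $(P_{2,\loc})$, which yields neither the full Li-Yau bound $(LY)$ nor a global Harnack inequality. Without global Poincaré, each error estimate has to be localized and then glued back together with sharp exponential off-diagonal decay inherited solely from $(UE)$. Extracting the correct $L^1$-average form $r^{-1}\fint_{2B}|u|$ on the right-hand side of $(RH_p)$ --- rather than the weaker $L^2$-average of $|\nabla u|$ that appears in $(\widetilde{RH}_p)$ --- is what makes the theorem applicable to Example 4 of Section \ref{rmms}, and this sharpening is precisely where the technical subtleties of the argument accumulate.
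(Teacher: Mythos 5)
Your implication cycle is $(iii)\Rightarrow(ii)\Rightarrow(iii)$ plus a separate bridge $(i)\Leftrightarrow(iii)$, whereas the paper closes the single loop $(i)\Rightarrow(ii)\Rightarrow(iii)\Rightarrow(i)$ through Propositions \ref{hk-gly}, \ref{hk-pnorm} and Theorem \ref{heat-har}. The easy parabolic steps $(ii)\Leftrightarrow(iii)$ you sketch are fine, but the two hard bridges contain genuine gaps.

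For $(iii)\Rightarrow(i)$ you compare $u$ with $H_{r^2}(\chi u)$ and invoke Duhamel's formula to control the error. This is the wrong tool for the setting. First, in a general Dirichlet space $\mathcal{L}(\chi u)$ is merely a distribution (or at best a signed measure via the Leibniz rule); it is not clear that $\chi u\in\mathscr{D}(\mathcal{L})$, so $\int_0^t H_s\mathcal{L}(\chi u)\,ds$ is not defined as a Bochner integral of $L^2$-valued data without further work. Second, even if one makes this well-defined, the error term requires $L^2\to L^p$ \emph{off-diagonal} bounds for $\nabla H_s$ with exponential decay, and these do not follow directly from $(G_p)$ and $(UE)$; one has to prove a gradient Gaffney estimate and interpolate, which is a nontrivial extra lemma that you do not supply. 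The paper avoids the error term entirely: by Lemma \ref{mvp} a harmonic function on $3B$ satisfies the \emph{exact} reproducing identity $u=\Phi(r\sqrt{\mathcal{L}})u$ in $W^{1,2}(B)$, where $\hat\Phi$ has compact support (finite propagation speed kills the commutator), and then $(RH_p)$ drops out of the spectral multiplier estimates in Claim 2 of Theorem \ref{heat-har}. Your heat-semigroup substitute for the band-limited $\Phi$ is exactly what the finite-propagation machinery is designed to replace.

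For $(i)\Rightarrow(iii)$ you replace $H_tf$ by a harmonic competitor $v_j$ on each $2B_j$ and estimate the correction $w_j=H_tf-v_j$ by ``$L^2$-energy methods, bootstrapping to $L^p$ using heat-kernel off-diagonal estimates.'' This elides the central technical difficulty. Energy methods give $\|\nabla w_j\|_{L^2(2B_j)}\lesssim\sqrt t\,\|\partial_t H_tf\|_{L^2(2B_j)}$, but you need $\|\nabla w_j\|_{L^p(B_j)}$, and passing from $L^2$ to $L^p$ integrability of $\nabla w_j$ under the sole hypothesis $(RH_p)$ on harmonic functions is precisely the content of the paper's Theorem \ref{harmonic-pnorm}. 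That theorem is proved via Christ's dyadic cube decomposition, an explicit chain of approximating Poisson solutions, pointwise Haj\l asz-gradient control \eqref{arvio}, and a potential estimate of Riesz type; none of this is ``off-diagonal estimates.'' Without it, your covering argument yields an $L^2$-based bound for the correction and the $L^p$ claim is unproved. You rightly flag in your closing paragraph that extracting the $L^1$-average form $r^{-1}\fint_{2B}|u|$ (rather than $(\widetilde{RH}_p)$) ``is where the technical subtleties accumulate''; those subtleties are the finite propagation speed reproducing formula on one side and the dyadic chain argument of Theorem \ref{harmonic-pnorm} on the other, and both are missing from your outline.
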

Note that in Theorem \ref{main-har-heat} it is enough to assume   $(P_{2,\loc})$   instead of the much stronger global condition $(P_2)$: a Riemannian manifold that is the union of a compact part and a finite number of Euclidean ends  is a typical example satisfying
$(UE)$, $(P_{2,\loc})$, but {\em not} $(P_2)$; see \cite{cch06,cd99}. On the other hand, since $(P_2)$ implies   $(P_{2,\loc})$ and $(UE)$, we have the following corollary.

\begin{cor}\label{main-cor-finite}
Let $(X,d,\mu,\E)$ be a non-compact doubling Dirichlet
 metric measure space endowed with a ``{\it carr\'e du champ}".
 Assume that $(X,d,\mu,\E)$ satisfies  $(P_2)$.  Let $p\in (2,\infty)$.
 Then the conditions $(RH_{p})$, $ (GLY_{p})$,
and $(G_p)$  are mutually equivalent.
\end{cor}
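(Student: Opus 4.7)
My plan is that this corollary follows almost immediately from Theorem \ref{main-har-heat}, so the task reduces to verifying that the hypotheses of that theorem are satisfied whenever $(D)$ and $(P_2)$ hold. There are really only two things to check.

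First, $(P_2)$ trivially implies $(P_{2,\loc})$: by definition, $(P_2)$ is the statement that the Poincar\'e constant $C_P(r_0)$ in $(P_{2,\loc})$ can be chosen independent of the scale $r_0$, so one may simply take $C_P(r_0) := C_P$ for all $r_0 > 0$. Second, I need to derive $(UE)$ from the joint hypotheses $(D)$ and $(P_2)$. This is exactly the classical result of Saloff-Coste \cite{sal,sal2}, Grigor'yan \cite{gri92}, and Sturm \cite{st1,st2,st3} recalled in the introduction: in the present setting $(D)$ and $(P_2)$ are equivalent to the two-sided Li-Yau estimate $(LY)$ for the heat kernel, whose upper half is precisely $(UE)$. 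Alternatively, one can go through the intermediate step already mentioned in the excerpt, namely that $(D)+(P_2)$ implies the local Sobolev inequality $(LS_q)$ for some $q > 2$, and $(LS_q)$ together with $(D)$ is in turn known to be equivalent to $(UE)$ (cf. \cite{bcs15}).

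With $(UE)$ and $(P_{2,\loc})$ in hand, all the hypotheses of Theorem \ref{main-har-heat} are met, and applying that theorem directly gives the mutual equivalence of $(RH_p)$, $(GLY_p)$, and $(G_p)$ for every $p \in (2,\infty)$.

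There is really no obstacle here, since the corollary is just a statement of the form ``stronger hypothesis $\Longrightarrow$ same conclusion'' relative to Theorem \ref{main-har-heat}; the only substantive input beyond that theorem is the well-known implication $(D)\wedge(P_2)\Longrightarrow(UE)$, and this is a standard fact quoted in the introduction to which one need only refer.
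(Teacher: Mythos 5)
Your proof is correct and follows exactly the same route as the paper's: observe that $(P_2)$ implies $(P_{2,\loc})$ trivially and implies $(UE)$ by the classical Saloff-Coste--Grigor'yan--Sturm results, then invoke Theorem~\ref{main-har-heat}.
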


\begin{rem}\label{remark-main}\rm
(i) Note that for $p=2$ all the conditions $(i)$, $(ii)$, $(iii)$ in Theorem  \ref{main-har-heat} hold. This is obvious for $(i)$ and we already observed that this is also the case for $(iii)$.
 Finally, $(ii)$  follows from    $\cite{G1}$, also  see $\cite[\rm Lemma\  2.3]{cd99}$.

(ii) Also note that the limit  case  $p=\infty$ of Theorem $\ref{main-har-heat}$ is nothing but Theorem $\ref{main-har-heat-infty}$.

(iii) Theorem \ref{main-har-heat} actually holds with $(P_{2,\loc})$ replaced by the weaker condition $(P_{p,\loc})$.
However, by \cite[Theorem 6.3]{bcf14} together with
\cite[Corollary 3.8]{bf15}, one has that, $(UE)$ and $(P_{p,\loc})$ together with $(RH_p)$ or $(G_p)$ imply $(P_{2,\loc})$.

(iv) Finally, note  that under $(D)$ and $(P_2)$,  there always exists $\varepsilon>0$ such that $(RH_p)$, hence $(GLY_p)$ and $(G_p)$, hold for $2<p<2+\varepsilon$;
see $\cite[{\rm\, Section\ } 2.1]{ac05}$ and Lemma \ref{lem-open} below.
\end{rem}

To the best of our knowledge, Theorem \ref{main-har-heat-infty} and
Theorem \ref{main-har-heat} are new even on Riemannian
manifolds. Since our assumptions are quite mild, our setting
includes Riemannian metric measure spaces,
sub-Riemannian manifolds, and degenerate elliptic/parabolic
equations in these settings; see the final section.

%

Regarding the proofs, the main difficulties and novelties appear in the proof of
``$(RH_p)\Longrightarrow (GLY_p)$" for $p\in (2,\infty)$,  and in
``$(G_p)\Longrightarrow (RH_p)$" for $p\in (2,\infty]$.

A version of the implication $(RH_\infty)\Longrightarrow (GLY_\infty)$ was
proven in \cite[Theorem 3.2]{ji15} via quantitative regularity
estimates for solutions to the Poisson equation in \cite[Theorem 3.1]{jky14}, under $(D)$ and $(P_2)$.
In the present work, we replace the assumptions $(D)$ and $(P_2)$  there by  the slightly weaker combination
$(D)$, $(UE)$ and $(P_{\infty,\loc})$.
To prove $(RH_p)\Longrightarrow (GLY_p)$ for $p\in (2,\infty)$, we follow
some ideas from \cite{ji15,jky14}.
In particular, starting from $(RH_p)$, we first establish a quantitative
regularity estimate
for solutions to the Poisson equation; see Theorem \ref{harmonic-pnorm} below.
As we already said, harmonic functions are {\it not} necessarily locally
Lipschitz  in
a non-smooth setting.
Therefore, to establish Theorem \ref{harmonic-pnorm}, we can neither assume
nor use any Lipschitz regularity
of harmonic functions. In the classical setting, the fact that quantitative regularity for harmonic
functions implies quantitative regularity for solutions to the Poisson equation
is easy to prove  and there is even an analog for
certain non-linear equations, see \cite{kms}.

To overcome the difficulties attached to the non-smooth setting, we use the pointwise approach to
 Sobolev spaces on metric measure spaces by Haj\l asz \cite{ha96};
see \cite{hkst,sh}
and Section 2.1 below for more details. Then by using $(RH_p)$ in
the full strength,
a stopping-time argument and a bootstrap argument, we obtain
 pointwise control on Haj\l asz gradients of solutions to the
Poisson equation in terms of potentials; see  \eqref{arvio} below.
We expect that such
estimates are of independent interest.

Then, by viewing the heat kernel $h_t$ as a solution to the Poisson
equation $\mathcal{L} h_t=-\frac{\partial h_t}{\partial t}$, where a suitable estimate
for $\frac{\partial h_t}{\partial t}$ can be obtained from $h_t$ by using
Cauchy transforms (cf. Sturm \cite[Theorem 2.6]{st2}), we obtain $(GLY_p)$.

To prove $(G_p)\Longrightarrow (RH_p)$ for $p\in (2,\infty]$, we
first establish a reproducing formula for harmonic functions
by using the finite propagation speed property; see Lemma \ref{mvp}
below. Then, by using this reproducing formula, we follow recent developments on
the boundedness of spectral multipliers from  \cite{bcf14,bcs15} to show that
$(G_p)\Longrightarrow (RH_p)$ for all $p\in (2,\infty].$

%
%
%
%

\subsection{Applications to  Riesz transforms}

\hskip\parindent Let us apply the previous results to $L^p$-boundedness of the  Riesz transform $|\nabla \mathcal{L}^{-1/2}|$.
We say that $(R_p)$ holds  if this operator is continuous from $L^{p}(X,\mu)$ to itself. One easily checks that $(R_2)$ follows from the definitions and spectral theory.

For $p\in (1,2)$, it was proved by Coulhon and Duong in \cite{cd99} that $(R_p)$ holds as soon as $(D)$ and $(UE)$ hold
(however, this condition is not necessary, see \cite{CCFR}).
In particular, $(D)$ and $(P_2)$ are sufficient   conditions for  $(R_p)$ to be valid in this range.

For $p>2$, Auscher, Coulhon, Duong and Hofmann established in \cite{acdh}  a
characterisation of the boundedness of the Riesz transform on manifolds via
boundedness of the gradient of the heat semigroup.
Although the characterisation in \cite{acdh} was stated on manifolds,
its proof indeed works on metric measure spaces, as indicated in
\cite[p.6]{bcf14}. For further information  we refer to
\cite{ac05, bf15} and references therein.

Using \cite[Theorem 1.3]{acdh}, Theorem \ref{main-har-heat} above, and the open-ended character of condition $(RH_p)$ (Lemma \ref{lem-open} below), we obtain
the following result.

\begin{thm}\label{riesz-main}
Let $(X,d,\mu,\E)$ be a non-compact doubling Dirichlet metric measure space endowed with a ``{\it carr\'e du champ}". Assume that $(X,d,\mu,\E)$ satisfies  $(UE)$.
Let $p\in (2,\infty)$.   If $(P_p)$ holds, then $(RH_{p})$, $(G_{p})$ and $(R_p)$ are equivalent.
\end{thm}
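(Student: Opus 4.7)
The plan is to combine three ingredients already available in the paper: the equivalence $(RH_p)\Longleftrightarrow(G_p)$ from Theorem \ref{main-har-heat}, the open-ended self-improvement of $(RH_p)$ given by Lemma \ref{lem-open}, and the Auscher--Coulhon--Duong--Hofmann Riesz transform criterion \cite[Theorem 1.3]{acdh}. Since $(P_p)$ trivially implies $(P_{p,\loc})$, Remark \ref{remark-main}(iii) will let us invoke Theorem \ref{main-har-heat} at exponent $p$ directly, so that $(RH_p)\Longleftrightarrow(G_p)$ is in hand for free.

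Next I would dispose of the easy direction $(R_p)\Longrightarrow(G_p)$: under $(UE)$ the heat semigroup is analytic on $L^p$, hence $\|\mathcal{L}^{1/2}H_tf\|_p\lesssim t^{-1/2}\|f\|_p$, and factorising $|\nabla H_tf|=|\nabla\mathcal{L}^{-1/2}|(\mathcal{L}^{1/2}H_tf)$ then turns $(R_p)$ into $(G_p)$. This is exactly the observation already recalled in the introduction.

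The substantive direction is $(G_p)\Longrightarrow(R_p)$, which I would route through a slightly larger exponent. Starting from $(G_p)$, the first equivalence yields $(RH_p)$; Lemma \ref{lem-open} then furnishes some $\varepsilon>0$ for which $(RH_{p+\varepsilon})$ holds; since $(P_p)$ also implies $(P_{p+\varepsilon,\loc})$ (by H\"older), a second application of Theorem \ref{main-har-heat} at exponent $p+\varepsilon$ promotes this to $(G_{p+\varepsilon})$; and finally \cite[Theorem 1.3]{acdh}, valid in our doubling Dirichlet metric measure framework as the introduction recalls, converts $(G_{p+\varepsilon})$ into $(R_q)$ for every $q\in(2,p+\varepsilon)$, in particular into $(R_p)$.

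The main obstacle will be confirming that the acdh criterion applies under our hypotheses $(D)$, $(UE)$ and $(P_p)$, rather than the $(D)$, $(P_2)$ combination in which it was originally formulated. This is however a bookkeeping issue that has by now been settled in \cite{ac05, bf15, bcf14}: once one has the Gaussian upper bound, volume doubling and a gradient bound at an exponent strictly larger than $p$, the proof of \cite{acdh} goes through. With this single verification in place, no further computation is needed and the theorem is just the one-line chain of implications above.
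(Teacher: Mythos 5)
Your overall chain of implications matches the paper's, but the way you dispose of the $(P_2)$ requirement in the Auscher--Coulhon--Duong--Hofmann step is a genuine soft spot, and the paper resolves it differently and more carefully. You propose to avoid $(P_2)$ by invoking Remark \ref{remark-main}(iii) (so that Theorem \ref{main-har-heat} can run on $(P_{p,\loc})$), and then you assert that the acdh criterion ``goes through'' under $(D)$, $(UE)$ and $(P_p)$ plus a gradient bound. That assertion is not what the cited results say, and it leaves a gap: \cite[Theorem 1.3]{acdh} (equivalently Theorem \ref{riesz-heat}) is formulated under $(P_2)$, and you have given no reason why its proof survives with only $(P_p)$. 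The paper's route is to observe first that $(P_2)$ \emph{does hold here}: under $(D)$, $(UE)$ and $(P_p)$, the condition $(G_p)$ (or $(RH_p)$) implies $(P_2)$ by \cite[Corollary 2.8]{bf15} and \cite[Theorem 6.3]{bcf14}. Once $(P_2)$ is in hand, Corollary \ref{main-cor-finite} handles the $(RH_p)\Leftrightarrow(G_p)$ equivalence and Theorem \ref{riesz-heat} applies verbatim, with no need to re-examine the internals of the acdh argument. Your remark that the matter is ``settled in \cite{ac05,bf15,bcf14}'' points at the right literature, but you should be deriving $(P_2)$, not claiming the acdh proof works without it.

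A second, smaller point: Theorem \ref{riesz-heat} requires $(G_q)$ for \emph{all} $q\in(2,p_0)$, not just at the single exponent $p_0$. You produce $(G_{p+\varepsilon})$ at one exponent. This is easily repaired --- Lemma \ref{lem-open}(ii) in fact delivers $(RH_q)$ for every $q\in(2,p+\varepsilon_1)$, and converting each of these via Theorem \ref{main-har-heat} (or Corollary \ref{main-cor-finite} once $(P_2)$ is known) yields the whole range of $(G_q)$'s; alternatively one can interpolate between $(G_2)$ and $(G_{p+\varepsilon})$. Either way, spell it out; the acdh criterion is a range-to-range statement and needs the full interval as input.
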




%

Let us compare
Theorems \ref{main-har-heat} and  \ref{riesz-main} with  \cite[Theorem 2.1]{ac05}. The latter result states that on
a Riemannian manifold $M$ satisfying $(D)$  and $(P_2)$, there  exists $p_M\in (2,\infty]$
  such that for all $p_0\in (2,p_M)$, $(RH_p)$
for all $p\in (2,p_0)$ is equivalent to the validity of $(R_p)$ for
all $p\in (2,p_0)$. Now $(R_p)$ easily implies $(G_p)$ and conversely, according to  \cite[Theorem 2.1]{acdh}, under the same assumptions the validity of $(G_p)$  for
all $p\in (2,p_0)$ implies the validity of $(R_p)$ for
all $p\in (2,p_0)$.  Theorems \ref{main-har-heat} and  \ref{riesz-main}  contain three improvements with respect to  \cite[Theorem 2.1]{ac05}.
First, the proof of \cite[Theorem 2.1]{ac05} makes an essential use of $1$-forms on manifolds, and we do not know how to extend the
arguments from \cite{ac05} to our general setting. Second,  Theorems \ref{main-har-heat} and  \ref{riesz-main}  state a point-to-point equivalence
among $(RH_{p})$, $(G_{p})$ and $(R_p)$,
as opposed to a mere equivalence between  $(RH_{p})$ for $p\in (2,p_0)$ and $(G_{p})$ for $p\in (2,p_0)$. Finally, we obtain that $p_M=+\infty$.

According to Gehring's Lemma (cf. \cite{ge73,iw95}),
our  reverse H\"older inequality $(RH_{p})$ is an open-ended condition; see Lemma \ref{lem-open} below.
We then have the following corollary to Theorem~\ref{riesz-main}, which generalises the main result of \cite{ac05}
and a recent result \cite[Theorem 1.2]{bf15}.
\begin{cor}\label{cor-riesz-open}
Let $(X,d,\mu,\E)$ be a non-compact doubling Dirichlet  metric measure space endowed with a ``{\it carr\'e du champ}". Assume that $(X,d,\mu,\E)$ satisfies  $(UE)$.

(i) If $(P_2)$ holds, then the set of $p$'s such that $(R_p)$ holds is an interval $(1,p_0)$, with $p_0\in (2,\infty]$.

(ii) Let $p\in (2,\infty)$. If $(P_p)$, and one of the mutually  equivalent conditions $(RH_p)$, $(G_p)$, $(R_p)$, hold, then there exists $\ez>0$ such that all the mutually  equivalent conditions $(RH_{p+\ez})$, $(G_{p+\ez})$, $(R_{p+\ez})$ hold.
\end{cor}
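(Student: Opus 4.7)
The plan is to deduce both parts directly from Theorem~\ref{riesz-main} combined with two auxiliary facts: the open-ended character of $(RH_p)$ (Lemma~\ref{lem-open}, itself based on Gehring's lemma), and the monotonicity of the Poincar\'e hypothesis, namely that $(P_p)$ implies $(P_q)$ for every $q \ge p$. Neither step should present a real obstacle — the essential work has already been done in proving Theorem~\ref{riesz-main}; what remains is essentially bookkeeping with interpolation and the open-endedness.

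For part~(ii), I would argue as follows. Suppose $(P_p)$ holds and any one of $(RH_p)$, $(G_p)$, $(R_p)$ holds. By Theorem~\ref{riesz-main}, all three hold simultaneously, in particular $(RH_p)$. Lemma~\ref{lem-open} then yields some $\varepsilon>0$ such that $(RH_{p+\varepsilon})$ holds. Because $(P_q)$ becomes weaker as $q$ grows, $(P_{p+\varepsilon})$ is inherited from $(P_p)$, so we may reapply Theorem~\ref{riesz-main} at the exponent $p+\varepsilon$ (shrinking $\varepsilon$ if needed to keep $p+\varepsilon$ finite) to recover the full equivalence $(RH_{p+\varepsilon})\Leftrightarrow(G_{p+\varepsilon})\Leftrightarrow(R_{p+\varepsilon})$, which is exactly the claim.

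For part~(i), let $\mathcal{I}:=\{p\in(1,\infty):(R_p)\text{ holds}\}$. Under $(D)$ and $(UE)$, the Coulhon--Duong result from \cite{cd99} gives $(1,2]\subset\mathcal{I}$, and $(R_2)$ is automatic from spectral theory. To show $\mathcal{I}$ is an interval of the prescribed form it suffices to prove: (a) if $p\in\mathcal{I}$ and $p>2$ then $(2,p)\subset\mathcal{I}$, and (b) $\mathcal{I}\cap(2,\infty)$ is open in $(2,\infty)$. For (a), interpolate the $L^p$-boundedness of $|\nabla\mathcal{L}^{-1/2}|$ with its $L^2$-boundedness, which yields $(R_q)$ for every $q\in(2,p)$. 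For (b), since $(P_2)$ implies $(P_q)$ for all $q\ge 2$, Theorem~\ref{riesz-main} turns $(R_p)$ into $(RH_p)$; applying Lemma~\ref{lem-open} and then Theorem~\ref{riesz-main} again (as in part~(ii)) produces $(R_{p+\varepsilon})$. Combining (a), (b), and the already established $(1,2]\subset\mathcal{I}$, the set $\mathcal{I}$ is exactly $(1,p_0)$ with $p_0:=\sup\mathcal{I}\in(2,\infty]$.

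The only subtlety worth flagging is making sure one is genuinely allowed to invoke Theorem~\ref{riesz-main} at the new exponent $p+\varepsilon$: this requires $(P_{p+\varepsilon})$, which is where monotonicity of $(P_q)$ in $q$ (and the fact that $(P_2)$ in part~(i) is the strongest Poincar\'e assumption available) enters crucially. Everything else is a straightforward chain of equivalences.
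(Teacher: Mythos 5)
Your approach is essentially the same as the paper's (whose proof is simply ``combine Theorem~\ref{riesz-main} and Lemma~\ref{lem-open}''), and part~(ii) is argued correctly: Theorem~\ref{riesz-main} at $p$, then Lemma~\ref{lem-open}(ii) to get $(RH_{p+\ez})$, then monotonicity of $(P_q)$ in $q$ to reapply Theorem~\ref{riesz-main} at $p+\ez$.

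However, there is a small but genuine gap in your part~(i). You reduce the claim to three ingredients: $(1,2]\subset\mathcal{I}$, statement (a) (interpolation fills in $(2,p)$ once $p\in\mathcal{I}$), and statement (b) ($\mathcal{I}\cap(2,\infty)$ is open). You then assert that these combine to give $\mathcal{I}=(1,p_0)$ with $p_0\in(2,\infty]$. But (a) and (b) are both vacuous when $\mathcal{I}\cap(2,\infty)=\emptyset$, so your three ingredients are consistent with $\mathcal{I}=(1,2]$, in which case $\sup\mathcal{I}=2\notin(2,\infty]$ and the set is not even of the form $(1,p_0)$. What is missing is the seed: the assertion that $\mathcal{I}$ actually contains some $p>2$. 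This comes from Lemma~\ref{lem-open}(i), which is the distinct statement that $(P_2)$ alone — via Keith--Zhong self-improvement and Gehring's lemma applied directly at the exponent $2$ — produces $\ez>0$ with $(RH_p)$ for all $p\in(2,2+\ez)$; combined with $(P_2)\Rightarrow(P_p)$ and Theorem~\ref{riesz-main} this gives $(2,2+\ez)\subset\mathcal{I}$. Your (b) invokes ``Lemma~\ref{lem-open} as in part~(ii),'' i.e.\ part~(ii) of that lemma, which presupposes $(RH_{p_0})$ for some $p_0>2$ and therefore cannot supply the starting point. Once you add the explicit invocation of Lemma~\ref{lem-open}(i), the argument closes, and everything else — the interpolation step (a), the openness step (b), and the reduction to these facts — is sound.
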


\begin{rem}\rm Even though we only assume $(P_p)$ in Theorem \ref{riesz-main} and (ii) of Corollary \ref{cor-riesz-open}, recent results from \cite[Theorem 6.3]{bcf14} and \cite[Corollary 3.8]{bf15} show that $(P_p)$ together with $(RH_p)$ or $(G_p)$ implies $(P_2)$.
\end{rem}


\subsection{Sobolev inequalities and isoperimetric inequality}\label{SI}
\hskip\parindent
Let $1\le p\le q\le +\infty$. In the above setting, we say that the Sobolev inequality $(S_{p,q})$ holds
if for every ball $B$, $B=B(x,r)$ and every Lipschitz function $f$, compactly supported in
 $B$, there exists $C$ such that
$$\left(\fint_B|f|^q\,d\mu\right)^{1/q}\le Cr\left(\fint_B|\nabla f|^p\,d\mu\right)^{1/p}. \leqno(S_{q,p})$$

Applying the methods from \cite{jk,jky14}, we show in Theorem \ref{p-sobolev} below that on a
non-compact metric measure space $(X,d,\mu, \E)$ endowed with a ``{\it carr\'e du champ}" and satisfying  $(D_Q)$ and $(UE)$, if
additionally
for some $p_0\in (2,\infty)$, $(P_{p_0,\loc})$ and one of the conditions $(RH_{p_0})$, $(R_{p_0})$,  $(G_{p_0})$
holds, then the Sobolev inequality $(S_{q,p_0'})$, where $p_0'<2$ is the
H\"older conjugate
of $p_0$, $q\ge p_0'$ satisfying $1/p_0'-1/q<1/Q$, is valid. An analogue for the
isoperimetric inequality ($p_0=\infty$)
will also be established in Theorem \ref{iso}.

\subsection{Plan of the paper}
\hskip\parindent   The paper is organized as follows. In Section 2, we recall and provide some basic notions and tools,
which include Sobolev spaces, harmonic functions, Poisson equations
and some functional calculi.

In Section 3, we provide a quantitative gradient estimate for solutions
to Poisson equations, assuming
$(RH_p)$.

In Section 4, we give the proofs of Theorems \ref{main-har-heat-infty} and  \ref{main-har-heat}, and their corollaries.

In Section \ref{riesz}, we prove Theorem \ref{riesz-main}, and in
Section 6, we study Sobolev inequalities and the isoperimetric inequality.

In Section 7, we exhibit several examples that our results can
be applied to.

In Appendix \ref{appendix}, we provide additional details for the techniques that are used in the proofs.

Throughout the work, { we always assume that our space $(X,d,\mu,\E)$ is a non-compact
doubling Dirichlet metric measure space. However, we wish to point out that our results and techniques
allow a localisation for local or compact settings. In order to keep the length of this paper reasonable,
we will present the localization in a forthcoming paper. }

We denote by $C,c$ positive constants which are independent of the
main parameters, but which may vary from line to line.
We
 use $\sim$ to mean that
two quantities are comparable.

\section{Preliminaries and auxiliary tools}

\subsection{Harmonic functions and Poisson equations}\label{harpo}
\hskip\parindent In this subsection, we recall some basic properties of harmonic
functions and of solutions to the Poisson equation. { Most of these properties have been deduced via  de Giorgi-Moser-Nash theory,
requiring only  doubling property and Sobolev inequality.}

Before we start our discussion, let us recall the notion of the reverse doubling,
 which for Riemannian manifolds  originates in \cite[Theorem 1.1]{gri92}. Assume that the Dirichlet metric measure space $(X,d,\mu,\E)$ satisfies $(D)$. If in addition $X$ is connected then it is known that the so-called reverse doubling
 estimate is valid, see e.g. \cite[Proposition 5.2]{GH}.
 The reverse doubling estimate ensures that, as $(X,d)$ is non-compact, there exist
 $0<Q'<Q$ and $c>0$ such that,
for all $r\geq s>0$ and $x,y\in M$ such that $d(x,y)<r+s$,
\begin{equation*}\label{rd}\tag{$RD$}
c\left(\frac{r}{s}\right)^{Q'}\leq \frac{V(y,r)}{V(x,s)},
\end{equation*}

Notice that $(UE)$ is equivalent to the local Sobolev inequality $(LS_q)$, for any $q\in (2,\infty]$ satisfying $\frac{q-2}{q}<\frac 2Q$, see \cite[Theorem 1.2.1]{bcs15}. It follows from  \cite[Section 3.4]{bcs15} that under $(RD)$
the local Sobolev estimate  $(LS_q)$ can be strengthened to the Sobolev inequality $(S_{q,2})$.

We continue with the Harnack inequality; see for instance \cite{bm93,bm,ji2}.

\begin{prop}\label{l2.3}
Assume that the doubling Dirichlet metric measure space $(X,d,\mu,\E)$ satisfies $(UE)$. Then there exists $C$ only depending on
$C_D$ and $C_{S}$ such that if $\mathcal{L} u=0$ in $B(x_0,r)$, then
$$\|u\|_{L^\fz(B(x_0,r/2))}\le C\fint_{B(x_0,r)}|u|\,d\mu.$$
\end{prop}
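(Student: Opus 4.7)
The plan is to run a classical Moser iteration. Under the assumptions $(D)$ and $(UE)$, the local Sobolev inequality $(LS_q)$ holds for some $q>2$ (see \cite[Theorem 1.2.1]{bcs15} and the discussion in the excerpt). The Sobolev constant $C_{LS}$ depends only on $C_D$ and the constants in $(UE)$, which matches the announced dependence of $C$ on $C_D$ and $C_S$; no Poincar\'e inequality will be needed.

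First, since $u$ is harmonic in $B(x_0,r)$ and $\mathcal{E}$ is strongly local with a carr\'e du champ, the chain and Leibniz rules for $\Gamma$ imply that $|u|^{k}$ is a subsolution of $\mathcal{L}$ for every $k\ge 1$. Testing the equation against $\eta^{2}|u|^{2k-1}\mathrm{sgn}(u)$, where $\eta$ is a Lipschitz cutoff with $\eta\equiv 1$ on $B(x_0,\rho r)$, supported in $B(x_0,\rho' r)$ and satisfying $|\nabla\eta|\le 2/((\rho'-\rho)r)$, produces the standard Caccioppoli-type inequality
\begin{equation*}
\int_{B(x_0,\rho' r)}\eta^{2}\bigl|\nabla(|u|^{k})\bigr|^{2}\,d\mu\le Ck^{2}\int_{B(x_0,\rho' r)}|u|^{2k}|\nabla\eta|^{2}\,d\mu.
\end{equation*}
A routine truncation at level $\varepsilon>0$ followed by $\varepsilon\downarrow 0$ handles the non-smoothness of $t\mapsto|t|^{k}$.

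Combining this with $(LS_q)$ applied to $\eta|u|^{k}$ and using $(D)$ to control volume factors yields the Moser reverse-H\"older step relating $L^{2k}$-averages and $L^{kq}$-averages of $u$ on slightly nested balls. Iterating along the geometric sequence $k_{j}=2(q/2)^{j}$ on shrinking balls $B(x_0,(1/2+2^{-j-1})r)$ produces a convergent product of constants and gives the $L^{2}$-$L^{\infty}$ bound
\begin{equation*}
\|u\|_{L^{\infty}(B(x_0,\rho r))}\le \frac{C}{(\rho'-\rho)^{\alpha}}\Bigl(\fint_{B(x_0,\rho' r)}|u|^{2}\,d\mu\Bigr)^{1/2},\qquad \tfrac12\le \rho<\rho'\le 1,
\end{equation*}
with $\alpha$ depending only on $q$ and $C_D$.

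To upgrade the $L^{2}$ average on the right to an $L^{1}$ one, I would apply the classical self-improvement trick: bound $|u|^{2}\le \|u\|_{L^{\infty}(B(x_0,\rho' r))}|u|$ and use Young's inequality to get
\begin{equation*}
\|u\|_{L^{\infty}(B(x_0,\rho r))}\le \tfrac12\|u\|_{L^{\infty}(B(x_0,\rho' r))}+\frac{C}{(\rho'-\rho)^{2\alpha}}\fint_{B(x_0,\rho' r)}|u|\,d\mu.
\end{equation*}
The Giaquinta--Giusti iteration lemma then absorbs the $L^{\infty}$ term on the right into the left-hand side, yielding the claimed inequality upon choosing $\rho=1/2$, $\rho'=1$. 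The only mildly delicate point is the rigorous justification of the chain rule for $t\mapsto|t|^{k}$ in the Dirichlet-form framework; this is handled by the usual truncate-and-pass-to-the-limit argument and presents no essential difficulty. All remaining steps are entirely classical.
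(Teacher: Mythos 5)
Your Moser-iteration argument is correct and in fact reconstructs the route the paper takes implicitly: Proposition~\ref{l2.3} is stated there without proof, with a pointer to Biroli--Mosco and related work, where precisely this de\,Giorgi--Moser--Nash scheme (Caccioppoli inequality for powers of $u$, combined with the local Sobolev inequality $(LS_q)$ coming from $(D)$ and $(UE)$, followed by the iteration and the $L^2\to L^1$ absorption trick) is carried out in the Dirichlet-form setting. The constant dependence you announce is the right one, and the technical points you flag are indeed the ones that must be checked; just note that the truncation you need is not only near zero but also at high level, i.e.\ one tests with $\eta^2 u\,\min(|u|,m)^{2k-2}$ rather than $\eta^2|u|^{2k-1}\mathrm{sgn}(u)$ directly, since the latter is not a priori in $W^{1,2}$, and one lets $m\to\infty$ after the iteration. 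That is a standard device and presents no real obstruction.

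It is worth recording that the paper also sketches a genuinely different proof in the remark following Theorem~\ref{heat-har}: choosing an even $\Phi\in\mathscr{S}(\rr)$ with $\supp\hat\Phi\subset[-1,1]$ and $\Phi(0)=1$, one uses the Davies--Gaffney estimate and \cite[Proposition 4.1.1]{bcs15} to show that $\|V_r\Phi(r\sqrt{\mathcal{L}})^2\|_{1\to\infty}\le C$, and then invokes the reproducing formula $u=\Phi(r\sqrt{\mathcal{L}})^2(u\chi_{3B})$ on $B$ from Lemma~\ref{mvp}. This gives the $L^1\to L^\infty$ bound in one stroke, avoiding the power iteration and absorption machinery entirely, at the price of the spectral-theoretic input (finite propagation speed, Paley--Wiener supports, weighted operator-norm estimates). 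Your route is more elementary and self-contained; the paper's alternative is shorter once the functional-calculus toolbox has already been set up, which it has to be anyway for the main theorems.
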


\begin{prop}\label{l2.3add}
Assume that the doubling Dirichlet metric measure space $(X,d,\mu,\E)$ satisfies  $(P_{2,\loc})$.
 For each $r_0>0$, there exists $C=C(C_D,C_P(r_0))$ such that if $u$ is a positive harmonic function on $B(x_0,r)$, $r<r_0$, then
$$\sup_{y\in B(x_0,r/2)}u(y)\le C\inf_{y\in B(x_0,r/2)}u(y).$$
Further if $(P_2)$ holds, then the above constant $C$ may be chosen independent of $r_0$.
\end{prop}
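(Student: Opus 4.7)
The strategy is the classical Moser iteration, adapted to the doubling metric measure setting, with careful bookkeeping to ensure that every constant depends only on $C_D$ and $C_P(r_0)$. The argument has three ingredients: a Caccioppoli-type estimate for powers of $u$, a local Sobolev inequality, and a $\log u$ estimate that bridges between positive and negative powers.

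First, I would combine $(D)$ with $(P_{2,\loc})$ via the Saloff-Coste chain-of-balls argument to obtain a local $(S_{q,2})$-type Sobolev-Poincar\'e inequality on every ball of radius less than $r_0$, with constants controlled by $C_D$ and $C_P(r_0)$. In parallel, for a positive harmonic $u$ on $B(x_0,r)$, testing the harmonic equation $\int \langle\nabla u,\nabla\varphi\rangle\,d\mu=0$ with $\varphi=\eta^2 u^{2s-1}$ (for a cutoff $\eta$ supported in $B(x_0,r)$ and any $s\ne 1/2$, with the Leibniz and chain rules of the strongly local $\Gamma$) yields
$$
\int \eta^2 u^{2s-2}|\nabla u|^2\,d\mu\;\le\; C\,\frac{|s|^2}{(s-1/2)^2}\int |\nabla\eta|^2 u^{2s}\,d\mu.
$$

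Next, feeding this Caccioppoli inequality into the local Sobolev inequality on a shrinking sequence of balls converging to $B(x_0,r/2)$, the standard Moser iteration produces the mean-value bounds
$$
\sup_{B(x_0,r/2)} u^p \;\le\; C\,\fint_{B(x_0,3r/4)} u^p\,d\mu,\qquad
\Bigl(\fint_{B(x_0,3r/4)} u^{-p}\,d\mu\Bigr)^{-1/p}\;\le\; C\,\inf_{B(x_0,r/2)} u,
$$
valid for every $p>0$, with $C=C(C_D,C_P(r_0),p)$. To bridge these two sides, I would test the harmonic equation with $\varphi=\eta^2/u$ to obtain a Caccioppoli-type bound for $\nabla\log u$; combined with $(P_{2,\loc})$ this shows that $\log u$ has BMO seminorm on sub-balls controlled by $C_D$ and $C_P(r_0)$. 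The John-Nirenberg inequality (valid on doubling spaces supporting an $L^2$-Poincar\'e inequality) then produces some $p_0>0$ depending only on $C_D$ and $C_P(r_0)$ such that
$$
\Bigl(\fint_{B(x_0,3r/4)} u^{p_0}\,d\mu\Bigr)\Bigl(\fint_{B(x_0,3r/4)} u^{-p_0}\,d\mu\Bigr)\;\le\; C.
$$
Chaining this with the two Moser bounds above (taken with $p=p_0$) yields $\sup_{B(x_0,r/2)} u \le C\,\inf_{B(x_0,r/2)} u$ with constant $C(C_D,C_P(r_0))$, which is the desired Harnack inequality. The final assertion under $(P_2)$ is immediate: all constants $C_P(r_0)$ can be replaced by the universal $C_P$, so the Harnack constant no longer depends on $r_0$.

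The main technical obstacle is the bookkeeping at each step: the Sobolev-Poincar\'e chain-of-balls, the Moser iteration summation, and the John-Nirenberg argument must all be carried out so that the final constant depends only on $C_D$ and $C_P(r_0)$, with no implicit dependence on $r$ or on the position $x_0$. A secondary point is that, in the non-smooth Dirichlet setting, one must verify that $\eta^2 u^{2s-1}$ and $\eta^2/u$ are legitimate test functions for harmonic $u>0$ on $B(x_0,r)$; this is standard using truncation of $u$ from below and the chain rule for the carr\'e du champ, together with the density of compactly supported Lipschitz functions in $W^{1,2}_0(B(x_0,r))$ recalled in Appendix \ref{appendix-domain}. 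Modulo these verifications, the proof reduces to the Moser/Biroli-Mosco scheme already available in the literature, which is why one may alternatively cite \cite{bm93,bm,st1}.
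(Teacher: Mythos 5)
Your proposal correctly reproduces the Moser iteration scheme (Caccioppoli inequality for powers of $u$, local Sobolev--Poincar\'e inequality, the $\log u$ Caccioppoli estimate, and John--Nirenberg to bridge positive and negative exponents), which is precisely the de Giorgi--Moser--Nash theory for strongly local Dirichlet forms that the paper invokes by citing \cite{bm93,bm,ji2} rather than giving an explicit proof. The attention you give to tracking constants so that they depend only on $C_D$ and $C_P(r_0)$ (hence become scale-free under $(P_2)$), and to legitimating the test functions via truncation, is exactly what is needed.
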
	

Using the Harnack inequality, we obtain the following relation between Yau's gradient estimate and our condition $(RH_\infty)$. Since Lipschitz regularity of harmonic functions is the best one can hope for in non-smooth settings (cf.
\cite{ji2,zz11}), we have to use essential supremum instead of pointwise supremum in $(Y_\infty)$.
\begin{lem}\label{rh-yau}
Assume that the doubling Dirichlet metric measure space $(X,d,\mu,\E)$ satisfies  $(P_{2})$.
Then $(RH_{\infty})$ holds if and only if $(Y_\infty)$ holds with $K=0$.
\end{lem}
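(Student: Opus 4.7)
\smallskip

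\noindent\emph{Proof plan for Lemma \ref{rh-yau}.}

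The strategy is to exploit the classical relation $|\nabla \log u|=|\nabla u|/u$ together with the elliptic Harnack inequality, which under $(D)$ and $(P_2)$ is available via Propositions~\ref{l2.3} and~\ref{l2.3add}. In particular, harmonic functions admit continuous (in fact H\"older) representatives, so we may evaluate them pointwise.

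\smallskip

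\noindent\emph{Direction $(Y_\infty)\Longrightarrow (RH_\infty)$.} Let $u$ be harmonic on $2B=B(x_0,2r)$. First I would translate $u$ so that $(Y_\infty)$ becomes applicable. By Proposition~\ref{l2.3} applied on $B(x_0,3r/2)$ (via a covering of $\tfrac{3}{2}B$ by balls whose doubles lie in $2B$, using $(D)$),
$$M:=\|u\|_{L^\infty(\tfrac{3}{2}B)}\le C\fint_{2B}|u|\,d\mu.$$
Then $v:=u+2M$ is a positive harmonic function on $\tfrac{3}{2}B$ with $v\le 3M$ there. Apply $(Y_\infty)$ with $K=0$ on the ball $\tfrac{3}{2}B$:
$$\||\nabla v|\|_{L^\infty(B)}=\||\nabla u|\|_{L^\infty(B)}\le \frac{C}{r}\,\|v\|_{L^\infty(B)}\le \frac{C'M}{r}\le \frac{C''}{r}\fint_{2B}|u|\,d\mu,$$
which is $(RH_\infty)$.

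\smallskip

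\noindent\emph{Direction $(RH_\infty)\Longrightarrow (Y_\infty)$ with $K=0$.} Let $u$ be a positive harmonic function on $B(x_0,2r)$; our target is $|\nabla u(x)|\le C u(x)/r$ for a.e.\ $x\in B(x_0,r)$. Fix such $x$ and consider the ball $B_x:=B(x,r/4)$, whose double $2B_x=B(x,r/2)$ still lies inside $B(x_0,2r)$, so $u$ is harmonic on $2B_x$. Applying $(RH_\infty)$ gives
$$\||\nabla u|\|_{L^\infty(B_x)}\le \frac{C}{r}\fint_{B(x,r/2)}u\,d\mu.$$
Next, since $u$ is positive harmonic on $B(x,r)\subset B(x_0,2r)$, Proposition~\ref{l2.3add} (with $K=0$, using $(P_2)$) yields
$$\sup_{B(x,r/2)}u\le C\inf_{B(x,r/2)}u,$$
and because the Harnack inequality forces $u$ to have a continuous representative, $\fint_{B(x,r/2)}u\,d\mu\le C\,u(x)$. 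Combining these,
$$|\nabla u(x)|\le \frac{C}{r}\,u(x)\quad\text{for a.e.\ }x\in B(x_0,r),$$
which, after dividing by $u(x)>0$, is exactly $(Y_\infty)$ with $K=0$ in the essential-supremum sense.

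\smallskip

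The only non-obvious point is the passage from a ball-average of $u$ to a pointwise bound in the second direction, which is handled by the Harnack inequality for positive harmonic functions (Proposition~\ref{l2.3add}); the use of $(P_2)$ enters precisely there. The first direction is essentially bookkeeping via the elliptic Harnack inequality to reduce to the positive case.
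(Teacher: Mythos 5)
Your argument matches the paper's proof in both structure and the key tools: both directions go through the two Harnack inequalities (Propositions~\ref{l2.3} and~\ref{l2.3add}) to relate the sup, inf, and mean of a positive harmonic function, and the $(Y_\infty)\Rightarrow(RH_\infty)$ direction uses the same translation trick of shifting $u$ by a multiple of $\|u\|_{L^\infty(\frac{3}{2}B)}$. One small improvement in your version: by adding $2M$ rather than $M$ you get strict positivity of $v$ directly (when $M>0$) and thereby sidestep the strong maximum principle, which the paper invokes to handle the case $u+\delta\equiv 0$; you should just add a sentence disposing of the trivial case $M=0$ (where $u\equiv 0$ on $\frac{3}{2}B$ and hence $|\nabla u|\equiv 0$ on $B$ by locality), and note, as the paper does, that a covering/rescaling step is needed to pass from $(Y_\infty)$ applied on $\frac{3}{2}B$ to the stated bound on all of $B$.
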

\begin{proof}
{$(RH_{\infty})\Longrightarrow (Y_{\infty})$ with $K=0$}:  Suppose that $u$ is positive harmonic function on $2B$, $B=B(x_0,r)$.
By  Propositions \ref{l2.3},  \ref{l2.3add} and a simple covering argument, we see that
$$|\nabla u(x)|\le |||\nabla u|||_{L^{\infty}(B)}\le \frac {C}r \fint_{\frac 32B}|u|\,d\mu\le  \frac {C}r \sup_{y\in \frac 32B}u(y) \le
\frac {C}r \inf_{y\in \frac 32B}u(y) \le \frac {C}r u(x)$$
for a.e. $x\in B$, i.e., $(Y_{\infty})$ holds with $K=0$.

{$(Y_{\infty})\ \mbox{with}\ K=0 \Longrightarrow (RH_{\infty})$}: Suppose that  $u$ is a harmonic function in $2B$.
Let $\delta= \|u\|_{L^\infty(\frac 32B)}$, then the strong maximum principle (cf. \cite{bm})
implies that either $u+\delta\equiv 0$ in $\frac 32B$ or $u+\delta>0$ there.
In the first case, $(RH_{\infty})$ holds obviously since $|\nabla u|\equiv 0$ in $B$.
In the second case, by  Proposition~\ref{l2.3} and the same covering argument,
 we obtain
$$\delta= \|u\|_{L^\infty(\frac 32B)}\le C\fint_{2B}|u|\,d\mu,$$
and hence by $(Y_{\infty})$ with $K=0$ and Proposition \ref{l2.3add},
$$|\nabla (u(x)+\delta)|\le \frac{C}{r}[u(x)+\delta]\le \frac{C}{r}\inf_{y\in B}[u(y)+\delta]\le
\frac{C}{r}\fint_{2B}|u+\delta|\,d\mu\le \frac{C}{r}\fint_{2B}|u|\,d\mu$$
for a.e. $x\in B$. That is, $(RH_{\infty})$ holds, which completes the proof.
\end{proof}


In what follows we will  need the following Caccioppoli inequality; see \cite{bm,ji2}.
\begin{lem}\label{l2.5}
Let  $(X,d,\mu,\E)$ be a doubling Dirichlet metric measure space. Then if $\mathcal{L} f=g$ in
$B:=B(x_0,R)$, $g\in L^2(B)$, we have that for any $0<r<R$
\begin{equation*}
\int_{B(x_0,r)}|\nabla f|^2\,d\mu\le
\frac{C}{(R-r)^2}\int_{B(x_0,R)}|f|^2\,d\mu+C(R-r)^2\int_{B(x_0,R)}|g|^2\,d\mu,
\end{equation*}
where $C$ only depends on $C_D$.
\end{lem}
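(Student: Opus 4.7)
The plan is to run the standard cutoff/test-function argument adapted to this Dirichlet-form setting. First I would choose a Lipschitz cutoff $\eta$ on $X$ with $\eta \equiv 1$ on $B(x_0,r)$, $\supp \eta \subset B(x_0,R)$, $0 \le \eta \le 1$, and Lipschitz constant at most $C/(R-r)$ (one can take $\eta(x) = \max\{0, \min\{1, (R-d(x,x_0))/(R-r)\}\}$; since $(X,d)$ is a length space the required bound on $|\nabla \eta|$ is automatic). Because Lipschitz functions with compact support in $B$ belong to $W^{1,2}_0(B)$ (as noted in Appendix \ref{appendix-domain}), and by the Leibniz rule for $\Gamma$, the function $\varphi := \eta^2 f$ lies in $W^{1,2}_0(B)$ whenever $f \in W^{1,2}(B)$.

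Next I would plug $\varphi = \eta^2 f$ into the weak formulation of $\mathcal{L} f = g$ in $B$. The Leibniz rule for the carré du champ gives
$$\langle \nabla f, \nabla(\eta^2 f)\rangle = \eta^2 |\nabla f|^2 + 2\eta f \langle \nabla f, \nabla \eta\rangle,$$
so the weak equation becomes
$$\int_B \eta^2 |\nabla f|^2 \, d\mu = -2\int_B \eta f \langle \nabla f, \nabla \eta \rangle\,d\mu + \int_B g \, \eta^2 f \, d\mu.$$
The main (entirely routine) step is then to absorb the two right-hand side terms. For the first, the Cauchy--Schwarz inequality (pointwise via $|\langle \nabla f,\nabla\eta\rangle|\le|\nabla f||\nabla\eta|$) and Young's inequality $2ab \le \tfrac{1}{2}a^2 + 2b^2$ give
$$2\Bigl|\int_B \eta f \langle \nabla f,\nabla \eta\rangle\,d\mu\Bigr| \le \tfrac{1}{2}\int_B \eta^2 |\nabla f|^2\,d\mu + 2\int_B f^2 |\nabla \eta|^2 \, d\mu.$$
For the second, Young's inequality with the weight $(R-r)^2$ yields
$$\Bigl|\int_B g\,\eta^2 f\,d\mu\Bigr| \le (R-r)^2 \int_B g^2 \,d\mu + \frac{1}{4(R-r)^2}\int_B \eta^4 f^2 \,d\mu.$$

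Combining these, using $\eta \le 1$ and $|\nabla \eta|\le C/(R-r)$, and absorbing the $\tfrac{1}{2}\int \eta^2 |\nabla f|^2$ into the left-hand side gives
$$\int_B \eta^2 |\nabla f|^2 \,d\mu \le \frac{C}{(R-r)^2}\int_{B(x_0,R)} |f|^2 \,d\mu + C(R-r)^2 \int_{B(x_0,R)} |g|^2\,d\mu.$$
Finally, since $\eta \equiv 1$ on $B(x_0,r)$, the left-hand side dominates $\int_{B(x_0,r)}|\nabla f|^2\,d\mu$, which is the desired inequality. The constants produced here are purely arithmetic from Young's inequality; the dependence on $C_D$ only enters if one wishes to standardise the cutoff via a partition argument using doubling, but no such argument is actually needed. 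There is no real obstacle --- the only point that deserves care is the admissibility of the test function $\eta^2 f \in W^{1,2}_0(B)$, which follows from the Leibniz rule for $\Gamma$ and the Lipschitz-cutoff property recalled in Appendix \ref{appendix-domain}.
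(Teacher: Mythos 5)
The paper does not give its own proof of this Caccioppoli inequality: it simply cites Biroli--Mosco \cite{bm} and \cite{ji2}. Your argument is the standard one and is essentially the proof given in those references: test the weak equation with $\eta^2 f$, apply the Leibniz rule for the carr\'e du champ, and absorb via Young's inequality. The computation is correct, the choice of cutoff $\eta$ with $|\nabla\eta|\le (R-r)^{-1}$ is legitimate because the distance function has $|\nabla d(\cdot,x_0)|\le 1$ a.e., and your closing remark is right that the constant is in fact absolute and the dependence on $C_D$ claimed in the statement is not actually used. The only point worth flagging is the admissibility of $\eta^2 f$ as a test function in $W^{1,2}_0(B)$: the Leibniz rule in the strongly local Dirichlet form setting is stated a priori for bounded elements of $\D$, so for unbounded $f\in W^{1,2}(B)$ one should either invoke the (standard) extension of the Leibniz rule to $f\in\D_{\loc}$ multiplied by a bounded Lipschitz function, or run the argument with a truncation $f_N = \max(-N,\min(f,N))$ and pass to the limit. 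This is routine but deserves a word.
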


The following result was proved in \cite{bm} by using Sobolev inequalities.
\begin{lem}\label{exitence-poisson}
Assume that the Dirichlet metric measure space $(X,d,\mu,\E)$ satisfies $(D_Q)$, $Q\ge 2$, and that $(UE)$ holds. Let $p\in \left(\max\left\{Q/2,2\right\},\fz\right]$. Then for each $g\in L^p(B(x_0,r))$,   there is a unique
solution $f\in W^{1,2}_0(B(x_0,r))$ to $\mathcal{L} f=g$ in $B(x_0,r).$
Moreover
$$\|f\|_{L^\fz(B(x_0,r))}\le Cr^2V(x_0,r)^{-1/p}\|g\|_{L^p(B(x_0,r))},$$
where $C=C(C_D,C_S)$.
\end{lem}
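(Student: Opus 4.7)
Both assertions are consequences of the de Giorgi--Moser--Nash method carried out in the Dirichlet-form setting by Biroli--Mosco \cite{bm}; I sketch the route I would follow.

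First, for existence and uniqueness I would apply the Lax--Milgram theorem on the Hilbert space $W^{1,2}_0(B)$, $B := B(x_0,r)$, endowed with the inner product $\E$. Under $(D_Q)$ and $(UE)$, the local Sobolev inequality $(LS_q)$ recalled earlier holds for some $q > 2$ (one takes $q = 2Q/(Q-2)$ when $Q > 2$, and any large $q$ when $Q = 2$), and for $u \in W^{1,2}_0(B)$ this upgrades, via zero extension, to both a Friedrichs-type inequality $\|u\|_{L^2(B)}^2 \le Cr^2\,\E(u,u)$ and the Sobolev inequality
$$\left(\fint_B |u|^q\,d\mu\right)^{2/q} \le \frac{Cr^2}{V(x_0,r)}\,\E(u,u).$$
The first makes $\E$ coercive on $W^{1,2}_0(B)$; since $p > 2$ and $\mu(B) < \infty$ force $g \in L^2(B)$, the functional $\varphi \mapsto \int_B g\varphi\,d\mu$ is bounded on $W^{1,2}_0(B)$, so Lax--Milgram provides the unique weak solution $f$.

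Next, for the $L^\infty$ bound I would run a Moser iteration. Fix $\beta \ge 1$ and test the equation against a truncation of $|f|^{2\beta-2}f$; the chain rule for $\Gamma$ yields, with $w := |f|^\beta$,
$$\int_B |\nabla w|^2\,d\mu = \frac{\beta^2}{2\beta-1}\int_B g\cdot|f|^{2\beta-2}f\,d\mu \le \frac{\beta^2}{2\beta-1}\,\|g\|_{L^p(B)}\,\|f\|_{L^{(2\beta-1)p'}(B)}^{2\beta-1},$$
where $p' = p/(p-1)$. Feeding this into the Sobolev inequality applied to $w$ produces the recursion
$$\|f\|_{L^{q\beta}(B)}^{2\beta} \le Cr^2\,V(x_0,r)^{(2-q)/q}\,\frac{\beta^2}{2\beta-1}\,\|g\|_{L^p(B)}\,\|f\|_{L^{(2\beta-1)p'}(B)}^{2\beta-1}.$$
Defining $\beta_j$ by $\beta_0 = 1$ and $(2\beta_{j+1}-1)p' = q\beta_j$, the asymptotic ratio $\beta_{j+1}/\beta_j$ equals $q/(2p') = q(p-1)/(2p)$, which exceeds $1$ precisely when $p > q/(q-2)$; with $q = 2Q/(Q-2)$ this is exactly our hypothesis $p > Q/2$. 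Summing the resulting geometric series of the explicit constants and of the $\beta_j^2/(2\beta_j-1)$ factors yields the claimed bound $\|f\|_{L^\infty(B)} \le Cr^2\,V(x_0,r)^{-1/p}\|g\|_{L^p(B)}$.

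The principal technical point is that $|f|^{2\beta-2}f$ is only a legitimate test function once $f \in L^{(2\beta-1)p'}(B)$, whereas a priori $f$ lies just in $L^2(B)$. The standard remedy is to run the iteration on the truncations $T_M(f) := \mathrm{sign}(f)\min(|f|,M)$, transport the $L^{q\beta_j}$ bounds forward step by step starting from the base case (where $f \in L^2(B) \subset L^{p'}(B)$ suffices), and pass to the limit $M \to \infty$ by monotone convergence only at the end. Since this bookkeeping is done explicitly in \cite{bm} and the arguments transfer verbatim to our framework, I would defer to that reference for the final details.
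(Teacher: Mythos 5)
Your argument — Lax--Milgram for existence and uniqueness, then Moser iteration driven by $(LS_q)$ for the $L^\infty$ bound — is exactly what the paper invokes by citing \cite[Theorem 4.1]{bm}, and the iteration, when tracked carefully, does produce the claimed $V(x_0,r)^{-1/p}$ power (the paper only adds the one-line observation that uniqueness follows since the difference of two solutions is harmonic in $W^{1,2}_0(B)$, hence zero). One small correction: $(LS_q)$ is available only for $q$ strictly less than $2Q/(Q-2)$, so one should take $q$ close to but below that endpoint; the strict hypothesis $p>\max\{Q/2,2\}$ still guarantees $p > q/(q-2)$ for such $q$, so the iteration ratio $q/(2p')$ exceeds one and the scheme closes.
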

\begin{proof} See \cite[Theorem 4.1]{bm} for the existence and the given
estimate; the uniqueness follows since the difference of any two solutions
is harmonic, with boundary value zero in the Sobolev sense.
\end{proof}


\begin{lem}\label{l2.6}
Assume that the Dirichlet metric measure space $(X,d,\mu,\E)$ satisfies $(D_{Q})$, $Q\ge 2$, and that $(UE)$ holds.
Let $p\in\left(\frac{2Q}{Q+2},\infty\right]$. For each $B=B(x_0,r)$
and $g\in L^{p}(B)$,   there exists $f\in W^{1,2}_0(B)$ that
satisfies $\mathcal{L} f=g$ in $B$. Moreover, there exists a constant
$C$  
   such that
\begin{eqnarray*}
\fint_{B}|f|\,d\mu\le Cr \lf(\fint_{B}|\nabla f|^2\,d\mu\r)^{1/2}\le Cr^2 \lf(\fint_{B}|g|^{p}\,d\mu\r)^{1/p}.
\end{eqnarray*}
\end{lem}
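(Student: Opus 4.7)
The plan is to prove existence and the two inequalities separately, relying throughout on the Dirichlet--Sobolev inequality $(S_{q,2})$ for compactly supported functions, where $q=\frac{2Q}{Q-2}$ (or any $q>2$ when $Q\le 2$). Under the standing assumptions of $(D_Q)$, $(UE)$, and non-compactness of $X$, the reverse doubling estimate $(RD)$ holds (as recalled at the start of this subsection), and the paper's reference to \cite[Section 3.4]{bcs15} upgrades the local Sobolev inequality $(LS_q)$ to the Sobolev inequality $(S_{q,2})$: for every ball $B=B(x_0,r)$ and every $\varphi\in W^{1,2}_0(B)$,
$$\lf(\fint_B |\varphi|^q\,d\mu\r)^{1/q}\le Cr\lf(\fint_B|\nabla \varphi|^2\,d\mu\r)^{1/2}.$$
This is the single analytic ingredient we will use over and over.

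For existence, note that the hypothesis $p>\max\{Q/2,2\}$ of Lemma \ref{exitence-poisson} may fail in the present range $p>\frac{2Q}{Q+2}$, so a direct appeal is impossible; instead I apply the Lax--Milgram theorem on the Hilbert space $W^{1,2}_0(B)$ endowed with the inner product $\E(\cdot,\cdot)$. The bilinear form $\E$ is coercive on $W^{1,2}_0(B)$ because $(S_{q,2})$ together with Hölder's inequality produces a Poincaré inequality $\|\varphi\|_{L^2(B)}\le Cr\||\nabla \varphi|\|_{L^2(B)}$. The hypothesis $p>\frac{2Q}{Q+2}$ is equivalent to $p'<q$; hence Hölder and $(S_{q,2})$ furnish the continuous embedding $W^{1,2}_0(B)\hookrightarrow L^{p'}(B)$, and by duality $\varphi\mapsto \int_B g\varphi\,d\mu$ is a bounded linear functional on $W^{1,2}_0(B)$. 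Lax--Milgram then delivers a unique $f\in W^{1,2}_0(B)$ satisfying $\E(f,\varphi)=\int_B g\varphi\,d\mu$ for every $\varphi\in W^{1,2}_0(B)$, which is precisely $\mathcal{L} f=g$ in $B$.

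For the estimates, the first one $\fint_B|f|\,d\mu\le Cr\lf(\fint_B|\nabla f|^2\,d\mu\r)^{1/2}$ is an immediate consequence of Jensen's inequality followed by $(S_{q,2})$ applied to $f\in W^{1,2}_0(B)$. For the second, I test the equation against $f$ itself (legitimate since $f\in W^{1,2}_0(B)$), obtaining
$$\int_B|\nabla f|^2\,d\mu=\E(f,f)=\int_B gf\,d\mu\le \|g\|_{L^p(B)}\|f\|_{L^{p'}(B)}.$$
Since $p'\le q$, Hölder gives $\|f\|_{L^{p'}(B)}\le V(B)^{1/p'-1/q}\|f\|_{L^q(B)}$, and $(S_{q,2})$ yields $\|f\|_{L^q(B)}\le Cr\,V(B)^{1/q-1/2}\||\nabla f|\|_{L^2(B)}$. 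Combining and dividing by $\||\nabla f|\|_{L^2(B)}$ produces $\||\nabla f|\|_{L^2(B)}\le Cr\,V(B)^{1/p'-1/2}\|g\|_{L^p(B)}$, which after dividing by $V(B)^{1/2}$ rewrites as the desired $\lf(\fint_B|\nabla f|^2\r)^{1/2}\le Cr\lf(\fint_B |g|^p\r)^{1/p}$.

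The one nontrivial ingredient is the Dirichlet--Sobolev inequality $(S_{q,2})$ for $W^{1,2}_0(B)$; once this is in hand, the argument is entirely routine variational machinery. The main pedagogical point is that extending beyond the range of Lemma \ref{exitence-poisson} forces us to abandon $L^\infty$-type bounds on $f$ and instead work purely with the energy space, which is why the testing argument with $f$ as test function and the Sobolev--Hölder chain is the natural path.
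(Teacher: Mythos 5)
Your argument is correct in substance but takes a genuinely different route from the paper. The paper establishes existence by truncating $g$ to $g_k:=\chi_{\{|g|\le k\}}g$, invoking Lemma \ref{exitence-poisson} (which handles bounded data) to get solutions $f_k$, and then using the energy estimate coming from $(S_{p',2})$ to show $(f_k)$ is Cauchy in $W^{1,2}_0(B)$; the estimates are extracted from the same chain of inequalities. Your approach replaces the approximation step with a direct Lax--Milgram argument on $(W^{1,2}_0(B),\E)$, which is both more self-contained and arguably cleaner, while the paper's route has the advantage of delegating the existence kernel to the cited result of Biroli--Mosco and thereby staying closer to the toolbox already recorded in the paper. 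Your estimate derivation (testing against $f$ itself and then running Hölder and $(S_{q,2})$) is essentially the paper's computation in disguise.

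One small imprecision worth fixing: you take $q=\frac{2Q}{Q-2}$, but the Sobolev inequality $(S_{q,2})$ obtained from $(UE)$ and $(RD)$ is available only for exponents satisfying the \emph{strict} inequality $\frac{q-2}{q}<\frac{2}{Q}$, i.e.\ $q<\frac{2Q}{Q-2}$; the endpoint is not asserted. Since $p>\frac{2Q}{Q+2}$ is equivalent to $p'<\frac{2Q}{Q-2}$, the cleanest choice (and the one the paper makes) is simply $q=p'$ when $p<\infty$, or any fixed $q\in\left(2,\frac{2Q}{Q-2}\right)$ when $p=\infty$. With that adjustment your embedding $W^{1,2}_0(B)\hookrightarrow L^{p'}(B)$ and the ensuing Lax--Milgram and energy estimates go through exactly as you wrote them.
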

\begin{proof}
Let us first prove the existence  of $f$.  Let $p'$ be the H\"older conjugate of $p$.
 Notice that $\frac{p'-2}{p'}<\frac2Q$  and $(UE)$ implies that $(S_{p',2})$ holds.
For each $k\in\cn$, let $g_k:=\chi_{\{|g|\le k\}}\,g$.
By Lemma \ref{exitence-poisson}, there exists a solution $f_k\in W^{1,2}_0(B)$
to $\mathcal{L} f_k=g_k$
 in $B$. For all $k,j\in\cn$,  $(S_{p',2})$  yields
\begin{eqnarray*}
\int_{B}|\nabla (f_k-f_j)|^2\,d\mu
&&= \int_{B} [g_k-g_j][f_k-f_j]\,d\mu\\
&&\le \|g_k-g_j\|_{L^{p}(B)}\|f_k-f_j\|_{L^{p'}(B)}\\
&& \le C\|g_k-g_j\|_{L^{p}(B)} \frac{r}{\mu(B)^{1/2-1/{p'}}} \||\nabla (f_k-f_j)|\|_{L^2(B)},
\end{eqnarray*}
and similarly
\begin{eqnarray}\label{2.8}
\int_{B}|\nabla f_k|^2\,d\mu&&\le C\|g_k\|_{L^{p}(B)} \frac{r}{\mu(B)^{1/2-1/{p'}}} \||\nabla f_k|\|_{L^2(B)}.
\end{eqnarray}
Therefore, $(f_k)_k$ is a Cauchy sequence in $W^{1,2}_0(B)$, and there exists
a limit $f\in W^{1,2}_0(B).$
By this and $(S_{p',2})$, we see that for each $\varphi\in W^{1,2}_0(B)$,
$$\int_{B}\langle \nabla f,\nabla \varphi\rangle\,d\mu=\lim_{k\to\infty}\int_{B}\langle \nabla f_k,\nabla \varphi\rangle\,d\mu=\lim_{k\to\infty}\int_{B}g_k\varphi\,d\mu=\int_{B}g\varphi\,d\mu,$$
where the last equality follows from the convergence $g_k\to g$ in $L^{p}(B)$
together with  $\varphi\in W^{1,2}_0(B)\subset L^{p'}(B)$.
This implies that $f$ is a solution to $\mathcal{L} f=g$ in $B$.

Notice that by \eqref{2.8},
$$\fint_{B}|f_k|^2\,d\mu\le Cr^2\fint_{B}|\nabla f_k|^2\,d\mu\le C\|g_k\|^2_{L^{p}(B)} \frac{r^4}{\mu(B)^{2/{p}}}
\le C\|g\|^2_{L^{p}(B)} \frac{r^4}{\mu(B)^{2/{p}}}.$$ By this,
letting $k\to \infty$, we conclude that
$$\fint_{B}|f|\,d\mu\le Cr \lf(\fint_{B}|\nabla f|^2\,d\mu\r)^{1/2}\le Cr^2 \lf(\fint_{B}|g|^{p}\,d\mu\r)^{1/{p}},$$
as desired.
\end{proof}

In our discussion we will also need the following result, see  \cite[Theorem 5.13]{bm}.
\begin{lem}\label{l2.7}
Assume that the Dirichlet metric measure space $(X,d,\mu,\E)$ satisfies $(D_{Q})$, $Q\ge 2$, and $(P_{2,\loc})$. Suppose that $f\in W^{1,2}(B)$, $B=B(x_0,r)$, $g\in L^p(B)$ and $\mathcal{L} f=g$ in $B$, where
$p\in (\frac Q2,\infty]\cap
(2,\infty]$. Then $f$ is locally H\"older continuous on $B$. 
\end{lem}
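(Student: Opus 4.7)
The plan is to follow the classical De Giorgi--Moser--Nash strategy: split the problem into a harmonic part and a Poisson part, control each separately, and combine them in a Campanato-type iteration. Fix $x_0 \in B$ and $r_0>0$ small so that $B(x_0, 2r_0) \subset B$. For every $r \le r_0$, use Lemma \ref{exitence-poisson} (whose hypothesis $p > Q/2$ is satisfied) to find $w_r \in W^{1,2}_0(B(x_0,r))$ with $\mathcal{L} w_r = g$ in $B(x_0,r)$ and
\[
\|w_r\|_{L^\infty(B(x_0,r))} \le C r^2 V(x_0,r)^{-1/p} \|g\|_{L^p(B(x_0,r))}.
\]
Using $(D_Q)$ in its lower form $V(x_0,r) \ge c (r/r_0)^Q V(x_0,r_0)$ for $r \le r_0$, this gives $\|w_r\|_{L^\infty(B(x_0,r))} \le C r^{\alpha_1} \|g\|_{L^p(B)}$ with $\alpha_1 := 2 - Q/p > 0$, which is exactly the condition $p > Q/2$.

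Next, set $u_r := f - w_r$. Then $u_r$ is harmonic in $B(x_0,r)$. By Proposition \ref{l2.3add} applied to $u_r - \inf_{B(x_0, r/2)} u_r$ and to $\sup_{B(x_0,r/2)} u_r - u_r$, both of which are nonnegative harmonic in $B(x_0,r)$ (after passing to a continuous version as ensured by Proposition \ref{l2.3}), one derives in the standard Moser fashion the oscillation decay
\[
\mathop{\mathrm{osc}}_{B(x_0,\rho)} u_r \le \theta \mathop{\mathrm{osc}}_{B(x_0,r)} u_r, \qquad \rho = r/K,
\]
for some fixed $K>1$ and $\theta \in (0,1)$ depending only on $C_D$ and $C_P(r_0)$. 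Iterating this at dyadic scales yields the Hölder estimate $\mathop{\mathrm{osc}}_{B(x_0,\rho)} u_r \le C (\rho/r)^{\alpha_2} \mathop{\mathrm{osc}}_{B(x_0,r)} u_r$ for all $0<\rho\le r$, with $\alpha_2 := \log(1/\theta)/\log K > 0$.

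Setting $\alpha := \min\{\alpha_1,\alpha_2\}$, a Campanato-type combination is carried out as follows. For any $\rho \le r \le r_0$, write
\[
\mathop{\mathrm{osc}}_{B(x_0,\rho)} f \le \mathop{\mathrm{osc}}_{B(x_0,\rho)} u_r + 2 \|w_r\|_{L^\infty(B(x_0,r))} \le C\bigl(\rho/r\bigr)^{\alpha_2}\mathop{\mathrm{osc}}_{B(x_0,r)} f + C r^{\alpha_1}\|g\|_{L^p(B)} + C(\rho/r)^{\alpha_2}\|w_r\|_{L^\infty(B(x_0,r))}.
\]
Choosing $r = \rho^{1/2}\cdot r_0^{1/2}$ (or more standardly iterating the inequality $\phi(\rho) \le \theta \phi(r) + C r^{\alpha_1}$ with $\phi(\rho):=\mathop{\mathrm{osc}}_{B(x_0,\rho)} f$; see, e.g., the iteration lemma of Giaquinta), one concludes $\mathop{\mathrm{osc}}_{B(x_0,\rho)} f \le C \rho^{\alpha}(\|f\|_{L^2(B)}+\|g\|_{L^p(B)})$ for all $\rho$ small enough. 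Since $x_0$ was an arbitrary point with $B(x_0,2r_0)\subset B$, this gives local Hölder continuity of $f$ on $B$.

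The main obstacle is the oscillation decay for the harmonic part: one must upgrade the elliptic Harnack inequality of Proposition \ref{l2.3add}, which is only available at scales $r<r_0$ under $(P_{2,\loc})$, into a scale-invariant decay with constants depending only on $r_0$. This is handled exactly as in \cite{bm}: one applies the Harnack inequality to the truncated functions $M(r)-u_r$ and $u_r-m(r)$ (where $M$ and $m$ are the essential sup and inf on $B(x_0,r)$), which are nonnegative supersolutions/subsolutions, then uses Moser's trick to obtain a geometric decay of the oscillation. All constants depend only on $C_D$, $C_P(r_0)$ and the Sobolev constant provided by $(UE)$, so the Hölder exponent and norm are uniform on compact subsets of $B$.
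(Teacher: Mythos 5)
Your proof is the classical De Giorgi--Nash--Moser / Campanato iteration (harmonic plus Poisson decomposition, Harnack-driven oscillation decay, Giaquinta-type iteration), which is precisely the argument behind the reference [Theorem 5.13, Biroli--Mosco] that the paper delegates this lemma to, so the route matches. Two technical points you should make explicit rather than inherit silently. First, you invoke Lemma \ref{exitence-poisson} and Proposition \ref{l2.3}, whose stated hypotheses include $(UE)$, but Lemma \ref{l2.7} assumes only $(D_Q)$ and $(P_{2,\loc})$; for the local conclusion this is harmless since $(D)$ together with $(P_{2,\loc})$ yields a local Sobolev inequality on balls of radius at most $r_0$, which is all the $L^\infty$-bound for $w_r$ and the mean-value/Harnack estimates for $u_r$ actually use at the small scales you work at, but you should say so rather than cite lemmas whose hypotheses you have not assumed. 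Second, ``passing to a continuous version as ensured by Proposition \ref{l2.3}'' is circular: that proposition gives only local boundedness, not continuity; the clean fix is to run the whole argument with essential oscillation (noting $f\in L^\infty_{\loc}$ from the decomposition $f=u_{r_0}+w_{r_0}$) and then deduce the existence of a continuous representative \emph{from} the geometric decay you establish.
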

%
%
%
%
\subsection{Functional calculus}
\hskip\parindent
Let $\mathbb{C}_+:=\{z\in\mathbb{C}:\,\mathrm{Re}\,z>0\}.$
Let $L$ be a non-negative, self-adjoint operator on $L^2(X,\mu)$, and let us denote its spectral decomposition
by $E_{L}(\lambda)$. Then,
for every bounded measurable function
$F:\, [0,\infty)\to \mathbb{C}$, one defines the operator $F(L):\, L^2(X,\mu)\to L^2(X,\mu)$ by the formula
\begin{equation}\label{spectrum-operator}
F(L):=\int_0^\infty F(\lambda)\,dE_{L}(\lambda).
\end{equation}
In the case of $F_z(\lambda):= e^{-z\lambda}$ for $z\in \mathbb{C}_+$, one sets
$e^{-zL}:= F_z(L)$ as given by \eqref{spectrum-operator}, which
gives a definition of the heat semigroup for complex time.
By spectral theory, the family $\{e^{-zL}\}_{z\in\mathbb{C}_+}$ satisfies
$$\|e^{-zL}\|_{2\to 2}\le 1$$
for all $z\in \mathbb{C}_+ $; cf. \cite[Chapter 2]{Da95}.



\begin{defn}[Davies-Gaffney estimate]
We say that the  semigroup $\{e^{-tL}\}_{t>0}$
satisfies the Davies-Gaffney estimate if for all open
sets $E$ and $F$ in $X$,
$t\in (0,\fz)$ and $f\in L^2(E)$ with $\supp\,f\subset E$, it holds that
\begin{equation}\label{DG-estimate}
\|e^{-tL}f\|_{L^2(F)}\le  \exp\bigg\{-\frac{\dist(E,F)^2}{4t}\bigg\}
\|f\|_{L^2(E)},
\end{equation}
where and in what follows, $\dist(E,F):= \inf_{x\in E,\,y\in F}d(x,y)$.
\end{defn}

\begin{defn}[Finite propagation speed property]
We say that $L$  satisfies the
finite propagation speed property if for all $0 < t < d(E,F)$ and $E,F\subset X$, $f_1\in L^2(E)$ and $f_2\in L^2(F)$,
\begin{equation}\label{fin-speed}
\int_X\langle\cos(t\sqrt{L})f_1,f_2\rangle\, d\mu=0.
\end{equation}
\end{defn}


The following result was obtained by Sikora in \cite{Si}. The statement  can also be found in \cite[Proposition 3.4]{hlmmy} and
\cite[Theorem 3.4]{CS}.

\begin{prop}\label{finite-davies-gaffney}
The operator $L$ satisfies the finite propagation speed property \eqref{fin-speed} if and only if the semigroup $\{e^{-tL}\}_{t>0}$ satisfies  the Davies-Gaffney estimate
\eqref{DG-estimate}.
\end{prop}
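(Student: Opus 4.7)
\medskip

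The plan is to reduce the proposition to a single subordination identity linking the heat semigroup to the wave propagator, and then run two quite different arguments for the two implications.

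The key identity is the spectral/subordination formula
\begin{equation*}
e^{-tL} \;=\; \frac{1}{2\sqrt{\pi t}}\int_{-\infty}^{\infty} e^{-s^2/(4t)}\,\cos(s\sqrt{L})\,ds,\qquad t>0,
\end{equation*}
where the integral is understood in the strong operator topology on $L^2(X,\mu)$. I would first derive this by applying \eqref{spectrum-operator} with the elementary scalar identity $e^{-t\lambda^2}=\frac{1}{2\sqrt{\pi t}}\int_{\mathbb{R}} e^{-s^2/(4t)}\cos(s\lambda)\,ds$ (valid for $\lambda\ge 0$), followed by Fubini on the spectral measure.

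For the direction \eqref{fin-speed} $\Rightarrow$ \eqref{DG-estimate}, I would fix $f_1\in L^2(E)$, $f_2\in L^2(F)$ with $d:=\dist(E,F)>0$ and test the identity above in the inner product against $f_2$. By the finite propagation speed hypothesis, the integrand $\langle \cos(s\sqrt{L})f_1,f_2\rangle$ vanishes for $|s|<d$, so only the region $|s|\ge d$ contributes. Using the spectral contractivity $\|\cos(s\sqrt{L})\|_{2\to 2}\le 1$, the Cauchy--Schwarz inequality, and the Gaussian tail bound $\frac{1}{2\sqrt{\pi t}}\int_{|s|\ge d}e^{-s^2/(4t)}\,ds\le e^{-d^2/(4t)}$, one obtains $|\langle e^{-tL}f_1,f_2\rangle|\le e^{-d^2/(4t)}\|f_1\|_2\|f_2\|_2$, which by duality gives \eqref{DG-estimate}. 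This direction is essentially routine.

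For the converse \eqref{DG-estimate} $\Rightarrow$ \eqref{fin-speed}, let $u(s):=\langle\cos(s\sqrt{L})f_1,f_2\rangle$ with $f_1,f_2$ supported in $E,F$ respectively; this is a continuous, bounded, even function of $s\in\mathbb{R}$, and the goal is to show $u\equiv 0$ on $(-d,d)$. The subordination identity together with the Davies--Gaffney hypothesis yields, for every $t>0$,
\begin{equation*}
\left|\int_{-\infty}^{\infty} e^{-s^2/(4t)}\,u(s)\,ds\right|\;\le\;2\sqrt{\pi t}\,e^{-d^2/(4t)}\,\|f_1\|_2\|f_2\|_2.
\end{equation*}
The plan is to regard this as a quantitative vanishing statement for the Gauss--Weierstrass transform of $u$ and invert it. The cleanest route is to fix any $0<\alpha<d$, multiply by $e^{\alpha^2/(4t)}$, and analyse the behaviour as $t\downarrow 0$: outside $[-\alpha,\alpha]$ the weight $e^{-(s^2-\alpha^2)/(4t)}$ becomes negligible, while on $[-\alpha,\alpha]$ it concentrates; a standard approximation-of-identity argument then forces $u(s)=0$ for $|s|\le\alpha$. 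Because $\alpha<d$ is arbitrary, this gives finite propagation speed. The main obstacle will be making this inversion rigorous for $u$ which is merely bounded (and not integrable), so one cannot directly invoke a Fourier-inversion formula; my preferred workaround is to combine the estimate above with the contractivity of $\{e^{-zL}\}$ on the imaginary axis and apply a Phragm\'en--Lindel\"of argument to the analytic function $z\mapsto e^{zd^2/4}\langle e^{-zL}f_1,f_2\rangle$ on the right half-plane, thereby transferring the real-axis decay to the imaginary axis and recovering via Fourier inversion the claimed support condition on $u$.
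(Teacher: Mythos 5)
The paper does not actually prove this proposition: it simply cites Sikora \cite{Si}, \cite{hlmmy} and \cite{CS}. So the question is whether your argument stands on its own. Your subordination identity and the forward implication (finite speed $\Rightarrow$ Davies--Gaffney) are correct and complete: the vanishing of $u(s)=\langle\cos(s\sqrt L)f_1,f_2\rangle$ for $|s|<d$ plus the bound $\frac{1}{2\sqrt{\pi t}}\int_{|s|\ge d}e^{-s^2/(4t)}\,ds=\mathrm{erfc}\bigl(d/(2\sqrt t)\bigr)\le e^{-d^2/(4t)}$ and duality do the job.

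The converse is where the gap lies, and both of your proposed routes have concrete problems. The ``approximation of identity'' route fails because the weight $\frac{1}{2\sqrt{\pi t}}e^{-(s^2-\alpha^2)/(4t)}$ is not a kernel concentrating at $s=\pm\alpha$: it is the Gaussian $\frac{1}{2\sqrt{\pi t}}e^{-s^2/(4t)}$ (which concentrates at $s=0$) scaled by the diverging constant $e^{\alpha^2/(4t)}$, so in the limit $t\downarrow 0$ you only learn that $e^{\alpha^2/(4t)}u(0)$ stays bounded, i.e.\ $u(0)=\langle f_1,f_2\rangle=0$, which is already trivially true — you get no information about $u$ at the points $|s|\in(0,d)$ you actually care about. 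The Phragm\'en--Lindel\"of route is the right idea (this \emph{is} essentially Sikora's argument), but your renormalising factor is wrong: $e^{zd^2/4}\langle e^{-zL}f_1,f_2\rangle$ is not bounded on the positive real axis, since the Davies--Gaffney estimate only controls $e^{-d^2/(4t)}$, so $e^{td^2/4}\cdot e^{-d^2/(4t)}\to\infty$ as $t\to\infty$. You want $\Phi(z)=e^{d^2/(4z)}\langle e^{-zL}f_1,f_2\rangle$, which is bounded by $\|f_1\|_2\|f_2\|_2$ on the positive real axis by Davies--Gaffney and on the imaginary axis because $e^{-i\tau L}$ is a contraction and $|e^{d^2/(4i\tau)}|=1$; then Phragm\'en--Lindel\"of (applied in each quarter-plane after the change of variable $w=1/z$, which turns the $e^{c/|z|}$ singularity at $0$ into sub-critical growth $e^{c|w|}$ in a sector of opening $\pi/2$) gives $|\langle e^{-zL}f_1,f_2\rangle|\le\|f_1\|_2\|f_2\|_2\,e^{-\mathrm{Re}(d^2/(4z))}$ on the right half-plane. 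Even granting all this, the final step — turning this complex-time Gaussian bound into the support statement $u\equiv 0$ on $(-d,d)$ — is itself non-trivial and is precisely what you wave at with ``recovering via Fourier inversion'' without supplying it. So the converse implication is not proved as written; the missing content is exactly what \cite{Si} and \cite{CS} provide.
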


By the Fourier inversion formula, whenever $F$ is an even bounded
Borel-function with $\hat{F}\in L^1(\rr)$, we can write $F(\sqrt{L})$
in terms of $\cos(t\sqrt{L})$ as
\begin{equation}\label{fourier-operator}
F(\sqrt{L})=\frac{1}{2\pi}\int_{-\infty}^\infty \hat{F}(t)\cos(t\sqrt{L})\,dt.
\end{equation}


The following result follows from
\cite[Theorem 0.1]{st2} (see also \cite{hr03}) and \cite[Theorem 3.4]{CS}.
\begin{lem}\label{davies-gaffney} Let  $(X,d,\mu,\E)$ be a  Dirichlet metric measure space
endowed with a ``{\it carr\'e du champ}". Then
the associated heat semigroup $e^{-t\mathcal{L}}$
satisfies the Davies-Gaffney estimate.
\end{lem}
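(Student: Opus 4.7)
The plan is to run the classical Davies perturbation argument inside the carré du champ framework. Fix $f\in L^2(E)$ with $\supp f\subset E$, put $u(t):=e^{-t\mathcal{L}}f$, and pick a bounded Lipschitz function $\rho\in\D_{\loc}\cap\ccc(X)$ with $|\nabla\rho|\le 1$ almost everywhere; such functions exist in abundance because the intrinsic distance $d$ is defined precisely as the supremum over such $\rho$'s. For $\alpha>0$ introduce the weighted energy
$$
\psi(t):=\int_X e^{2\alpha\rho}\,u(t,\cdot)^2\,d\mu.
$$
By spectral theory $u\in\mathscr{D}(\mathcal{L})$ for $t>0$ with $\partial_t u=-\mathcal{L}u$, and the boundedness of $\rho$ guarantees that $e^{2\alpha\rho}u$ lies in $W^{1,2}(X)$ (this will be justified by truncation/approximation of $\rho$ if needed). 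Hence one may differentiate under the integral and use the definition of $\mathcal{L}$ to obtain
$$
\psi'(t)=-2\int_X\langle\nabla(e^{2\alpha\rho}u),\nabla u\rangle\,d\mu.
$$

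The second step is to expand this using the Leibniz rule for the carré du champ, which is available because $\E$ is strongly local and regular:
$$
\psi'(t)=-2\int_X e^{2\alpha\rho}|\nabla u|^2\,d\mu-4\alpha\int_X e^{2\alpha\rho}\,u\,\langle\nabla\rho,\nabla u\rangle\,d\mu.
$$
Applying Cauchy--Schwarz pointwise in the form $|2\alpha u\langle\nabla\rho,\nabla u\rangle|\le|\nabla u|^2+\alpha^2 u^2|\nabla\rho|^2$ and then using $|\nabla\rho|\le 1$, the cross term is absorbed by the first one and one is left with the clean differential inequality
$$
\psi'(t)\le 2\alpha^2\psi(t),
$$
so that Grönwall gives $\psi(t)\le e^{2\alpha^2 t}\psi(0)$.

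For the last step, choose $\rho$ to approximate $x\mapsto\dist(x,E)$ (truncated at some large level and then, if needed, smoothed through an increasing sequence of bounded Lipschitz functions converging monotonically from below). Then $\rho\equiv 0$ on $E$, so $\psi(0)=\int_E f^2\,d\mu=\|f\|_{L^2(E)}^2$, while $\rho\ge\dist(E,F)$ on $F$. Consequently
$$
e^{2\alpha\,\dist(E,F)}\|u(t)\|_{L^2(F)}^2\le\psi(t)\le e^{2\alpha^2 t}\|f\|_{L^2(E)}^2,
$$
and optimising by setting $\alpha=\dist(E,F)/(2t)$ yields the Davies--Gaffney bound
$$
\|e^{-t\mathcal{L}}f\|_{L^2(F)}\le\exp\!\Bigl\{-\frac{\dist(E,F)^2}{4t}\Bigr\}\|f\|_{L^2(E)}.
$$

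The genuinely delicate step is not the Grönwall computation but the justification that $e^{2\alpha\rho}u$ is an admissible test element in the form, that the chain/Leibniz rule applies to it, and that a distance-type $\rho$ with $|\nabla\rho|\le 1$ a.e.\ can be used (a priori only functions in $\D_{\loc}$ enter the definition of $d$). The standard remedy, and the one I would carry out, is to approximate $\rho$ by a monotone sequence of bounded Lipschitz $\rho_n\in\D_{\loc}\cap\ccc(X)$ with $|\nabla\rho_n|\le 1$ obtained from the definition of the intrinsic distance, run the above argument for each $\rho_n$ (where all manipulations are legal because $e^{2\alpha\rho_n}$ is bounded and Lipschitz, hence $e^{2\alpha\rho_n}u\in\D$), and pass to the limit by monotone/dominated convergence. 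Once this approximation is in place the proof is exactly the perturbation argument above; indeed this is the content of Sturm's Theorem 0.1 in \cite{st2}.
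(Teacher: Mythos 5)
Your proof is correct and is precisely the Davies perturbation argument that underlies Sturm's Theorem 0.1 in \cite{st2}, which the paper invokes directly as a citation (together with \cite[Theorem 3.4]{CS}) rather than giving a proof. You have unpacked the content of that reference: the weighted-energy differential inequality, the Grönwall step, the optimisation of $\alpha$, and — correctly flagged as the delicate part — the approximation/admissibility of the weight $e^{2\alpha\rho}u$ in the form, so the two are the same approach, with yours written out in full.
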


In what follows, $\mathcal{L}$ is as above. Let $\mathscr{S}(\rr)$ denote the collection of all Schwartz
functions on $\rr$. We need the following $L^2$-boundedness of spectral multipliers.

\begin{lem}\label{bdd-spectral}
Let $\Phi\in \mathscr{S}(\rr)$ be an
even function with $\Phi(0)=1$.  Then there exists $C>0$ such that
$$\sup_{r>0}\|(r^2\mathcal{L})^{-1}(1-\Phi(r\sqrt{\mathcal{L}}))\|_{2\to 2}\le C,$$
and, for each $k=0,1,2,\ldots$, there exists $C$ such that
$$\sup_{r>0}\|(r^2\mathcal{L})^{k}\Phi(r\sqrt{\mathcal{L}})\|_{2\to 2}\le C.$$
\end{lem}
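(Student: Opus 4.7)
The plan is to obtain both bounds directly from the spectral theorem, by recognising each displayed operator as a single bounded spectral multiplier of $\mathcal{L}$. Recall that by the spectral theorem, for any bounded Borel $F : [0,\infty) \to \mathbb{C}$,
\[
\|F(\mathcal{L})\|_{2 \to 2} \;\le\; \sup_{\lambda \ge 0} |F(\lambda)|.
\]
So the entire task reduces to showing that, after a change of variables $s = r\sqrt{\lambda}$, the underlying multiplier functions are uniformly bounded in $s \ge 0$ (independently of $r > 0$).

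For the second bound, I would define $\phi_k(s) := s^{2k}\,\Phi(s)$. Since $\Phi \in \mathscr{S}(\mathbb{R})$, the function $\phi_k$ is bounded on $[0,\infty)$, with $\|\phi_k\|_\infty$ depending only on $k$ and $\Phi$. As $(r^2\mathcal{L})^k \Phi(r\sqrt{\mathcal{L}})$ is the spectral multiplier corresponding to $\lambda \mapsto (r^2\lambda)^k \Phi(r\sqrt{\lambda}) = \phi_k(r\sqrt{\lambda})$, the spectral theorem immediately gives
\[
\sup_{r>0} \bigl\|(r^2\mathcal{L})^k \Phi(r\sqrt{\mathcal{L}})\bigr\|_{2 \to 2} \;\le\; \|\phi_k\|_\infty \;<\; \infty.
\]

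For the first bound, set $\psi(s) := (1 - \Phi(s))/s^2$ for $s > 0$. The point is that $\psi$ extends to a bounded function on $[0,\infty)$: indeed, because $\Phi$ is even and Schwartz with $\Phi(0) = 1$, Taylor's theorem gives $\Phi(s) = 1 + \tfrac{1}{2}\Phi''(0) s^2 + O(s^4)$ as $s \to 0$, so $\psi$ extends continuously at $0$; and for large $s$, $1 - \Phi(s)$ remains bounded while $s^{-2} \to 0$, so $\psi$ is bounded there as well. Hence $\|\psi\|_\infty < \infty$. The operator $(r^2\mathcal{L})^{-1}(1 - \Phi(r\sqrt{\mathcal{L}}))$ is then interpreted, a priori, as the bounded spectral multiplier associated to $\lambda \mapsto \psi(r\sqrt{\lambda})$, and the spectral theorem yields
\[
\sup_{r>0} \bigl\|(r^2\mathcal{L})^{-1}(1 - \Phi(r\sqrt{\mathcal{L}}))\bigr\|_{2 \to 2} \;\le\; \|\psi\|_\infty.
\]

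The only subtle point is the interpretation of the first bound, since $(r^2\mathcal{L})^{-1}$ is not globally bounded: one must verify that composing it with $1 - \Phi(r\sqrt{\mathcal{L}})$ kills the singularity at $\lambda = 0$ so that the product is a well-defined bounded operator. This is handled precisely by the vanishing $1 - \Phi(s) = O(s^2)$ at $s = 0$ guaranteed by $\Phi$ even and $\Phi(0) = 1$, which is exactly what makes $\psi$ bounded near the origin. Once this is recorded, no further estimate (Davies--Gaffney, finite propagation, or the Fourier representation \eqref{fourier-operator}) is needed for the $L^2$ statement; those tools would only be required for the off-diagonal or $L^p$ refinements that follow in the paper.
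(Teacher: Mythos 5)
Your argument is correct and coincides with the paper's own proof: both reduce the claim, via the spectral theorem, to a uniform sup-norm bound on the scalar multiplier, which becomes $r$-independent after the substitution $s=r\sqrt{\lambda}$, and both use the evenness of $\Phi$ (hence $\Phi'(0)=0$) together with $\Phi(0)=1$ to cancel the $s^{-2}$ singularity at the origin. Nothing is missing.
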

\begin{proof}
We only give the proof of the first inequality; the second one follows
similarly.
Since { $\Phi'(0)=0$}, spectral theory (cf. \cite[Chapter 2]{Da95}) gives
\begin{eqnarray*}
\left\|(r^2\mathcal{L})^{-1}(1-\Phi(r\sqrt{\mathcal{L}}))\right\|_{2\to 2} \le
\sup_{\lambda}\left|\frac{1-\Phi(r{{\lambda}})}{r^2{\lambda^2}}\right|
=\sup_{\lambda}\left|\frac{1-\Phi({{\lambda}})}{{\lambda^2}}\right| < \infty.
\end{eqnarray*}
%
The proof is complete.
\end{proof}

\begin{lem}\label{lem-fin-spe}
Let $\Phi\in\mathscr{S}(\rr)$ be an
even function whose Fourier transform $\hat{\Phi}$ satisfies
$\supp \hat{\Phi}\subset [-1,1]$. Then for every $\kz\in\zz_+$ and
$t>0$, the operator
$(t^2\mathcal{L})^\kz \Phi(t\sqrt{\mathcal{L}})$ satisfies
\begin{equation}\label{fin-speed-general}
\int_X\langle (t^2\mathcal{L})^\kz \Phi(t\sqrt{\mathcal{L}}) f_1,f_2\rangle\,d\mu=0
\end{equation}
for all $0 < t < d(E,F)$
and $E,F\subset X$, $f_1\in L^2(E)$ and $f_2\in L^2(F)$.
\end{lem}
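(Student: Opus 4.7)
The plan is to reduce the statement to the finite propagation speed property \eqref{fin-speed} via the Fourier inversion formula \eqref{fourier-operator} applied to a suitably rescaled function.

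First, define $\psi(\lambda):=\lambda^{2\kappa}\Phi(\lambda)$. Since $\Phi\in\mathscr{S}(\rr)$ is even and $\lambda^{2\kappa}$ is an even polynomial, $\psi$ is an even Schwartz function. Moreover, $\hat{\psi}$ is (up to a factor of $(-1)^\kappa$) the $(2\kappa)$-th derivative of $\hat{\Phi}$, so
$$\supp\hat{\psi}\subset\supp\hat{\Phi}\subset[-1,1].$$
By spectral calculus we have the identity
$$(t^2\mathcal{L})^\kappa\Phi(t\sqrt{\mathcal{L}})=\psi(t\sqrt{\mathcal{L}})=F(\sqrt{\mathcal{L}}),\quad\text{where }F(\lambda):=\psi(t\lambda).$$

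Next I would compute the Fourier transform of the rescaled symbol: a change of variables gives $\hat{F}(\tau)=t^{-1}\hat{\psi}(\tau/t)$, which is continuous, compactly supported in $[-t,t]$, and hence in $L^1(\rr)$. Therefore \eqref{fourier-operator} applies and yields
$$F(\sqrt{\mathcal{L}})=\frac{1}{2\pi}\int_{-t}^{t}\hat{F}(\tau)\cos(\tau\sqrt{\mathcal{L}})\,d\tau,$$
with the integral converging in the strong operator topology on $L^2(X,\mu)$ (using $\|\cos(\tau\sqrt{\mathcal{L}})\|_{2\to 2}\le 1$ by spectral theory).

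Now pair against $f_1\in L^2(E)$, $f_2\in L^2(F)$ with $d(E,F)>t$, and apply Fubini (justified by $\hat{F}\in L^1$ together with $\|\cos(\tau\sqrt{\mathcal{L}})f_1\|_{2}\|f_2\|_{2}\le\|f_1\|_2\|f_2\|_2$):
$$\int_X\langle(t^2\mathcal{L})^\kappa\Phi(t\sqrt{\mathcal{L}})f_1,f_2\rangle\,d\mu=\frac{1}{2\pi}\int_{-t}^{t}\hat{F}(\tau)\int_X\langle\cos(\tau\sqrt{\mathcal{L}})f_1,f_2\rangle\,d\mu\,d\tau.$$
By Lemma \ref{davies-gaffney} the heat semigroup satisfies the Davies-Gaffney estimate, so by Proposition \ref{finite-davies-gaffney} $\mathcal{L}$ has the finite propagation speed property. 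Since $|\tau|\le t<d(E,F)$ throughout the integration, \eqref{fin-speed} forces the inner integral to vanish for every such $\tau$, giving \eqref{fin-speed-general}.

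The only delicate point is verifying that $\hat{F}$ remains in $L^1$ and supported in $[-t,t]$ after the $\lambda^{2\kappa}$ multiplication, but this is immediate from the identity $\widehat{\lambda^{2\kappa}\Phi}=(-1)^\kappa\hat{\Phi}^{(2\kappa)}$ together with the fact that differentiation does not enlarge support. Everything else is a routine application of Fourier inversion and the finite propagation speed property already recorded in the paper.
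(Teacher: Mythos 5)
Your proposal is correct and follows essentially the same route as the paper: defining $\Phi_\kappa(s)=s^{2\kappa}\Phi(s)$, observing that $\widehat{\Phi_\kappa}=(-1)^\kappa\widehat{\Phi}^{(2\kappa)}$ keeps the Fourier support inside $[-1,1]$, and then concluding via \eqref{fourier-operator}, Lemma \ref{davies-gaffney}, and Proposition \ref{finite-davies-gaffney}. You have simply spelled out the rescaling, the Fubini step, and the integrability of $\hat F$, which the paper leaves implicit.
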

\begin{proof}
Let $\Phi_{\kz}(s):=s^{2\kz}\Phi(s)$. By noticing that
$\widehat{\Phi_{\kz}}(\lambda)=(-1)^{\kz}\frac{\,d^{2\kz}}{\,d\lambda^{2\kz}}\widehat{\Phi}(\lambda),$
the conclusion follows from  Lemma \ref{davies-gaffney}, Proposition \ref{finite-davies-gaffney}
and \eqref{fourier-operator}.
\end{proof}

\begin{lem}\label{lem-l2-appro}
Let $\Phi\in\mathscr{S}(\rr)$ be an
even function with $\Phi(0)=1$. Then, for each $f\in L^2(X,\mu)$, it
holds that
$$\lim_{t\to 0^+}\left\|f-\Phi(t\sqrt{\mathcal{L}})f\right\|_2=0.$$
\end{lem}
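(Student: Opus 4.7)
The plan is to reduce the statement to a scalar dominated-convergence argument by invoking the spectral theorem for the non-negative self-adjoint operator $\mathcal{L}$. Using the spectral resolution $E_{\mathcal{L}}$, the squared $L^2$-norm can be rewritten as
\begin{equation*}
\left\|f-\Phi(t\sqrt{\mathcal{L}})f\right\|_2^2 = \int_0^\infty \left|1-\Phi(t\sqrt{\lambda})\right|^2 \, d\langle E_{\mathcal{L}}(\lambda)f,f\rangle,
\end{equation*}
and the total mass of the positive finite measure $d\langle E_{\mathcal{L}}(\lambda)f,f\rangle$ on $[0,\infty)$ is exactly $\|f\|_2^2<\infty$.

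Next, I would exploit the two hypotheses on $\Phi$. Since $\Phi\in\mathscr{S}(\rr)$, it is in particular bounded on $\rr$, so the integrand is dominated uniformly in $t$ by the constant $(1+\|\Phi\|_\infty)^2$, which is integrable against the finite measure $d\langle E_{\mathcal{L}}(\lambda)f,f\rangle$. Since $\Phi$ is continuous at $0$ with $\Phi(0)=1$, for each fixed $\lambda\geq 0$ we have $\Phi(t\sqrt{\lambda})\to 1$ as $t\to 0^+$, so the integrand converges pointwise to $0$.

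The conclusion now follows from the dominated convergence theorem applied to this scalar integral. The argument is robust in the sense that it uses only self-adjointness of $\mathcal{L}$ together with boundedness and continuity-at-zero of $\Phi$; neither the Schwartz decay nor the doubling/Poincaré/$(UE)$ framework is needed here. There is no real obstacle: this is a standard functional-calculus approximation fact, included for convenient reference in the subsequent proofs that employ cutoffs of the form $\Phi(t\sqrt{\mathcal{L}})$.
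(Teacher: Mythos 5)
Your proof is correct, and it takes a genuinely different route from the paper's. The paper first reduces to the dense subspace $\mathscr{D}(\mathcal{L})$ (implicitly using the uniform $L^2$-boundedness of $1-\Phi(t\sqrt{\mathcal{L}})$ to pass back to general $f$) and then, for $f\in\mathscr{D}(\mathcal{L})$, bounds $\|f-\Phi(t\sqrt{\mathcal{L}})f\|_2 \le Ct^2\|\mathcal{L}f\|_2$ by quoting the first estimate in Lemma \ref{bdd-spectral}, which in turn exploits $\Phi'(0)=0$ (evenness). You instead apply the dominated convergence theorem directly to the scalar spectral integral $\int_0^\infty|1-\Phi(t\sqrt{\lambda})|^2\,d\langle E_{\mathcal{L}}(\lambda)f,f\rangle$, using only boundedness of $\Phi$ and continuity at $0$ with $\Phi(0)=1$. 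Your argument is more elementary and uses strictly weaker hypotheses (neither evenness, nor $\Phi'(0)=0$, nor Schwartz decay, nor the auxiliary Lemma \ref{bdd-spectral} are needed), whereas the paper's argument is slightly longer but yields a quantitative rate $O(t^2)$ on the dense subspace, which is convenient for the surrounding machinery where such norm estimates are used repeatedly.
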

\begin{proof}
The  domain $\mathscr{D}(\mathcal{L})$ is dense in $L^2(X,\mu)$ and hence  it is enough to prove
Lemma \ref{lem-l2-appro} for $f \in \mathscr{D}(\mathcal{L})$. Then
\begin{eqnarray*}
\left\|f-\Phi(t\sqrt{\mathcal{L}})f\right\|_2 \le C t^2 \|\mathcal{L}f\|_2
\left\|(t^2\mathcal{L})^{-1}(1-\Phi(t\sqrt{\mathcal{L}}))\right\|_{2\to 2}
\end{eqnarray*}
and the lemma follows from Lemma \ref{bdd-spectral}.
\end{proof}

\section{Regularity of solutions to the Poisson equation}

\hskip\parindent
In this section, we show that suitable regularity of harmonic functions
implies a gradient estimate for solutions to the Poisson equation
$\mathcal{L} f=g$.

The following result was established in \cite[Proposition 3.1]{jky14} under the stronger
assumption of both $(D_Q)$  and $(P_2)$; we adapt the proof below to our our setting. Given $a>1$ and $r>0,$  let $[\log_ar]$ be
the largest integer smaller than $\log_ar$.

\begin{prop}\label{pointwise-bound}
Assume that the Dirichlet metric measure space $(X,d,\mu,\E)$ satisfies $(D_Q)$, $Q\ge 2$, and that $(UE)$ holds.
 Suppose that $\mathcal{L} f=g$ in $2B$,
$B=B(x_0,r)$, with $g\in L^\fz(2B)$. Then, for every $p>\frac{2Q}{Q+2}$, there exists $C>0$ such
that for almost every $x\in B$,
$$|f(x)|\le C\lf\{\fint_{2B}|f|\,d\mu+G_1(x)\r\},$$
where
\begin{equation}\label{potential-g1}
G_1(x):=\sum_{j\le [\log_2r]}
2^{2j}\lf(\fint_{B(x,2^j)}|g|^{p}\,d\mu\r)^{1/p}.
\end{equation}
\end{prop}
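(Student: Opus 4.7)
I would combine a dyadic decomposition around a Lebesgue point $x\in B$ with a local harmonic/Poisson splitting on each dyadic ball, telescope the resulting differences, and estimate each piece via the maximum principle for harmonic functions (Proposition~\ref{l2.3}) and the $W^{1,2}_0$ solvability bound for the Poisson equation (Lemma~\ref{l2.6}). This replaces the $(P_2)$-based argument of \cite{jky14} by one that requires only $(UE)$.

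First, fix a Lebesgue point $x\in B$, put $j_0:=[\log_2 r]$ and $B_j:=B(x,2^j)$. Since $d(x,x_0)<r$, every $B_j$ with $j\le j_0$ lies inside $2B$. Apply Lemma~\ref{l2.6} to obtain, for each such $j$, a unique $v_j\in W^{1,2}_0(B_j)$ solving $\mathcal{L}v_j=g$ in $B_j$, together with the bound
\[
\fint_{B_j}|v_j|\,d\mu\le C\,2^{2j}\Bigl(\fint_{B_j}|g|^p\,d\mu\Bigr)^{1/p},
\]
which requires exactly $p>2Q/(Q+2)$. Since moreover $g\in L^\infty(2B)$, Lemma~\ref{exitence-poisson} applied with $p_0=\infty$ gives the additional pointwise control $\|v_j\|_{L^\infty(B_j)}\le C\,2^{2j}\|g\|_{L^\infty(2B)}$; this is the ingredient that will force the tail of the telescope to vanish. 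Set $u_j:=f-v_j$, which is harmonic on $B_j$ because $\mathcal{L}f=g$ in $2B\supset B_j$. Observe that for every $k\le j_0$ the difference $v_k-v_{k-1}$ is also harmonic on $B_{k-1}$, as both terms solve $\mathcal{L}\cdot=g$ there.

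Second, telescope: iterating $v_k(x)=(v_k-v_{k-1})(x)+v_{k-1}(x)$ and using $|v_{j_0-N}(x)|\le C\,2^{2(j_0-N)}\|g\|_{L^\infty(2B)}\to 0$ as $N\to\infty$, I obtain, at a.e.\ $x\in B$,
\[
f(x)=u_{j_0}(x)+\sum_{k\le j_0}\bigl(v_k-v_{k-1}\bigr)(x).
\]
For the leading term, Proposition~\ref{l2.3} (applied to the harmonic function $u_{j_0}$ on $B_{j_0}$) together with doubling and the $L^1$-bound from Lemma~\ref{l2.6} yield $|u_{j_0}(x)|\le C\fint_{2B}|f|\,d\mu+Cr^2(\fint_{2B}|g|^p\,d\mu)^{1/p}$, and the last summand is absorbed by the $k=j_0$ term of $G_1(x)$. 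For each $k\le j_0$, Proposition~\ref{l2.3} applied to the harmonic function $v_k-v_{k-1}$ on $B_{k-1}$, followed by the triangle inequality, doubling, and Lemma~\ref{l2.6} on $B_k$ and $B_{k-1}$, gives
\[
|(v_k-v_{k-1})(x)|\le C\,2^{2k}\Bigl(\fint_{B_k}|g|^p\,d\mu\Bigr)^{1/p}+C\,2^{2(k-1)}\Bigl(\fint_{B_{k-1}}|g|^p\,d\mu\Bigr)^{1/p}.
\]
Summing over $k\le j_0$ reproduces exactly $C\,G_1(x)$, and the proof is complete.

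The main obstacle is arranging the telescope so that $\fint_{2B}|f|\,d\mu$ appears with constant $1$ on the right-hand side: a naive recursion of Caccioppoli type, $\fint_{B_{j-1}}|f|\le C\fint_{B_j}|f|+\epsilon_j$ with $C>1$, compounds to $+\infty$ as $j\to-\infty$. The splitting through harmonic \emph{differences} $v_k-v_{k-1}$ circumvents this, since the averages of these differences involve $g$ but never $f$. A secondary subtlety is justifying $v_{j_0-N}(x)\to 0$ at a.e.\ $x$ without assuming $(P_{2,\loc})$ (which is not in the hypotheses and would usually be needed for continuity of $v_j$): this is handled precisely by Lemma~\ref{exitence-poisson} with $p_0=\infty$, combined with Lebesgue differentiation applied to a bounded representative of $v_j$.
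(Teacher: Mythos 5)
Your proposal follows the same dyadic decomposition and Poisson splitting as the paper's proof: the same family of $W^{1,2}_0$ solutions $v_j$ on the nested balls $B(x,2^j)$, the same observation that the consecutive differences are harmonic, and the same two estimating ingredients (Proposition~\ref{l2.3} for sup-bounds of the harmonic pieces, Lemma~\ref{l2.6} for the $L^1$-bound of $v_j$ in terms of $g$). However, there is a genuine gap in the way you carry out the telescope. You assert pointwise identities $f(x)=u_{j_0}(x)+\sum_{k\le j_0}(v_k-v_{k-1})(x)$ and pointwise bounds on $|u_{j_0}(x)|$, $|(v_k-v_{k-1})(x)|$, $|v_{j_0-N}(x)|$. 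But the hypotheses are only $(D_Q)$ and $(UE)$, without $(P_{2,\loc})$: under these, harmonic functions and the $v_j$ are merely locally bounded, not necessarily continuous, so these pointwise values are not directly defined. Your proposed fix via ``Lebesgue differentiation applied to a bounded representative of $v_j$'' does not close this, because $v_j=v_j^{(x)}$ depends on the very point $x$ at which you want to evaluate; you would need $x$ to be a Lebesgue point of an $x$-dependent (uncountable) family of functions, and no Fubini-type argument gives you a single full-measure set for that.

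The paper avoids this entirely by never evaluating the auxiliary functions pointwise. It starts from $|f(x)|=\lim_{j\to-\infty}\fint_{B(x,2^{j-2})}|f|\,d\mu$, which needs only that $x$ is a Lebesgue point of the single function $f$, inserts the telescope \emph{inside the average}, and then bounds each piece $\fint_{B(x,2^{j-2})}|f_j|$, $\fint_{B(x,2^{j-2})}|f_{k+1}-f_k|$, $\fint_{B(x,2^{j-2})}|f-f_{k_0}|$ by an $L^\infty$-norm on a slightly larger ball via Proposition~\ref{l2.3}. If you replace your pointwise telescope by the averaged one, bound each averaged term by the corresponding $L^\infty$ control, and then pass $s\to0$ and $N\to\infty$, your proof becomes the paper's proof. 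Incidentally, the extra $\|v_j\|_{L^\infty}$ estimate you pull from Lemma~\ref{exitence-poisson} is then unnecessary: the tail of the telescope already vanishes because $\limsup_{j\to-\infty}2^{2j}(\fint_{B(x,2^j)}|g|^p)^{1/p}\le\limsup_{j\to-\infty}2^{2j}\|g\|_{L^\infty(2B)}=0$, which is exactly how the paper handles it.
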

\begin{proof}  Let $k_0=[\log_2r]$ and $x\in B$. By Lemma \ref{l2.6}, for each $j\le k_0$, there exists $f_j\in W^{1,2}_0(B(x,2^j))$
such that $\mathcal{L}f_j=g$ in $B(x,2^j)$, and
$$\fint_{B(x,2^{j-2})}|f_{j}(y)|\,d\mu(y)\le C\fint_{B(x,2^j)}|f_{j}(y)|\,d\mu(y)
\le C2^{2j}\lf(\fint_{B(x,2^j)}|g|^{p}\,d\mu\r)^{1/p}.$$
Moreover, for each $k\le k_0-1,$ as $\mathcal{L}(f_{k+1}-f_k)=0$ in $B(x,2^k)$ (notice that $B(x,2^k)\subset 2B$),
by Proposition \ref{l2.3}, we have
$$\|f_{k+1}-f_k\|_{L^\fz(B(x,2^{k-2}))}\le C\fint_{B(x,2^{k-1})}|f_{k+1}-f_k|\,d\mu\le C\fint_{B(x,2^k)}|f_{k+1}-f_k|\,d\mu.$$
Thus, from the above two inequalities, for almost every $x\in B$,
we deduce that
\begin{eqnarray*}
|f(x)|&&=\lim_{j\to-\fz}\fint_{B(x,2^{j-2})}|f(y)|\,d\mu(y)\\
&&\le \limsup_{j\to-\fz}\lf\{\fint_{B(x,2^{j-2})}|f_{j}(y)|\,d\mu(y)
+\sum_{k=j}^{k_0-1}\fint_{B(x,2^{j-2})}|f_{k+1}(y)-f_{k}(y)|\,d\mu(y)\r\}\\
&&\hs+\limsup_{j\to-\fz}\fint_{B(x,2^{j-2})}|f_{k_0}(y)-f(y)|\,d\mu(y)\\
&&\le \limsup_{j\to-\fz}\lf\{ C2^{2j}\lf(\fint_{B(x,2^{j})}|g|^{p}\,d\mu\r)^{1/{p}}
+\sum_{k=j}^{k_0-1}\|f_{k+1}-f_{k}\|_{L^\fz{(B(x,2^{j-2}))}}+\|f_{k_0}-f\|_{L^\fz(B(x,2^{j-2}))}\r\}\\
&&\le \lim_{j\to-\fz}C\sum_{k=j}^{k_0-1}\fint_{B(x,2^{k})}|f_{k+1}(y)-f_{k}(y)|\,d\mu(y)
+C\fint_{B(x,2^{k_0})}|f_{k_0}-f|\,d\mu\\
&&\le C\sum_{k=-\fz}^{k_0-1}\fint_{B(x,2^{k})}|f_{k+1}(y)|\,d\mu(y)
+C\fint_{B(x,2^{k_0})}|f_{k_0}|\,d\mu+C\fint_{B(x,2^{k_0})}|f|\,d\mu\\
&&\le C\sum_{k=-\fz}^{k_0} 2^{2k}\lf(\fint_{B(x,2^{k})}|g|^{p}\,d\mu\r)^{1/{p}}
+C\fint_{2B}|f|\,d\mu.
\end{eqnarray*}
Above, in the third inequality, we used the fact that
$$\limsup_{j\to-\fz}2^{2j}\lf(\fint_{B(x,2^{j})}|g|^{p}\,d\mu\r)^{1/{p}}
\le \limsup_{j\to-\fz}2^{2j}\|g\|_{L^\fz(2B)}=0,$$
and in the last inequality, we used the doubling condition to conclude that
$$\fint_{B(x,2^{k_0})}|f|\,d\mu\le C\fint_{2B}|f|\,d\mu.$$
The proof is complete.
\end{proof}
\subsection{Harmonic functions satisfying condition $(RH_\infty)$}
\hskip\parindent
The next statement deals with the case when harmonic functions satisfy condition $(RH_\infty)$. The proof
of the following theorem is similar to that of \cite[Theorem 3.1]{jky14}.

\begin{thm}\label{infinite-har}
Assume that the Dirichlet metric measure space $(X,d,\mu,\E)$ satisfies $(D_Q)$, $Q\ge 2$, and that
$(UE)$ and $(P_{\infty,\loc})$ hold.
Assume that $(RH_\infty)$ holds. Then if $\mathcal{L} f=g$ in $2B$, $B=B(x_0,r)$,
$g\in L^\infty(2B)$, and $p>\frac{2Q}{Q+2}$, there exists
$C=C(C_D,C_{LS},C_P(1),p)>0$ such that, for almost every $x\in B$,
$$|\nabla f(x)|\le C\lf\{\frac{1}{r}\fint_{2B}|f|\,d\mu
+G_2(x)\r\},$$
 where
\begin{equation}G_2(x):= \sum_{j\le [\log_2
r]}2^j\lf(\fint_{B(x,2^j)}|g|^{p}\,d\mu\r)^{1/p}.\label{potential-g2}
\end{equation}
\end{thm}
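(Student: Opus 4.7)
The plan is to mimic the proof of Proposition \ref{pointwise-bound}, replacing averaged control by $L^\infty$-control on each dyadic annulus by means of $(RH_\infty)$.  Let $k_0:=[\log_2 r]$.  For each $j\le k_0$, Lemma \ref{l2.6} provides $f_j\in W^{1,2}_0(B(x,2^j))$ with $\mathcal{L}f_j=g$ in $B(x,2^j)$, and one has the telescoping identity
$$f \;=\; f_j \;+\; \sum_{k=j}^{k_0-1}(f_{k+1}-f_k) \;+\; (f-f_{k_0}),$$
where $f_{k+1}-f_k$ is harmonic on $B(x,2^k)$ (hence on the double of $B(x,2^{k-1})$) and, since $x\in B$, $f-f_{k_0}$ is harmonic on $B(x,2^{k_0})\subset 2B$.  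By Lemma \ref{l2.7} the function $f$ is locally H\"older continuous in $B$ and lies in $W^{1,2}_{\loc}(B)$, so $|\nabla f|\in L^2_{\loc}(B)$ and Lebesgue differentiation applies: for a.e.\ $x\in B$,
$$|\nabla f(x)| \;=\; \lim_{j\to-\infty}\fint_{B(x,2^{j-2})}|\nabla f|\,d\mu.$$

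The first main step is to estimate the middle term.  Applying $(RH_\infty)$ to the harmonic function $f_{k+1}-f_k$ on $B(x,2^{k-1})$ gives
$$\bigl|\bigl|\bigl|\nabla(f_{k+1}-f_k)\bigr|\bigr|\bigr|_{L^\infty(B(x,2^{k-1}))} \;\le\; \frac{C}{2^{k-1}}\fint_{B(x,2^k)}|f_{k+1}-f_k|\,d\mu.$$
Using $(D)$ together with the bound from Lemma \ref{l2.6} applied to $f_k$ on $B(x,2^k)$ and to $f_{k+1}$ on $B(x,2^{k+1})$, one gets $\fint_{B(x,2^k)}|f_{k+1}-f_k|\,d\mu \le C\,2^{2k}\bigl(\fint_{B(x,2^{k+1})}|g|^p\,d\mu\bigr)^{1/p}$, so summing over $k$ from $j$ to $k_0-1$ yields a total contribution controlled by $C G_2(x)$.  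The boundary term $f-f_{k_0}$ is handled analogously: $(RH_\infty)$ combined with doubling and Lemma \ref{l2.6} produces the expected contribution $\frac{C}{r}\fint_{2B}|f|\,d\mu + C G_2(x)$.

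The remaining task is to show that $\fint_{B(x,2^{j-2})}|\nabla f_j|\,d\mu\to 0$ as $j\to-\infty$.  Using Lemma \ref{exitence-poisson} with $p=\infty$ gives $\|f_j\|_{L^\infty(B(x,2^j))}\le C\,2^{2j}\|g\|_{L^\infty(2B)}$, and then the Caccioppoli inequality (Lemma \ref{l2.5}) yields
$$\fint_{B(x,2^{j-1})}|\nabla f_j|^2\,d\mu \;\le\; C\,2^{2j}\,\|g\|_{L^\infty(2B)}^2,$$
so by Cauchy--Schwarz and doubling, $\fint_{B(x,2^{j-2})}|\nabla f_j|\,d\mu \le C\,2^j\|g\|_{L^\infty(2B)}\to 0$.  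Combining the three terms and letting $j\to-\infty$ gives the claimed estimate.

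I expect the main subtlety to be the legitimacy of the Lebesgue-differentiation step for the (abstract) pointwise gradient $|\nabla f|$ coming from the carr\'e du champ, which forces us to invoke the regularity statement of Lemma \ref{l2.7} to know $f\in W^{1,2}_{\loc}(B)$; the use of $(P_{\infty,\loc})$ is implicit here via the validity of $(RH_\infty)$ for harmonic functions appearing at each scale.  The rest (the scale shifts between $2^{k+1}$, $2^k$, $2^{k-1}$, and the vanishing of $\nabla f_j$) is bookkeeping with $(D_Q)$, Lemma \ref{l2.6} and Lemma \ref{l2.5}.
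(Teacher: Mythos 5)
Your proof is correct, but it takes a genuinely different route from the paper's. You estimate $|\nabla f(x)|$ directly via Lebesgue differentiation of the density $|\nabla f|\in L^1_{\loc}$, applying $(RH_\infty)$ at each scale to get $L^\infty$ bounds on $|\nabla(f_{k+1}-f_k)|$ and on $|\nabla(f-f_{k_0})|$, summing these, and killing the residual term $\fint_{B(x,2^{j-2})}|\nabla f_j|\,d\mu$ via Lemma \ref{exitence-poisson} plus Caccioppoli. The paper instead estimates the \emph{difference quotients} $|f(x)-f(y)|$: it proves a quantitative Lipschitz bound by a telescopic argument based on Lemma \ref{lip-har-add} (which converts the $(RH_\infty)$ $L^\infty$ gradient bound for harmonic functions into a pointwise modulus of continuity via $(P_{\infty,\loc})$), and then passes from $\mathrm{Lip}\,f$ to $|\nabla f|$ using \cite[Theorem 2.1]{kz12}. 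Your approach is more direct: it never needs the pointwise Lipschitz constant and, as far as I can tell, it never needs $(P_{\infty,\loc})$ at all -- that hypothesis enters the paper's proof precisely to pass from $\||\nabla u|\|_{L^\infty}$ to $|u(x)-u(y)|$ in Lemma \ref{lip-har-add}, a conversion you simply do not perform. What the paper's route buys in return is the extra information that $f$ is genuinely locally Lipschitz (with the explicit Lipschitz modulus $\frac{1}{r}\fint_{2B}|f|\,d\mu + G_2(x)+G_2(y)$), not just that $|\nabla f|$ is bounded.

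One small inaccuracy: you invoke Lemma \ref{l2.7} to justify that $f\in W^{1,2}_{\loc}(B)$ and that Lebesgue differentiation of $|\nabla f|$ is legitimate. This lemma requires $(P_{2,\loc})$, which is not among the hypotheses of Theorem \ref{infinite-har} (only the weaker $(P_{\infty,\loc})$ is). Fortunately the invocation is also unnecessary: $f\in W^{1,2}(2B)$ is built into the meaning of ``$\mathcal{L}f=g$ in $2B$'', so $|\nabla f|\in L^2(2B)\subset L^1_{\loc}(2B)$, and Lebesgue differentiation in a doubling metric measure space is all that is needed. Your speculation that ``$(P_{\infty,\loc})$ is implicit via the validity of $(RH_\infty)$'' is also not quite right; $(RH_\infty)$ is an independent assumption, not a consequence of $(P_{\infty,\loc})$, and as noted above your argument does not seem to use $(P_{\infty,\loc})$ anywhere.
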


In order to prove Theorem \ref{infinite-har}, we need the following Lipschitz estimate, which follows from $(D)$, $(P_{\infty,\loc})$ and $(RH_\infty)$.
Its proof, which uses a telescopic estimate, will be omitted; see for instance \cite{sh}.
\begin{lem}\label{lip-har-add}
Assume that the doubling Dirichlet metric measure space $(X,d,\mu,\E)$ satisfies  $(UE)$ and $(P_{\infty,\loc})$.
Assume that $(RH_\infty)$ holds.
 If $\mathcal{L}f=0$ in $2B$, $B=B(x_0,r)$, then for almost all $x,y\in B(x_0,r)$ with $d(x,y)<1/2$,
 it holds that
 $$|f(x)-f(y)|\le C\frac{d(x,y)}{r}\fint_{2B} |f|\,d\mu,$$
 where $C=C(C_D,C_P(1))$.
\end{lem}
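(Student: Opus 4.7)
The plan is to combine a standard telescoping-of-balls argument with $(P_{\infty,\loc})$ and a uniform $L^\infty$-bound on $|\nabla f|$ extracted from $(RH_\infty)$. The key preliminary observation is that, since $f$ is harmonic on $2B=B(x_0,2r)$, for any $z\in B(x_0,r)$ one has $B(z,r)\subset 2B$, so $(RH_\infty)$ applied on the ball $B(z,r/2)$, combined with the doubling property, yields
$$
\||\nabla f|\|_{L^\infty(B(z,r/2))}\,\le\,\frac{C}{r}\fint_{B(z,r)}|f|\,d\mu\,\le\,\frac{C}{r}\fint_{2B}|f|\,d\mu,
$$
with a constant depending only on $C_D$. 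Call this the \emph{uniform gradient bound}. A second preliminary fact, used only in the large-scale case, is the elliptic estimate $\|f\|_{L^\infty(B(x_0,r))}\le C\fint_{2B}|f|\,d\mu$ from Proposition~\ref{l2.3}.

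Fix Lebesgue points $x,y\in B(x_0,r)$ with $s:=d(x,y)<1/2$, and split into two cases. If $s\ge r/4$, the conclusion is immediate from Proposition~\ref{l2.3}: $|f(x)-f(y)|\le 2\|f\|_{L^\infty(B(x_0,r))}\le C\fint_{2B}|f|\,d\mu\le C(s/r)\fint_{2B}|f|\,d\mu$. Assume henceforth $s<r/4$. Then the concentric chains $B_k^x:=B(x,2^{-k}s)$, $B_k^y:=B(y,2^{-k}s)$, $k\ge 0$, and the linking ball $\widetilde B:=B(x,2s)\supset B_0^x\cup B_0^y$ all sit inside $B(x,r/2)\cup B(y,r/2)$, so the uniform gradient bound supplies a uniform control of $|\nabla f|$ on each of them. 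Moreover, their radii are bounded above by $2s<1$, so $(P_{\infty,\loc})$ applies with $r_0=1$.

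For each $k\ge 0$, doubling yields $|f_{B_k^x}-f_{B_{k+1}^x}|\le C\fint_{B_k^x}|f-f_{B_k^x}|\,d\mu$; applying $(P_{\infty,\loc})$ gives the further bound $\le C_P(1)\,2^{-k}s\,\||\nabla f|\|_{L^\infty(B_k^x)}$, and combining with the uniform gradient bound,
$$
|f_{B_k^x}-f_{B_{k+1}^x}|\,\le\, C\,\frac{2^{-k}s}{r}\fint_{2B}|f|\,d\mu,
$$
and analogously for the $y$-chain. Summing over $k\ge 0$ and invoking Lebesgue's differentiation theorem to identify $f(x)=\lim_k f_{B_k^x}$ at a.e.\ $x$, I obtain $|f(x)-f_{B_0^x}|+|f(y)-f_{B_0^y}|\le C(s/r)\fint_{2B}|f|\,d\mu$. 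The two chains are linked by applying the same doubling--$(P_{\infty,\loc})$--gradient-bound step once on $\widetilde B$, giving $|f_{B_0^x}-f_{B_0^y}|\le C(s/r)\fint_{2B}|f|\,d\mu$. The triangle inequality assembles the three pieces into the desired estimate.

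The only non-routine checks concern scale-matching: the chains and the linking ball must stay in $B(x,r/2)\cup B(y,r/2)$ for the uniform gradient bound to apply (ensured by $s<r/4$), and their radii must stay below $1$ for $(P_{\infty,\loc})$ to apply with a fixed constant (ensured by $s<1/2$, which is precisely why $C_P(1)$ appears in the statement).
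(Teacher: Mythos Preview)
Your proof is correct and follows precisely the telescopic-estimate approach that the paper indicates (the paper omits the proof, noting only that it ``uses a telescopic estimate''). The two-case split, the uniform gradient bound extracted from $(RH_\infty)$ combined with doubling, and the chain-of-balls argument using $(P_{\infty,\loc})$ at scales below $1$ are exactly what is needed.
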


\begin{proof}[Proof of Theorem \ref{infinite-har}]
Set $k_0:=[ \log_2 r]$. Let $x,y\in B$ be Lebesgue points of $f$. Note that $G_1(x) \le Cr G_2(x)$
for all $x\in B$. Hence
if $d(x,y)\ge r/16$, then by Proposition \ref{pointwise-bound}, we have
\begin{eqnarray}\label{lip-est-add}
\quad \quad |f(x)-f(y)|&&\le C\fint_{2B}|f|\,d\mu+CG_1(x)+CG_1(y)\le Cd(x,y)\lf\{\frac {1}r\fint_{2B}|f|\,d\mu+G_2(x)+G_2(y)\r\}.
\end{eqnarray}

Now assume that $d(x,y)<r/16$ and $d(x,y)<\frac 12$.  Choose $k_1\in\zz$ such that $2^{k_1-2}\le d(x,y)<2^{k_1-1}$.
As in the proof of Proposition \ref{pointwise-bound}, for each $j\in \zz$ and $j\le k_0$, pick $f_j\in W^{1,2}_0(B(x,2^j))$
with $\mathcal{L}f_j=g$ in $B(x,2^j)$.
By the choice of $k_1$, we see that for each $z\in B(y,2^{k_1-1})$,
$$d(x,z)\le d(x,y)+d(y,z)< 2^{k_1-1}+2^{k_1-1}\le 2^{k_1},$$
which further implies that $B(y,2^{k_1-1})\subset B(x,2^{k})$ for each $k\ge k_1$.
Hence, for each  $k\ge k_1$, the value $f_k(y)$ is well defined, and we have
\begin{eqnarray*}
|f(x)-f(y)|&&\le |f(x)-f_{k_0}(x)-[f(y)-f_{k_0}(y)]|\\
&&\hs+\sum_{j=k_1}^{k_0-1}|[f_j(x)-f_{j+1}(x)]-[f_j(y)-f_{j+1}(y)]|
+ |f_{k_1}(x)-f_{k_1}(y)|\\
&&=: \mathrm {I}_1+\mathrm {I}_2+\mathrm {I}_3.
\end{eqnarray*}

Let us estimate the term $\mathrm {I}_1$. According to the choice of $f_{k_0}$,
$f-f_{k_0}$ is harmonic in $B(x,2^{k_0})\subset 2B$. By using the fact that $y\in B(x,2^{k_1-1})$ together with Lemma \ref{lip-har-add}, we conclude that
\begin{eqnarray*}
\mathrm {I}_1&&\le C\frac{d(x,y)}{2^{k_0}}\fint_{B(x,2^{k_0})} |f-f_{k_0}|\,d\mu\\
&&\le Cd(x,y)\lf\{\frac {1}r\fint_{2B}|f|\,d\mu
+2^{k_0}\lf(\fint_{B(x,2^{k_0})}|g|^{p}\,d\mu\r)^{1/{p}}\r\},
\end{eqnarray*}
where we used $(D)$, estimate
$\frac 1{\mu(B(x,2^{k_0}))}\le C\frac1{\mu(2B)}$,
and Lemma \ref{l2.6} to estimate $\fint_{B(x,2^{k_0})} |f_{k_0}|\,d\mu$.

The term $\mathrm {I}_2$ can be estimated similarly as the first term.
For each $k_1\le j\le k_0-1$, $f_j-f_{j+1}$ is harmonic in $B(x,2^j)$.
As $y\in B(x,2^{k_1-1})\subset \frac 12B(x,2^j)$, by using Lemma \ref{lip-har-add}
and Lemma \ref{l2.6}, we deduce that
\begin{eqnarray*}
\mathrm {I}_2&&=\sum_{j=k_1}^{k_0-1}|[f_j(x)-f_{j+1}(x)]-[f_j(y)-f_{j+1}(y)]|
\le Cd(x,y)\lf\{\sum_{j=k_1}^{k_0-1}\frac 1{2^j}\fint_{B(x,2^j)} |f_j-f_{j+1}|\,d\mu\r\}\\
&&\le Cd(x,y)\lf\{\sum_{j=k_1}^{k_0}\frac 1{2^j}\fint_{B(x,2^{j})} |f_j|\,d\mu\r\}\le Cd(x,y)\lf\{\sum_{j=k_1}^{k_0}2^{j}\lf(\fint_{B(x,2^{j})}|g|^{p}\,d\mu\r)^{1/p}\r\}.
\end{eqnarray*}

By Proposition \ref{pointwise-bound} and Lemma \ref{l2.6}, we see that for almost every $z\in B(x,2^{k_1-1})$,
\begin{eqnarray*}
|f_{k_1}(z)|&&\le C\sum_{k=-\fz}^{k_1} 2^{2k}\lf(\fint_{B(z,2^k)}|g|^{p}\,d\mu\r)^{1/{p}}
+C\fint_{B(x,2^{k_1})}|f_{k_1}|\,d\mu\\
&&\le C\sum_{k=-\fz}^{k_1} 2^{2k}\lf(\fint_{B(z,2^k)}|g|^{p}\,d\mu\r)^{1/{p}}
+C2^{2k_1}\lf(\fint_{B(x,2^{k_1})}|g|^{p}\,d\mu\r)^{1/{p}},
\end{eqnarray*}
which together with the fact that $y\in B(x,2^{k_1+1})$  implies that
$$ \mathrm {I}_3\le C2^{k_1}\lf\{\sum_{k=-\fz}^{k_1} 2^{k}\lf(\fint_{B(x,2^k)}|g|^{p}\,d\mu\r)^{1/{p}}
+\sum_{k=-\fz}^{k_1} 2^{k}\lf(\fint_{B(y,2^k)}|g|^{p}\,d\mu\r)^{1/{p}}\r\}.$$

Combining the estimates for the terms $\mathrm {I}_1$, $\mathrm {I}_2$ and $\mathrm {I}_3$, and \eqref{lip-est-add},
we see that for almost all $x,y\in B$ with $d(x,y)<1/2$,
\begin{eqnarray*}
|f(x)-f(y)|&&\le Cd(x,y)\lf\{\frac {1}r\fint_{2B}|f|\,d\mu+G_2(x)+G_2(y)\r\}.
\end{eqnarray*}
Clearly, for $g\in L^\infty(2B)$, $G_2\in L^\infty(2B)$, and hence up to a modification on a set with measure zero,
$f$ is a Lipschitz function on $B$.

For a locally Lipschitz function $\phi$, denote by $\mathrm{Lip}\, \phi$ its pointwise Lipschitz constant as
\begin{equation}\label{Lip-constant}
\mathrm{Lip}\, \phi(x):=\limsup_{d(x,y)\to 0}\frac {|\phi(x)-\phi(y)|}{d(x,y)}.
\end{equation}
By \cite[Theorem 2.1]{kz12} (also see \cite{GSC}), we see that for almost every $x\in
B$,
\begin{eqnarray*}
 |\nabla f(x)|&&\le \mathrm{Lip} \, f(x)\le C\lf\{\frac 1r\fint_{B(x_0,2r)}|f|\,d\mu
+\sum_{j=-\fz}^{k_0}2^j\lf(\fint_{B(x,2^j)}|g|^{p}\,d\mu\r)^{1/p}\r\},
\end{eqnarray*}
proving the claim.
\end{proof}

We need the following potential estimate from Haj\l asz and Koskela
\cite[Theorem 5.3]{hak}. Again, $g$ refers both to a function defined on
$2B$ and to its zero extension to the exterior of $2B.$

\begin{thm}\label{t3.3} Let $(X,d,\mu, \E)$ be a  Dirichlet metric measure space satisfying $(D_Q)$, $Q\ge 2$. Let
$B=B(x_0,r)\subset X$, $g\in L^q(2B)$, 
 and $G_2$ be
defined via \eqref{potential-g2}. Then

(i) for $q\in \left(\frac{2Q}{Q+2},Q\right)$ and $q^\ast=\frac{Qq}{Q-q}$,
$$\|G_2\|_{L^{q^\ast}(B)}\le Cr\mu(B)^{-1/Q}\|g\|_{L^q(2B)};$$


(ii) for $q>Q$
$$\|G_2\|_{L^\fz(B)}\le Cr\mu(B)^{-1/q}\|g\|_{L^q(2B)}.$$
\end{thm}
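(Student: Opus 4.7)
The plan is to follow the standard Hedberg-type splitting argument that underlies Haj\l asz--Koskela's potential estimate. The key idea is to view $G_2(x)$ as a discrete Riesz-$1$ potential of the $L^p$-centered maximal function, and then to truncate at a scale that we optimize pointwise.

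First I would fix an auxiliary scale $s\in (0,r]$ and split
\[
G_2(x)\le \sum_{j:\,2^j\le s}2^j\lf(\fint_{B(x,2^j)}|g|^p\,d\mu\r)^{1/p}
+\sum_{j:\,s<2^j\le r}2^j\lf(\fint_{B(x,2^j)}|g|^p\,d\mu\r)^{1/p}
=:S(x,s)+L(x,s).
\]
For the ``near'' part $S(x,s)$, each factor is bounded by the $L^p$-maximal function $M_p g(x):=\sup_{t>0}(\fint_{B(x,t)}|g|^p\,d\mu)^{1/p}$, so summing the geometric series in $j$ gives $S(x,s)\le C\,s\,M_p g(x)$. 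For the ``far'' part $L(x,s)$, I would choose $p$ small enough to have $p<q$ (which is allowed since $G_2$ is only required to come from the range $p>\tfrac{2Q}{Q+2}$), apply H\"older's inequality on $B(x,2^j)$, and use the lower bound $\mu(B(x,2^j))\ge c\,\mu(B)\,(2^j/r)^Q$ that follows from $(D_Q)$ and the inclusion $B\subset B(x,2r)$. This yields
\[
L(x,s)\le C\,r^{Q/q}\mu(B)^{-1/q}\|g\|_{L^q(2B)}\sum_{j:\,s<2^j\le r}2^{j(1-Q/q)}.
\]

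For part (ii), $q>Q$ makes the exponent $1-Q/q$ positive, so the sum is controlled by its largest term $\sim r^{1-Q/q}$, and the trivial bound $s=r$ in the splitting gives $G_2(x)\le C\,r\,\mu(B)^{-1/q}\|g\|_{L^q(2B)}$ directly, with no need for the maximal function. For part (i), $q<Q$ makes the sum in $L(x,s)$ dominated by the smallest $2^j\sim s$, giving $L(x,s)\le C\,s^{1-Q/q}r^{Q/q}\mu(B)^{-1/q}\|g\|_{L^q(2B)}$. Combining the two contributions,
\[
G_2(x)\le C\bigl(s\,M_p g(x)+s^{1-Q/q}r^{Q/q}\mu(B)^{-1/q}\|g\|_{L^q(2B)}\bigr).
\]
Equalizing the two terms by choosing $s$ pointwise so that $s^{Q/q}M_p g(x)=r^{Q/q}\mu(B)^{-1/q}\|g\|_{L^q(2B)}$ yields
\[
G_2(x)\le C\,r\,\mu(B)^{-1/Q}\|g\|_{L^q(2B)}^{q/Q}\bigl(M_p g(x)\bigr)^{1-q/Q}.
\]

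Finally, taking $L^{q^\ast}(B)$-norms and using the identity $q^\ast(1-q/Q)=q$, the estimate reduces to $\|M_p g\|_{L^q(X)}\le C\|g\|_{L^q(X)}$. Since $p<q$ this is exactly the strong $(q/p,q/p)$-type inequality for the Hardy--Littlewood maximal operator on the doubling space $(X,d,\mu)$ applied to $|g|^p$. The main subtlety I anticipate is to justify that we may really choose $p<q$: this is fine in (i) because the hypothesis $q>\tfrac{2Q}{Q+2}$ leaves room below $q$ inside the admissible range of $p$, and in (ii) because $q>Q>\tfrac{2Q}{Q+2}$. With that matter settled, the two claims follow from the splitting-and-optimize argument above.
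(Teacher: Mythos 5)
Your Hedberg truncation argument is correct and is precisely the standard proof of the cited result \cite[Theorem 5.3]{hak}; the paper gives no proof of its own, only the citation. The exponent arithmetic in (i), including $q^\ast(1-q/Q)=q$ and the reduction to the $L^{q/p}$-boundedness of the Hardy--Littlewood maximal operator (valid because $p<q$), is right, and you correctly flag that $p<q$ is needed: this constraint is left implicit in the statement but is consistent with how the exponent $p$ entering $G_2$ is actually chosen in the paper's applications of this result (Corollary \ref{infinite-poisson}, Theorem \ref{iso}). One minor slip of wording: in part (ii), choosing $s=r$ in your splitting makes $L(x,r)$ empty and leaves only $S(x,r)\ls r\,\mathcal{M}_p g(x)$, which is not the asserted bound; what you actually use is the opposite degenerate choice (put the whole sum in the far part), where for $q>Q$ the series $\sum_{2^j\le r}2^{j(1-Q/q)}$ is comparable to its top term $\sim r^{1-Q/q}$, and multiplying by $r^{Q/q}\mu(B)^{-1/q}\|g\|_{L^q(2B)}$ gives the stated $L^\infty$ estimate.
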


Theorem \ref {t3.3} allows us to obtain the following corollary to Theorem \ref{infinite-har}.

\begin{cor}\label{infinite-poisson}
Let $(X,d,\mu,\E)$ be  a Dirichlet metric measure space
satisfying $(D_Q)$, $Q\ge 2$, and assume that $(UE)$ and $(P_{\infty,\loc})$ hold.
 Assume that $(RH_\infty)$ holds. Then for every $f\in W^{1,2}(2B)$,  $B=B(x_0,r)$, satisfying $\mathcal{L} f=g$ with $g\in L^q(2B)$,
 $q>Q$, we have
$$\||\nabla f|\|_{L^\infty(B)}\le C\lf\{\frac{1}{r}\fint_{2B}|f|\,d\mu
+r\left(\fint_{2B}|g|^{q}\,d\mu\right)^{1/q}\r\}.$$
where $C=C(Q,C_Q,C_P)>0.$
\end{cor}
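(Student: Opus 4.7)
The plan is to combine the pointwise gradient bound from Theorem \ref{infinite-har} with the Sobolev-type endpoint estimate for the Riesz-like potential $G_2$ in Theorem \ref{t3.3}(ii), then normalise by the volume of $2B$.

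First, observe that the hypothesis $q>Q\ge 2$ automatically gives $q>\tfrac{2Q}{Q+2}$, so we may apply Theorem \ref{infinite-har} with the exponent $p:=q$. This yields, for a.e.\ $x\in B$,
\begin{equation}\label{plan-pt}
|\nabla f(x)|\le C\Bigl\{\tfrac{1}{r}\fint_{2B}|f|\,d\mu+G_2(x)\Bigr\},
\end{equation}
where $G_2(x)=\sum_{j\le[\log_2 r]}2^{j}\bigl(\fint_{B(x,2^j)}|g|^{q}\,d\mu\bigr)^{1/q}$ (with $g$ extended by $0$ outside $2B$, so that for balls $B(x,2^j)$ with $x\in B$ the averages only see the values of $g$ on $2B$).

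Next, I would take the essential supremum over $x\in B$ in \eqref{plan-pt}. The first term is independent of $x$, so the only thing to control is $\|G_2\|_{L^\infty(B)}$. Since $q>Q$, Theorem \ref{t3.3}(ii) applies directly and gives
\[
\|G_2\|_{L^\infty(B)}\le Cr\,\mu(B)^{-1/q}\|g\|_{L^q(2B)}
= Cr\Bigl(\frac{\mu(2B)}{\mu(B)}\Bigr)^{1/q}\Bigl(\fint_{2B}|g|^{q}\,d\mu\Bigr)^{1/q}.
\]
The volume doubling property $(D)$ bounds $\mu(2B)/\mu(B)$ by $C_D$, so
\[
\|G_2\|_{L^\infty(B)}\le Cr\Bigl(\fint_{2B}|g|^{q}\,d\mu\Bigr)^{1/q},
\]
with $C$ depending only on $C_D$, $q$, and $Q$. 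Substituting this into \eqref{plan-pt} yields the claimed bound.

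There is no real obstacle here beyond verifying that the exponents are compatible: we need $q>\tfrac{2Q}{Q+2}$ to invoke Theorem \ref{infinite-har} and $q>Q$ to invoke the endpoint case of the potential estimate, and both follow from the single assumption $q>Q$. The passage from $\|g\|_{L^q(2B)}$ to the normalised average $(\fint_{2B}|g|^q\,d\mu)^{1/q}$ is where doubling enters, but only in its simplest form. All other work (telescoping over dyadic scales, the Lipschitz regularity of harmonic functions under $(RH_\infty)$, and the identification of the Haj\l asz upper gradient with $|\nabla f|$) is already packaged into Theorem \ref{infinite-har}, so no further analytic input is needed.
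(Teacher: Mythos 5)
Your reduction of the estimate to Theorem \ref{infinite-har} plus Theorem \ref{t3.3}(ii) is the right skeleton, and the normalization step $\mu(B)^{-1/q}\|g\|_{L^q(2B)}\le C(\fint_{2B}|g|^q\,d\mu)^{1/q}$ via doubling is correct. However, there is a genuine gap: Theorem \ref{infinite-har} is stated only for $g\in L^\infty(2B)$, and you apply it directly to $f$ with $g$ merely in $L^q(2B)$, $q>Q$. That is outside the stated hypotheses of the theorem, so the "only thing to control is $\|G_2\|_{L^\infty(B)}$" step is not yet justified. The $L^\infty$ hypothesis is not cosmetic: it is used in the proof of Theorem \ref{infinite-har} both to make $G_2$ finite a.e.\ so that $f$ is Lipschitz (allowing $\mathrm{Lip}\,f$ to be identified with $|\nabla f|$), and in Proposition \ref{pointwise-bound} to kill the limiting term $2^{2j}(\fint_{B(x,2^j)}|g|^p)^{1/p}$ as $j\to-\infty$.

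The paper's proof fills exactly this gap with a truncation argument. Set $g_k:=\chi_{\{|g|\le k\}}g$ and let $f_k\in W^{1,2}_0(2B)$ solve $\mathcal{L} f_k=g_k$; then $g_k\in L^\infty$ so Theorem \ref{infinite-har} and Theorem \ref{t3.3}(ii) give a uniform bound $\||\nabla f_k|\|_{L^\infty(B)}\le Cr\mu(B)^{-1/q}\|g\|_{L^q(2B)}$. One then shows $f_k\to f_0$ in $W^{1,2}_0(2B)$ (where $\mathcal{L} f_0=g$ in $2B$), passes the $L^\infty$ bound to $|\nabla f_0|$, and finally applies $(RH_\infty)$ to the harmonic difference $f-f_0$ to obtain the claimed estimate for $f$ itself. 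Your write-up skips both the truncation and the decomposition $f=(f-f_0)+f_0$, and the closing remark that "no further analytic input is needed" is not accurate — the limiting argument is the substance of the proof. To repair your proposal, either carry out the truncation as above, or first verify that Theorem \ref{infinite-har} extends verbatim to $g\in L^q(2B)$ with $q>Q$ (which requires re-checking the two uses of $L^\infty$ noted above, using that $G_2\in L^\infty$ by Theorem \ref{t3.3}(ii) and that $\lim_{j\to-\infty}2^{2j}(\fint_{B(x,2^j)}|g|^q\,d\mu)^{1/q}=0$ a.e.\ by Lebesgue differentiation).
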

\begin{proof}
If $q=\infty$, then the conclusion follows from Theorem \ref{infinite-har}.

Suppose now that $q\in (Q,\infty)$. Let $f_0\in W^{1,2}_0(2B)$ be the solution to $\mathcal{L} f_0=g$ in $2B$.
Then $f-f_0\in W^{1,2}(2B)$ and $\mathcal{L}(f-f_0)=0$. Moreover, for each $k\in\cn$, set again  $g_k:=\chi_{\{|g|\le k\}}\,g$, and let $f_k\in W^{1,2}_0(2B)$ be the solution
to $\mathcal{L} f_k=g_k$ in $2B$. By using Lemma \ref{exitence-poisson}, Theorems \ref{infinite-har} and  \ref{t3.3}, we conclude that
\begin{eqnarray*}
\||\nabla f_k|\|_{L^\infty(B)}&&\le C\lf\{\frac{1}{r}\fint_{2B}|f_k|\,d\mu
+r\|g_k\|_{L^q(2B)}\r\}\le \frac{Cr}{\mu(B)^{1/q}}\|g_k\|_{L^q(2B)}\le \frac{Cr}{\mu(B)^{1/q}}\|g\|_{L^q(2B)}.
\end{eqnarray*}

On the other hand, notice that
\begin{eqnarray*}
\int_{2B}|\nabla (f_0-f_k)|^2\,d\mu&&=\int_{2B} (f_0-f_k)(g-g_k)\,d\mu\\
&&\le \left(\|f_0\|_{L^\infty(2B)}+\|f_k\|_{L^\infty(2B)}\right)\|g-g_k\|_{L^1(2B)}\to 0,
\end{eqnarray*}
as $k\to\infty$, which, together with the preceding inequality, implies that
\begin{eqnarray*}
\||\nabla f_0|\|_{L^\infty(B)}&&\le Cr\left(\fint_{B(x_0,2r)}|g|^{q}\,d\mu\right)^{1/q}.
\end{eqnarray*}
Combining this with $(RH_\infty)$ for $f-f_0$ yields that
$$\||\nabla f|\|_{L^\infty(B)}\le C\lf\{\frac{1}{r}\fint_{2B}|f|\,d\mu
+r\left(\fint_{B(x_0,2r)}|g|^{q}\,d\mu\right)^{1/q}\r\},$$
as desired.
\end{proof}

\subsection{Harmonic functions satisfying condition $(RH_p)$ for $p\in (2,\infty)$}
\hskip\parindent
Let us now turn to the case when only a reverse H\"older inequality
$(RH_p)$, $p\in (2,\infty)$, holds
for gradients of harmonic functions. In this case, we do not know how to
get  pointwise estimates
for the gradients of solutions to Poisson equations. As a substitute for
this, we provide a
quantitative $L^p$-estimate as follows.

\begin{thm}\label{harmonic-pnorm}
Assume that the Dirichlet metric measure space $(X,d,\mu,\E)$ satisfies $(D_Q)$, $Q\ge 2$,
and that $(P_{2,\loc})$ and $(UE)$ hold.
Assume that $(RH_{p})$ holds for some $p\in (2,\infty)$. Let  $q\in \left(\frac{pQ}{Q+p},p\right]$ with
$\frac 1q-\frac 1p<\frac 1Q$.
Then for every $f\in W^{1,2}(2B)$, $B=B(x_0,r)$, satisfying $\mathcal{L} f=g$ with $g\in L^q(2B)$,
 it holds   that
$$\left(\fint_{B(x_0,r)}|\nabla f|^{p}\,d\mu\right)^{1/p}\leq \frac {C}{r}\left(\fint_{B(x_0,2r)}|f|\,d\mu+
r^2\left(\fint_{B(x_0,2r)}|g|^{q}\,d\mu\right)^{1/q}\right),$$
where  $C=C(p,q,C_Q,C_P(1),C_{LS})$.
\end{thm}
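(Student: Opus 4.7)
The plan is to split $f=f_0+h$, where $f_0\in W^{1,2}_0(2B)$ is the Poisson solution of $\mathcal{L}f_0=g$ on $2B$ produced by Lemma~\ref{l2.6} and $h:=f-f_0$ is harmonic on $2B$. Then $\nabla f=\nabla f_0+\nabla h$, and I will control the two pieces in $L^p(B)$ by separate mechanisms. The harmonic piece is immediate: since $h$ is harmonic on $2B$, $(RH_p)$ applied to the ball $B$ gives
$$\lf(\fint_B|\nabla h|^p\,d\mu\r)^{1/p}\le \frac{C}{r}\fint_{2B}|h|\,d\mu \le \frac{C}{r}\lf(\fint_{2B}|f|\,d\mu+\fint_{2B}|f_0|\,d\mu\r),$$
and Lemma~\ref{l2.6} gives $\fint_{2B}|f_0|\,d\mu\le Cr^2\bigl(\fint_{2B}|g|^q\,d\mu\bigr)^{1/q}$ (the hypothesis $q>\frac{pQ}{Q+p}$ forces $q>\frac{2Q}{Q+2}$ since $p>2$, so Lemma~\ref{l2.6} applies).

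For the inhomogeneous piece $f_0$, I will adapt the telescoping argument of Theorem~\ref{infinite-har}. Fix a Lebesgue point $x\in B$, set $k_0:=[\log_2 r]$, and for each dyadic scale $2^j$ with $j\le k_0$ let $f_{0,j}^x\in W^{1,2}_0(B(x,2^j))$ solve $\mathcal{L}f_{0,j}^x=g$. A standard dyadic telescope with the top scale absorbed by Proposition~\ref{l2.3} and the doubling property, together with Lemma~\ref{l2.6} at each scale, writes the difference $f_0(x)-f_0(y)$ for nearby $x,y\in B$ as a sum of increments of harmonic functions at the various scales. The critical step is that the pointwise Lipschitz estimate of Lemma~\ref{lip-har-add}, which drove the proof of Theorem~\ref{infinite-har} via $(RH_\infty)$, is no longer available and must be replaced by a Haj{\l}asz-gradient estimate for harmonic functions: using $(RH_p)$ together with the self-improved Poincar\'e inequality $(P_{p,\loc})$ (from $(P_{2,\loc})$ via \cite{kz08}), a stopping-time argument on the level sets of an appropriate maximal function of the gradient, and a bootstrap to close the reverse-H\"older exponent, one shows that every function $u$ harmonic on $2B'$ (with $B'$ a ball of radius $s$) admits a Haj{\l}asz gradient $H_u\in L^p(B')$ with $H_u(y)\le \frac{C}{s}\fint_{2B'}|u|\,d\mu$ a.e.\ on $B'$ such that $|u(y)-u(z)|\le d(y,z)\bigl(H_u(y)+H_u(z)\bigr)$ for a.e.\ $y,z\in B'$. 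Inserting this into the telescope yields the pointwise Haj{\l}asz-type bound
$$|f_0(x)-f_0(y)|\le C d(x,y)\bigl(G_2(x)+G_2(y)\bigr) \quad \text{a.e.\ on } B$$
with $G_2$ the potential of \eqref{potential-g2}. The Kinnunen-Zhou identification \cite{kz12} (cf.\ Appendix~\ref{appendix-domain}) then delivers $|\nabla f_0|\le CG_2$ a.e.\ on $B$.

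To conclude I invoke Theorem~\ref{t3.3}. The assumption $\frac{1}{q}-\frac{1}{p}<\frac{1}{Q}$ leaves two cases: either $q\in(\frac{2Q}{Q+2},Q)$ and $p<q^\ast=\frac{Qq}{Q-q}$, in which case H\"older and Theorem~\ref{t3.3}(i) combined with the doubling property and $\frac{1}{q}=\frac{1}{q^\ast}+\frac{1}{Q}$ give $\bigl(\fint_B G_2^p\,d\mu\bigr)^{1/p}\le Cr\bigl(\fint_{2B}|g|^q\,d\mu\bigr)^{1/q}$; or $q>Q$, in which case Theorem~\ref{t3.3}(ii) gives the same bound directly via its $L^\infty$-estimate. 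Adding the harmonic contribution from the first paragraph yields the claimed estimate. The main obstacle is the Haj{\l}asz-gradient bound for harmonic functions in the finite-$p$ regime: it lies essentially deeper than its $(RH_\infty)$ counterpart (Lemma~\ref{lip-har-add}), and the stopping-time plus bootstrap argument that closes the exponent is the principal technical novelty beyond the proof of Theorem~\ref{infinite-har}.
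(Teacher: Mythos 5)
Your initial split $f=f_0+h$ with $f_0\in W^{1,2}_0(2B)$ and $h$ harmonic, and the direct application of $(RH_p)$ to $h$ together with Lemma~\ref{l2.6} for $\fint_{2B}|f_0|$, matches Step~5 of the paper's argument and is sound. The difficulty is concentrated entirely in controlling $\|\,|\nabla f_0|\,\|_{L^p(B)}$, and this is where your proposal has a genuine gap.

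The key claim you make --- that every $u$ harmonic on $2B'=B(y_0,2s)$ admits a Haj\l asz gradient $H_u$ with the \emph{pointwise} bound $H_u(y)\le \frac{C}{s}\fint_{2B'}|u|\,d\mu$ for a.e.\ $y\in B'$ --- is equivalent to $(RH_\infty)$, not $(RH_p)$: such a bound forces $u$ to be Lipschitz on $B'$ with constant $\frac{C}{s}\fint_{2B'}|u|\,d\mu$, hence $\||\nabla u|\|_{L^\infty(B')}\lesssim \frac 1s\fint_{2B'}|u|\,d\mu$. No stopping-time or bootstrap argument can extract a uniform pointwise bound from the merely averaged information in $(RH_p)$, $p<\infty$; the conical manifolds of Example~2 (Section~\ref{Ex}) satisfy $(RH_p)$ for $p<p_0<\infty$ but not $(RH_\infty)$, so the claimed lemma is false in the generality you need. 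Once this lemma is unavailable, your point-centered telescope (balls $B(x,2^j)$ for each fixed $x$) loses its pointwise anchor, and the resulting estimate would only control the Haj\l asz gradients at each scale in $L^p$, with no obvious way to assemble the $x$-dependent, scale-dependent family into an $L^p$ bound over $B$.

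The paper avoids both obstacles by a different mechanism. It replaces the point-centered balls by a \emph{fixed} dyadic decomposition (Christ cubes) of $B$ and builds, at each scale $k$ and cube index $\alpha$, a local Poisson solution $f_{\alpha,k}$ on a ball $B^k_\alpha$. The differences $f_{\alpha,k}-f_{\alpha',k-1}$ between consecutive scales are harmonic, and the pointwise Haj\l asz inequality for them is supplied not by any harmonicity-specific lemma but by the \emph{generic} Haj\l asz--Poincar\'e estimate (Lemma~\ref{hajlasz-gradient}), taking $\mathcal{M}_2(|\nabla(\cdot)|)$ as Haj\l asz gradient. The hypothesis $(RH_p)$ then enters only at the $L^p$-estimation stage: it bounds $\||\nabla(f_{\alpha,k}-f_{\alpha',k-1})|\|_{L^p}$, and the boundedness of the maximal operator on $L^{p/2}$ (here $p>2$ is used crucially) controls the $L^p$-norm of $w_k=\sum_\alpha\mathcal{M}_2(\cdots)\chi_{B^k_\alpha}$. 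The bounded overlap of the Christ cubes and the geometric decay in $k$ are what make the double sum over $(k,\alpha)$ converge in $L^p$. This is structurally unlike the $(RH_\infty)$ proof of Theorem~\ref{infinite-har}, and it is the reason the $L^p$ case requires the dyadic-chain construction rather than a point-centered telescope.

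(Two smaller remarks: the potential in your estimate should carry the exponent $q$ of $g$'s integrability, as in $G_2$ of Theorem~\ref{t3.3}, not the $(RH_p)$ exponent $p$; and the reference for the identification of $|\nabla\cdot|$ with the pointwise Lipschitz constant is Koskela--Zhou \cite{kz12}, not ``Kinnunen--Zhou''.)
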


We need the following well-known Christ's dyadic cube decomposition for metric
measure spaces $(X,d,\mu)$
from \cite{cr}; also see \cite[Theorem 1.2]{hk12}.

\begin{lem}[Christ's dyadic cubes]\label{dya-cube}
There exists a collection of open subsets $\{Q^k_\alpha\subset X:\,k\in\mathbb{Z},\,
\alpha\in I_k\}$, where $I_k$ denotes a certain (possibly finite) index set depending on
$k$, and constants $\delta\in(0, 1)$,  $a_0\in(0, 1)$ and $a_1>a_0$ with $a_1\in (0,\infty)$ such that

(i) $\mu(X\setminus \cup_{\alpha}Q_\alpha^k)=0$ for all $k\in\mathbb{Z} $;

(ii) if $i> k$, then either $Q^i_\alpha\subset Q^k_\beta$ or
$Q^i_\alpha\cap Q^k_\beta=\emptyset$;

(iii) for each $k$ and all $\alpha\neq\beta\in I_k,$ $Q_\alpha^k\cap Q_\beta^k=\emptyset$;

(iv) for each $(k,\alpha)$ and each $i < k$, there exists a unique $\beta$ such that
$Q^k_\alpha\subset Q^i_\beta$;

(v) $\diam(Q_\alpha^k)\le a_1 \delta^k$;

(vi) each $Q_\alpha^k$ contains a ball $B(z^k_\alpha, a_0\delta^k)$.
\end{lem}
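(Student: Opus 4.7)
The plan is to follow Christ's classical construction from \cite{cr} (see also \cite{hk12}), suitably adapted to the present doubling metric measure setting. Fix $\delta \in (0,1)$ sufficiently small, to be chosen. For each $k \in \mathbb{Z}$, apply Zorn's lemma (or a greedy algorithm using separability of $X$) to produce a maximal $\delta^k$-separated subset $\{z_\alpha^k : \alpha \in I_k\}$ in $X$. By maximality, every point of $X$ lies within distance $\delta^k$ of some $z_\alpha^k$, while the balls $B(z_\alpha^k, \delta^k/2)$ are pairwise disjoint; the doubling condition $(D)$ guarantees each $I_k$ is at most countable and locally finite.

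Second, I would build an ancestor tree. For each $(k,\alpha)$, the covering property at scale $k-1$ furnishes at least one $\beta \in I_{k-1}$ with $d(z_\alpha^k, z_\beta^{k-1}) < \delta^{k-1}$; select one such $\beta$ as the parent of $(k,\alpha)$. Iterating yields a tree structure on $\bigsqcup_k I_k$ in which, for each $j \ge k$, the set $\mathrm{desc}_j(k,\alpha)$ of scale-$j$ descendants of $(k,\alpha)$ is well-defined. Define provisional cubes
\[
\hat Q_\alpha^k := \bigcup_{j \ge k}\ \bigcup_{\gamma \in \mathrm{desc}_j(k,\alpha)} B\bigl(z_\gamma^j,\, c\delta^j\bigr),
\]
for a small constant $c \in (0,1)$. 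These open sets contain $B(z_\alpha^k, c\delta^k)$, and since any scale-$j$ descendant of $(k,\alpha)$ lies within distance $\sum_{i=k+1}^{j} \delta^i \le \delta^{k+1}/(1-\delta)$ of $z_\alpha^k$ (triangle inequality along the tree), $\hat Q_\alpha^k$ has diameter bounded by a constant multiple of $\delta^k$. The nesting properties (ii) and (iv) hold by the tree construction, and preliminary versions of (v) and (vi) hold with explicit constants depending on $c$ and $\delta$.

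The main obstacle is enforcing the pairwise disjointness (iii) at a fixed scale while preserving the covering (i), the ball inclusion (vi), and the diameter bound (v). I would first fix $c$ and $\delta$ small enough that the effective reach $c\delta^k + \delta^{k+1}/(1-\delta)$ of a scale-$k$ provisional cube is strictly less than $\delta^k/2$, so that sibling cubes $\hat Q_\alpha^k$ and $\hat Q_\beta^k$ ($\alpha \neq \beta \in I_k$) can only overlap in a thin boundary region far from each center. Any remaining overlap is removed by a deterministic tie-breaking rule: well-order each $I_k$ and set
\[
Q_\alpha^k := \hat Q_\alpha^k \setminus \bigcup_{\beta \prec \alpha} \overline{\hat Q_\beta^k}.
\]
The tie-breaking only affects points outside $B(z_\alpha^k, a_0 \delta^k)$ for some $a_0 > 0$, so property (vi) survives; (v) persists since cubes only shrink; (ii), (iii), (iv) hold by construction; and (i) follows from the full covering $\bigcup_\alpha B(z_\alpha^k, \delta^k) = X$ together with a standard measure-zero estimate for the tie-breaking boundaries, which is obtained from the doubling condition applied to the countable family of boundary sets. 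The most delicate part of the argument is balancing the parameters $(c,\delta,a_0,a_1)$ so that all six properties hold simultaneously and quantitatively.
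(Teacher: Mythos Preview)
The paper does not prove this lemma; it is quoted from Christ \cite{cr} (see also \cite{hk12}) and used as a black box. So there is no ``paper's proof'' to compare against, only the original construction.

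Your outline is in the right spirit, but the tie-breaking step as written has a genuine gap: it does not preserve the nesting property (ii). Suppose $i>k$ and $\gamma\in I_i$ is a descendant of $\alpha\in I_k$, so that $\hat Q_\gamma^i\subset \hat Q_\alpha^k$. After your tie-breaking, $Q_\alpha^k=\hat Q_\alpha^k\setminus\bigcup_{\beta\prec\alpha}\overline{\hat Q_\beta^k}$. But $\hat Q_\gamma^i$ can still meet $\overline{\hat Q_\beta^k}$ for some sibling $\beta\prec\alpha$ (the balls around fine-scale descendants of $\gamma$ may overlap balls around fine-scale descendants of $\beta$), and the \emph{independent} tie-breaking you perform at level $i$ has no reason to excise exactly that overlap. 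So $Q_\gamma^i$ need not be contained in $Q_\alpha^k$, and nesting fails. Christ avoids this by never forming overlapping provisional cubes: instead, each fine-scale net point is assigned to a unique ancestor at every level via the tree, and the cube $Q_\alpha^k$ is built from exactly those points (and their small balls) whose level-$k$ ancestor is $\alpha$. Disjointness at each level and nesting across levels then come for free from the tree, with no post-hoc tie-breaking.

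A second, smaller gap: the claim that the uncovered set in (i) has measure zero is not a generic consequence of ``doubling applied to a countable family of boundary sets''. In Christ's argument this is the \emph{thin boundary} estimate $\mu\{x\in Q_\alpha^k: d(x,X\setminus Q_\alpha^k)\le t\delta^k\}\le Ct^\eta\mu(Q_\alpha^k)$, which requires a separate probabilistic/combinatorial argument using doubling; you should either invoke that estimate explicitly or point to the Hyt\"onen--Kairema construction \cite{hk12}, which produces genuinely partitioning (half-open) cubes and sidesteps the boundary issue entirely.
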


\begin{rem}\rm
(i) In the above lemma, we can require $\delta$ and $a_1$ to be as small as we
wish. This can been done by removing some generations, for instance,
$2k+1$-generations,  from the set; also see \cite[Theorem 1.2]{hk12}.

(ii) Under the doubling condition $(D)$,  it is easy to
see via  conditions (iii) and (v) above that
$X=\cup_{\alpha}\bar{Q}_\alpha^k$ for each $k$.
\end{rem}

The doubling condition allows us to conclude the following bounded overlap
property for the balls $B(z^k_\alpha, a_1\delta^k)$ that contain $B(z^k_\alpha, a_0\delta^k)$ from Lemma \ref{dya-cube}.

\begin{prop}\label{bdd-overlap}
Let $(X,d,\mu)$  be a  Dirichlet metric measure space satisfying $(D_Q)$ for some $Q\ge 2$.
For each $\alpha$ and $k$, let $B^k_\alpha=B(z^k_\alpha, a_1\delta^k)$.
Then for each dilation $t>1$, there exists a constant $C(t,a_0,a_1,C_Q,Q)>0$ such
that for each $k$,
$$\sum_{\alpha}\chi_{tB_\alpha^k}(x)\le C(t,a_1,C_Q,Q).$$
\end{prop}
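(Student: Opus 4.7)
The plan is to exploit the essential feature of Christ's cubes: although the dilated balls $tB^k_\alpha$ may overlap, the concentric \emph{small} balls $B(z^k_\alpha,a_0\delta^k)$ contained in the cubes $Q^k_\alpha$ are pairwise disjoint at each fixed level $k$, because the cubes themselves are. The overlap count for $tB^k_\alpha$ will then be controlled by packing these disjoint small balls into a common ball, and estimating via the doubling condition $(D_Q)$.

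Fix $x\in X$ and $k\in\mathbb{Z}$, and consider the set $\mathcal{A}_x:=\{\alpha\in I_k:\,x\in tB^k_\alpha\}$. For each $\alpha\in\mathcal{A}_x$, the definition gives $d(x,z^k_\alpha)<ta_1\delta^k$. Consequently
$$B(z^k_\alpha,a_0\delta^k)\subset B(x,(ta_1+a_0)\delta^k),$$
and the balls on the left-hand side, as $\alpha$ ranges over $\mathcal{A}_x$, are pairwise disjoint by property (iii) of Lemma \ref{dya-cube} (since $B(z^k_\alpha,a_0\delta^k)\subset Q^k_\alpha$). Summing their measures therefore yields
$$\sum_{\alpha\in\mathcal{A}_x}\mu\bigl(B(z^k_\alpha,a_0\delta^k)\bigr)\le \mu\bigl(B(x,(ta_1+a_0)\delta^k)\bigr).$$

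To turn this into a cardinality bound, I compare each small ball with the big one by doubling. Since $x\in tB^k_\alpha$, we have $B(x,(ta_1+a_0)\delta^k)\subset B(z^k_\alpha,(2ta_1+a_0)\delta^k)$, so $(D_Q)$ gives
$$\mu\bigl(B(x,(ta_1+a_0)\delta^k)\bigr)\le \mu\bigl(B(z^k_\alpha,(2ta_1+a_0)\delta^k)\bigr)\le C_Q\Bigl(\tfrac{2ta_1+a_0}{a_0}\Bigr)^{Q}\mu\bigl(B(z^k_\alpha,a_0\delta^k)\bigr).$$
Hence each summand on the left of the displayed inequality above is bounded below by a fixed multiple of $\mu(B(x,(ta_1+a_0)\delta^k))$, and rearranging yields
$$\#\mathcal{A}_x\le C_Q\Bigl(\tfrac{2ta_1+a_0}{a_0}\Bigr)^{Q}=:C(t,a_0,a_1,C_Q,Q),$$
which is exactly the claimed bound $\sum_\alpha \chi_{tB^k_\alpha}(x)\le C(t,a_0,a_1,C_Q,Q)$.

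There is no real obstacle here; the only subtle point is choosing the enclosing radius $(ta_1+a_0)\delta^k$ large enough to contain every small ball $B(z^k_\alpha,a_0\delta^k)$ indexed by $\mathcal{A}_x$, and then applying $(D_Q)$ in the correct direction to reverse-engineer a uniform lower bound on each small-ball measure relative to the enclosing one. Note that the constant depends on $t$, $a_0$, $a_1$, $C_Q$, $Q$ but is independent of $k$ and $x$, as required.
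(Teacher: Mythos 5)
Your proof is correct and follows essentially the same route as the paper: pack the pairwise disjoint small balls $B(z^k_\alpha,a_0\delta^k)$ into a single ball centered at $x$, then use $(D_Q)$ to bound each small ball's measure from below by a uniform multiple of the containing ball's measure. The only differences are cosmetic choices of enclosing radii ($(ta_1+a_0)\delta^k$ and $(2ta_1+a_0)\delta^k$ versus the paper's $2ta_1\delta^k$ and $4ta_1\delta^k$), which yield the same type of constant.
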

\begin{proof}
For each $x\in X$, let
$$C(x,k)=\sum_{\alpha}\chi_{tB_\alpha^k}(x).$$
Fix a point $x_0$ and $k\in \zz$, and consider the ball
$B(x_0,2ta_1\delta^k)$. Then there exist $C(x_0,k)$ distinct balls, say
$\{B^k_{\alpha_j}\}_{j\le C(x_0,k)}$, that are inside
$B(x_0,2ta_1\delta^k)$. Using the doubling condition and the properties (iii) and (vi) of
the dyadic cubes from Lemma \ref{dya-cube}, we see that
\begin{eqnarray*}
V(x_0,2ta_1\delta^k)&&\ge \sum_{j=1}^{C(x_0,k)}V(z^k_{\alpha_j},a_0\delta^k)
\ge \sum_{j=1}^{C(x_0,k)}\frac{1}{C_Q(4ta_1)^Q}\mu(4tB^k_{\alpha_j})\\
&&\ge \frac{C(x_0,k)a_0^Q}{C_Q(4ta_1)^Q}V(x_0,2ta_1\delta^k).
\end{eqnarray*}
Therefore, we conclude that
$$C(x_0,k)\le C_Q(4ta_1/a_0)^Q,$$
which completes the proof.
\end{proof}

We need the following geometric consequence of doubling; see \cite{hkst} for
instance.
\begin{lem}\label{geometric-doubling}
 Let $(X,d,\mu)$  be a  doubling Dirichlet metric measure space.
 Then there exists a constant $N_\mu$ such that for each $k\ge 1$, every $2^{-k}r$-separated set in any ball $B(x,r)$ in $X$ has at most
$N_\mu^k$ elements.
\end{lem}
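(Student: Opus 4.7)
The plan is to use the elementary packing argument that underlies every proof of ``doubling implies geometric doubling." Let $S=\{y_1,\dots,y_N\}$ be a $2^{-k}r$-separated set in $B(x,r)$. Setting $\rho:=2^{-k-1}r$, the balls $B(y_i,\rho)$ are pairwise disjoint (by separation) and all contained in $B(x,2r)$ (since $y_i\in B(x,r)$ and $\rho\le r/2$). Hence
\[
\sum_{i=1}^N V(y_i,\rho)\le V(x,2r).
\]

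Next I would use $(D)$ twice, in opposite directions, to compare each $V(y_i,\rho)$ to $V(x,2r)$. First, since $y_i\in B(x,r)$, we have $B(x,2r)\subset B(y_i,4r)$, so iterating $(D)$ gives $V(x,2r)\le V(y_i,4r)\le C_D^2\, V(y_i,r)$. Second, iterating $(D)$ in the other direction $k+1$ times yields $V(y_i,r)\le C_D^{k+1}\, V(y_i,\rho)$, so altogether
\[
V(y_i,\rho)\ge C_D^{-(k+3)}\,V(x,2r).
\]
Substituting into the disjointness bound gives $N\le C_D^{k+3}$, and for $k\ge 1$ this is at most $(C_D^{4})^k$, so the lemma holds with $N_\mu:=C_D^{4}$ (any constant $N_\mu\ge C_D$ absorbing the additive ``+3'' into the exponent works).

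There is no substantive obstacle: the argument is a standard volume-packing estimate, and doubling of $\mu$ gives exactly the tool needed. The only mild bookkeeping is to confirm that a fixed multiplicative loss ($C_D^3$ coming from the two boundary applications of doubling) can be absorbed into a single geometric factor $N_\mu^k$ valid for all $k\ge 1$, which is immediate from the monotonicity $k+3\le 4k$. Note also that the proof uses only the doubling property and requires neither the Dirichlet form structure nor any Poincar\'e-type hypothesis; it is therefore purely metric.
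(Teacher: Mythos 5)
Your argument is correct and is precisely the standard volume-packing proof that the paper delegates to the reference \cite{hkst} without reproducing. The bookkeeping checks out: disjointness of the balls $B(y_i,2^{-k-1}r)$, containment in $B(x,2r)$, and the two iterated applications of $(D)$ give $N\le C_D^{k+3}\le (C_D^4)^k$ for $k\ge 1$, so $N_\mu=C_D^4$ works; your observation that only metric doubling is used is also accurate.
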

We shall make use of the Hardy-Littlewood maximal functions.
\begin{defn}[Hardy-Littlewood maximal function]
For any locally integrable function $f$ on $X$, its Hardy-Littlewood maximal function is defined as
$$\mathcal{M}f(x):=\sup_{B:\,x\in B}\fint_B|f|\,d\mu,$$
where $B$ is any ball. For $p>1$, we define the $p$-Hardy-Littlewood maximal function as
$$\mathcal{M}_pf(x):=\sup_{B:\, x\in B}\left(\fint_B|f|^p\,d\mu\right)^{1/p}.$$
\end{defn}

Using the Poincar\'e inequality, it readily follows that $\mathcal{M}_2(|\nabla f|)$ generates a
Haj\l asz gradient in the following sense; see Appendix \ref{appendix-sobolev}. The proof uses a  telescopic argument, which is by now  classical;
see for instance \cite{hak} and the monographs \cite{bb10,hkst}.
\begin{lem}\label{hajlasz-gradient}
Assume that $(X,d,\mu,\E)$ satisfies $(D)$
and $(P_{2,\loc})$. Then for each $\beta\in (0,1)$ and $r_0>0$, there exists $C=C(C_D,\beta,C_P(r_0))>0$
such that, for all $f\in W^{1,2}(B)$,   $B=B(x_0,r)$,  it holds for almost all
$x,y\in \beta B$, that
$$|f(x)-f(y)|\le Cd(x,y)\left(\mathcal{M}_2(|\nabla f|\chi_B)(x)+\mathcal{M}_2(|\nabla f|\chi_B)(y)\right).$$
Moreover, if $f$ is continuous on $B$, then the above inequality holds for all $x,y\in \beta B$.
\end{lem}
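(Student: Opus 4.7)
The plan is to implement the classical Hajłasz telescoping-chain argument, adapted to the localisation to $\beta B$. Assume without loss of generality that $r<r_0$, so that every ball used below has radius less than $r_0$ and $(P_{2,\loc})$ applies with the uniform constant $C_P(r_0)$.

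First I would fix Lebesgue points $x,y\in\beta B$ and set $d:=d(x,y)$, splitting into a \emph{small-separation} regime $d\le (1-\beta)r/8$ and a \emph{large-separation} regime $d>(1-\beta)r/8$. In the small-separation regime, the balls $B_k^x:=B(x,2^{-k}d)$ and $B_k^y:=B(y,2^{-k}d)$ are all contained in $B$. By Lebesgue differentiation, $f(x)=\lim_k f_{B_k^x}$ a.e., and telescoping gives
$$|f(x)-f_{B_0^x}|\le \sum_{k=0}^\infty |f_{B_{k+1}^x}-f_{B_k^x}|.$$
Each consecutive-scale difference is bounded, via volume doubling and $(P_{2,\loc})$ applied to $B_k^x$, by
$$|f_{B_{k+1}^x}-f_{B_k^x}|\le C_D\fint_{B_k^x}|f-f_{B_k^x}|\,d\mu\le C\cdot 2^{-k}d\left(\fint_{B_k^x}|\nabla f|^2\,d\mu\right)^{1/2}\le C\cdot 2^{-k}d\cdot \mathcal{M}_2(|\nabla f|\chi_B)(x),$$
where in the last step I use $B_k^x\subset B$. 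Summing the geometric series yields $|f(x)-f_{B_0^x}|\le Cd\,\mathcal{M}_2(|\nabla f|\chi_B)(x)$, and symmetrically for $y$. Since $B_0^x,B_0^y\subset B(x,2d)\subset B$, one final application of $(P_{2,\loc})$ on $B(x,2d)$ (combined with doubling to compare $f_{B_0^x}$, $f_{B_0^y}$, and $f_{B(x,2d)}$) bridges the two telescopes and gives the claimed inequality.

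In the large-separation regime, $d\sim r$, so I would connect $x$ and $y$ to an anchor ball such as $B(x_0,r/2)$ through uniformly many balls (a number depending only on $\beta$) lying inside $B$, running the same telescoping argument on each end. The resulting bound reads
$$|f(x)-f(y)|\le Cr\bigl(\mathcal{M}_2(|\nabla f|\chi_B)(x)+\mathcal{M}_2(|\nabla f|\chi_B)(y)\bigr)\le C'd\bigl(\mathcal{M}_2(|\nabla f|\chi_B)(x)+\mathcal{M}_2(|\nabla f|\chi_B)(y)\bigr),$$
using $d\sim r$ in this regime. Finally, for continuous $f$, the right-hand side is lower semicontinuous in $(x,y)$ while the left-hand side is continuous on $\beta B\times \beta B$; hence the a.e.\ inequality upgrades to a pointwise one on all of $\beta B$.

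The main obstacle is the geometric bookkeeping: every ball in each telescoping chain must lie inside $B$ so that truncation by $\chi_B$ does not break the Poincaré step and the maximal function $\mathcal{M}_2(|\nabla f|\chi_B)$ remains controlling. This is precisely what forces the split into the two regimes above, calibrated by $(1-\beta)r$, and is what makes the constant depend on $\beta$; once this is in place, the rest is routine summation of a geometric series.
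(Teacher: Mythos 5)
The paper does not write out a proof for this lemma; it simply cites the classical telescoping argument of Haj\l asz--Koskela and the standard monographs. Your proposal implements exactly that argument, and the core of it---the small-separation telescoping through the shrinking balls $B(x,2^{-k}d)$, $B(y,2^{-k}d)$, the doubling-plus-Poincar\'e estimate of consecutive averages, the geometric series, and the bridge through $B(x,2d)$---is correct and matches the standard route.

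Two points need repair. First, your argument for the continuity upgrade is backwards. You claim the right-hand side $C\,d(x,y)\bigl(\mathcal{M}_2(|\nabla f|\chi_B)(x)+\mathcal{M}_2(|\nabla f|\chi_B)(y)\bigr)$ is lower semicontinuous and conclude from this that an a.e.\ inequality against a continuous left-hand side upgrades to a pointwise one. But lower semicontinuity of the right-hand side is exactly the \emph{wrong} kind of semicontinuity for this deduction: if $g$ is continuous and $h$ is merely lower semicontinuous with $g\le h$ a.e., one cannot conclude $g\le h$ everywhere (take $g\equiv 1$ and $h=2$ off a single point, $h=0$ at that point; $h$ is lsc, $g\le h$ a.e., yet it fails at that point). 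You would need upper semicontinuity of the right-hand side, which the maximal function does not have. The correct and simpler route is to observe that the telescoping argument uses the a.e.\ identity $f(x)=\lim_k f_{B(x,2^{-k}d)}$, which holds precisely at Lebesgue points of $f$; when $f$ is continuous every point is a Lebesgue point, so the telescoping (and hence the inequality) holds at \emph{every} pair $x,y\in\beta B$ directly, with no limiting argument needed.

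Second, a minor slip in the large-separation regime: the anchor $B(x_0,r/2)$ need not contain $x$ or $y$ when $\beta>1/2$, so one should take instead $B(x_0,\gamma r)$ with some $\gamma\in(\beta,1)$, e.g.\ $\gamma=(1+7\beta)/8$, which contains $\beta B$ and the balls $B(x,s_0)$, $B(y,s_0)$ with $s_0=(1-\beta)r/8$ and is itself inside $B$. Also, ``uniformly many balls'' is not quite the right mechanism: intermediate chain balls between $x$ and $y$ would generally contain neither $x$ nor $y$, so the Poincar\'e averages over them are not controlled by $\mathcal{M}_2(|\nabla f|\chi_B)$ evaluated at $x$ or $y$. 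One should instead compare $f_{B(x,s_0)}$ and $f_{B(y,s_0)}$ directly to $f_{B(x_0,\gamma r)}$---a single anchor that contains both $x$ and $y$---so the Poincar\'e average over the anchor is dominated by the maximal function at either point. With these two corrections your proof is complete.
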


Let us now turn to the proof of the main gradient estimate. Recall that, under $(D_Q)$ together with $(P_{2,\loc})$,
every solution $f$ to the equation $\mathcal{L} f=g$ with $g\in L^\fz$ is locally H\"older
continuous according to Lemma \ref{l2.7}: there exists a modification $\tilde f$ such that $\tilde f=f$ a.e., and
every point is a Lebesgue point of $\tilde f$.
Thus, in what follows, we may assume that every point is a Lebesgue point of our solution.

\begin{proof}[Proof of Theorem \ref{harmonic-pnorm}]
For simplicity, we assume that $a_1=1$ and $\delta=\frac 14$ in  Lemma \ref{dya-cube}.

We divide the proof into  five  steps.
In first four steps we prove  that the statement is valid  under the additional assumption that $g\in L^\infty(2B)$.
Then in the last step we use truncations to remove this additional assumption and conclude the proof.

\

 {\bf Step 1.} {\it Construction of a chain of balls.}

 Let $k_0=[-\log_4r]$ be the largest integer smaller than $-\log_4r$, and
set $B^{k_0}=B(x_0,6r)$. Fix a dyadic decomposition as in Lemma \ref{dya-cube}.
 For each $k>k_0,$ let
 $$I_{B,k}:=\{\alpha:\, Q_\alpha^{k+2}\cap B(x_0,r)\neq \emptyset\},$$
 and
 $$\mathcal{F}_k=\left\{B_\alpha^k:=B(z_\alpha^{k+2},2^{-2k}):\, \alpha\in I_{B,k}\right\}.$$
 Then, by Proposition \ref{bdd-overlap}, we see that for each $k>k_0$, it
holds that
 $$\sum_{\alpha\in I_{B,k}:\,B_\alpha^k\in \mathcal{F}_k}\chi_{B_\alpha^k}(x)\le C_Q(64/a_0)^Q.$$
 From the properties of our dyadic cubes (Lemma \ref{dya-cube}),
we see that:

 (i) $B(x_0,r)\subset \cup_{\alpha\in I_{B,k}}B_\alpha^k$ for all $k>k_0$;

(ii) for each $B_\alpha^k\in \mathcal{F}_k$, there exist
balls $B_\alpha^j\in \mathcal{F}_j$, $k_0< j<k$, such that for all
$k_0<j<k$, $B_\alpha^{j+1}\subset \frac 13B^{j}_\alpha$, and
$B_\alpha^{k_0+1}\subset \frac{1}{3}B^{k_0}=2B$.

We call the collection $B_{\alpha}^{k_0+1},\dots,B_{\alpha}^{k-1}$ a chain
associated to $B_{\alpha}^k$ (and hence to $Q_{\alpha}^k$).

{\bf Proof of (ii):} If $B_\alpha^k\in \mathcal{F}_k$, then $Q_\alpha^{k+2}\cap B(x_0,r)\neq \emptyset$.
Therefore, there exists $Q_\alpha^{k+1}$ that contains  $Q_\alpha^{k+2}$ and hence,
$Q_\alpha^{k+1}\cap B(x_0,r)\neq \emptyset$ and $d(z_\alpha^{k+2},z_\alpha^{k+1})\le 2^{-2k-2}$ (by Lemma \ref{dya-cube} (v)).

For each $x\in B_\alpha^k$,
$$d(x,z_\alpha^{k+1})<2^{-2k-2}+2^{-2k}=\frac 542^{-2k}< \frac{5}{16}2^{-2k+2}.$$
 From this, we conclude that $\frac 13B_\alpha^{k-1}\supset B_\alpha^k$.

In what follows, for each $B_\alpha^k\in \mathcal{F}_k$, we fix
a chain from (ii).

\

 {\bf Step 2.} {\it Construction of a Haj\l asz  gradient via the chain.}

We first assume that $\mathcal{L} f=g$ in $6B=B^{k_0}$, $B=B(x_0,r)$,
and $g\in L^\infty(6B)$. In the last step of the proof, we will complete the proof by using $2B$
instead of $6B$.

Let $f_{k_0}\in W^{1,2}_0(B(x_0,6r))$ be the solution to
$$\mathcal{L} f_{k_0}=g$$
in  $B(x_0,6r)$; the existence of a unique solution is guaranteed by Lemma \ref{exitence-poisson}.
For each $k>k_0$ and $B_\alpha^k\in \mathcal{F}_k$, $a\in I_{B,k}$, we let
 $f_{\alpha,k}\in W^{1,2}_0(B^k_\alpha)$ be the solution to the Poisson equation
 $$\mathcal{L} f_{\alpha,k}=g$$
in  $B^k_\alpha.$
 Then by Lemma \ref{l2.6}, we see that for each $k\ge k_0$ and every $a\in I_{B,k}$,
\begin{equation}\label{van-sol}
\fint_{B^k_\alpha}|f_{\alpha,k}|\,d\mu\le C2^{-4k}\left(\fint_{B^k_\alpha}|g|^{q}\right)^{1/q}.
 \end{equation}
 In what follows, for consistency, we will write $f_{k_0}$ as $f_{\alpha,k_0}$, $\alpha\in I_{B,k_0}$,
 although there is only one element in $I_{B,k_0}$.

Define a function $w_{k_0}$ on $6B=B(x_0,6r)$ by setting
$$w_{k_0}(x)=\mathcal{M}_2(|\nabla (f-f_{\alpha,k_0})|\chi_{ 3B})(x).$$
For each $k>k_0$ and every $\alpha\in I_{B,k}$, let $\alpha'\in I_{B,k-1}$
the unique one such that  $Q^{k+2}_\alpha\subset Q^{k+1}_{\alpha'}$.
Define
 $$w_k(x):=\sum_{\alpha\in I_{B,k}: B^k_\alpha\in \mathcal{F}_k}
 \mathcal{M}_2(|\nabla (f_{\alpha,k}-f_{\alpha',k-1})|\chi_{\frac 12B^k_\alpha})(x)\chi_{B^k_\alpha}(x)$$
on $6B.$

 We also set
 $$G_3(x):=\sum_{j=2k_0-4}^{\infty}2^{-j}
 \left(\fint_{B(x,2^{-j})}|h|^{q}\,d\mu\right)^{1/q},$$
where  $h$ is the zero extension of $g$ to
$X\setminus 6B.$

\

{\it Claim:}
There exists $C=C(C_D,C_{LS},C_P(1))>0$ such that for  all $x,\,y\in B(x_0,r)$ with $d(x,y)<1/2$, it holds that
\begin{equation} \label{arvio}
|f(x)-f(y)|\le Cd(x,y)\left\{G_3(x)+G_3(y)+\sum_{k\ge k_0}w_k(x)+\sum_{k\ge k_0}w_k(y)\right\}.
\end{equation}

{\bf Proof of the claim:} Let $x,y\in B$  such that $d(x,y)<1/2$.
If $d(x,y)\ge \frac r{64}$, then
$$|f(x)-f(y)|\le |(f-f_{\alpha,k_0})(x)-(f-f_{\alpha,k_0})(y)|+|f_{\alpha,k_0}(x)-f_{\alpha,k_0}(y)|,$$
where  by Lemma \ref{hajlasz-gradient} with $(P_{2,\loc})$ for balls with radii at most one and $\beta=\frac 12$, we have
\begin{eqnarray*}
&&|(f-f_{\alpha,k_0})(x)-(f-f_{\alpha,k_0})(y)|\\
&&\quad\le Cd(x,y)\left[\mathcal{M}_2\left(|\nabla(f-f_{\alpha,k_0})|\chi_{3B}\right)(x)+\mathcal{M}_2
\left(|\nabla(f-f_{\alpha,k_0})|\chi_{ 3B}\right)(y)\right],
\end{eqnarray*}
and by  Proposition \ref{pointwise-bound} and \eqref{van-sol} we have
$$|f_{\alpha,k_0}(x)-f_{\alpha,k_0}(y)|\le Cd(x,y)[G_3(x)+G_3(y)].$$
The above two estimates complete the case $d(x,y)\ge \frac r{64}$.

Suppose now $d(x,y)<\frac r{64}$ and $d(x,y)<1/2$. Then there exists  $k>k_0$ such that $1/2^{2k+6}\le d(x,y)< 1/2^{2k+4}$.
From the properties of dyadic cubes, Lemma \ref{dya-cube}, we see that
there exists a cube $Q_\alpha^{k+2}$ such that $x\in \bar{ Q}_\alpha^{k+2}$.
Noticing that $B^k_\alpha=B(z_\alpha^{k+2},2^{-2k})$,
we see that
$$d(y,z_\alpha^{k+2})\le d(x,y)+d(x,z_\alpha^{k+2})<\frac 1{2^{2k+4}}+\frac 1{2^{2k+4}}=\frac 1{2^{2k+3}},$$
and hence, $x,y\in \frac 13B^k_\alpha$.

Let $\{B_\alpha^j\in \mathcal{F}_j\}_{k_0\le j< k}$ be the chain of
balls such that $\frac 13B_\alpha^j\supset B_\alpha^{j+1}$, whose
existence is guaranteed by {\bf Step 1} (ii). Applying a telescopic
argument, we obtain
\begin{eqnarray*}
|f(x)-f(y)|&&\le |(f-f_{\alpha,k_0})(x)-(f-f_{\alpha,k_0})(y)|+\sum_{j=k_0}^{k-1}|(f_{\alpha,j}-f_{\alpha,j+1})(x)-(f_{\alpha,j}-f_{\alpha,j+1})(y)|\\
&&\quad+|f_{\alpha,k}(x)-f_{\alpha,k}(y)|.
\end{eqnarray*}
By using Lemma \ref{hajlasz-gradient} with $(P_{2,\loc})$ for balls with radii at most one,
$\beta=\frac{2}{3}$ for $k>k_0$ and $\beta=\frac 12$ for $k_0$, we conclude that
\begin{eqnarray*}
&&   |(f-f_{\alpha,k_0})(x)-(f-f_{\alpha,k_0})(y)|+\sum_{j=k_0}^{k-1}|(f_{\alpha,j}-f_{\alpha,j+1})(x)-(f_{\alpha,j}-f_{\alpha,j+1})(y)|\\
&&\quad\le Cd(x,y)\left\{w_{k_0}(x)+w_{k_0}(y)+\sum_{k_0<j\le k}w_j(x)+\sum_{k_0<j\le k}w_j(y)\right\}\\
&&\quad \le  Cd(x,y)\left\{\sum_{k\ge k_0}w_k(x)+\sum_{k\ge k_0}w_k(y)\right\}.
\end{eqnarray*}

On the other hand, by using \eqref{van-sol}, Proposition \ref{pointwise-bound}
and that $d(x,y)\approx 2^{-2k}$, we see that
\begin{eqnarray*}
|f_{\alpha,k}(x)-f_{\alpha,k}(y)|
&&\le |f_{\alpha,k}(x)|+|f_{\alpha,k}(y)|\\
&&\le Cd(x,y)\left\{\sum_{j=2k}^\infty 2^{-j}\left(\fint_{B(x,2^{-j})}|g|^{q}\,d\mu\right)^{1/q}+\sum_{j=2k}^\infty 2^{-j}\left(\fint_{B(y,2^{-j})}|g|^{q}\,d\mu\right)^{1/q}\right\}\\
&&\le Cd(x,y)\left\{G_3(x)+G_3(y)\right\}.
\end{eqnarray*}
The above two estimates imply the claim.

\

{\bf Step 3.} {\it Claim:} For $\tilde{q}\in (q,p]$ with $1/\tilde{q}-1/p<
1/Q,$  we have the estimate
\begin{eqnarray*}
\left\|G_3+\sum_{k\ge k_0}w_k\right\|_{L^p(B(x_0,r))}&&\le
\frac {C(\tilde q)[V(x_0,6r)]^{1/p}}{r}\left(\fint_{B(x_0,6r)}|f|\,d\mu+
r^2\left(\fint_{B(x_0,6r)}|g|^{\tilde{q}}\,d\mu\right)^{1/\tilde{q}}\right).
\end{eqnarray*}

We begin by estimating the $L^p$-norm of the second term on the left-hand side.
For a ball $B^k_\alpha\in \mathcal{F}_k$, let
$B^{k-1}_{\alpha'}=B^{k-1}_{\alpha'(\alpha)}\in \mathcal{F}_{k-1}$ be the ball
from the definition of $w_k;$ then it satisfies $\frac
13B^{k-1}_{\alpha'}\supset B^k_\alpha\in \mathcal{F}_k$.
Notice that $f_{k,\alpha}-f_{\alpha,k-1}$ is harmonic on $B_{\alpha}^{k}.$
Hence $(RH_{p_0})$ \eqref{van-sol} and the boundedness of the usual
Hardy-Littlewood maximal operator on $L^{p/2}$ with $p> 2$  gives,
for all $k>k_0$ and
$\alpha\in I_{B,k}$, that
\begin{eqnarray*}
\int_{X} \left[\mathcal{M}_2(|\nabla (f_{\alpha,k}-f_{\alpha,k-1})|\chi_{\frac 12B^k_\alpha})(x)\chi_{B^k_\alpha}(x)\right]^p\,d\mu(x)&&\le \int_{{\frac 12B^k_\alpha}} |\nabla (f_{\alpha,k}-f_{\alpha,k-1})|^p\,d\mu(x)\\
&&\le C\mu(B^k_\alpha)\left(\frac{2^{2k}}{\mu(B^k_\alpha)}\int_{ B^k_\alpha}| (f_{\alpha,k}-f_{\alpha,k-1})|\,d\mu\right)^p\\
&&\le C\mu(B^{k-1}_{\alpha'})2^{-2pk}\left(\frac{1}{\mu(B^{k-1}_{\alpha'})}\int_{ B^{k-1}_{\alpha'}}| g|^{q}\,d\mu\right)^{p/q}.
\end{eqnarray*}
Combining this with the fact that the sets $\{B_\alpha^k\}_{\alpha\in I_{B,k}}$
have uniformly  bounded overlaps for each $k$,
we have that for each $k> k_0$ and for each $2<p\le p_0$,
\begin{eqnarray}\label{wk-est}
\left\|w_k\right\|_{L^p(B(x_0,6r))}^p&&\le C(\mu)
\int_{B(x_0,6r)}\left(\sum_{\alpha\in I_{B,k}: B^k_\alpha\in \mathcal{F}_k}\mathcal{M}_2(|\nabla (f_{\alpha,k}-f_{\alpha,k-1})|\chi_{\frac 12B^k_\alpha})(x)\chi_{B^k_\alpha}(x)\right)^p\,d\mu(x)\nonumber\\
&&\le C(\mu)\sum_{\alpha\in I_{B,k}: B^k_\alpha\in \mathcal{F}_k}
\int_{B^k_\alpha}\left(\mathcal{M}_2(|\nabla (f_{\alpha,k}-f_{\alpha,k-1})|\chi_{\frac 12B^k_\alpha})(x)\right)^p\,d\mu(x)\nonumber\\
&&\le  C(\mu)\sum_{\alpha\in I_{B,k}:B^{k}_\alpha\in \mathcal{F}_k}\mu(B^{k-1}_{\alpha'(\alpha)})2^{-2pk}\left(\frac{1}{\mu(B^{k-1}_{\alpha'(\alpha)})}\int_{ B^{k-1}_{\alpha'(\alpha)}}| g|^{q}\,d\mu\right)^{p/q}\nonumber\\
&&\le C(\mu)\sum_{\alpha\in I_{B,k}:B^k_\alpha\in \mathcal{F}_k}\mu(B^{k-1}_{\alpha'(\alpha)})2^{-2pk}\left(\frac{1}{\mu(B^{k-1}_{\alpha'(\alpha)})}\int_{ B^{k-1}_{\alpha'(\alpha)}}| g|^{\tilde{q}}\,d\mu\right)^{p/\tilde{q}}\nonumber\\
&&\overset{\text{doubling}}{\leq} C(\mu)\sum_{\alpha\in I_{B,k}:B^k_\alpha\in \mathcal{F}_k}V(x_0,6r)^{1-\frac p{\tilde{q}}}\frac{2^{2kQ(\frac p{\tilde{q}}-1)}}{r^{Q(\frac p{\tilde{q}}-1)}}2^{-2pk}\left(\int_{ B^{k-1}_{\alpha'(\alpha)}}| g|^{\tilde{q}}\,d\mu\right)^{p/\tilde{q}}\nonumber\\
&&\le C(\mu)V(x_0,6r)^{1-\frac p{\tilde{q}}}\frac{2^{2kQ(\frac p{\tilde{q}}-1)}}{r^{Q(\frac p{\tilde{q}}-1)}}2^{-2pk}\left(\int_{B(x_0,6r)}| g|^{\tilde{q}}\,d\mu\right)^{p/\tilde{q}}.
\end{eqnarray}
Above the last inequality relies on $\tilde{q}\le p$ and the fact that
$$\sum_{\alpha \in I_{B,k}: B^k_\alpha\in \mathcal{F}_k}\chi_{B_{\alpha'(\alpha)}^{k-1}}(x)\le C(\mu,a_0).$$
Indeed, since $B^k_\alpha\subset \frac 13B_{\alpha'(\alpha)}^{k-1}$, we have $Q^{k+2}_\alpha\subset \frac 13B_{\alpha'}^{k-1}$.
For each $\alpha'\in I_{B,k-1}$, let
\begin{eqnarray*}
I_{\alpha',k}:=\left\{\alpha: \alpha \in I_{B,k}, B^k_\alpha\subset \frac 13B_{\alpha'}^{k-1}\right\}.
\end{eqnarray*}
By  using  dyadic cubes again, we see that
\begin{eqnarray*}
\mu(B_{\alpha'}^{k-1})&&\ge \sum_{\alpha\in I_{\alpha',k}}V(z^{k+2}_\alpha,a_02^{-2k-4})\ge  \sum_{\alpha\in I_{\alpha',k}}\frac{C(\mu)}{(2^8a_0)^Q}V(z^{k+2}_\alpha,2^{-2k+4})\ge \sum_{\alpha\in I_{\alpha',k}}\frac{C(\mu)a_0^Q}{(2^8)^Q}\mu(B_{\alpha'}^{k-1}),
\end{eqnarray*}
which implies that $\#(I_{\alpha',k})\le \frac{2^{8Q}}{C(\mu)(a_0)^Q}$. Therefore, we conclude that
\begin{eqnarray*}
\sum_{\alpha \in I_{B,k}: B^k_\alpha\in \mathcal{F}_k}\chi_{B_{\alpha'(\alpha)}^{k-1}}(x)&&\le
\sum_{\alpha' \in I_{B,k-1}: B^{k-1}_{\alpha'}\in \mathcal{F}_{k-1}}\chi_{B_{\alpha'}^{k-1}}(x)\cdot\#(I_{\alpha',k})\le C(\mu,a_0).
\end{eqnarray*}

By \eqref{wk-est}
\begin{eqnarray*}
\left\|w_k\right\|_{L^p(B(x_0,2r))}
&&\le C(\mu)V(x_0,6r)^{\frac 1p-\frac 1{\tilde{q}}}\frac{2^{2kQ(\frac 1{\tilde{q}}-\frac 1p)}}{r^{Q(\frac 1{\tilde{q}}-\frac 1p)}}2^{-2k}\left(\int_{B(x_0,6r)}| g|^{\tilde{q}}\,d\mu\right)^{1/\tilde{q}}.
\end{eqnarray*}
Therefore, by the Minkowski inequality,
$$\left\|\sum_{k>k_0}w_k\right\|_{L^p(B(x_0,r))}\le C(\mu)V(x_0,6r)^{\frac 1p-\frac 1{\tilde{q}}}r\left(\int_{B(x_0,6r)}| g|^{\tilde{q}}\,d\mu\right)^{1/\tilde{q}}$$
provided $q<\tilde{q}\le p$ and $\frac 1{\tilde{q}}-\frac 1p<\frac 1Q$.

By applying Lemma \ref{l2.6} and $(RH_{p})$, we conclude that
\begin{eqnarray}\label{w0-est}
\|w_{k_0}\|_{L^p(B(x_0,r))}&&\le V(x_0,2r)^{1/p}\frac {C}{r}\fint_{B(x_0,2r)}|f-f_{\alpha,k_0}|\,d\mu\nonumber\\
&&\le  \frac {CV(x_0,2r)^{1/p}}{r}\left(\fint_{B(x_0,6r)}|f|\,d\mu+
r^2\left(\fint_{B(x_0,6r)}|g|^{q}\,d\mu\right)^{1/q}\right),
\end{eqnarray}
which completes the estimate for the $L^p$-integral of the second term on the
left-hand side; recall that $q<\tilde{q}.$

Regarding the first term, an estimate similar to the one in Theorem \ref{t3.3} (see also \cite[Theorem 5.3]{hak}) yields
\begin{eqnarray*}
\|G_3\|_{L^p(B(x_0,r))}&&\le C(\mu)rV(x_0,6r)^{\frac 1p-\frac 1{\tilde{q}}}\left(\int_{B(x_0,6r)}| g|^{\tilde{q}}\,d\mu\right)^{1/\tilde{q}}.
\end{eqnarray*}

The claim then follows by combining the last inequality with \eqref{wk-est} and \eqref{w0-est}.

\

{\bf Step 4.} {\it Completion of the $L^\infty$ case.}

For each  $y_0\in B(x_0,r/2)$,  let $0\leq \psi_r\leq 1$ be a { one-parameter family of Lipschitz cut-off functions such that}
\begin{equation}\label{cutoff}
\aligned
&\psi_r(x)=1\text{ whenever } x\in B(y_0, \min\{r/4,1/8\}),\\
&\psi_r(x)=0\text{ whenever }x\in X\setminus B(y_0,\min\{r/2,1/4\}),\,\,\,\text{and }|\nabla\psi_r(x)|\leq\frac{C}{\min\{r,1\}}.
\endaligned\end{equation}
Then by {\bf Step 2}, we see that,
for all $x,\,y\in X$,
\begin{eqnarray*}
&&|(f\psi_r)(x)-(f\psi_r)(y)|\\
&&\quad\le Cd(x,y)\left\{\left(\frac{|f(x)|}{\min\{r,1\}}+G_3(x)+\sum_{k\ge k_0}w_k(x)\right)\chi_{2B}(x)+\left(\frac{|f(y)|}{\min\{r,1\}}+G_3(y)+\sum_{k\ge k_0}w_k(y)\right)\chi_{2B}(y)\right\}.
\end{eqnarray*}
Recall our assumption that $g\in
L^\infty(6B)$. Therefore, by applying Lemma \ref{exitence-poisson}
to $f_{\alpha,k_0}$ and Proposition \ref{l2.3} to $f-f_{\alpha,k_0}$, we see
that  $f\in L^\infty(2B)$. This, together with
{\bf Step 3}, implies that $f\psi_r\in W^{1,p}_0(B(y_0,\min\{r/2,1/4\}))$; see Appendix \ref{appendix-sobolev}.

By \eqref{arvio} and the pointwise estimate of the gradient of a Sobolev
function (see Appendix \ref{appendix-sobolev}) for
$f\psi_r$,  we conclude that
$$|\nabla f(x)|=|\nabla (f\psi_r)(x)|\le CG_3(x)+C\sum_{k\ge k_0}w_k(x)$$
for a.e. $x\in B(y_0,\min\{r/4,1/8\})$. By the arbitrariness of $y_0$, we see this estimate holds for a.e.  $x\in B(x_0,r/2)$.
This together with the estimate from
{\bf Step 3} yields
 \begin{eqnarray}\label{est-12}
\||\nabla f|\|_{L^p(B(x_0,r/2))}&& \le
\frac {CV(x_0,6r)^{1/p}}{r}\left(\fint_{B(x_0,6r)}|f|\,d\mu+
r^2\left(\fint_{B(x_0,6r)}|g|^{q}\,d\mu\right)^{1/q}\right).
\end{eqnarray}

%
%
%

Let us now replace $B(x_0,6r)$ on the R.H.S. by $2B$, $ B=B(x_0,r)$.
By using Lemma \ref{geometric-doubling}, we see that
$B(x_0,r)$ contains at most $N_\mu^5$ separate balls with radii  $r/32$.
Fix such a maximal collection, which we for simplicity assume to have exactly $N_\mu^5$ elements.
Denote these balls by $\{B(x_i,r/32)\}_{1\le i\le N_\mu^5}$.
Then
$$B(x_0,r)\subset \bigcup_{1\le i\le N_\mu^5}B(x_i,r/16).$$
By applying the estimate \eqref{est-12} to each $B(x_i, \frac{12r}{16})$ yields
 \begin{eqnarray*}
\||\nabla f|\|_{L^p(B(x_i,r/16))}&&\le
\frac {CV(x_i,\frac{12r}{16})^{1/p}}{r}\left(\fint_{B(x_i,\frac{12r}{16})}|f|\,d\mu+
r^2\left(\fint_{B(x_i,\frac{12r}{16})}|g|^{q}\,d\mu\right)^{1/q}\right)\\
&&\le \frac {CV(x_0,2r)^{1/p}}{r}\left(\fint_{B(x_0,2r)}|f|\,d\mu+
r^2\left(\fint_{B(x_0,2r)}|g|^{q}\,d\mu\right)^{1/q}\right).
\end{eqnarray*}
Therefore, we conclude that
\begin{eqnarray*}
\||\nabla f|\|_{L^p(B(x_0,r))}&&\le \sum_{i=1}^{N_\mu^5} \||\nabla f|\|_{L^p(B(x_i,r/16))}\\
&&\le \frac {CV(x_0,2r)^{1/p}}{r}\left(\fint_{B(x_0,2r)}|f|\,d\mu+
r^2\left(\fint_{B(x_0,2r)}|g|^{q}\,d\mu\right)^{1/q}\right),
\end{eqnarray*}
which completes the proof in the case $g \in L^\infty(2B)$.

\

{\bf Step 5.}  {\it Truncation argument.}

Once again, for each $k\in\cn$, let $g_k:=\chi_{\{|g|\le k\}}\,g$, and let $f_k\in W^{1,2}_0(2B)$ be the solution
to $\mathcal{L} f_k=g_k$ in $2B$.

By Lemma \ref{l2.6}, we see that there exists a solution
$f_0\in W^{1,2}_0(2B)$ to $\mathcal{L} f_0=g$ in $2B$,
with
\begin{equation}\label{perus}
\fint_{2B}|f_0|\,d\mu\le Cr \lf(\fint_{2B}|\nabla f_0|^2\,d\mu\r)^{1/2}\le Cr^2 \lf(\fint_{2B}|g|^{q}\,d\mu\r)^{1/q}.
\end{equation}
For each $k\in\cn$, by using Lemma \ref{l2.6} again, we obtain
$$\fint_{2B}|f_0-f_k|\,d\mu\le Cr \lf(\fint_{2B}|\nabla (f_0-f_k)|^2\,d\mu\r)^{1/2}\le Cr^2 \lf(\fint_{2B}|g-g_k|^{q}\,d\mu\r)^{1/q},$$
since $f_0-f_k\in W^{1,2}_0(2B)$. Consequently $f_k\to f_0$ in $W^{1,2}_0(2B)$.

By Lemma \ref{l2.6} and Theorem \ref{harmonic-pnorm}, we have for each $k\in\cn$ that
\begin{eqnarray*}
\left(\fint_{B(x_0,r)}|\nabla f_k|^{p}\,d\mu\right)^{1/p}&&\leq \frac {C}{r}\left(\fint_{B(x_0,2r)}|f_k|\,d\mu+
r^2\left(\fint_{B(x_0,2r)}|g_k|^{q}\,d\mu\right)^{1/q}\right)\\
&&\le Cr\left(\fint_{B(x_0,2r)}|g_k|^{q}\,d\mu\right)^{1/q}\le Cr\left(\fint_{B(x_0,2r)}|g|^{q}\,d\mu\right)^{1/q}.
\end{eqnarray*}
Letting $k\to \infty$, we conclude that
\begin{eqnarray*}
\left(\fint_{B(x_0,r)}|\nabla f_0|^{p}\,d\mu\right)^{1/p}&&\le Cr\left(\fint_{B(x_0,2r)}|g|^{q}\,d\mu\right)^{1/q}.
\end{eqnarray*}
By applying this together with $(RH_p)$ for the harmonic function $f-f_0$  on
$B(x_0,r)$ and \eqref{perus} we obtain
\begin{eqnarray*}
\left(\fint_{B(x_0,r)}|\nabla f|^{p}\,d\mu\right)^{1/p}&&\le \left(\fint_{B(x_0,r)}|\nabla (f-f_0)|^{p}\,d\mu\right)^{1/p}+\left(\fint_{B(x_0,r)}|\nabla f_0|^{p}\,d\mu\right)^{1/p}\\
&&\le \frac{C}{r}\fint_{B(x_0,2r)}|f-f_0|\,d\mu+Cr\left(\fint_{B(x_0,2r)}|g|^{q}\,d\mu\right)^{1/q}\\
&&\le \frac{C}{r}\fint_{B(x_0,2r)}|f|\,d\mu+Cr\left(\fint_{B(x_0,2r)}|g|^{q}\,d\mu\right)^{1/q},
\end{eqnarray*}
as desired.

\end{proof}

Corollary \ref{infinite-poisson} and Theorems \ref{harmonic-pnorm} yield the following quantitative H\"older regularity of solutions to the
Poisson equation.
\begin{cor}\label{cor-holder}
Let $(X,d,\mu,\E)$ be a Dirichlet metric measure space
satisfying $(D_Q)$ with $Q\ge 2$, and suppose that $(UE)$ holds. Assume that $(RH_{p})$ and $(P_{2,\loc})$ hold for some
$p\in (Q,\infty]$. Let $q>\max\{Q/2,1\}$ and $\alpha:=\alpha(p,q)=\min\{1-Q/p,2-Q/q\}$. If
$\mathcal{L} f=g$ in  $B(x_0,r)$ with $g\in L^q(B)$, then $f$ belongs to
$C^\alpha_\loc(B)$.
\end{cor}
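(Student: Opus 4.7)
The plan is to reduce H\"older regularity of $f$ to a Campanato-type decay of its mean oscillation on small balls, by combining the gradient estimates of Theorem~\ref{harmonic-pnorm} (when $p\in(Q,\infty)$) and Corollary~\ref{infinite-poisson} (when $p=\infty$) with a harmonic/Poisson splitting of $f$ and a standard iteration lemma. Set $\omega_f(y,r):=\inf_{c\in\rr}\fint_{B(y,r)}|f-c|\,d\mu$; by Campanato's characterisation it suffices to show $\omega_f(y,r)\le Cr^\alpha$ uniformly for $y$ in a compact subset of $B(x_0,r)$ and all sufficiently small $r$.

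Fix $y\in B(x_0,r/2)$ and $\rho>0$ small enough that $B(y,\rho)\subset B(x_0,r)$. I would split $f=h+w$ on $B(y,\rho)$, where $w\in W^{1,2}_0(B(y,\rho))$ solves $\mathcal{L} w=g$ in $B(y,\rho)$ (existence via Lemma~\ref{l2.6}, supplemented by a truncation argument parallel to Step~5 of the proof of Theorem~\ref{harmonic-pnorm} when $q\le 2$), and $h:=f-w$ is harmonic in $B(y,\rho)$. Using Lemma~\ref{exitence-poisson} when $q>\max\{Q/2,2\}$, or a standard Moser iteration based on $(UE)$ and the hypothesis $q>Q/2$ when $q\le 2$, one obtains in scale-invariant form
\[\|w\|_{L^\infty(B(y,\rho))}\le C\rho^2\Big(\fint_{B(y,\rho)}|g|^q\,d\mu\Big)^{1/q}\le C\|g\|_{L^q(B(x_0,r))}\rho^{2-Q/q},\]
where $(D_Q)$ is used, for $y$ in a compact subset of $B(x_0,r)$, to bound $V(y,\rho)^{-1/q}$ by a constant multiple of $\rho^{-Q/q}$.

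For the harmonic part, $(RH_p)$ applied to $h-c$ with an infimising constant $c$ yields
\[\Big(\fint_{B(y,\rho/2)}|\nabla h|^p\,d\mu\Big)^{1/p}\le\frac{C}{\rho}\,\omega_h(y,\rho),\qquad\omega_h(y,r):=\inf_{c}\fint_{B(y,r)}|h-c|\,d\mu.\]
Since $p>Q$, the Morrey-type Sobolev embedding available in doubling metric measure spaces supporting $(P_{2,\loc})$ (for instance through the Haj\l{}asz gradient of Lemma~\ref{hajlasz-gradient} and a telescoping argument) gives, for $\tau\le 1/4$ and $x_1,x_2\in B(y,\tau\rho)$,
\[|h(x_1)-h(x_2)|\le C\,d(x_1,x_2)^{1-Q/p}\,\rho^{Q/p}\Big(\fint_{B(y,\rho/2)}|\nabla h|^p\,d\mu\Big)^{1/p}\le C\tau^{1-Q/p}\omega_h(y,\rho),\]
whence $\omega_h(y,\tau\rho)\le C\tau^{1-Q/p}\omega_h(y,\rho)$. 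For $p=\infty$, Corollary~\ref{infinite-poisson} replaces $(RH_p)$ and gives the analogous estimate with exponent~$1$.

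Combining the two parts through $\omega_h(y,\rho)\le\omega_f(y,\rho)+\|w\|_{L^\infty(B(y,\rho))}$ produces, for every $\tau\le 1/4$,
\[\omega_f(y,\tau\rho)\le C\tau^{1-Q/p}\omega_f(y,\rho)+C\rho^{2-Q/q}.\]
A standard Campanato--Giaquinta iteration lemma, with $\tau$ chosen small enough to absorb the factor $C\tau^{1-Q/p}$, then yields $\omega_f(y,r)\le Cr^\alpha$ uniformly for $y\in B(x_0,r/2)$, so $f\in C^\alpha_{\loc}(B)$. I expect the main technical difficulty to be the Campanato-type decay for the harmonic part under $(RH_p)$ in the present non-smooth setting: the classical Morrey embedding must be deployed in a doubling metric measure space only through the Haj\l{}asz gradient and the weak Poincar\'e inequality, and it is precisely here that the assumption $p>Q$ enters in an essential way.
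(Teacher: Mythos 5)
Your proposal is correct and supplies a complete argument where the paper only gives a one-line citation of Theorem~\ref{harmonic-pnorm} and Corollary~\ref{infinite-poisson}. Your route is a Campanato-type iteration: on each small ball split $f=h+w$ with $h$ harmonic and $w\in W^{1,2}_0$ a Poisson solution, bound $\|w\|_{L^\infty}$ by $C\rho^{2-Q/q}$ (Lemma~\ref{exitence-poisson}, supplemented by Moser iteration for $Q/2<q\le 2$), obtain the decay $\omega_h(y,\tau\rho)\le C\tau^{1-Q/p}\omega_h(y,\rho)$ from $(RH_p)$ plus a Morrey/telescoping estimate, and then iterate. This differs from what a literal reading of the paper's reference would suggest: a single application of Theorem~\ref{harmonic-pnorm} followed by Morrey gives exponent $1-Q/p$, but only under the constraint $\tfrac1q-\tfrac1p<\tfrac1Q$ of that theorem; when $\tfrac1q-\tfrac1p\ge\tfrac1Q$ (i.e. $2-Q/q\le 1-Q/p$) one has to lower $p$ to $\tilde p<\frac{qQ}{Q-q}$ and so misses the endpoint $\alpha=2-Q/q$. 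Your splitting uses only $(RH_p)$ on $h$ (bypassing Theorem~\ref{harmonic-pnorm} altogether) and the iteration genuinely recovers the stated $\alpha=\min\{1-Q/p,2-Q/q\}$, so it is both more elementary and, at least at the face value of the stated exponent, the correct mechanism. An equivalent alternative keeping closer to the paper's citation is to apply Theorem~\ref{harmonic-pnorm} to $f-c$ on each small ball and take the infimum over $c$; this yields the same decay relation $\omega_f(y,\tau\rho)\le C\tau^{1-Q/p}\omega_f(y,\rho)+C\rho^{2-Q/q}$. Two small remarks: in the borderline case $1-Q/p=2-Q/q$ the standard iteration lemma produces a logarithmic loss — an issue for the corollary's statement as well, not for your proof specifically; and when you invoke Lemma~\ref{hajlasz-gradient} for the Morrey step, what is really needed is the telescoping Poincar\'e chain with $(D_Q)$ to deduce $|h(x_1)-h(x_2)|\le Cd(x_1,x_2)^{1-Q/p}\rho^{Q/p}\bigl(\fint_{B(y,\rho/2)}|\nabla h|^p\,d\mu\bigr)^{1/p}$ — the maximal-function pointwise bound of Lemma~\ref{hajlasz-gradient} by itself gives a $W^{1,p}$-type control rather than a H\"older one, so you should state the chain argument explicitly rather than citing that lemma.
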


\section{Elliptic equations vs parabolic equations}

\subsection{From elliptic equations  to parabolic equations}

\hskip\parindent
In this section, we give quantitative gradient estimates for
the heat kernel by using the regularity of solutions to the
Poisson equation.


To begin with, let us recall that, under  $(D)$ and  $(UE)$,
we have the estimate
\begin{equation}\label{tly}
\lf|\frac{\partial h_t}{\partial t}(x,y)\r|\le
\frac C{t\,V(y,\sqrt t)}\exp\lf\{-\frac{d^2(x,y)}{ct}\r\},
\end{equation}
for the time derivative of the heat kernel for all $t>0$;
see \cite{bcs15,sal,st2,st3}.

A version of the following result, requiring the slightly stronger condition $(P_2)$, has been established in \cite{ji15}.
The proof below follows the ideas of the proof of \cite[Theorem 3.2]{ji15}.
\begin{prop}\label{infinite-heat}
Assume that the  doubling Dirichlet metric measure space $(X,d,\mu,\E)$ satisfies  $(UE)$ and $(P_{\infty,\loc})$.
Then $(RH_\infty)$ implies
$(GLY_{\infty})$.
\end{prop}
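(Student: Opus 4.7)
The plan is to view $h_t(\cdot,y)$ as a solution of the Poisson equation $\mathcal{L} f = g$ with $g = -\partial_t h_t(\cdot, y)$, and to invoke the gradient estimate for Poisson solutions furnished by Corollary \ref{infinite-poisson}, which is exactly designed for the hypotheses $(D)$, $(UE)$, $(P_{\infty,\loc})$ and $(RH_\infty)$ at hand. The remaining ingredients are the Gaussian upper bound $(UE)$ for $h_t$ itself and the matching Gaussian bound \eqref{tly} for $\partial_t h_t$ (the latter being the Sturm/Cauchy-transform consequence of $(UE)$ already recorded in the excerpt).

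Concretely, fix $y \in X$, $t>0$ and $x \in X$, set $f := h_t(\cdot, y)$ and $g := -\partial_t h_t(\cdot, y)$, so that $\mathcal{L} f = g$ on $X$, and pick the natural scale $r := \sqrt{t}$ together with any exponent $q > Q$. Corollary \ref{infinite-poisson} applied on $B := B(x, r)$ gives
\[
|\nabla_x h_t(x,y)| \le \||\nabla f|\|_{L^\infty(B)} \le C\left\{\frac{1}{\sqrt t}\fint_{2B}|h_t(\cdot, y)|\,d\mu + \sqrt t\left(\fint_{2B}|\partial_t h_t(\cdot, y)|^q\,d\mu\right)^{1/q}\right\}.
\]
Substituting $(UE)$ into the first term and \eqref{tly} into the second, and using doubling to replace $V(z, \sqrt t)$ by $V(y, \sqrt t)$ uniformly for $z \in 2B$, both contributions are controlled by $\frac{C}{\sqrt t\, V(y,\sqrt t)}\sup_{z\in 2B}\exp(-cd^2(z,y)/t)$.

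It then remains to upgrade the supremum of the Gaussian factor over $z \in 2B$ to the desired $\exp(-c'd^2(x,y)/t)$ at the base point. This I would handle by a case split: when $d(x,y) \le 4\sqrt t$ the Gaussian is bounded below by a positive constant and the inequality is automatic; when $d(x,y) > 4\sqrt t$, for any $z \in 2B$ one has $d(z,y) \ge d(x,y) - 2\sqrt t$, and the elementary bound $(d(x,y) - 2\sqrt t)^2 \ge \tfrac12 d^2(x,y) - 4t$ yields $\exp(-cd^2(z,y)/t) \le e^{4c}\exp(-cd^2(x,y)/(2t))$. Combining the two cases delivers $(GLY_\infty)$. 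The only technicalities are checking that $f = h_t(\cdot, y)$ lies in $W^{1,2}(2B)$ so that Corollary \ref{infinite-poisson} applies (standard under $(UE)$) and carefully tracking doubling constants; the genuine conceptual work has already been invested in the Poisson-equation gradient estimate, so the main obstacle for this proposition proper is really just the bookkeeping with the Gaussian tails.
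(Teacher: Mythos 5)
Your proof is correct and follows essentially the same route as the paper's: the paper's (one-line) proof also views $h_t(\cdot,y)$ as a Poisson solution with source $-\partial_t h_t(\cdot,y)$, applies the quantitative gradient estimate from Theorem~\ref{infinite-har} (of which Corollary~\ref{infinite-poisson} is just the $L^q$-averaged form you used), then substitutes $(UE)$ and \eqref{tly} and runs the standard Gaussian-tail bookkeeping. One small imprecision worth noting: for $z\in 2B$ in the far-field regime $d(x,y)\gg\sqrt t$, $V(z,\sqrt t)$ and $V(y,\sqrt t)$ are \emph{not} uniformly comparable, only up to a polynomial factor $(1+d(y,z)/\sqrt t)^Q$ from $(D_Q)$; this factor must be absorbed into the Gaussian exponential (the usual $s^Q e^{-cs^2}\le C e^{-c's^2}$ trick), which is routine and does not change the structure of your argument.
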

\begin{proof}
By using Theorem \ref{infinite-har} and following the  proof of \cite[Theorem 3.2]{ji15}, we conclude the claim.
\end{proof}

Our next result follows via the argument in \cite[p 941]{acdh}.
\begin{prop}\label{bdd-infinite-heat}
Assume that the doubling Dirichlet metric measure space $(X,d,\mu,\E)$ satisfies  $(UE)$.
Then $(GLY_\infty)$ implies $(G_p)$ for all $p\in [1,\infty]$.
\end{prop}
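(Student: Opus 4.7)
The plan is to deduce $(G_\infty)$ and $(G_1)$ directly from $(GLY_\infty)$ by integrating the pointwise bound, and then to interpolate with $(G_2)$, which always holds by spectral theory as noted in the Introduction.

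First I would establish $(G_\infty)$. For $f \in L^\infty(X,\mu)$ and $t>0$, using $(GLY_\infty)$ one has
\[
|\nabla H_t f(x)| \le \int_X |\nabla_x h_t(x,y)|\,|f(y)|\,d\mu(y) \le \frac{C\|f\|_\infty}{\sqrt t}\int_X \frac{1}{V(y,\sqrt t)}\exp\!\left\{-c\frac{d^2(x,y)}{t}\right\}\,d\mu(y).
\]
The remaining integral is uniformly bounded in $x$ and $t$ by the classical annular decomposition argument: split into the regions $A_k=\{y:\,k\sqrt t\le d(x,y)<(k+1)\sqrt t\}$ for $k\ge 0$, note that $A_k\subset B(x,(k+1)\sqrt t)$, and use $(D)$ (in the form $(D_Q)$) to compare $V(y,\sqrt t)$ with $V(x,\sqrt t)$, picking up a polynomial factor $(k+1)^Q$ against which the Gaussian factor $e^{-ck^2}$ wins. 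This gives $\||\nabla H_t|\|_{\infty\to\infty}\le C/\sqrt t$, i.e. $(G_\infty)$.

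Next I would establish $(G_1)$ by essentially the same integral bound but with the roles of $x$ and $y$ reversed. From $(GLY_\infty)$,
\[
\int_X |\nabla_x h_t(x,y)|\,d\mu(x) \le \frac{C}{\sqrt t}\int_X \frac{1}{V(y,\sqrt t)}\exp\!\left\{-c\frac{d^2(x,y)}{t}\right\}\,d\mu(x) \le \frac{C}{\sqrt t},
\]
the last bound following by the same annular/doubling argument applied in the $x$-variable (note that $V(y,\sqrt t)$ is now the ``wrong'' center, but $(D_Q)$ again absorbs the mismatch). By Tonelli's theorem (Schur's test),
\[
\|\,|\nabla H_t f|\,\|_1 \le \int_X |f(y)|\!\left(\int_X |\nabla_x h_t(x,y)|\,d\mu(x)\right)\!d\mu(y)\le \frac{C}{\sqrt t}\|f\|_1,
\]
which yields $(G_1)$.

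Finally, since $(G_2)$ is automatic, Riesz--Thorin interpolation between $(G_1)$ and $(G_2)$, and between $(G_2)$ and $(G_\infty)$, applied to the sublinear operator $f\mapsto |\nabla H_t f|$ (equivalently, to its linearisation via the vector-valued carré du champ), yields $(G_p)$ for every $p\in[1,\infty]$ with constant $C/\sqrt t$. The only non-routine input is the volume-comparison computation that turns the pointwise Gaussian bound into uniform $L^1$-integrability in either variable; this is completely standard under $(D)$ and constitutes the main (and only) technical step.
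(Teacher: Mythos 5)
Your argument is correct and is essentially the paper's: the proof there consists of the single displayed estimate $\int_X V(x,\sqrt t)^{-1}\exp\{-d^2(x,y)/(ct)\}\,d\mu(y)\le C(C_D)$, obtained by the same annular decomposition via $(D)$, after which the conclusion is left implicit. One small simplification you could make: once you have the two endpoint kernel bounds $\sup_x\int_X|\nabla_x h_t(x,y)|\,d\mu(y)\le C/\sqrt t$ and $\sup_y\int_X|\nabla_x h_t(x,y)|\,d\mu(x)\le C/\sqrt t$, the scalar Schur test applied to the majorant $g(x):=\int_X|\nabla_x h_t(x,y)||f(y)|\,d\mu(y)\ge|\nabla H_tf(x)|$ gives $(G_p)$ for all $p\in[1,\infty]$ at once, so you can avoid invoking Riesz--Thorin for the sublinear map $f\mapsto|\nabla H_tf|$ (which, while repairable via the vector-valued linearisation you mention, is an unnecessary detour).
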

\begin{proof}
By  decomposing $X$ into the union of $B(x,\sqrt t)$ and the sets $B(x,2^k\sqrt t)\setminus B(x,2^{k-1}\sqrt t)$ for $k\ge 1$, one sees via  $(D)$ that
\begin{eqnarray}\label{quasi-conservative}
\int_X \frac1{V(x,\sqrt t)}\exp\lf\{-\frac{d^2(x,y)}{ct}\r\} \,d\mu(y) \le C(C_D).
\end{eqnarray}
The conclusion then follows from this and $(GLY_\infty)$.
\end{proof}

We will also need the following observation.

\begin{prop}\label{hk-gly}
Assume that the  doubling Dirichlet metric measure space $(X,d,\mu,\E)$ satisfies  $(UE)$.
Suppose that $(P_{2,\loc})$ and $(RH_{p})$ hold for some $p\in (2,\infty)$. Then $(GLY_{p})$ holds.
\end{prop}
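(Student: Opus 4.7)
The plan is to exploit Theorem \ref{harmonic-pnorm} by regarding the heat kernel as a solution to a Poisson equation whose source is its own time derivative. Fix $y \in X$ and $t > 0$, and set $u(\cdot) := h_t(\cdot, y)$, so that $\mathcal{L} u = -\partial_t h_t(\cdot, y)$ in the distributional sense; the Caccioppoli inequality (Lemma \ref{l2.5}) places $u$ in $W^{1,2}_{\loc}(X)$. By $(UE)$ together with \eqref{tly}, both $u$ and $\partial_t h_t(\cdot, y)$ satisfy Gaussian upper bounds with denominator $V(y, \sqrt t)$ (and an additional factor $t^{-1}$ for the time derivative).

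First I will fix $x_0 \in X$ and apply Theorem \ref{harmonic-pnorm} on the ball $B(x_0, \sqrt t)$, with $q$ chosen in $(pQ/(Q+p), p]$ so that $1/q - 1/p < 1/Q$; such a $q$ exists for every $p \in (2, \infty)$. Inserting the Gaussian bounds from $(UE)$ and \eqref{tly} on $2B(x_0, \sqrt t)$, using the volume comparison $V(x, \sqrt t) \sim V(x_0, \sqrt t)$ for $x$ in this larger ball from $(D)$, and noting that $d(x, y) \ge d(x_0, y) - 2\sqrt t$ forces $\exp(-c d^2(x, y)/t) \le C \exp(-c' d^2(x_0, y)/t)$, one obtains
$$\int_{B(x_0, \sqrt t)} |\nabla_x h_t(x, y)|^p \, d\mu(x) \le \frac{C \, V(x_0, \sqrt t)}{t^{p/2} \, V(y, \sqrt t)^p} \left(1 + \frac{d(x_0, y)}{\sqrt t}\right)^{pQ} \exp\!\left(-c_0 \frac{d^2(x_0, y)}{t}\right),$$
where the polynomial factor arises from relating $V(x_0, \sqrt t)$ to $V(y, \sqrt t)$ via $(D_Q)$. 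Absorbing this polynomial factor into a slightly weaker Gaussian yields the same estimate with the $(1 + d/\sqrt t)^{pQ}$ replaced by a constant and $c_0$ replaced by some $c_1 > 0$.

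Next I will sum this estimate along a $\sqrt t$-net. Pick a maximal $\sqrt t$-separated net $\{x_i\}$ in $X$; the balls $\{B(x_i, \sqrt t / 2)\}$ are pairwise disjoint while $\{B(x_i, \sqrt t)\}$ cover $X$ with bounded overlap depending only on $C_D$. For any $\gamma \in (0, c_1)$, the preceding ball estimate yields
$$\int_X |\nabla_x h_t|^p \exp(\gamma d^2(\cdot, y)/t) \, d\mu \le \frac{C}{t^{p/2} V(y, \sqrt t)^p} \sum_i V(x_i, \sqrt t) \exp\!\left(-(c_1 - \gamma) \frac{d^2(x_i, y)}{t}\right).$$
The disjointness of $\{B(x_i, \sqrt t / 2)\}$ together with doubling lets one dominate the sum by $C \int_X \exp(-c_2 d^2(x, y)/t) \, d\mu(x)$, which via a dyadic annular decomposition around $y$ using $(D_Q)$ is bounded by $C V(y, \sqrt t)$. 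This delivers $(GLY_p)$.

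The main obstacle is the bookkeeping of Gaussian exponents: the constant $c$ in the on-diagonal bound from Theorem \ref{harmonic-pnorm} must first be cut to absorb the polynomial factor $(1 + d/\sqrt t)^{pQ}$ from volume comparisons, cut again to leave a positive gap over the weight $\gamma$, and still retain enough decay for the final annular sum to converge. Fixing a sufficiently small $\gamma > 0$ at the outset and splitting $c$ appropriately turns each of the above steps into a clean application of $(UE)$, \eqref{tly}, the doubling property, and the Poisson gradient estimate of Theorem \ref{harmonic-pnorm}.
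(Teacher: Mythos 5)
Your proof is correct and follows essentially the same route as the paper: both view $h_t(\cdot,y)$ as a solution to $\mathcal{L}f=-\partial_t h_t(\cdot,y)$, apply Theorem~\ref{harmonic-pnorm} at the parabolic scale $\sqrt t$, insert the Gaussian bounds from $(UE)$ and \eqref{tly}, and sum via a bounded-overlap covering of $X$ by balls of radius $\sim\sqrt t$ (the paper organises the covering as a central ball plus dyadic annuli rather than a global $\sqrt t$-net, but this is a cosmetic difference). The only minor imprecision is the appeal to the Caccioppoli inequality to conclude $h_t(\cdot,y)\in W^{1,2}_\loc$; the cleaner justification is that the Gaussian bounds on $h_t$ and $\partial_t h_t$ together with doubling put $h_t(\cdot,y)$ in $\mathscr{D}(\mathcal{L})\subset W^{1,2}(X)$.
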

\begin{proof}
Decompose the space $X$ into $B=B(y,2\sqrt t)$ and the sets $B(y,2^{k+1}\sqrt  t)\setminus B(y,2^k\sqrt t),$  $k\ge 1$.
Denote $B(y,2^{k+1}\sqrt  t)\setminus B(y,2^k\sqrt t)$ by $U_k(B)$.
By Theorem \ref{harmonic-pnorm}, $(UE)$ and \eqref{tly}, we see that
\begin{eqnarray*}
&& \||\nabla_x h_t(\cdot,y)|\|_{L^{p}(B)}\\
&&\quad\le \frac {CV(y,4\sqrt t)^{1/{p}}}{\sqrt t}\left(\fint_{B(y,4\sqrt t)}|h_t(x,y)|\,d\mu(x)+
t\left(\fint_{B(y,4\sqrt t)}\left|\frac{\partial }{\partial t}h_t(x,y)\right|^{{p}}\,d\mu(x)\right)^{1/{p}}\right)\\
&&\quad\le\frac {C}{\sqrt t}V(y,\sqrt t)^{1/{p}-1}.
\end{eqnarray*}

Let $\{B_{k,j}=B(x_{k,j},\sqrt t/2)\}_j$ be a maximal  set of pairwise disjoint
balls with radius $2^{-1}\sqrt t$ in $B(y,2^{k+1}\sqrt  t)$.
Then it is easy to see that
 $$B(y,2^{k+1}\sqrt t)\subset \cup_{j}B(x_{k,j},\sqrt t)$$
 and
 $$\sum_{j}\chi_{4B_{k,j}}(x)\le C(C_D).$$
 Therefore, by applying Theorem \ref{infinite-har}, $(D)$,
$(UE)$, and \eqref{tly}, we conclude that
 \begin{eqnarray*}
 &&\int_{U_k(B)}|\nabla_x h_t(x,y)|^{p}\,d\mu(x)\\
 &&\quad \le \sum_{j:\,2B_{k,j}\cap U_k(B)\neq\emptyset}  \int_{2B_{k,j}}|\nabla_x h_t(x,y)|^{p}\,d\mu(x) \\
 &&\quad\le \sum_{j:\,2B_{k,j}\cap U_k(B)\neq\emptyset}\frac {C\mu(4B_{k,j})}{t^{p/2}}
 \left(\fint_{4B_{k,j}}|h_t(x,y)|\,d\mu(x)+
t\left(\fint_{4B_{k,j}}\left|\frac{\partial }{\partial t}h_t(x,y)\right|^{{p}}\,d\mu(x)\right)^{1/{p}}\right)^{p}\\
&&\quad\le \sum_{j:\,2B_{k,j}\cap U_k(B)\neq\emptyset}\frac {C\mu(4B_{k,j})}{t^{p/2}}V(y,\sqrt t)^{-p}\exp\left\{\frac{-c2^{2k}t}{t}\right\}\\
&&\quad\le C V(y,2^{k+2}\sqrt t) \frac {\exp\left\{-c2^{2k}\right\}}{t^{p/2} V(y,\sqrt t)^{{p}}}\le C V(y,\sqrt t) 2^{kQ}\frac {\exp\left\{-c2^{2k}\right\}}{t^{p/2} V(y,\sqrt t)^{{p}}}\\
&&\quad\le C\frac {\exp\left\{-c2^{2k}\right\}}{t^{p/2} V(y,\sqrt t)^{{p}-1}} .
 \end{eqnarray*}
 This together with the estimate on $\|\nabla_x h_t(\cdot,y)\|_{L^{p}(B)}$
from the beginning of the proof
allow us to deduce that there exists $\gz>0$ such that
 \begin{eqnarray*}
 &&\int_X|\nabla_x h_t(x,y)|^{p}\exp\left\{\gz d^2(x,y)/t\right\}\,d\mu(x)\le \frac{C}{t^{{p}/2}V(y,\sqrt t)^{{p}-1}},
 \end{eqnarray*}
 which completes the proof.
\end{proof}

The following conclusion follows via the argument in \cite[p. 944]{acdh} applied to our setting.

\begin{prop}\label{hk-pnorm}
Assume that the doubling Dirichlet metric measure space $(X,d,\mu,\E)$ satisfies $(UE)$.
Then $(GLY_{p})$  for some $p\in (2,\infty)$  implies
$(G_{p})$.
\end{prop}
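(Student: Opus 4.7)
The plan is to show that the sublinear operator $|\nabla H_t\, \cdot\,|$ is bounded on $L^p$ with norm $C/\sqrt{t}$. Dominating $|\nabla H_t f(x)|$ pointwise by the integral operator with kernel $K_t(x,y):=|\nabla_x h_t(x,y)|$, the statement is equivalent, by duality, to the bilinear bound
$$I := \int_X\!\!\int_X K_t(x,y)\,|f(y)|\,|g(x)|\,d\mu(x)\,d\mu(y)\le \frac{C}{\sqrt{t}}\,\|f\|_p\|g\|_{p'}$$
for arbitrary $f\in L^p$, $g\in L^{p'}$.

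The central step is to exploit $(GLY_p)$, which controls a Gaussian-weighted $L^p$-norm of $K_t(\cdot,y)$ in the first variable only. Freezing $y$ and splitting $1=e^{\gamma d^2(x,y)/(pt)}\cdot e^{-\gamma d^2(x,y)/(pt)}$, Hölder's inequality in $x$ with exponents $p,p'$ gives
$$\int_X K_t(x,y)\,|g(x)|\,d\mu(x)\le \Bigl(\int_X K_t(x,y)^p e^{\gamma d^2(x,y)/t}\,d\mu(x)\Bigr)^{1/p}\Bigl(\int_X |g(x)|^{p'} e^{-\gamma p' d^2(x,y)/(pt)}\,d\mu(x)\Bigr)^{1/p'}.$$
By $(GLY_p)$ the first factor is at most $C\,t^{-1/2}V(y,\sqrt t)^{-1/p'}$. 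Substituting back into $I$ and applying Hölder in $y$ with exponents $p,p'$ yields, with $c:=\gamma p'/p>0$,
$$I\le \frac{C}{\sqrt t}\,\|f\|_p\Bigl(\int_X\!\!\int_X\frac{|g(x)|^{p'}}{V(y,\sqrt t)}\,e^{-c d^2(x,y)/t}\,d\mu(x)\,d\mu(y)\Bigr)^{1/p'}.$$

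Fubini then reduces the whole estimate to the standard doubling bound
$$\sup_{x\in X,\,t>0}\int_X\frac{1}{V(y,\sqrt t)}\,e^{-c d^2(x,y)/t}\,d\mu(y)\le C,$$
which one verifies by splitting $X$ into $B(x,\sqrt t)$ and the dyadic annuli $B(x,2^{k+1}\sqrt t)\setminus B(x,2^k\sqrt t)$, using $(D_Q)$ to replace $V(y,\sqrt t)^{-1}$ by a constant multiple of $2^{kQ}V(x,2^k\sqrt t)^{-1}$ on the $k$-th annulus, and noting that the Gaussian factor $e^{-c 2^{2k}}$ makes the resulting geometric series summable. Combining the three displays produces $I\le C\,t^{-1/2}\|f\|_p\|g\|_{p'}$, which is $(G_p)$.

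The only real subtlety is the asymmetry built into $(GLY_p)$: the weighted integral is taken only in the first variable, whereas $(G_p)$ is symmetric in the two arguments. The Hölder--duality splitting above compensates for this by trading half of the Gaussian weight for the $L^{p'}$-weight of $g(x)$, and then absorbing the leftover volume factor through the doubling estimate, entirely in the spirit of the argument of \cite[p.\,944]{acdh} transported to the present metric measure setting.
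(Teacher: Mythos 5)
Your proof is correct and is essentially the argument the paper is pointing to by its citation of \cite[p.\,944]{acdh}: dominate the sublinear operator $|\nabla H_t|$ by the integral operator with kernel $K_t(x,y)=|\nabla_x h_t(x,y)|$, split the Gaussian weight via H\"older in the $x$-variable to activate $(GLY_p)$, apply H\"older again in $y$, and close with Fubini and the volume--doubling estimate $\sup_{x,t}\int_X V(y,\sqrt t)^{-1}\exp(-cd^2(x,y)/t)\,d\mu(y)<\infty$ (a mild variant of the paper's \eqref{quasi-conservative}, with $V(y,\sqrt t)$ in place of $V(x,\sqrt t)$, which you correctly reduce to doubling on the dyadic annuli). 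Since the paper gives no explicit proof, your write-out is a welcome elaboration; the only detail left implicit in both is the subadditivity of the carr\'e du champ under superposition, which justifies $|\nabla H_t f(x)|\le\int K_t(x,y)|f(y)|\,d\mu(y)$ and is used elsewhere in the paper (e.g.\ Proposition~\ref{bdd-infinite-heat}).
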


\begin{rem}\rm If $(P_p)$ holds, then  one can also use the open-ended property of the reverse
H\"older inequality $(RH_p)$ (Lemma \ref{lem-open} below),
Theorem \ref{harmonic-pnorm}
 and the Hardy-Littlewood maximal operator to prove the fact that
$(RH_{p})$ ($p\in (2,\infty)$) yields $(G_{p})$. We will
not go through this argument  and leave the details to interested readers.
\end{rem}

\subsection{From parabolic equations to elliptic equations}
\hskip\parindent In this section, we show that  $(G_p)$
implies $(RH_p)$. We begin with an abstract reproducing formula for
harmonic functions.

\begin{lem}[Reproducing formula]\label{mvp}
Let $(X,d,\mu,\E)$ be a
doubling  Dirichlet metric measure space.
 Assume that  $(UE)$ holds,  Let
$\Phi\in\mathscr{S}(\rr)$ be an even function whose Fourier
transform $\hat{\Phi}$ satisfies $\supp \hat{\Phi}\subset [-1,1]$
and $\Phi(0)=1$ . Then if $u\in W^{1,2}(3B)$ is harmonic on $3B$, $B=B(x_0,r)$,
for each $0<t\le r$, $u=\Phi(t\sqrt{\mathcal{L}})u$ as functions in $W^{1,2}(B)$.
\end{lem}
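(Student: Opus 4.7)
The plan is to exploit finite propagation speed together with the harmonicity of $u$, reducing the identity to a wave-equation argument via the Fourier representation \eqref{fourier-operator}. After the change of variable $s=t\tau$,
$$\Phi(t\sqrt{\mathcal{L}}) = \frac{1}{2\pi}\int_{-1}^{1}\hat{\Phi}(\tau)\cos(t\tau\sqrt{\mathcal{L}})\,d\tau,$$
with $\tfrac{1}{2\pi}\int_{-1}^{1}\hat{\Phi}(\tau)\,d\tau=\Phi(0)=1$, and for $\tau\in[-1,1]$ one has $|t\tau|\le t\le r$. Hence it suffices to show that, for every $s\in[-r,r]$, $\cos(s\sqrt{\mathcal{L}})u=u$ on $B$ in an appropriate sense; integrating against $\hat{\Phi}(\tau)/(2\pi)\,d\tau$ over $[-1,1]$ then yields the claim.

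Since $u$ is only defined on $3B$, I would first globalize it. Choose a Lipschitz cutoff $\eta$ with $0\le\eta\le 1$, $\eta\equiv1$ on $2B$ and $\supp\eta\Subset 3B$, and set $v:=\eta u\in W^{1,2}(X)\cap L^2(X,\mu)$. Strong locality together with the Leibniz rule for the carr\'e du champ gives, for every $\phi\in W^{1,2}_0(2B)$,
$$\E(v,\phi)=\int_X\eta\,d\Gamma(u,\phi)+\int_X u\,d\Gamma(\eta,\phi)=\E(u,\phi)=0,$$
because $\eta\equiv1$ on $\supp\phi\subset 2B$ makes the second integral vanish (by strong locality $d\Gamma(\eta,\phi)=0$ where $\eta$ is constant), while $\mathcal{L}u=0$ on $3B\supset 2B$ handles the first. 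Thus $\mathcal{L}v=0$ on $2B$ weakly, and since $v\equiv u$ on $B$, the target reduces to $\Phi(t\sqrt{\mathcal{L}})v=v$ on $B$.

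The core step is the duality identity: for every $\psi\in\mathscr{D}(\mathcal{L})$ with $\supp\psi\subset B$ and every $s\in[-r,r]$, $\langle\cos(s\sqrt{\mathcal{L}})v,\psi\rangle=\langle v,\psi\rangle$. Set $\psi_s:=\cos(s\sqrt{\mathcal{L}})\psi$; by self-adjointness it is equivalent to show $\langle v,\psi_s\rangle=\langle v,\psi\rangle$. Spectral theory places $\psi_s\in\mathscr{D}(\mathcal{L})\subset W^{1,2}(X)$, and Lemma \ref{lem-fin-spe} forces $\supp\psi_s\subset\overline{B(x_0,r+|s|)}\subset 2B$; the volume doubling $(D)$ ensures $X$ is proper, so this support is a compact subset of $2B$ whenever $|s|<r$, placing $\psi_s\in W^{1,2}_0(2B)$ (and the case $|s|=r$ follows by continuity). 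From $\mathcal{L}v=0$ on $2B$ one obtains $\E(v,\psi_s)=\langle v,\mathcal{L}\psi_s\rangle=0$, whence $\tfrac{d^2}{ds^2}\langle v,\psi_s\rangle=-\langle v,\mathcal{L}\psi_s\rangle=0$. Combined with $\tfrac{d}{ds}\big|_{s=0}\langle v,\psi_s\rangle=\langle v,-\sqrt{\mathcal{L}}\sin(0)\psi\rangle=0$, this makes $s\mapsto\langle v,\psi_s\rangle$ constant on $[-r,r]$, proving the claim. Integrating against $\hat{\Phi}/(2\pi)$ over $[-1,1]$ and invoking density of $\mathscr{D}(\mathcal{L})\cap L^2(B)$ in $L^2(B)$ yields $\Phi(t\sqrt{\mathcal{L}})v=u$ a.e.\ on $B$; as both sides lie in $W^{1,2}(B)$, equality holds in $W^{1,2}(B)$.

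The principal obstacle is the chain of regularity checks tying the abstract functional calculus to the geometric weak formulation: justifying $\mathcal{L}v=0$ on $2B$ via strong locality and the Leibniz rule, and then verifying that the finite-propagation support of $\psi_s$ really places it in $W^{1,2}_0(2B)$ so that the weak equation for $v$ applies to the test function $\psi_s$. Once these technicalities are settled, the wave-equation calculation above closes the argument.
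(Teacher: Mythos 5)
You take a genuinely different route from the paper's proof, and it is worth comparing them. The paper writes $1-\Phi(t\sqrt{\mathcal L}) = -\int_0^t\sqrt{\mathcal L}\,\Phi'(s\sqrt{\mathcal L})\,ds$ to expose a factor of $\mathcal L$, applies $(t^2\mathcal L)^{-1}$ (checking via Lemma~\ref{bdd-spectral} that this is $L^2$-bounded and via \eqref{finite-4-3} that finite propagation speed is preserved), and then uses self-adjointness and harmonicity of $u$ to kill the resulting pairing. You instead pass through the cosine representation \eqref{fourier-operator} directly: with $s=t\tau$ and $\int_{-1}^1\hat\Phi\,d\tau/(2\pi)=\Phi(0)=1$, the identity reduces to $\cos(s\sqrt{\mathcal L})v=v$ on $B$ for $|s|\le r$, and you establish this by a wave-equation ODE argument on $s\mapsto\langle v,\cos(s\sqrt{\mathcal L})\psi\rangle$, noting that $\ddot\psi_s=-\mathcal L\psi_s$ and $\mathcal Lv=0$ on $2B$ gives a vanishing second derivative. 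Your reduction avoids the $\mathcal L$-inversion entirely and is conceptually cleaner. Both proofs ultimately rest on the same three ingredients (self-adjointness, finite propagation speed via Lemma~\ref{lem-fin-spe}, and harmonicity fed in through the weak formulation), but assembled quite differently, and the cutoff bookkeeping ($v=\eta u$ harmonic on $2B$ vs.\ the paper's $u\psi$) is a legitimate way to handle the local-to-global passage.

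There is, however, a genuine gap at the density step. You conclude by ``invoking density of $\mathscr D(\mathcal L)\cap L^2(B)$ in $L^2(B)$,'' but what your duality argument actually requires is density in $L^2(B)$ of the smaller family $\{\psi\in\mathscr D(\mathcal L):\ \supp\psi\subset B\}$, since you need both $\psi\in\mathscr D(\mathcal L)$ (for the ODE $\ddot\psi_s=-\mathcal L\psi_s$) and $\supp\psi\subset B$ (so that $\supp\cos(s\sqrt{\mathcal L})\psi\subset\overline{B(x_0,r+|s|)}\Subset 2B$ and $\psi_s\in W^{1,2}_0(2B)$). This density is not a standard fact at the level of generality of the paper, and it is precisely the point the paper's proof labours over: it mollifies test functions $g\in L^2(\tfrac32 B)$ with compact support by $\Phi(\varepsilon\sqrt{\mathcal L})$, using $\supp\hat\Phi\subset[-1,1]$ and Lemma~\ref{lem-fin-spe} to control the enlarged support and Lemma~\ref{lem-l2-appro} for the $L^2$-convergence as $\varepsilon\to0$. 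If you insert that mollification into your argument, the supports of $\cos(s\sqrt{\mathcal L})\Phi(\varepsilon\sqrt{\mathcal L})\psi$ for $|s|$ up to $r$ leak out of $2B$, so you would need to take $\eta\equiv1$ on a strictly larger ball such as $\tfrac52 B$ (still compactly inside $3B$, so that $\mathcal Lv=0$ there) and re-track the support budget $\varepsilon+r+\text{(radius of }\supp\psi)<\tfrac52 r$. The argument then closes, but as currently written the density claim is asserted rather than proved, and the ball-size adjustment it forces is not acknowledged.
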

\begin{proof}
Since $\Phi'(0)=0$, the function
 $\tilde\Phi(s):=s^{-1}\Phi'(s)\in \mathscr{S}(\rr)$ extends to an analytic function which satisfies  a
Paley-Wiener estimate with the same exponent as $\Phi$; see \cite{ru87} or  Appendix \ref{appendix-paley}.  By applying Lemma \ref{lem-fin-spe} to
the functions $t^{2\kz}\Phi(t)$, $\kz\in\mathbb{Z}_+$,
and $\tilde
\Phi$, we conclude that the operators
$(t^2\mathcal{L})^\kz\Phi(t\sqrt{\mathcal{L}})$ and
$(t^2\mathcal{L})^{-1/2}\Phi'(t\sqrt{\mathcal{L}})$ satisfy
\begin{equation}\label{finite-4-1}
\int_X\langle (t^2\mathcal{L})^\kz\Phi(t\sqrt{\mathcal{L}})f_1,f_2\rangle\,d\mu=0,
\end{equation}
and
\begin{equation}\label{finite-4-2}
 \int_X\langle (t^2\mathcal{L})^{-1/2}\Phi'(t\sqrt{\mathcal{L}})f_1,f_2\rangle\,d\mu=0,
\end{equation}
for all $0<t<d(E,F)$ with $E,F\subset X,$ $f_1\in L^2(E),$ and
$f_2\in L^2(F).$

Let $\psi$ be a Lipschitz cut-off function such that $\psi=1$ on
$\frac 83B$, $\psi=0$ outside $3B$. Let $\varepsilon\in (0,r/4)$.
For each $g\in L^2(\frac32 B)$ with support in $\overline{\frac 32B}$, by \eqref{finite-4-1} and Lemma \ref{bdd-spectral} we have
$$\Phi(\ez\sqrt{\mathcal{L}})g\in \mathscr{D}(\mathcal{L})$$
with support in $\overline{\frac 74 B}$. Since $\Phi(0) = 1$, we
have
$$1-\Phi(t\sqrt{\mathcal{L}})=-\int_0^t\sqrt{\mathcal{L}}\Phi'(s\sqrt{\mathcal{L}})\,ds,$$
which together with \eqref{finite-4-2} implies that
\begin{equation}\label{finite-4-3}
 \int_X\langle (t^2\mathcal{L})^{-1}(1-\Phi(t\sqrt{\mathcal{L}}))f_1,f_2\rangle\,d\mu=
 \int_0^t\int_X\langle (t^2\mathcal{L})^{-1}\sqrt{\mathcal{L}}\Phi'(s\sqrt{\mathcal{L}})f_1,f_2\rangle\,d\mu\,ds=0,
\end{equation}
for all $0<t<d(E,F)$ with $E,F\subset X,$ $f_1\in L^2(E),$ and
$f_2\in L^2(F).$
This together with Lemma \ref{bdd-spectral} implies that for each $t\le r$
\begin{equation}\label{finite-4-4}
(t^2\mathcal{L})^{-1}(1-\Phi(t\sqrt{\mathcal{L}}))\Phi(\ez\sqrt{\mathcal{L}})g
\in \mathscr{D}(\mathcal{L}),
\end{equation}
with support in $\overline{\frac {11}4 B}$. By this, the self-adjointness  of $\mathcal{L}$
and the fact that $u$ is harmonic on $3B$, we obtain that
\begin{eqnarray*}
\int_X\langle
(1-\Phi(tr\sqrt{\mathcal{L}}))u,\Phi(\ez\sqrt{\mathcal{L}})g\rangle\,d\mu&&=\int_X\langle u,(1-\Phi(tr\sqrt{\mathcal{L}}))\Phi(\ez\sqrt{\mathcal{L}})g\rangle\,d\mu\\
&&=\int_X\langle u\psi,(1-\Phi(tr\sqrt{\mathcal{L}}))\Phi(\ez\sqrt{\mathcal{L}})g\rangle\,d\mu\\
&&=r^2\int_{3B}\langle \nabla u,\nabla(r^2\mathcal{L})^{-1}(1-\Phi(tr\sqrt{\mathcal{L}}))\Phi(\ez\sqrt{\mathcal{L}})g\rangle\,d\mu\\
&&=0.
\end{eqnarray*}
Since $g$ is arbitrary, and by Lemma \ref{lem-l2-appro}
$\Phi(\ez\sqrt{\mathcal{L}})g\to g$ in $L^2(X,\mu)$ as $\varepsilon\to 0$, we
find that $(1-\Phi(tr\sqrt{\mathcal{L}}))u=0$ in $L^2(B).$ Hence
$u(x)= \Phi(tr\sqrt{\mathcal{L}})u(x)$
for a.e. $x\in \frac 32B$. Therefore,
 $u=\Phi(tr\sqrt{\mathcal{L}})u$ in $W^{1,2}(B)$ for each $t\le 1$. The proof is complete.
\end{proof}


{ \begin{rem}\label{sobolev-spectral-l2}\rm
{Notice that, for each $f\in L^2(X,\mu)$,
$\Phi(r\sqrt{\mathcal{L}})f\in W^{1,2}(X)$ and
$$\||\nabla \Phi(r\sqrt{\mathcal{L}})f|\|_2
= \|\sqrt{\mathcal{L}} \Phi(r\sqrt{\mathcal{L}})f\|_2
\le \frac Cr \|f\|_2, $$
see Lemma \ref{bdd-spectral}.}
\end{rem}
}

\begin{cor}\label{cor-mvp}
Assume that the doubling Dirichlet metric measure space $(X,d,\mu,\E)$  satisfies  $(UE)$.  Let
$\Phi\in\mathscr{S}(\rr)$ be an even function whose Fourier
transform $\hat{\Phi}$ satisfies $\supp \hat{\Phi}\subset [-1,1]$ and $\Phi(0)=1$. Then
if $u\in W^{1,2}(X)$ is harmonic on $3B$, $B=B(x_0,r)$,
for each $0<t\le 1$,
$u$ equals $\Phi(tr\sqrt{\mathcal{L}})u$ as functions in $W^{1,2}(B)$.
\end{cor}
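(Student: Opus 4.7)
The plan is to deduce this as an immediate consequence of Lemma \ref{mvp} via a rescaling of the time parameter. Since $u \in W^{1,2}(X)$, its restriction to $3B$ lies in $W^{1,2}(3B)$, so the hypotheses of Lemma \ref{mvp} are in force. The only substantive change between the lemma and the corollary is the range of the parameter: Lemma \ref{mvp} gives the identity $u = \Phi(s\sqrt{\mathcal{L}})u$ in $W^{1,2}(B)$ for every $s\in (0,r]$, whereas the corollary writes the scale as $s = tr$ with $t \in (0,1]$.

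First I would set $s := tr$ and observe that for $t \in (0,1]$ and $r$ the radius of $B$, one has $s \in (0,r]$, so $s$ falls within the admissible range in Lemma \ref{mvp}. Then, applying Lemma \ref{mvp} directly to $u|_{3B}$ with parameter $s$, I obtain that $u$ and $\Phi(s\sqrt{\mathcal{L}})u = \Phi(tr\sqrt{\mathcal{L}})u$ agree as elements of $W^{1,2}(B)$. Note that $\Phi(tr\sqrt{\mathcal{L}})u$ is well defined as an element of $W^{1,2}(X)$ by spectral theory applied to $u \in L^2(X)$ (cf.\ Remark \ref{sobolev-spectral-l2}), which is more than enough to take its restriction to $B$.

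There is no real obstacle here: the corollary is essentially a reformulation of Lemma \ref{mvp} tailored to a hypothesis where $u$ already lives globally on $X$, which makes the spectral operator $\Phi(tr\sqrt{\mathcal{L}})u$ unambiguously defined without any need for cut-off extensions inside the argument. The rescaling $s = tr$ is purely notational and is convenient for later applications where one wants the operator parameter to be of the form (small) $\times$ (scale of ball).
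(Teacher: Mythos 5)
There is a genuine gap in your argument. You treat the corollary as a notational rescaling of Lemma \ref{mvp}, but the two statements apply the spectral multiplier $\Phi(tr\sqrt{\mathcal{L}})$ to \emph{different functions}. In Lemma \ref{mvp} the input is $u\in W^{1,2}(3B)$, so $\Phi(tr\sqrt{\mathcal{L}})$ acts on (the zero extension to $X$ of) $u|_{3B}$; in Corollary \ref{cor-mvp} the input is the globally defined $u\in W^{1,2}(X)$. Since $\Phi(tr\sqrt{\mathcal{L}})$ is a nonlocal operator on $L^2(X)$, there is no a priori reason for $\Phi(tr\sqrt{\mathcal{L}})(u\chi_{3B})$ and $\Phi(tr\sqrt{\mathcal{L}})u$ to agree on $B$, and your chain of identities silently identifies the two. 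Your appeal to spectral theory (Remark \ref{sobolev-spectral-l2}) only shows that $\Phi(tr\sqrt{\mathcal{L}})u$ is a well-defined element of $W^{1,2}(X)$; it does not address the discrepancy.

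To close the gap you need to show that the tail contributes nothing on $B$: write $u = u\chi_{3B} + u\chi_{X\setminus 3B}$ and argue that $\Phi(tr\sqrt{\mathcal{L}})(u\chi_{X\setminus 3B}) = 0$ on $B$. This is exactly where the finite propagation speed property enters. By \eqref{finite-4-1} (the case $\kappa=0$ of Lemma \ref{lem-fin-spe}), the operator $\Phi(tr\sqrt{\mathcal{L}})$ has propagation speed $tr\le r$, which is strictly less than $\dist(B,X\setminus 3B)=2r$, so it annihilates data supported in $X\setminus 3B$ when tested against $L^2(B)$. Adding this step yields the corollary; without it, the argument does not follow from Lemma \ref{mvp} alone. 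This is precisely what the paper's proof supplies, and it is the whole content of the corollary beyond the lemma.
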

\begin{proof}
Notice that by Lemma \ref{mvp}, for each $0<t\le 1$,
$u(x)=\Phi(tr\sqrt{\mathcal{L}})(u\chi_{3B})(x)$,  a.e. $x\in B$.
On the other hand, by \eqref{finite-4-1}, we see that
$$\Phi(tr\sqrt{\mathcal{L}})(u\chi_{X\setminus 3B})(x)=0$$
on $B$, which allows us to conclude that  for each $0<t\le 1$,
$u=\Phi(tr\sqrt{\mathcal{L}})u$ in $W^{1,2}(B)$.
\end{proof}

The main result of this section reads as follows.

\begin{thm}\label{heat-har}
Assume that the  doubling Dirichlet metric measure space $(X,d,\mu,\E)$  satisfies $(UE)$. If $(G_{p_0})$ holds for some
 $p_0\in (2,\infty]$,  then $(RH_{p_0})$ holds.
\end{thm}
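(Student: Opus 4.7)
My approach is to use the reproducing formula of Lemma~\ref{mvp}/Corollary~\ref{cor-mvp} to express a harmonic function $u$ as $\Phi(s\sqrt{\mathcal{L}})u$, and then bring $(G_{p_0})$ to bear through an $L^{p_0}$ bound on the gradient of the spectral multiplier. Fix $B=B(x_0,r)$ with $u$ harmonic on $2B$. Choose an even $\Phi\in\mathscr{S}(\mathbb{R})$ with $\Phi(0)=1$ and $\operatorname{supp}\hat{\Phi}\subset[-1,1]$. For any $y_0\in B$, set $B'=B(y_0,r/8)$; then $3B'\subset 2B$, so Corollary~\ref{cor-mvp} applied on $3B'$ gives $u=\Phi(\tfrac{r}{8}\sqrt{\mathcal{L}})u$ as elements of $W^{1,2}(B')$. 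The finite propagation property (Lemma~\ref{lem-fin-spe}) then yields
\begin{equation*}
\nabla u=\nabla\Phi(\tfrac{r}{8}\sqrt{\mathcal{L}})(u\chi_{2B'})\qquad\text{a.e. on }B'.
\end{equation*}

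The heart of the proof --- and the main obstacle --- is the $L^{p_0}$ gradient multiplier bound
\begin{equation}\label{spm}
\bigl\||\nabla\Phi(s\sqrt{\mathcal{L}})f|\bigr\|_{L^{p_0}(X)}\le\frac{C}{s}\|f\|_{L^{p_0}(X)},\qquad s>0,\ f\in L^{p_0}(X),\tag{$\star$}
\end{equation}
deduced from $(G_{p_0})$ and $(UE)$. For $p_0=\infty$, $(G_\infty)$ together with $(UE)$ gives the integrated bound $\sup_{t,x}\sqrt{t}\int_X|\nabla_xh_t(x,y)|\,d\mu(y)<\infty$, so writing $\Phi(s\sqrt{\mathcal{L}})$ via a heat-semigroup representation (or decomposing $X$ dyadically around $x$ and using Gaussian estimates for $h_t$ with finite propagation) produces $(\star)$ quite directly. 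For finite $p_0$, the tempting factorization $\nabla\Phi(s\sqrt{\mathcal{L}})=\nabla H_{s^2/2}\cdot(e^{s^2\mathcal{L}/2}\Phi(s\sqrt{\mathcal{L}}))$ is useless because $\lambda\mapsto e^{\lambda^2/2}\Phi(\lambda)$ is unbounded and thus not an $L^{p_0}$-multiplier. Instead, following the spectral multiplier program of \cite{bcf14,bcs15}, I would perform a dyadic spectral decomposition $\Phi=\sum_j\Phi_j$, express each $\nabla\Phi_j(s\sqrt{\mathcal{L}})$ through the heat semigroup on a matching time scale, and then combine $(G_{p_0})$ with the Davies--Gaffney/Gaussian off-diagonal bounds stemming from $(UE)$ (Lemma~\ref{davies-gaffney}) to estimate each piece on annular regions and sum. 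This step is the delicate one and is where the spectral multiplier machinery of \cite{bcf14,bcs15} is essential.

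Granted $(\star)$, applied to $f=u\chi_{2B'}$ it yields $\|\nabla u\|_{L^{p_0}(B')}\le(C/r)\|u\|_{L^{p_0}(2B')}$. Since $u$ is harmonic on $B(y_0,r/2)\subset 2B$, the elliptic Harnack inequality (Proposition~\ref{l2.3}, available under $(UE)$) gives $\|u\|_{L^\infty(2B')}\le C\fint_{B(y_0,r/2)}|u|\,d\mu\le C\fint_{2B}|u|\,d\mu$ by doubling, hence
\begin{equation*}
\left(\fint_{B'}|\nabla u|^{p_0}\,d\mu\right)^{1/p_0}\le\frac{C}{r}\cdot\frac{\mu(2B')^{1/p_0}}{\mu(B')^{1/p_0}}\fint_{2B}|u|\,d\mu\le\frac{C}{r}\fint_{2B}|u|\,d\mu,
\end{equation*}
the last step using doubling again. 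Covering $B$ by a bounded number of such sub-balls $B(y_i,r/8)$ (possible by doubling) and summing yields $(RH_{p_0})$ in the finite-$p_0$ case; the endpoint $p_0=\infty$ is treated analogously by replacing $L^{p_0}$-norms with essential suprema throughout.
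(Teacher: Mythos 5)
Your overall skeleton --- express $\nabla u$ via the reproducing formula (Lemma~\ref{mvp}) as the gradient of a compactly propagating spectral multiplier applied to a truncation of $u$, and then bring in $(G_{p_0})$ through an operator bound on that multiplier --- is exactly the strategy the paper uses (Step~2 of the proof of Theorem~\ref{heat-har}). The gap is in the crux, the multiplier estimate $(\star)$, which you explicitly flag as delicate but never actually establish. You correctly discard the heat-semigroup factorisation (since $\lambda\mapsto e^{\lambda^2/2}\Phi(\lambda)$ is unbounded), then gesture at a dyadic spectral decomposition with off-diagonal annular estimates. What you are missing is the far simpler resolvent factorisation
\begin{equation*}
\nabla\Phi(s\sqrt{\mathcal{L}})=\nabla(1+s^2\mathcal{L})^{-1}\cdot(1+s^2\mathcal{L})\Phi(s\sqrt{\mathcal{L}}),
\end{equation*}
which is the mechanism the paper uses inside Claim~2. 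Its first factor is bounded on $L^{p_0}$ by $C/s$ by integrating $(G_{p_0})$ against the Laplace representation $(1+s^2\mathcal{L})^{-1}=\int_0^\infty e^{-t}e^{-ts^2\mathcal{L}}\,dt$; its second factor is the Schwartz spectral multiplier $\Psi(s\sqrt{\mathcal{L}})$ with $\Psi(\lambda)=(1+\lambda^2)\Phi(\lambda)$, whose $L^{p_0}$-boundedness uniformly in $s$ follows from the Gaussian-type kernel estimates that $(D)$ and $(UE)$ furnish. This closes $(\star)$ without any dyadic spectral machinery.

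Two further remarks. First, the paper does not prove your unweighted $p_0\to p_0$ bound; it instead establishes a weighted $1\to p_0$ bound (Claim~2) by combining the same resolvent factor with a weighted $p\to 2$ bound (Claim~1) and duality, all orchestrated via \cite[Prop.~4.1.1]{bcs15}. This produces the $L^1$ average $\fint_{3B}|u|$ on the right directly, so the paper does not need the additional Harnack inequality step that your route requires; your route is a valid variant, but it costs an extra tool. Second, some technical slips: Corollary~\ref{cor-mvp} requires $u\in W^{1,2}(X)$, whereas your $u$ lives only in $W^{1,2}(2B)$, so you should instead apply Lemma~\ref{mvp} to the zero extension $u\chi_{3B'}$; and the finite-propagation argument requires the time parameter to be \emph{strictly} less than $d(B',X\setminus 2B')=r/8$, so take $s<r/8$.
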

\begin{proof}
Let $\Phi\in\mathscr{S}(\rr)$ be an even function whose Fourier
transform $\hat{\Phi}$ satisfies $\supp \hat{\Phi}\subset [-1/2,1/2]$ and  $\Phi(0)=1$.
Then it follows that $\Phi^2\in\mathscr{S}(\rr)$ and $\supp \hat{\Phi^2}\subset [-1,1]$.
In the proof, for simplicity we denote $V(x,r)$ by $V_r(x)$.

{\bf Step 1.} {\it Boundedness of the spectral multipliers.}

{\bf Claim 1.} We first claim that, for each $p\in [1,2]$, there exists $C>0$ such that
$$\sup_{r>0}\|V_r^{1/p-1/2}\Phi(r\sqrt{\mathcal{L}})\|_{p\to 2}\le C.$$
By \cite[Proposition 4.1.1]{bcs15} and the fact that $\sup_{t>0}|\Phi(t)(1+t^2)^N|<\infty$, one has
\begin{eqnarray*}
\|V_r^{1/p-1/2}\Phi(r\sqrt{\mathcal{L}})\|_{p\to 2}&&=\|V_r^{1/p-1/2}\Phi(r\sqrt{\mathcal{L}})(1+r^2\mathcal{L})^NV_r^{1/2-1/p}
V_r^{1/p-1/2}(1+r^2\mathcal{L})^{-N}\|_{p\to 2}\\
&&\le C\|\Phi(r\sqrt{\mathcal{L}})(1+r^2\mathcal{L})^N\|_{2\to 2}\|V_r^{1/p-1/2}(1+r^2\mathcal{L})^{-N}\|_{p\to 2}\\
&&\le C\|V_r^{1/p-1/2}(1+r^2\mathcal{L})^{-N}\|_{p\to 2},
\end{eqnarray*}
where we choose $N>Q$ with $Q$ the number from $(D_Q)$. Notice that for any $f\in L^p(X,\mu)$ one has
\begin{eqnarray*}
\|V_r^{1/p-1/2}(1+r^2\mathcal{L})^{-N}f\|_2
&&\le C\int_0^\infty \left(\int_X\left|e^{-s}s^{N-1}V_r(x)^{1/p-1/2}e^{-sr^2\mathcal{L}}f(x)\right|^2\,d\mu(x)\right)^{1/2}\,ds\\
&&\le C\int_0^\infty e^{-s}s^{N-1}\| V_r^{1/p}e^{-sr^2\mathcal{L}}f\|^{1-{p}/{2}}_{\infty} \left(\int_X\left|e^{-sr^2\mathcal{L}}f(x)\right|^p\,d\mu(x)\right)^{1/2}\,ds\\
&&\le C\int_0^\infty e^{-s}s^{N-1}\left\| \frac{V_r}{V_{\sqrt sr}}\right\|^{1/p-{1}/{2}}_{\infty} \|f\|^{1-p/2}_p \|f\|_p^{p/2}\,ds\\
&&\le C\int_0^\infty e^{-s}s^{N-1}\frac{1}{(s\wedge1)^{Q/2(1/p-1/2)}} \|f\|_p\,ds\\
&&\le C\|f\|_p.
\end{eqnarray*}
Above in the third inequality, we used the fact that
\begin{eqnarray*}
|V_r^{1/p}(x)e^{-sr^2\mathcal{L}}f(x)|&& \le \frac{CV_r(x)^{1/p}}{V_{\sqrt{sr^2}}(x)^{1/p}}
\int_{X}\frac{V_{\sqrt{sr^2}}(x)^{1/p}}{V_{\sqrt{sr^2}}(x)}e^{-\frac{d(x,y)^2}{c\sqrt{sr^2}}}|f(y)|\,d\mu(y)\\
&&\le \frac{CV_r(x)^{1/p}}{V_{\sqrt{sr^2}}(x)^{1/p}}\|f\|_p
\left(\int_{X}\frac{1}{V_{\sqrt{sr^2}}(x)}e^{-\frac{d(x,y)^2}{c\sqrt{sr^2}}}\,d\mu(y)\right)^{(p-1)/p}\\
&&\le \frac{CV_r(x)^{1/p}}{V_{\sqrt{sr^2}}(x)^{1/p}}\|f\|_p.
\end{eqnarray*}
The claim is proved.

{\bf Claim 2.} For each $p\in (2,\infty]$, if $(G_p)$ holds, then there exists $C>0$ such that
$$\sup_{r>0}\|rV_r^{1-1/p}|\nabla\Phi(r\sqrt{\mathcal{L}})^2|\|_{1\to p}\le C.$$

By Claim 1 and \cite[Proposition 4.1.1]{bcs15} again, we have
\begin{eqnarray*}
\|rV_r^{1-1/p}|\nabla\Phi(r\sqrt{\mathcal{L}})^2|\|_{1\to p}&&= \|rV_r^{1-1/p}|\nabla\Phi(r\sqrt{\mathcal{L}})|
V_r^{-1/2}V_r^{1/2}\Phi(r\sqrt{\mathcal{L}})\|_{1\to p}\\
&&\le C\|rV_r^{1-1/p}|\nabla\Phi(r\sqrt{\mathcal{L}})|V_r^{-1/2}\|_{2\to p}\|V_r^{1/2}\Phi(r\sqrt{\mathcal{L}})\|_{1\to 2}\\
&&\le C\|rV_r^{1-1/p}|\nabla\Phi(r\sqrt{\mathcal{L}})|V_r^{-1/2}\|_{2\to p}\\
&&\le  Cr\||\nabla\Phi(r\sqrt{\mathcal{L}})|V_r^{1/2-1/p}\|_{2\to p}\\
&&\le  Cr\||\nabla(1+r^2\mathcal{L})^{-1}|\|_{p\to p} \|(1+r^2\mathcal{L})\Phi(r\sqrt{\mathcal{L}})V_r^{1/2-1/p}|\|_{2\to p}.
\end{eqnarray*}
Claim 1 together with a duality argument easily implies
$$\sup_{r>0}\|(1+r^2\mathcal{L})\Phi(r\sqrt{\mathcal{L}})V_r^{1/2-1/p}|\|_{2\to p}<\infty,$$
while $(G_p)$ implies that
$$\||\nabla(1+r^2\mathcal{L})^{-1}|\|_{p\to p}\le C\int_0^\infty \left\|\left|\nabla e^{-t(1+r^2)\mathcal{L}}\right|\right\|_{p\to p}\,dt \le \frac{C}{r}.$$
Combining these two estimate proves the second claim.

{\bf Step 2.} {\it Completion of the proof.}

 Suppose first that $u\in W^{1,2}(3B)$, $B=B(x_0,r)$, satisfies $\mathcal{L} u=0$ in $3B$.
By Claim 2 and the validity of $(G_{p_0})$, we then have
$$\left\|rV_r^{1-1/p_0}\left|\nabla\Phi(r\sqrt{\mathcal{L}})^2(u\chi_{3B})(\cdot)\right|\right\|_{p_0}\le C\|u\|_{L^1(3B)}.$$
The doubling condition together with Lemma \ref{mvp} implies that
\begin{eqnarray*}
\||\nabla u|\|_{L^p(B)}&&\le \frac{1}{rV_r(x_0)^{1-1/p_0}} \|rV_r^{1-1/p_0}|\nabla\Phi(r\sqrt{\mathcal{L}})^2(u\chi_{3B})(\cdot)|\|_{p_0}\le C\frac{1}{rV_r(x_0)^{1-1/p_0}}\|u\|_{L^1(3B)},
\end{eqnarray*}
i.e.,
\begin{eqnarray*}
\left(\fint_{B}|\nabla u|^{p_0}\,d\mu\right)^{1/p_0}\le \frac Cr \fint_{3B}|u|\,d\mu.
\end{eqnarray*}
Finally following the same argument as in {\bf Step 4} of proof of Theorem
\ref{harmonic-pnorm}, we see that $(RH_{p_0})$ holds,
which completes the proof.
\end{proof}

\begin{rem}\rm
 Using Claim 1 from Step 1 and \cite[Proposition 4.1.1]{bcs15} one can see that for each $r>0$
\begin{eqnarray*}
\|V_r\Phi(r\sqrt{\mathcal{L}})^2\|_{1\to \infty}&&= \|V_r\Phi(r\sqrt{\mathcal{L}})V_r^{-1/2}V_r^{1/2}\Phi(r\sqrt{\mathcal{L}}) \|_{1\to \infty}\\
&&\le \|V_r\Phi(r\sqrt{\mathcal{L}})V_r^{-1/2}\|_{2\to\infty}\|V_r^{1/2}\Phi(r\sqrt{\mathcal{L}})\|_{1\to 2}\\
&&\le  \|\Phi(r\sqrt{\mathcal{L}})V_r^{1/2}\|_{2\to\infty}\|V_r^{1/2}\Phi(r\sqrt{\mathcal{L}})\|_{1\to 2}\\
&&\le C.
\end{eqnarray*}
This together with Lemma \ref{mvp} then gives a simple proof of Proposition \ref{l2.3}.
\end{rem}

We can now finish the proofs of  Theorem \ref{main-har-heat-infty} and Theorem \ref{main-har-heat}, and their corollaries.

\begin{proof}[Proof of Theorem \ref{main-har-heat-infty}]
$(RH_\infty) \Longrightarrow (GLY_\infty) $ is contained in Proposition \ref{infinite-heat},
$(GLY_\infty) \Longrightarrow (G_\infty) $ is straightforward and is contained in Proposition \ref{bdd-infinite-heat} (see  \cite[p.919]{acdh}),
and $(G_\infty)\Longrightarrow (RH_\infty) $ is contained in Theorem \ref{heat-har}.

$(GBE)\Longrightarrow (GLY_\infty)$ follows from \cite[Lemma 3.3]{acdh} whose proof only requires $(D)$ and $(UE)$.
Notice that $(GLY_\infty)$ together with $(UE)$ implies $(LY)$, and therefore $(P_2)$; see \cite[Theorem 3.4]{bcf14}.
Using $(D)$ and $(P_2)$, $(GLY_\infty)\Longrightarrow (GBE)$ then also follows from the same proof of \cite[Lemma 3.3]{acdh}.
\end{proof}

\begin{proof}[Proof of Corollary \ref{main-cor-infity}]
Note that  $(P_2)$ implies $(P_\infty)$ and $(LY)$ (cf. \cite{sal,st3}), in particular $(P_{\infty,\loc})$ and $(UE)$.
\end{proof}

\begin{proof}[Proof of Corollary \ref{main-cor-manifold}]
If $(X,d,\mu)$ is a Riemannian manifold, then for any locally smooth function $v$ with bounded gradient $\nabla v$ on a ball $B$, $B=B(x_0,r)$,
it holds that
$$\fint_{B}|v-v_B|\,d\mu\le \fint_{B}\fint_B |v(x)-v(y)|\,d\mu(x)\,d\mu(y)\le
Cr\||\nabla v|\|_{L^\infty(B)}.$$
Since harmonic functions are locally smooth on a Riemannian manifold, this together with the assumption $(RH_\infty)$ implies that the conclusion of
Lemma \ref{lip-har-add} holds under the current assumptions. Therefore,  $(D)$ and $(UE)$ are enough to guarantee $(RH_\infty)\Longrightarrow (GLY_\infty)$
if $(X,d,\mu)$ is a Riemannian manifold, by the proof of Theorem \ref{main-har-heat-infty}.

The implications $(GLY_\infty)\Longrightarrow (G_\infty)$ and $(G_\infty)\Longrightarrow (RH_\infty)$ are contained in  Proposition \ref{bdd-infinite-heat} and Theorem \ref{heat-har}, respectively, requiring only $(D)$ and $(UE)$.

$(GBE)\Longrightarrow (GLY_\infty)$ is straightforward; see \cite[Lemma 3.3]{acdh}.
On the other hand, since under $(D)$ and $(GLY_\infty)$,
$(P_2)$ holds by \cite[Corollary 2.2]{CS1} (see also \cite[Theorem 3.4]{bcf14}),
 one can apply  \cite[Lemma 3.3]{acdh} to see that  $ (GLY_\infty)\Longrightarrow (GBE)$.
\end{proof}

\begin{proof}[Proof of Theorem \ref{main-har-heat}]
$(RH_{p}) \Longrightarrow (GLY_{p}) $ is contained in Proposition \ref{hk-gly},
$(GLY_{p}) \Longrightarrow (G_{p}) $ is explained in Proposition \ref{hk-pnorm},
and $(G_{p}) \Longrightarrow (RH_{p}) $ is contained in Theorem \ref{heat-har}.
\end{proof}

\begin{proof}[Proof of Corollary \ref{main-cor-finite}]
The conclusion holds,  since $(P_2)$ implies  $(P_{2,\loc})$ and $(UE)$ (cf. \cite{bcs15,sal,st2}).
\end{proof}

\section{Riesz transforms}\label{riesz}
\hskip\parindent  In this section we apply our results to the
Riesz transform. The following result was essentially proved by
Auscher, Coulhon, Duong and Hofmann \cite{acdh}; see \cite{bcf14}.
As we already said, $(D)$ together with $(P_2)$
guarantees $(R_p)$ for all $p\in (1,2]$,
see \cite{cd99}.

\begin{thm}\label{riesz-heat}
Assume that the doubling Dirichlet metric measure space $(X,d,\mu, \E)$ satisfies  $(P_2)$. Let $p_0\in (2,\infty)$. Then the following statements are equivalent:

(i) $(R_p)$ holds for all $p\in (2,p_0)$.

(ii) $(G_p)$ holds for all $p\in (2,p_0)$.

\end{thm}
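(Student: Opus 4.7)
The direction $(i)\Rightarrow (ii)$ is immediate: under $(D)$ and $(P_2)$ one has $(UE)$, hence analyticity of $e^{-t\mathcal{L}}$ on every $L^p$ so that $\|\mathcal{L}^{1/2}e^{-t\mathcal{L}}\|_{p\to p}\le C/\sqrt{t}$. Factoring $\nabla H_t=(\nabla\mathcal{L}^{-1/2})\circ(\mathcal{L}^{1/2}e^{-t\mathcal{L}})$ and applying $(R_p)$ yields $(G_p)$.

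For $(ii)\Rightarrow (i)$, I would follow the Calder\'on--Zygmund/extrapolation strategy of Auscher--Coulhon--Duong--Hofmann \cite{acdh}, in the Dirichlet-form formulation detailed in \cite{bcf14}. Since $(D)$ together with $(UE)$ already yields $(R_p)$ for $p\in(1,2]$ by \cite{cd99}, only the range $p\in(2,p_0)$ is at stake. Fix $p\in(2,p_0)$ and choose $q\in(p,p_0)$. Starting from the subordination formula
$$\mathcal{L}^{-1/2}f=\frac{1}{\sqrt{\pi}}\int_0^\infty e^{-t\mathcal{L}}f\,\frac{dt}{\sqrt{t}},$$
split $I=(I-e^{-r^2\mathcal{L}})^N+\bigl[I-(I-e^{-r^2\mathcal{L}})^N\bigr]$ for $N$ sufficiently large depending on $p_0$ and the doubling exponent. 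Then verify the two ingredients required by the sharp-maximal-function criterion of \cite[Theorem 2.1]{acdh}: the ``high-frequency'' piece obeys $L^2$ off-diagonal estimates, inherited iteratively from the Davies--Gaffney bound of Lemma \ref{davies-gaffney}; the ``low-frequency'' piece, being a linear combination of $\nabla\mathcal{L}^{-1/2}\circ e^{-kr^2\mathcal{L}}$ with $k\ge 1$, is controlled in $L^q$-average on any ball $B$ of radius $r$ by $\mathcal{M}_2(|\nabla\mathcal{L}^{-1/2}f|)$ thanks to $(G_q)$ combined with the Gaussian kernel estimate $(UE)$. The extrapolation lemma then delivers weak type $(p,p)$ for $|\nabla\mathcal{L}^{-1/2}|$, and interpolating with the $L^2$-bound yields strong type $(p,p)$. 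Since $q\in(p,p_0)$ is arbitrary, $(R_p)$ holds throughout $(2,p_0)$.

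The main obstacle is the verification of the extrapolation machinery in this setting: the original argument in \cite{acdh} is phrased for Riemannian manifolds and at places exploits $1$-forms, whereas here one must work purely with the intrinsic ``\emph{carr\'e du champ}''. That bridging has been carried out in \cite{bcf14}, so the proof essentially consists of checking that the hypotheses required to invoke their version (most notably the $L^q$-off-diagonal estimate for $\nabla e^{-t\mathcal{L}}$) are supplied precisely by $(G_q)$ together with $(UE)$ and the Davies--Gaffney estimate.
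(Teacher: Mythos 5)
Your proposal follows the same route as the paper, which does not prove Theorem \ref{riesz-heat} from scratch but records that it ``was essentially proved by Auscher, Coulhon, Duong and Hofmann \cite{acdh}; see \cite{bcf14}'', i.e.\ exactly the extrapolation machinery you invoke; and your direction $(i)\Rightarrow(ii)$ is verbatim Step~1 of the paper's proof of Theorem~\ref{riesz-main}. The only slip is at the very end of the converse: having chosen $q\in(p,p_0)$ and supplied $(G_q)$, the sharp-maximal/good-$\lambda$ criterion of \cite[Theorem 2.1]{acdh} gives strong $L^s$-boundedness for every $s\in(2,q)$ directly, whence $(R_p)$ because $2<p<q$. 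Your phrase ``the extrapolation lemma then delivers weak type $(p,p)$ \dots\ interpolating with the $L^2$-bound yields strong type $(p,p)$'' carries the wrong index (if one wants a weak-type intermediate step it should be weak $(q,q)$, not $(p,p)$; Marcinkiewicz interpolation between strong $L^2$ and weak $(p,p)$ only yields strong bounds for exponents strictly below $p$). Since you took $p<q$, the conclusion is still reachable, so this is a cosmetic rather than a structural flaw.
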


First we record the open-ended character  of  condition $(RH_p)$.
\begin{lem}\label{lem-open}
Let  $(X,d,\mu, \E)$  be a doubling Dirichlet metric measure space.

(i)  If $(P_2)$ holds, then there exists $\varepsilon>0$, such that  $(RH_p)$ holds for each $p\in (2,2+\varepsilon)$.

(ii) If there exists $p_0\in (2,\infty)$ such that $(P_{p_0})$ and $(RH_{p_0})$ holds, then there exists $\varepsilon_1>0$
such that $(RH_p)$ holds for each $p\in (2,p_0+\varepsilon_1)$.
\end{lem}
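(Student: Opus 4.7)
My plan is to invoke Gehring's self-improvement of reverse H\"older inequalities in the form of \cite{ge73,iw95}. For both parts, I would first derive a ``weak'' reverse H\"older inequality of the form
\[
\left(\fint_{B'}|\nabla u|^{q}\,d\mu\right)^{1/q}
\le C\left(\fint_{\sigma B'}|\nabla u|^{s}\,d\mu\right)^{1/s},\qquad s<q,
\]
valid for sub-balls $B'$ with $\sigma B'$ inside the region of harmonicity of $u$; Gehring's lemma then upgrades the exponent $q$ on the left to some $q+\varepsilon$. The key ingredient that lets us squeeze in an exponent strictly below the Poincar\'e exponent is the Keith--Zhong self-improvement \cite{kz08}: $(P_q)\Longrightarrow(P_{q-\eta})$ for some $\eta>0$.

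For part (i) I would take $q=2$. Given $u$ harmonic on $2B$, Caccioppoli's inequality (Lemma~\ref{l2.5}) applied to $u-u_{\frac{3}{2}B}$ yields $(\fint_B|\nabla u|^{2}\,d\mu)^{1/2}\le (C/r)\,(\fint_{\frac{3}{2}B}|u-u_{\frac{3}{2}B}|^{2}\,d\mu)^{1/2}$. Keith--Zhong provides $(P_s)$ for some $s\in[\frac{2Q}{Q+2},2)$, and the Haj\l asz--Koskela Sobolev--Poincar\'e embedding controls the right-hand side by $Cr(\fint_{\lambda B}|\nabla u|^{s}\,d\mu)^{1/s}$ with $\lambda<2$. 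Combining yields the required weak reverse H\"older inequality, and Gehring then furnishes $\varepsilon>0$ so that the analogous bound holds with exponent $2+\varepsilon$ on the left. To close, I would use H\"older to raise $s$ back to $2$, apply Caccioppoli once more on an intermediate ball, and invoke Moser's $L^\infty$-estimate for harmonic functions (available because $(D)+(P_2)$ implies $(LY)$ and hence Moser iteration) to replace the remaining $L^2$ average of $u$ by $\fint_{2B}|u|\,d\mu$, obtaining $(RH_p)$ for $p\in(2,2+\varepsilon)$.

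For part (ii) I would take $q=p_0$. For $u$ harmonic on $2B$ and each sub-ball $B'\subset B$, applying $(RH_{p_0})$ to the harmonic function $u-u_{2B'}$ gives $(\fint_{B'}|\nabla u|^{p_0}\,d\mu)^{1/p_0}\le (C/r')\fint_{2B'}|u-u_{2B'}|\,d\mu$, while Keith--Zhong applied to $(P_{p_0})$ yields $(P_{p_0-\delta})$ for some $\delta>0$, controlling $\fint_{2B'}|u-u_{2B'}|\,d\mu$ by $Cr'(\fint_{2B'}|\nabla u|^{p_0-\delta}\,d\mu)^{1/(p_0-\delta)}$. Gehring's lemma then delivers $\varepsilon_1>0$ such that
\[
\left(\fint_{B'}|\nabla u|^{p}\,d\mu\right)^{1/p}
\le C\left(\fint_{\sigma B'}|\nabla u|^{p_0-\delta}\,d\mu\right)^{1/(p_0-\delta)}
\]
for $p\in[p_0,p_0+\varepsilon_1)$ and sub-balls with $\sigma B'\subset 2B$. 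Closing follows by applying $(RH_{p_0-\delta})$ (itself a consequence of $(RH_{p_0})$ by H\"older) on $\sigma B'$, together with a Vitali covering of $B$ by small balls and $L^{p_0+\varepsilon_1}$-boundedness of the Hardy--Littlewood maximal function; the case $p\in(2,p_0]$ follows directly from $(RH_{p_0})$ by H\"older.

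The main technical difficulty I anticipate is the book-keeping of ball dilations. Gehring's lemma in its local form provides the improved reverse H\"older only on sub-balls strictly inside the region of harmonicity, producing a dilation factor $\sigma>1$ on the right-hand side. Recovering the canonical $(RH_p)$ with outer dilation exactly~$2$ therefore requires covering the original ball $B$ by small sub-balls to which both the Gehring conclusion and a Caccioppoli/Moser estimate (in part (i)) or $(RH_{p_0})$ (in part (ii)) can be simultaneously applied, and then reassembling through the maximal function. This is standard Calder\'on--Zygmund real analysis, but it demands careful dilation accounting.
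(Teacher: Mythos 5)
Your proposal is correct and follows essentially the same approach as the paper's: combine Caccioppoli with the Keith--Zhong self-improvement of the Poincar\'e inequality to derive a weak reverse H\"older inequality for $|\nabla u|$, invoke Gehring's lemma to upgrade the exponent, and close with a geometric-doubling covering argument (as in Step~4 of the proof of Theorem~\ref{harmonic-pnorm}) to restore the canonical dilation factor~$2$. The only cosmetic differences are that in part~(i) the paper passes directly from Caccioppoli to the $L^1$ oscillation of $u$ (implicitly via the de~Giorgi--Moser $L^\infty$ estimate, available since $(D)$ and $(P_2)$ imply $(UE)$) whereas you route through the Haj\l asz--Koskela Sobolev--Poincar\'e embedding, and your invocation of the Hardy--Littlewood maximal operator in the closing of part~(ii) is superfluous, since the finite covering via Lemma~\ref{geometric-doubling} already suffices.
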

\begin{proof}
(i) By the self-improving property of $(P_2)$ from  \cite{kz08} (see Appendix \ref{appendix-poincare}), we have that
there exists $0<\tilde \ez<1$ such that for each ball $B=B(x,r)$ and every $v\in W^{1,2}(B)$
$$\fint_{B}|v-v_{B}|\,d\mu\le
Cr\lf(\fint_{B}|\nabla v|^{2-\tilde \ez}\,d\mu\r)^{1/(2-\tilde \ez)},$$
where $C$ is independent of $B$ and $v$.
Therefore, 
by Lemma \ref{l2.5}, for each
$u\in W^{1,2}(2B)$ satisfying $\mathcal{L} u=0$ in $2B$, $B=B(x_0,r)$, it holds that
\begin{eqnarray*}
\lf(\fint_B|\nabla u|^{2}\,d\mu\r)^{1/2}=\lf(\fint_B|\nabla (u-u_{2B})|^{2}\,d\mu\r)^{1/2}\le \frac {C}r \fint_{2B}|u-u_{2B}|\,d\mu\le
C\lf(\fint_{2B}|\nabla u|^{2-\tilde \ez}\,d\mu\r)^{1/(2-\tilde \ez)}.
\end{eqnarray*}
By applying the Gehring Lemma (cf. \cite{ge73,iw95}), we see that there exists $\varepsilon>0$ such that,
for each $p\in (2,2+\varepsilon)$,
\begin{eqnarray*}
\lf(\fint_{B(x_0,r/2)}|\nabla u|^{p}\,d\mu\r)^{1/p}&&\le
C\lf(\fint_{B(x_0,r)}|\nabla u|^{2-\tilde \ez}\,d\mu\r)^{1/(2-\tilde \ez)}\le C\lf(\fint_{B(x_0,r)}|\nabla u|^{2}\,d\mu\r)^{1/2}\\
&&\le \frac {C}r \fint_{2B}|u|\,d\mu.
\end{eqnarray*}
Applying the geometric doubling lemma, Lemma \ref{geometric-doubling}, as in
Step 4 of the proof of Theorem \ref{harmonic-pnorm},
we conclude  that $(RH_{p})$ holds for each $p\in (2,2+\varepsilon)$.

(ii) The second statement follows by noticing that  $(P_{p_0})$ implies $(P_{p_0-\hat\ez})$ for some $\hat\ez>0$ (cf. \cite{kz08} or Appendix \ref{appendix-poincare}).
This and $(RH_{p_0})$ imply
\begin{eqnarray*}
\lf(\fint_B|\nabla u|^{p_0}\,d\mu\r)^{1/p_0}=\lf(\fint_B|\nabla (u-u_{2B})|^{p_0}\,d\mu\r)^{1/p_0}\le \frac {C}r \fint_{2B}|u-u_{2B}|\,d\mu\le
\lf(\fint_{2B}|\nabla u|^{p_0-\hat\ez}\,d\mu\r)^{1/(p_0-\hat \ez)},
\end{eqnarray*}
if  $u\in W^{1,2}(2B)$ satisfies $\mathcal{L} u=0$ in $2B$, $B=B(x_0,r)$.

Using the Gehring Lemma once more gives the existence of $\varepsilon_1>0$
such that $(RH_p)$ holds for each $p\in (2,p_0+\varepsilon_1)$.
\end{proof}

%
%
%

We can now prove Theorem \ref{riesz-main} by using Theorem \ref{main-har-heat} and the lemma above.
\begin{proof}[Proof of Theorem $\ref{riesz-main}$]
Notice that under the assumption of $(D)$, $(UE)$ and $(P_p)$, $(RH_p)$ or $(G_p)$ implies $(P_2)$; see  \cite[Corollary 2.8]{bf15} and
 \cite[Theorem 6.3]{bcf14}. The equivalence of $(RH_{p})$ and $(G_{p})$ follows from Corollary \ref{main-cor-finite},
and  we only need to prove that $(G_{p})\Longleftrightarrow (R_{p})$.

{\bf Step 1.} $(R_{p})\Longrightarrow (G_{p})$.

This is well known (cf. \cite{acdh}), but we recall the argument for the sake of completeness. Assume $(R_{p})$. By  analyticity of the heat semigroup
on $L^{p}(X,\mu)$ (cf. \cite{ste70})
$$\|\mathcal{L}^{1/2}e^{-t\mathcal{L}}\|_{p\to p}\le \frac{C}{\sqrt t}.$$
Therefore, we conclude via $(R_p)$ that
$$\||\nabla H_t|\|_{p\to p}=\||\nabla\mathcal{L}^{-1/2} \mathcal{L}^{1/2}H_t|\|_{p\to p}=
\||\nabla\mathcal{L}^{-1/2} \mathcal{L}^{1/2}e^{-t\mathcal{L}}|\|_{p\to p}\le \frac{C}{\sqrt t},$$
i.e., $(G_{p})$ holds.

{\bf Step 2.} $(G_{p})\Longrightarrow (R_{p})$.

Suppose that $(G_{p})$ holds.
According to Corollary \ref{main-cor-finite}, we know that $(RH_{p})$ holds.
By Lemma \ref{lem-open},  there exists $\varepsilon_1>0$
such that $(RH_q)$ holds for each $q\in (2,p+\varepsilon_1)$. This, together with
 Theorem \ref{main-har-heat} and  Theorem \ref{riesz-heat} above,
yields that $(R_q)$ holds  for each $q\in (2,p+\varepsilon_1)$,
and in particular, $(R_{p})$ holds,  as desired.
\end{proof}

%

Corollary \ref{cor-riesz-open} now easily follows from Lemma \ref{lem-open} and Theorem \ref{riesz-main}.
\begin{proof}[Proof of Corollary \ref{cor-riesz-open}]
This corollary follows by combining Theorem \ref{riesz-main} and Lemma \ref{lem-open}.
\end{proof}

%
%

\begin{rem}\rm
One can also find a characterization of boundedness of local Riesz transforms
via boundedness of the gradient heat semigroup for small time, $(G_p^{\loc})$, in \cite{acdh}.
We expect that the ideas of this paper can be employed to show that  $L^p$-boundedness of the local Riesz transform
is point-to-point  equivalent to $(G_p^{\loc})$ for each $p\in (2,\infty)$.
\end{rem}

\section{Sobolev inequalities and isoperimetric inequality}
\hskip\parindent In this section, following the central idea of \cite{jk,jky14} and using Theorem \ref{infinite-har}, we show that
$(RH_p)$ for $p>2$ yields a Sobolev inequality or an isoperimetric
inequality. Combining this and Theorem \ref{riesz-main},
we find a new necessary condition for quantitative regularity of harmonic
functions and heat kernels,  and for boundedness of Riesz transforms.

\subsection{Sobolev inequalities} \hskip\parindent
Recall the definition of the Sobolev inequality $(S_{q,p})$ given in Section \ref{SI}. In our setting, under $(D)$ and $(UE)$, $(S_{q,2})$ holds for some $q>2$ (see  Section \ref{harpo})
 and hence by H\"older  $(S_{q,p})$ holds for every $p\ge 2$. Here we are interested in the non-trivial range $p\in [1,2)$.
\begin{thm}\label{p-sobolev}
 Let $(X,d,\mu,\E)$ be a  Dirichlet metric measure space. Assume that  $(X,d,\mu,\E)$ satisfies $(D_Q)$,  $Q> 2$, and  that  $(UE)$ and $(P_{2,\loc})$ hold. Let  $p_0\in (2,\infty)$. Suppose that one of the mutually  { equivalent} conditions $(RH_{p_0})$,
$(GLY_{p_0})$, $(G_{p_0})$,   holds.
Then the Sobolev inequality $(S_{q,p})$ holds for all $p\in [\frac{p_0}{p_0-1},2]$ and $q\in [1,\frac{pQ}{Q-p})$.
\end{thm}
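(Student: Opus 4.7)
The plan is to prove $(S_{q,p})$ via duality, reducing it to a dual gradient estimate for solutions of the Poisson equation; this gradient bound will be obtained from Theorem \ref{harmonic-pnorm} at the endpoint $p=p_*:=p_0/(p_0-1)$, from the energy identity plus $(S_{q_0,2})$ at the endpoint $p=2$, and extended to intermediate $p$ by interpolation.

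\textbf{Duality reduction.} Fix a Lipschitz function $f$ with compact support in $B=B(x_0,r)$. By $(L^q)^*=L^{q'}$,
$$\|f\|_{L^q(B)} = \sup\Bigl\{\int_B fg\,d\mu : g\in L^{q'}(B),\ \|g\|_{L^{q'}(B)}\le 1\Bigr\}.$$
Extend each such $g$ by zero to $2B$. Since $q<pQ/(Q-p)\le 2Q/(Q-2)$ gives $q'>2Q/(Q+2)$, Lemma \ref{l2.6} produces a unique $u\in W^{1,2}_0(2B)$ solving $\mathcal{L}u=g$, with $\fint_{2B}|u|\,d\mu \le Cr^2\bigl(\fint_{2B}|g|^{q'}\,d\mu\bigr)^{1/q'}$. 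Using $f\in W^{1,2}_0(2B)$ as a test function and H\"older's inequality,
$$\int_B fg\,d\mu = \int_{2B}\langle\nabla f,\nabla u\rangle\,d\mu \le \||\nabla f|\|_{L^p(B)}\,\||\nabla u|\|_{L^{p'}(B)},$$
so $(S_{q,p})$ reduces to the dual gradient estimate
$$\||\nabla u|\|_{L^{p'}(B)} \le Cr\,\mu(B)^{1/p'-1/q'}\,\|g\|_{L^{q'}(B)}. \qquad (\star)$$

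\textbf{Endpoint bounds and interpolation.} At $p=p_*$ (so $p'=p_0$), apply Theorem \ref{harmonic-pnorm} with $\tilde q:=q'$: the admissibility condition $\tilde q\in(p_0Q/(Q+p_0),p_0]$ is exactly $q\in[p_*,p_*Q/(Q-p_*))$, and its conclusion, combined with the averaged bound on $u$ above, yields $(\star)$ at $p'=p_0$. At $p=2$ (so $p'=2$), the energy identity $\||\nabla u|\|_{L^2(2B)}^2=\int_{2B}ug\,d\mu$ combined with the Sobolev inequality $(S_{q_0,2})$ for any $q_0<2Q/(Q-2)$ — which in our setting follows from $(UE)$ and the reverse doubling $(RD)$ recorded in Section \ref{harpo} — gives $(\star)$ at $p'=2$. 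For each intermediate $p\in(p_*,2)$, interpolate the linear solution map $g\mapsto u_g$, regarded as taking values in the Sobolev space with seminorm $\||\nabla\cdot|\|_{L^{p'}(B)}$: choosing $\theta\in(0,1)$ so that $1/p' = (1-\theta)/p_0+\theta/2$, and letting $\tilde q$ in the first endpoint bound approach $p_0Q/(Q+p_0)$, the interpolated estimate is precisely $(\star)$ for every $q\in[p_*,pQ/(Q-p))$. Plugging $(\star)$ into the duality chain gives $(S_{q,p})$ in that range, and the residual subrange $q\in[1,p_*)$ follows from the case $q=p_*$ via Jensen's inequality $\bigl(\fint_B|f|^q\bigr)^{1/q}\le \bigl(\fint_B|f|^{p_*}\bigr)^{1/p_*}$.

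\textbf{Main obstacle.} The principal difficulty lies in the interpolation step: one must carefully track the $\mu(B)^\alpha$-dependence of the operator norms at the two endpoints, justify interpolation for the relevant (sublinear) gradient map $g\mapsto|\nabla u_g|$, and verify that the family of admissible pairs $(p,q)$ produced by the interpolation exhausts the full claimed range $q<pQ/(Q-p)$. The duality reduction and each endpoint gradient bound are otherwise relatively routine consequences of Theorem \ref{harmonic-pnorm}, Lemma \ref{l2.6}, and the standard energy/Sobolev identity at $p=2$.
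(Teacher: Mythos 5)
Your duality reduction and the endpoint bound $(\star)$ at $p'=p_0$, obtained from Theorem \ref{harmonic-pnorm} together with Lemma \ref{l2.6}, coincide with the paper's argument (the paper tests directly against $h$ rather than passing through $(\star)$, and its reference to ``Theorem \ref{infinite-har}'' there is evidently a typo for Theorem \ref{harmonic-pnorm}; the mechanism is the same). The genuinely different ingredient is the last step. The paper writes only that ``$(S_{q,p})$ follows by the H\"older inequality for every $p\in[p'_0,2]$ and $q\in[1,Qp/(Q-p))$''. As you implicitly observe, raising $p$ on the gradient side of $(S_{q,p'_0})$ by H\"older transports the inequality to $p>p'_0$ but keeps $q$ in the \emph{smaller} range $q<p'_0Q/(Q-p'_0)$; for $p\in(p'_0,2]$ this misses the subrange $q\in[p'_0Q/(Q-p'_0),\,pQ/(Q-p))$ of the claimed statement. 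Your interpolation between the $p'=p_0$ endpoint (Theorem \ref{harmonic-pnorm}) and the $p'=2$ endpoint (energy identity plus $(S_{q_0,2})$) is what actually fills that gap, and the crucial observation you make---that the dual exponent $q'$ must be allowed to vary independently at the two endpoints, each approaching its critical threshold---is exactly what makes the interpolated region exhaust the full range, since $1/p'_\theta+1/Q$ is the corresponding affine combination of the two endpoint thresholds. The technicality you flag is routine rather than a gap: $g\mapsto u_g$ is linear, $\nabla u_g$ may be regarded as taking values in the Hilbert module of $L^2$-vector fields associated with $\Gamma$ (so $|\nabla u_g|$ is the pointwise module norm), and Riesz--Thorin for Banach-space-valued operators then applies; the $\mu(B)$-factor interpolates to precisely $\mu(B)^{1/p'-1/q'}$. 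With those details filled in, your route is both consistent with the paper's and in fact more complete than the text of the paper's final ``by H\"older'' step.
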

\begin{proof}
Let $p'_0=\frac{p_0}{p_0-1}$ and  $q\in [1,\frac{p'_0Q}{Q-{p'_0}})$. Then the conjugate exponent $q'$ of $q$ satisfies  $q'>\frac{Qp_0}{Q+p_0}$.
For any $B=B(x_0,r)$ and $g\in L^{\infty}(B)$, let $f\in W^{1,2}_0(B)$ be the solution to $\mathcal{L} f=g$ in $B(x_0,2r)$, see Lemma \ref{exitence-poisson}.
For a compactly supported Lipschitz function $h$ on $B$, we have
\begin{eqnarray*}
\left|\int_{B}h(x)g(x)\,d\mu(x)\right|
&&=\left|\int_Bh\mathcal{L}f\,d\mu\right|\\&&=\left|\int_B\langle\nabla h,\nabla f\rangle\,d\mu\right|\\
&&\le C\||\nabla h|\|_{L^{p'_0}(B)}\||\nabla f|\|_{L^{p_0}(B)}\\
&&=  C\||\nabla h|\|_{L^{p'_0}(B)}\left[V(x_0,r)\right]^{1/p_0}\left(\fint_B |\nabla f|^{p_0}\right)^{1/p_0}.
\end{eqnarray*}
Thus, by Theorem \ref{infinite-har},
$$
\left|\int_{B}h(x)g(x)\,d\mu(x)\right|
\le  C\||\nabla h|\|_{L^{p'_0}(B)}\frac{\left[V(x_0,r)\right]^{1/p_0}}{r}\left(\fint_{B(x_0,2r)}|f|\,d\mu+
r^2\left(\fint_{B(x_0,2r)}|g|^{q'}\,d\mu\right)^{1/q'}\right).$$
Since $p_0>2$, $\frac1{q'}<\frac{1}{p_0}+\frac{1}{Q}<\frac{1}{2}+\frac{1}{Q}$, and therefore we may apply Lemma \ref{l2.6},
which yields
$$\fint_{B(x_0,2r)}|f|\,d\mu\le C
r^2\left(\fint_{B(x_0,2r)}|g|^{q'}\,d\mu\right)^{1/q'},$$
and hence
$$\left|\int_{B}h(x)g(x)\,d\mu(x)\right|\le   C\||\nabla h|\|_{L^{p'_0}(B)}r\left[V(x_0,r)\right]^{1/p_0}
\left(\fint_{B(x_0,2r)}|g|^{q'}\,d\mu\right)^{1/q'}.
$$

Taking the supremum over all $g$ with $\|g\|_{L^{q'}(B)}\le 1$ yields
$$\left(\fint_B|h|^{q}\,d\mu\right)^{1/q}\le Cr\left(\fint_B|\nabla h|^{p'_0}\,d\mu\right)^{1/p'_0},$$
i.e. $(S_{q,p'_0})$. Finally
$(S_{q,p})$ follows by the H\"older inequality for every $p\in [p'_0,2]$ and $q\in [1,\frac{Qp}{Q-p})$, as desired.
\end{proof}

\subsection{Isoperimetric inequality}
\hskip\parindent In this section, we give an application of
Theorem \ref{main-har-heat-infty} to isoperimetric inequalities. The
following definition of
perimeter can be found in \cite{am02,mir} (see Appendix \ref{appendix-sobolev}).

For an open set $\Omega\subset X$,
denote by $\Lip(\Omega)$ ($\Lip_{\mathrm {loc}}(\Omega)$)
the space of all (locally) Lipschitz functions on $\Omega$, and by
$\Lip_0(\Omega)$ the space of all Lipschitz functions with compact support
in $\Omega$. Denote by $\mathscr{B}(X)$ the collection of all Borel sets in $X$.

\begin{defn}\label{d4.1}
Let $E\in \mathscr{B}(X)$ and $\Omega\subset X$ open. The perimeter of $E$
in $\Omega$, denoted by $P(E,\Omega)$, is defined by
\begin{equation}
 P(E,\Omega)=\inf\lf\{\liminf_{h\to\fz}\int_{\Omega} |\nabla v_h|\,d\mu:
\, \{v_h\}_h\subset \mathrm{Lip_\loc}(\Omega),
v_h\to \chi_E \ {\mathrm {in}} \ L^1_{\loc}(\Omega)\r\}.
\end{equation}
$E$ is a set of finite perimeter in $X$ if $P(E,X)<\fz$.
\end{defn}

The following proof is adapted from \cite{jky14}. We include it for completeness.
\begin{thm}\label{iso}
 Let $(X,d,\mu,\E)$ be a  Dirichlet metric measure space. Assume that  $(X,d,\mu,\E)$  satisfies $(D_Q)$,  $Q\ge 2$, and that  $(P_{\infty,\loc})$ and $(UE)$ hold. Suppose that one of the  mutually { equivalent}
conditions $(RH_{\infty})$, $(GLY_{\infty})$, $(G_\infty)$,
$(GBE)$, holds.
Then, for every bounded Borel set $E$ and every $x\in E,$
$$\mu(E)^{1-\frac 1 Q}\le C\frac{r}{\left[V(x,{r})\right]^{1/Q}}P(E,X).$$
where we choose  $r>\mathrm {diam} (E)$ such that $E\subset B(x,r)$.
\end{thm}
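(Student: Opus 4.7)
The plan is to prove the isoperimetric inequality by a Ledoux-type heat semigroup argument, using Theorem \ref{main-har-heat-infty} to convert the hypothesis into $(G_\infty)$ and $(UE)$. The two pillars are an upper bound $\|H_t\chi_E-\chi_E\|_1\le C\sqrt{t}\,P(E,X)$ coming from $(G_\infty)$, and a lower bound $\|H_t\chi_E-\chi_E\|_1\ge\mu(E)/2$ coming from $(UE)$ and doubling after a suitable choice of $t$. Balancing these estimates produces the exponent $1-1/Q$.

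For the upper bound I would first prove the $L^1$--BV inequality $\|H_tv-v\|_1\le C\sqrt{t}\,\||\nabla v|\|_1$ for every Lipschitz $v$ with compact support. This follows by duality: for any $\phi\in\mathscr{D}(\mathcal{L})\cap L^\infty(X,\mu)$, the identity $H_t\phi-\phi=-\int_0^t\mathcal{L}H_s\phi\,ds$ combined with the self-adjointness of $\mathcal{L}$ gives
$$\int_X(H_tv-v)\phi\,d\mu=-\int_0^t\!\!\int_X\langle\nabla v,\nabla H_s\phi\rangle\,d\mu\,ds,$$
and $(G_\infty)$ yields $\||\nabla H_s\phi|\|_\infty\le C\|\phi\|_\infty/\sqrt{s}$, so integrating in $s$ produces the claim. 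For the approximating sequence $\{v_h\}\subset\mathrm{Lip}_{\mathrm{loc}}(X)$ with $v_h\to\chi_E$ in $L^1_{\mathrm{loc}}$ and $\||\nabla v_h|\|_1\to P(E,X)$, a Lipschitz cut-off outside a ball slightly larger than $B(x,r)$ (possible since $E$ is bounded) upgrades the convergence to $L^1(X,\mu)$ without inflating $\limsup\||\nabla v_h|\|_1$; the $L^1$-contractivity of $H_t$ then delivers $\|H_t\chi_E-\chi_E\|_1\le C\sqrt{t}\,P(E,X)$ in the limit.

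For the lower bound, fix $x\in E$ with $E\subset B(x,r)$. By $(UE)$, for $y\in E$ and $\sqrt{t}\le r$,
$$H_t\chi_E(y)\le C\int_E\frac{d\mu(z)}{V(z,\sqrt{t})}\le\frac{C'\mu(E)}{V(x,r)(\sqrt{t}/r)^Q},$$
where the last step uses $d(x,z)\le r$ together with $(D_Q)$ to compare $V(z,\sqrt{t})$ with $V(x,r)(\sqrt{t}/r)^Q$. Choosing $\sqrt{t}:=r(2C'\mu(E)/V(x,r))^{1/Q}$, which lies in $(0,r]$ as long as $\mu(E)\le V(x,r)/(2C')$, forces $H_t\chi_E\le 1/2$ on $E$, and hence $\|\chi_E-H_t\chi_E\|_1\ge\mu(E)/2$. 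Combining with the upper bound and substituting the chosen $\sqrt{t}$ yields
$$\mu(E)\le C''\,r\,(\mu(E)/V(x,r))^{1/Q}P(E,X),$$
which rearranges to the desired estimate $\mu(E)^{1-1/Q}\le C_0 r V(x,r)^{-1/Q}P(E,X)$. The borderline regime $\mu(E)>V(x,r)/(2C')$ is handled symmetrically: one can either invoke the reverse doubling $(RD)$ (available since $X$ is non-compact and connected) to push $\sqrt{t}$ just above $r$ and obtain the same bound with $Q'$ in place of $Q$, or simply observe that in this regime $(\mu(E)/V(x,r))^{1/Q}$ is bounded below, so the weaker bound $\mu(E)\le Cr\,P(E,X)$ already implies the full inequality after absorbing constants. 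The main technical obstacle is the $L^1_{\mathrm{loc}}$-to-$L^1$ cut-off step for the approximating sequence, which requires controlling the gradient contribution introduced by the cut-off near the boundary of the enlarged ball; once this is in place, the remaining ingredients are a direct application of $(G_\infty)$, $(UE)$, and $(D_Q)$.
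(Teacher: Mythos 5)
Your proof is correct, but it takes a genuinely different route from the paper's. The paper proves Theorem~\ref{iso} through the Poisson equation: it solves $\mathcal{L}f=\chi_E$ in $2B$, feeds the $(RH_\infty)$ hypothesis into Theorem~\ref{infinite-har} to get an $L^\infty$ gradient bound on $f$, and then pairs $\nabla f$ with $\nabla v_h$ for the approximating Lipschitz sequence $v_h\to\chi_E$. This makes the isoperimetric inequality a direct consequence of the elliptic regularity theory developed in Section~3. You instead run the Ledoux-type heat semigroup argument: the $L^1$--BV inequality $\|H_tv-v\|_1\le C\sqrt{t}\,\||\nabla v|\|_1$ is obtained from $(G_\infty)$ by duality (after invoking Theorem~\ref{main-har-heat-infty} to pass from the assumed hypothesis to $(G_\infty)$), while the lower bound $\|H_t\chi_E-\chi_E\|_1\ge\mu(E)/2$ comes from $(UE)$ and $(D_Q)$ once $\sqrt{t}$ is tuned so that $H_t\chi_E\le 1/2$ on $E$; balancing yields the exponent $1-1/Q$. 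Your approach bypasses the potential estimates of Theorem~\ref{infinite-har} entirely and works directly on the parabolic side, which is shorter and arguably more self-contained given $(G_\infty)$; the paper's approach is more aligned with its main theme of translating between elliptic and parabolic regularity. Two small remarks. First, your second alternative for the borderline regime $\mu(E)>V(x,r)/(2C')$ is circular as written --- you invoke ``the weaker bound $\mu(E)\le Cr\,P(E,X)$'' without establishing it --- but your first alternative via reverse doubling $(RD)$ does go through: with $\sqrt t = r\bigl(2C\mu(E)/(cV(x,r))\bigr)^{1/Q'}>r$ one gets $\mu(E)^{1-1/Q'}\le C r V(x,r)^{-1/Q'}P(E,X)$, and multiplying by $\mu(E)^{1/Q'-1/Q}\le V(x,r)^{1/Q'-1/Q}$ recovers the stated inequality, so the case split is handled. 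Second, the cut-off step is actually cleaner than you suggest: cutting off at $B(x,kr)$ with $k\ge 2$ forces the boundary contribution $\tfrac1r\int_{B(x,kr)\setminus B(x,(k-1)r)}|v_h|\,d\mu\to 0$ because $v_h\to\chi_E$ in $L^1_{\mathrm{loc}}$ and $\chi_E$ vanishes on that annulus, so no extra term survives in $\limsup_h\||\nabla(v_h\psi)|\|_1$.
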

\begin{proof}
Let $E$ be a bounded Borel set in $X$. We can find a ball
$B=B(x,r)$ with center in $E$ and radius $r> \mathrm{diam} (E)$ such that
$E\subset\subset B$.

Consider the Poisson equation $\mathcal{L}f=\chi_E$ in $2B$. Then
there exists a solution $u\in W_0^{1,2}(2B)$ to the equation by Lemma \ref{l2.6}.  By
using $(RH_{\infty})$ and Theorem \ref{infinite-har}, we obtain that
for each $p>\frac{2Q}{Q+2}$, there exists
$C=C(C_D,C_{LS},C_P(1),p)>0$ such that, for almost every $y\in B$,
$$|\nabla f(y)|\le C\lf\{\frac{1}{r}\fint_{2B}|f|\,d\mu
+\sum_{j\le [\log_2
r]}2^j\lf(\fint_{B(y,2^j)}|\chi_{E}|^{p}\,d\mu\r)^{1/p}\r\}.$$

By Lemma \ref{l2.6} we have
\begin{eqnarray}\label{7.2}
\frac{1}{r}\fint_{2B}|f|\,d\mu\le Cr \lf(\fint_{B}|\chi_{E}|^{Q}\,d\mu\r)^{1/Q}\le \frac{Cr\mu(E)^{1/Q}}{\mu(B)^{1/Q}}.
\end{eqnarray}
Fix $p\in (\frac{2Q}{Q+2},Q)$. A direct calculation (cf. \cite[Proposition 4.1]{jky14}) shows that for any $y\in B$
\begin{eqnarray}\label{7.3}
\sum_{j\le [\log_2
r]}2^j\lf(\fint_{B(y,2^j)}|\chi_{E}|^{p}\,d\mu\r)^{1/p}\le C\frac{r}{\mu(B)^{1/Q}}\mu(E)^{1/Q}.
\end{eqnarray}

By the definition of perimeter, we may choose a sequence of
Lipschitz functions  $\{v_h\}_h\subset \mathrm{Lip_0}(B)$, $v_h\to
\chi_E \ {\mathrm {in}} \ L^1(B)$ such that
$$\lim_{h\to\fz}\int_{B} |\nabla v_h|\,d\mu=P(E,X).$$
As $f$ is a solution to the Poisson equation $\mathcal{L} u=\chi_E$ in $2B$, we then have for each $h\in \cn$,
$$\int_{2B}\nabla u\cdot \nabla v_h\,d\mu=\int_{2B}\chi_E v_h\,d\mu=\int_{E}v_h\,d\mu.$$
Since $\supp v_h\subset B$, by using the estimates \eqref{7.2} and \eqref{7.3}, and
passing $h$ to infinity, we obtain
\begin{eqnarray*}
\mu(E)&&=\lim_{h\to \fz}\|v_h\|_{L^1(B)}=\lim_{h\to \fz}\int_{2B}\nabla u\cdot \nabla v_h\,d\mu\le \lim_{h\to \fz}\|\nabla v_h\|_{L^1(B)}\||\nabla u|\|_{L^\fz(B)}\\
&&\le CP(E,X)\frac{r}{\mu(B)^{1/Q}}\mu(E)^{1/Q},
\end{eqnarray*}
which gives the conclusion and completes the proof.
\end{proof}


\begin{rem}\rm We remark that Theorem \ref{p-sobolev} and
Theorem \ref{iso} admit localisation. Since the arguments are
the same as for the global versions, we leave them
to interested readers.
\end{rem}

\section{Examples}\label{Ex}
\hskip\parindent  In this section, we  apply our results to several concrete examples of  interest. Notice
that since our assumptions are quite mild ($(D)$,  $(UE)$ and $(P_{2,\loc})$), our results
have broad applications. Below we will mainly concentrate on three different settings, and we refer the readers to
\cite{ags3,ams15,acdh,bg11,bm93,eks13,FKS82,kz12,VSC1992} for more examples.

\subsection{Riemannian metric measure spaces}\label{rmms}
\hskip\parindent Let us begin with some examples arising from
Riemannian geometry.

\medskip

{\bf Example 1.} Riemannian metric measure spaces with Ricci curvature bounded from below, i.e., $RCD^\ast(K,N)$ spaces, $K\in\rr$ and $N\in [1,\infty)$; see \cite{ams15,eks13}. Examples satisfying $RCD^\ast(K,N)$  include complete Riemannian manifolds with dimension not bigger than $N$
 and Ricci curvature not less than $K$, and complete Alexandrov spaces with dimension not bigger than $N$
 and curvature not less than $K$. An important fact is that  the $RCD^\ast(K,N)$ condition is stable under Gromov-Hausdorff convergence,
 which means that a Gromov-Hausdorff limit, of a sequence of manifolds satisfying $RCD^\ast(K,N)$,  satisfies also $RCD^\ast(K,N)$.

The $RCD^\ast(K,N)$ condition can be defined as follows; see \cite{ams15,eks13}. Let $(X, d,\mu,\E)$ be a Dirichlet metric measure space satisfying $\supp \mu= X$ and $V(x,r)\le Ce^{cr^2}$ for some $C,c>0$, $x\in X$ and each $r>0$. We call $(X, d,\mu,\E)$ a $RCD^\ast(K,N)$ space, where $K\in\rr$ and $N\in [1,\infty)$,
 if for all $f\in \mathscr{D}$ and each $t>0$, it holds that
\begin{equation}\label{bekn}
|\nabla H_tf(x)|^2+\frac{4Kt^2}{N(e^{2Kt}-1)}|\mathcal{L} H_tf(x)|^2\le e^{-2Kt}H_t(|\nabla f|^2)(x).
\end{equation}
Equivalently, $(X, d,\mu)$ is a $RCD^\ast(K,N)$ space if the Cheeger energy is a quadratic form
and $CD^\ast(K,N)$ condition holds; see \cite{ams15,eks13}.

Under the $RCD^\ast(K,N)$ condition, the (local) doubling condition was established in \cite{lv06,stm5}, and the (local) Poincar\'e inequality was established in \cite{raj2}. The doubling condition and Poincar\'e inequality have the same behaviour as in the case of classical smooth manifolds.

Gradient estimates for harmonic functions and heat kernels on $RCD^\ast(K,N)$ spaces were established in \cite{gm14,ji14,ji15,zz16}.
Our results recover these gradient estimates in a more obvious and simple way.
By the validity of the (local) doubling condition and (local) Poincar\'e inequality,
the definition \eqref{bekn} implies directly $(RH_{\infty})$, $(G_{\infty})$ and $(R_p)$ for all $p\in (1,\infty)$ if $K\ge 0$, and their local versions  if $K<0$.

\

{\bf Example 2.} On an $n$-dimensional conical manifold with compact
basis $N$ without boundary, $C(N):=\rr^+\times N$, let $\lambda_1$ be
the smallest nonzero eigenvalue of the Laplacian on the basis (see \cite{CW97,qzz13} for studies on the
first eigenvalue).
By a result of Li \cite{lh99}, the Riesz transform is bounded on
$L^p(C(N))$ for all $p\in (1,p_0)$ and not bounded for $p\ge p_0$,
where
$$p_0:=n\left(\frac n2-\sqrt{\left(\frac{n-2}{2}\right)^2+\lambda_1}\right)^{-1}$$
if $\lambda_1<n-1$ and $p_0=\infty$ otherwise; see also \cite{acdh}.

Therefore by Theorem \ref{riesz-main}, we see that $(RH_p)$ and
$(G_p)$ hold for all $p<p_0$.  Moreover, if $\lambda_1<n-1$, then
$(RH_p)$ and $(G_p)$ do not hold on $C(N)$ for any $p\ge p_0$.

\

{\bf Example 3.} By a result of Zhang \cite{zhang06}, it is known
that Yau's gradient estimate for harmonic functions is globally
stable under certain perturbations of the metric in the following
sense.

Let $M$ be an $n$-dimensional Riemannian manifold, $n>2$, suppose
that the volume of each ball $B(x,r)$ is comparable with $r^n$ for
any $x\in M$ and $r>0$, and assume that the $L^2$-Poincar\'e inequality holds.
If
$$Ric(x)\ge -\frac{\varepsilon}{1+d(x,x_0)^{2+\delta}}$$
for a fixed $x_0\in M$, $\delta>0$ and a sufficiently small
$\varepsilon>0$, then Yau's gradient estimate $(Y_{\infty})$ holds with $K=0$.
This holds, in particular, if $M$ is a
small compact perturbation of a manifold of dimension at least $3$
that has nonnegative Ricci curvature and maximum volume growth,
i.e., $V(x,r)\sim r^n$.

By Lemma \ref{rh-yau}, $(Y_{\infty})$ with $K=0$ is
equivalent to our $(RH_\infty)$. Therefore, by Theorem
\ref{main-har-heat-infty}, we see that $(RH_{\infty})$,
$(G_{\infty})$, $(GLY_\infty)$ and $(GBE)$ hold on these spaces.

\

{\bf Example 4.} Let $M$ be a  Riemannian manifold that is the union of a compact part, $M_0$, and a
finite number of Euclidean ends, $\mathbb{R}^n\setminus B(0,1)$, $n\ge 3$, each of which carries
the standard metric. The volume of balls in $M$ grows as
$V(x,r)\sim r^n$, in particular, volume is a doubling measure. Moreover, $(UE)$ holds as a consequence of the Sobolev inequality $(LS_q)$, $q>2$.
Notice also that, while $(P_{2,\loc})$ holds on $M$, $(P_p)$ does not hold for any $p\le n$; see \cite{cch06,cd99}.
By \cite{cch06}, the Riesz transform is bounded on $L^p(M)$ if and only if  $p\in (1,n)$.
Since $(R_p)$ implies $(G_p)$, Theorem \ref{main-har-heat} implies that $(RH_p)$  also holds if and only if $1<p<n$

Actually, it is rather easy to see that $(RH_p)$ holds on $M$ for $p<n$. Suppose that $u$ is a harmonic function on $2B$. There is nothing to prove if $r$ is small, since in this case, it holds
$$\||\nabla u|\|_{L^\infty(B)}\le \frac Cr \fint_{B}|u|\,d\mu.$$ If $r>>1$, then by applying the pointwise Yau's gradient estimate $(Y_{\infty})$ to
$u+\|u\|_{L^\infty(\frac 32B)}$, we conclude that
$$|\nabla u(x)|\le \frac{C}{1+\dist(x,M_0)}\left(u(x)+\|u\|_{L^\infty(\frac 32B)}\right)$$
 for each $x\in B$, which implies, if $p<n$,
\begin{eqnarray*}
\left(\fint_{B}|\nabla u|^p\,d\mu\right)^{1/p}\le C\|u\|_{L^\infty(\frac 32B)}\left(\fint_{B}\frac{1}{(1+\dist(x,M_0))^p}\,d\mu\right)^{1/p}\le \frac Cr\|u\|_{L^\infty(\frac 32B)}\le \frac Cr\fint_{2B}|u|\,d\mu.
\end{eqnarray*}

Notice that, however, $(\widetilde{RH}_p)$ does not hold on $M$ for any $p>2$. Indeed, if $(\widetilde{RH}_p)$ holds,  then we have
$$\lf(\int_B|\nabla u|^{p}\,d\mu\r)^{1/p}\le
C\mu(B)^{1/p-1/2}\lf(\int_{2B}|\nabla u|^{2}\,d\mu\r)^{1/2}, $$
if $u$ is harmonic on $2B$. By  \cite[Theorem 2.1]{lt92}, there exists  a bounded, non-constant harmonic function $u$
with finite Dirichlet energy. Applying the above estimate to $u$ and letting the radius of $B$ tend to infinity, we see that $\||\nabla u|\|_{p}=0$,
which cannot be true.

\

{\bf Example 5.} Consider a complete, non-compact, connected Riemannian
manifold $M$. Suppose that a finitely generated discrete group $G$
acts properly and freely on $M$ by isometries, such that the orbit space
$M_1 = M/G$ is a compact manifold. In other words, $M$ is a Galois covering
manifold of the compact Riemannian manifold $M_1$, with deck transformation
group (isomorphic to) $G$. The most simple example is $M = \mathbb{R}^D$ endowed with a Riemannian metric
which is periodic under the standard action of $G = \mathbb{Z}^D$ by translations.

Assuming that $G$ has polynomial volume growth of some order
$D \ge 1$, Dungey \cite[Theorem 1.1]{dng04b} (see also \cite{dng04a}) showed that
$(GLY_\infty)$ holds on $M$.  Our Theorem \ref{main-har-heat-infty} then implies
that $(RH_{\infty})$, $(G_{\infty})$, $(GLY_\infty)$ and $(GBE)$ hold on these spaces.
Indeed, by using the group structure of $M$, it is relative easier to show that $(RH_{\infty})$ holds on $M$;
 see Appendix \ref{appendix-covering}.

\subsection{Carnot-Carath\'eodory spaces}
\hskip\parindent A large class of examples that our results can be applied to
come from Carnot-Carath\'eodory spaces;
we refer the readers to \cite{bg13,bg11,FS86,grom96,hak,js87,nsw85} for
background and recent developments.

Let $M$ be a smooth, connected manifold and $\mu$ a Borel measure.
Let $\{X_i\}_{i=1,\cdots,m}$ be Lipschitz
vector fields on $M$, with real coefficients. The ``carr\'e du champ" operator $\Gamma$ is given as
$$\Gamma(f):=\sum_{i=1}^m |X_if|^2$$
for each $f\in C^\infty(M)$, where the corresponding Dirichlet form $\int_M \sum_i X_if X_ig\,d\mu$ generalises  a second-order diffusion
operator $L$.

A tangent vector $v\in T_xM$ is called subunit for $L$ at $x$ if $v = \sum_{i=1}^m a_i X_i(x)$, with
$\sum_{i=1}^m a_i^2\le 1$;  see  \cite{FP86}.   A Lipschitz
curve $\gamma: [0, T]\mapsto M$ is called subunit for $L$ if $\gamma'(t)$ is subunit for $L$ at $\gamma(t)$ for a.e. $t\in [0,T]$.
The subunit length of $\gamma$, $\ell(\gamma)$, is given as $T$. We assume that
for any $x,y\in M$, there always exists a subunit curve $\gamma$ joining $x$ to $y$.
The Carnot-Carath\'eodory distance then is defined as
$$d_{cc}(p,q):=\inf\{\ell(\gamma):\,\gamma\ \mbox{is a subunit curve joining}\ p\ \mbox{to} \ q\}.$$
Notice that for any $x,y\in M$, the Carnot-Carath\'eodory distance
$d_{cc}(x,y)$ is the same as $d(x,y)$
induced from the Dirichlet forms; see \cite{bg11,cks87}.

Once again, our results can be applied to this setting as soon as a
(local) doubling condition and an (local)
$L^2$-Poincar\'e inequality are available. Notice that all Carnot groups
equipped with the Lebesgue measure and the natural vector fields
satisfy an $L^2$-Poincar\'e inequality; see \cite{hak} for instance.

For general vector fields satisfying the H\"ormander condition (cf.
\cite{FP86,hak,js87,nsw85}), it is known that the doubling condition
and $L^2$-Poincar\'e inequality hold locally with constants depending
on the balls under consideration, which is not sufficient in order
to apply our
results. However,  the potential estimates for the Poisson equation
from Section 3, Theorem \ref{infinite-poisson} and Theorem
\ref{infinite-har}, still work in these settings.

As we recalled in the introduction, by Theorem \ref{main-har-heat-infty},
$(RH_\infty)$, $(GLY_\infty)$, $(G_\infty)$  and $(GBE)$ hold on
any Lie groups of polynomial growth (cf. \cite{Al92,sal3}),
and more generally, on sub-Riemannian manifolds satisfying
Baudoin-Garofalo's curvature-dimension inequality $CD(\rho_1,
\rho_2, \kappa, d)$ (cf. \cite{bg11}) with $\rho_1 \ge 0$, $\rho_2 >
0$, $\kappa\ge 0$ and  $  d\ge  2$.

Examples satisfying $CD(\rho_1, \rho_2, \kappa, d)$
include all Sasakian manifolds whose horizontal Webster-Tanaka-Ricci
curvature is bounded from below, all Carnot groups with step two,
and wide subclasses of principal bundles over Riemannian manifolds
whose Ricci curvature is bounded from below; see \cite[Section
2]{bg11}.

\subsection{Degenerate (sub-)elliptic/parabolic equations}
\hskip\parindent  Our results are also applicable to degenerate
(sub-)elliptic/parabolic equations on Euclidean spaces. It is of course also
possible to extend these degenerate equations to
general metric measure spaces. For instance, one may consider a
Dirichlet form given by
$$\dint_{X}\langle\nabla f(x)\cdot \nabla g(x)\rangle w(x)\,dx,$$
where  $\la \nabla f, \nabla g\ra$ is the natural energy density of energy on
an infinitesimally Hilbertian space $(X, d,\mu)$ (cf. \cite{ags3}),
or $\nabla$ is the Cheeger differential operator (cf. \cite{ch}), and
$w$ is a suitable weight.

We focus on  degenerate elliptic/parabolic equations on
Euclidean spaces, and we refer the reader to
\cite[p.133]{bm93} and \cite{FGW91} for more examples of degenerate
(sub-)elliptic equations.

Let  $w$ be a Muckenhoupt $A_2$-weight or a qc-weight, where by qc-weight we mean that
$w=|J_f|^{1-\frac{2}{n}}$, where $|J_f|$ denotes the Jacobian of a
quasiconformal mapping $f$ on $\rn$; see \cite{bm93,FKS82}. Let
$A:=(A_{ij}(x))_{i,j=1}^n$ be a symmetric matrix of functions on
$\rn$ satisfying the \emph{degenerate ellipticity condition},
namely, there exist constants $0<\lz\le \Lambda<\fz$ such that, for
all $\xi\in\rn$,
\begin{eqnarray}\label{degenerate EC1}
\lz w(x)|\xi|^2\le \langle A \xi,\,\xi \rangle \le \Lambda w(x)|\xi|^2.
\end{eqnarray}
For all $f,\,g\in C^\infty_c(\rn)$, consider the Dirichlet form
given by
\begin{eqnarray}\label{sesquilinear form}
\dint_{\rn}A(x)\nabla f(x)\cdot \nabla g(x)\,dx.
\end{eqnarray}

Then the intrinsic distance $d$ and the usual
Euclidean metric $d_E$ are comparable, that is  $d\sim d_E$. On the space $(\rn,d_E,w(x)\,dx)$, the doubling condition is a
well-known property of a Muckenhoupt weight or  follows from properties of
quasiconformal mappings, and an $L^2$-Poincar\'e inequality was
established in \cite{FKS82}. From this, one can deduce that a doubling
condition and a weak $L^2$-Poincar\'e inequality,
i.e.
\begin{equation} \fint_{B}|f-f_B|\,d\mu\le
Cr\lf(\fint_{cB}|\nabla f|^2\,d\mu\r)^{1/2},
\end{equation}
for all  $B=B(x,r)$, for some constant $c\ge 1$,  hold on $(\rn,d,w(x)\,dx)$. By using
the results from \cite[Section 9]{hak}, together with the fact that
$(\rn,d,w(x)\,dx)$ is geodesic, we see that $(\rn,d,w(x)\,dx)$
supports a scale-invariant $L^2$-Poincar\'e inequality. 

Therefore, our results are applicable to $(\rn,d,w(x)\,dx)$ as well.
We would like to point out that Caffarelli and Peral \cite{cp98} established a $W^{1,p}$-estimate for elliptic equations in divergence form
by using the technique of approximation to a reference equation. Shen \cite{shz05}
employed the techniques from \cite{cp98} to prove the equivalence of $(R_p)$ and
$(RH_p)$, for {\it uniformly elliptic} operators of divergence form on $\rn$.
Recently, for degenerate elliptic operators with $A$ being  complex-valued and satisfying suitable
weighted condition,  Cruz-Uribe et al. \cite{CMR15} obtained the boundedness of the Riesz transform
in an open interval containing 2.

For degenerate equations satisfying condition \eqref{degenerate EC1} for some
$A_2$-weight or qc-weight, although the heat kernel and harmonic
functions are known to be H\"older continuous (cf.
\cite{bm93,st1,st2,st3}), harmonic functions and the heat kernel
are not Lipschitz  in general; see the examples from the
introductions of \cite{ji14,krs} for instance.

Moreover, given an explicit $p>2$, we do not even know if the
gradients of harmonic functions or heat kernels are locally $L^p$-integrable.
Indeed, in view of Corollary \ref{cor-riesz-open} and
Theorem \ref{riesz-main}, we see that there exists $\varepsilon>0$
(implicit), such that $(RH_p)$ and $(G_p)$ hold for $p\in
(2,2+\varepsilon)$. However, for an explicitly given $p>2$, the
assumption $w\in A_2$ alone is not sufficient for quantitative
$L^p$-regularity of harmonic functions or heat kernels, in view of Theorem
\ref{p-sobolev}. Since if $(RH_p)$ or $(G_p)$ holds for some $p>2$,
then one has a Sobolev inequality $(S_{p',q})$ for some $q> p'$ on
$(\rn,d,w(x)\,dx)$, and it is well-known that $w\in A_2$ is not
sufficient to guarantee such a Sobolev inequality for (small) $p'.$
It would be of great interest to know how to quantify
the regularity of harmonic functions and heat kernels in this case.

\medskip

{Finally we apply our results to the simplest possible form of  degenerate elliptic operators in  dimension one.
The correspond to the Dirichlet form
\begin{eqnarray*}
Q_\alpha (f,g) = \dint_{{\rr}}|x|^\alpha f'(x)\cdot  g'(x)\,dx
\end{eqnarray*}
for some $\alpha>0$ on $L^2(\rr, {|x|^\alpha} dx)$. { The corresponding intrinsic distance coincides with the Euclidean distance}. Note that the weight $\omega_\alpha(x)=|x|^\alpha$ belongs
to Muckenhoupt  class $A_p$ only  if $\alpha +1< p $.  Observe that in the range $0 \le \alpha <1$ any harmonic function for the operator discussed here is of the form
{$a \,\mathrm{sign}(x)  |x|^{1-\alpha} +b$} for some constants $a,b \in \rr$. Hence  a simple calculation shows that $(RH_p)$
holds if and only if $\alpha(1-p)>-1$, i.e. $p<(1+\alpha)/\alpha$. It follows from examples and the results obtained in \cite{RSi6a} that the $L^2$-Poincar\'e inequality holds if and only if $\alpha <1$ or equivalently if $\omega_\alpha \in A_2$. Now it follows from Theorem~\ref{riesz-main}
that   for $0<\alpha <1$, $(R_p)$ holds also if and only if $p<(1+\alpha)/\alpha$. This range of validity of  $(R_p)$ was first obtained in \cite[Theorem 5.3]{hasi09}
(see also \cite[Section 6.3]{hasi09}).  Theorem \ref{riesz-main} yields  this result avoiding relatively tedious
calculations.
We point out that  the heat kernel and harmonic functions are usually discontinuous (at the point $x=0$)
for $\alpha\ge1$, see \cite {ersz}. We refer the reader to \cite{hasi09} for more about the Riesz transform.

}

\appendix

\section{Appendix}\label{appendix}
\subsection{Sobolev spaces on domains}\label{appendix-domain}
\hskip\parindent Let $U\subset X$ be an open set.
The local Sobolev space $W_{\loc}^{1,2}(U)$
is defined to be the collection of all  functions $f\in L^2_\loc(U)$, such that for any compact set $K\subset U$
there exists $F_K\in\D$ satisfying $f=F_K$ a.e. on $K$; see \cite[Definition 2.3]{GSC}.
Notice that bounded closed sets are compact (cf. \cite[Theorem 2.11]{GSC}).
So we can find a sequence of compact sets $\{U_j\}_{j=1}^\infty$ such that
$U_j$ is contained in the interior of $U_{j+1}$
and $\cup_{j=1}^\infty U_j=U$. Write $F_j$ for $F_{U_j}$.

By the locality of the Dirichlet form $\E$, for a given $f\in W_{\loc}^{1,2}(U)$,
one has that for any $j\in\cn$ and measurable set $E\subset U_j$
$$\int_{U_j}\chi_E\,d\Gamma(F_j,F_j)=\int_{U_{j}}\chi_{E}\,d\Gamma(F_{j+1},F_{j+1}).$$
This implies that we may consistently define $|\nabla f |$ on $U$ by setting
$$|\nabla f|^2=\frac{\,d\Gamma(F_j,F_j)|_U}{\,d\mu}$$
on $U_j$.

Now for each $p\ge 2$, the Sobolev space  $W^{1,p}(U)$, defined as the collection of all  functions $f\in W_{\loc}^{1,2}(U)$ satisfying $f,\,|\nabla f|\in L^p(U)$, is well defined.

\subsection{Equivalence of differently defined Sobolev spaces}\label{appendix-sobolev}
\hskip\parindent
There are several different types of Sobolev spaces on metric measure spaces: Haj\l asz Sobolev spaces \cite{ha96},
Newtonian Sobolev spaces \cite{sh}, Cheeger's Sobolev spaces \cite{ch} etc. We refer the readers to
the monographs by Heinonen, Koskela, Shanmugalingam and Tyson \cite{hkst} and
A. Bj\"orn and J. Bj\"orn \cite{bb10} for these studies.

We first recall the definition of Haj\l asz Sobolev spaces.
\begin{defn}
Let $1\le p\le \infty$. Given a measurable function $u$  on $X$, a non-negative measurable
function $g$ on $X$ is called a \textit{Haj\l asz gradient} of $f$
if there is a set $E\subset X$ with $\mu(E)=0$
such that for all $x,\ y\in X\setminus E$,  
\begin{equation}\label{HG}
|f(x)-f(y)|\leq d(x,y)[g(x)+g(y)].
\end{equation}

The Haj\l asz-Sobolev space $M^{1,p}(X)$ is defined to be the set of
all functions $f\in L^p(X,\mu)$ that have a  Haj\l asz gradient
$g\in L^p(X,\mu)$. The norm on this space is given by 
$$\|f\|_{M^{1,p}(X)}:=\|f\|_{p}+\inf_g\|g\|_{p},$$
where the infimum is taken over all Haj\l asz gradients of $f$.
\end{defn}

It is known that $M^{1,p}(X)$ embedded continuously into
the Newtonian Sobolev space $N^{1,p}(X)$, which was introduced by Shanmugalingam;
 see \cite[Theorem 4.8]{sh}. Notice that the embedding $M^{1,p}(X)\hookrightarrow N^{1,p}(X)$ actually holds
 on any metric measure space $(X,d)$ equipped with a Borel regular measure $\mu$;
 see \cite[Theorem 1.3]{jsyy15}.

Under the requirements of doubling and Poincar\'e inequality $(P_{2})$, it is known that
$$W^{1,2}(X)=N^{1,2}(X)=W^{1,2}(X),$$
and that Lipschitz functions are dense in these spaces;
see \cite{sh,kz12}.
Moreover, for any function $f\in W^{1,2}(X)$,
the square root of its density energy, $|\nabla f|$, equals its approximate pointwise Lipschitz constant,  $\mathrm{apLip} f$; see \cite[Theorem 2.2]{kz12}.  Here,
\begin{eqnarray*}
 \mathrm{apLip} f(x):=\inf_{A}\limsup_{y\in A:\,d(x,y)\to 0}\frac {|f(x)-f(y)|}{d(x,y)},
\end{eqnarray*}
where the infimum is taken over all Borel sets $A\subset X$ with a point of density
at $x$.

We note that, without the validity of Poincar\'e inequality, the above conclusions are not true in general.
In particular, it may happen that, $M^{1,p}(X)\subsetneq N^{1,p}(X)$, see \cite{hkst,sh} for instance.

Nevertheless, for locally Lipschitz functions $\phi$, assuming only doubling but not Poincar\'e, one still has that $|\nabla \phi|=\mathrm{Lip}\,\phi$, a.e.; see \cite[Remark 2.20]{GSC} and
\cite[Theorem 2.1]{kz12}. Therefore, our perimeter $P(E;\Omega)$ (see Definition \ref{d4.1}) coincides with
that from \cite{am02,mir}.

\subsection{Self-improving property of Poincar\'e inequalities}\label{appendix-poincare}
\hskip\parindent  The self-improving property of Poincar\'e inequality was obtained by Keith and Zhong \cite{kz08}
on a complete doubling metric space.
In our setting, together with the fact $(X,d)$ is geodesic, their result gives: if for some $p\in (1,\infty)$ it holds for each ball $B=B(x_0,r)$ and every Lipschitz function $f$ that
$$
\fint_{B}|f-f_B|\,d\mu\le
Cr\lf(\fint_{B}|\mathrm{Lip} f|^p\,d\mu\r)^{1/p},$$
then there exists $q\in (1,p)$ such that the above inequality holds with $p$ replaced by $q$.

From the previous subsection, we see that for each locally Lipschitz function $\phi$ it holds $|\nabla \phi|=\mathrm{Lip}\,\phi$.
This together with a density argument implies that, if $(P_p)$ holds for some $p\ge 2$,
then there exists $1<q<p$ such that for any ball $B=B(x_0,r)$ and any $g\in W^{1,p}(B)$ it holds that
$$
\fint_{B}|g-g_B|\,d\mu\le
Cr\lf(\fint_{B}|\nabla g|^q\,d\mu\r)^{1/q},$$
where $C$ is independent of $B$ and $g$.

Notice that however $(P_\infty)$ does not have the self-improving property,  see \cite{dsw12,djs12}.


\subsection{Paley-Wiener Estimate}\label{appendix-paley}
\hskip\parindent Suppose that $F\in \mathscr S (\rr)$ satisfies $\supp \hat{F} \subset [-1,1]$ and $F(0)=0$, where
$\hat{F}$ denotes the Fourier transform of $F$. By the Paley-Wiener theorem (see \cite{ru87}), we can extend $F$ to analytic function,
$F(z)$, on $\mathbb{C}$, and so also $G(z)=F(z)/z$. It holds obviously that for $|z| \ge 1$,
$|G(z)| \le |F(z)|$. Note that Paley-Wiener¡¯s estimate
$( |F(z)| \leq C_N (1 + |z|)^{-N} e^{B|\text{Im}(z)|}$ for $z\in[-B,B]$
 is a condition only for large $|z|$, so
$|G(z)|$ satisfies the same estimates as $|F(z)|$, which implies that  $\hat{G} \subset [-1,1]$.

From the above discussion, we see that if $F$ is a Schwartz function, then $G$, $\hat{F}$, $\hat{G}$ are all Schwartz functions.
It is easy to note that $d_t \hat{G}(t)=\hat{F}(t)$, so if $\hat{F} \subset [-1,1]$ then $\hat{G}(t)$
is constant on both half lines $(-\infty,-1]$ and $[1,\infty)$. But $\hat{G}(t)$ is a Schwartz function,
so it has to converge to zero at the ends, which means that $\supp\hat{G} \subset [-1,1]$.

\subsection{Gradient estimates on covering manifolds}\label{appendix-covering}
\hskip\parindent Let us provide a proof of $(RH_\infty)$ on covering manifolds in {\bf Example 5} from Section \ref{Ex}.
\begin{thm}
Let $M$ be a complete, non-compact, connected Riemannian
manifold. Suppose that a finitely generated discrete group $G$
acts properly and freely on $M$ by isometries, such that the orbit space
$M_1 = M/G$ is a compact manifold. Assume that $G$ has polynomial volume growth of some order
$D \ge 1$, Then $(RH_\infty)$ holds on $M$.
\end{thm}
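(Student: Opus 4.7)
My plan is to combine bounded geometry of $M$ at small scales with the polynomial growth of $G$ at large scales, and to deduce $(RH_\infty)$ from Yau's gradient estimate via Lemma~\ref{rh-yau}.

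Since $G$ acts isometrically on $M$ with compact quotient $M_1$, the manifold $M$ has bounded geometry: a uniform positive lower bound on the injectivity radius and uniform two-sided bounds on the sectional curvature. Together with the polynomial growth of $G$, this yields the global doubling condition $(D)$, the $L^2$-Poincar\'e inequality $(P_2)$, and hence $(UE)$, so that the uniform elliptic Harnack inequality holds on $M$. By Lemma~\ref{rh-yau}, proving $(RH_\infty)$ reduces to establishing Yau's gradient estimate $|\nabla \log u|\le C/r$ (with $K=0$) for every positive $u$ harmonic on $B(x_0,2r)$.

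For balls of radius $r\le r_0$, with $r_0$ comparable to the injectivity radius of $M_1$, Yau's estimate is immediate from standard interior $C^{1,\alpha}$ estimates for smooth elliptic operators on a Riemannian manifold of bounded geometry, composed with the mean-value conversion as in the proof of Lemma~\ref{rh-yau}. For $r>r_0$, the key ingredient is a Yau-type gradient estimate at \emph{all} scales, and on such covering manifolds this was established by Dungey~\cite{dng04a,dng04b} through discrete harmonic analysis on $G$ together with its polynomial growth. Once this large-scale bound is in hand, Yau's estimate holds on $M$ uniformly, and Lemma~\ref{rh-yau} yields $(RH_\infty)$.

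The principal obstacle is precisely the large-scale part: it cannot be produced by a naive chaining of Harnack along $G$-translated fundamental domains (which gives only an exponential loss in the word length), and genuinely requires the polynomial growth assumption on $G$ as an input to a spectral/Fourier argument on $G$. An alternative route is to check, following \cite{dm05,dng04b}, a generalised Bakry-\'Emery inequality $(GBE)$ on $M$ and conclude via the equivalence $(GBE)\Longleftrightarrow (RH_\infty)$ of Theorem~\ref{main-har-heat-infty}.
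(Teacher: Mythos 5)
Your proposal is logically valid but takes a genuinely different route from the paper, and in doing so misses the whole point of this appendix. You reduce to Yau's gradient estimate via Lemma~\ref{rh-yau} and then, for large radii, you \emph{invoke Dungey's result}. But Dungey's theorem is precisely $(GLY_\infty)$ (a heat-kernel estimate, not a harmonic-function gradient estimate), and once $(GLY_\infty)$ is in hand, $(RH_\infty)$ is an immediate consequence of Theorem~\ref{main-har-heat-infty} --- this is exactly what the main text of Example 5 already says. The purpose of the appendix is the opposite: to give a \emph{self-contained} proof of $(RH_\infty)$ that uses only the group structure and elementary elliptic estimates, thereby providing, via Theorem~\ref{main-har-heat-infty}, an independent and arguably simpler derivation of $(GLY_\infty)$ on covering manifolds. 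So while your argument would compile into a valid proof of the statement in isolation, it reduces the theorem to the very external result the appendix is designed to replace.

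You also claim the large-scale bound "cannot be produced by a naive chaining of Harnack along $G$-translated fundamental domains (which gives only an exponential loss in the word length)." This is true of a literal Harnack chain, but it is misleading as a diagnosis: the paper's proof \emph{is} a chaining argument along the group, made to work by a bootstrap. Concretely, Claim 1 in the paper's proof telescopes $u(x)-u(g\cdot x)$ over the dyadic orbit $\{g^{2^j}\cdot x\}_{j=0}^{k}$, applies the H\"older estimate from the elliptic Harnack inequality to the second differences, and sums the resulting geometric series to upgrade the H\"older exponent $\gamma$ to any $\beta\in(\gamma, 2\gamma]\cap(0,1]$. Iterating this finitely many times gives Lipschitz control $|u(x)-u(g\cdot x)|\lesssim \rho(g)r^{-1}\fint_{4B}|u|$. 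Claim 2 then uses oscillation decay (Proposition~\ref{l2.3}) to control the oscillation of $u$ over a fixed finite union of $G$-translates of the fundamental domain by the $G$-displacement oscillations from Claim 1, and the local estimate $(RH_{\infty,\loc})$ converts this into a pointwise gradient bound on each translate $h\cdot X$. The polynomial growth of $G$ enters only through $(D)$ and $(P_2)$; no Fourier analysis on $G$ is needed, contrary to your assessment. Your proposal would be strengthened by working out this bootstrap rather than appealing to \cite{dng04a,dng04b}.
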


Let us observe that, due to the group action and the polynomial volume growth of $G$,
 $(D)$ and $(P_2)$ hold on $M$; see \cite{sal}.  Moreover, by Yau's gradient estimate (cf. \cite{chy,ya75}), $(RH_{\infty,\loc})$ holds, i.e., for each $r_0>1$, there exists $C(r_0)>0$ such that
if  $u$ is harmonic in  $2B$, $B=B(x_0,r)$, $r<r_0$, it holds that
$$\||\nabla u|\|_{L^\infty(B)}\le \frac{C(r_0)}{r}\fint_{2B}|u|\,d\mu. \leqno(RH_{\infty,\loc})$$

\begin{proof} Since $(RH_{\infty,\loc})$ holds, we only need to prove $(RH_\infty)$ for balls of large radii.

Since $(D)$ and $(P_2)$ hold, by applying Proposition \ref{l2.3add}  there exist $C>0$ and $\gz\in (0,1)$, such that for each ball $B=B(x_0,r)$ and if $u$ is harmonic on $2B$,
it holds for all
$x,y\in B(x_0,r)$ that
\begin{equation}\label{holder}
|u(x)-u(y)|\le C\frac{d^\gz(x,y)}{r^\gz}\fint_{2B}|u|\,d\mu;
\end{equation}
see for instance \cite{bm}.

We may assume that $r>1$ is large enough so that $B=B(x_0,r)$ contains a copy of
the  fundamental domain $X$. Then $g\cdot X$ are pairwise disjoint for different $g\in G$,
and $M\setminus (G\cdot X)$ is of measure zero. For simplicity of notions we assume that $u$ is harmonic on $8B$.

{\bf Claim 1:} For each $x\in X$, $g\in G$ such that $g\cdot x\in B(x_0,r)$, it holds that
\begin{equation}\label{lipschitz}
|u(x)-u(g\cdot x)|\le C\frac{\rho(g)}{r}\fint_{4B}|u|\,d\mu.
\end{equation}
{\em Proof of Claim 1.}
If $d(x,g\cdot x)\ge 2^{-16}r$, then \eqref{lipschitz} is obvious by \eqref{holder}. Consider now
the $d(x,g\cdot x)< 2^{-16}r$.
Let $k\in \mathbb{N}$ such that
$2^{k}< r\rho(g)^{-1}\le 2^{k+1}$ (remember $r>>1$ and $\rho(g)\ge 1$).
For each $j\in \{1,\cdots,k\}$, notice that $g^{2^j}\cdot x\in 2B$,  since $d(g^{2^j}\cdot x,x)<r$.
By using \eqref{holder} twice, we see that for $1\le j\le k$ it holds that
\begin{eqnarray*}
\left|[u(x)-u(g^{2^j}\cdot x)]-[u(g^{2^j}\cdot x)-u(g^{2^{j+1}}\cdot x)]\right|&&\le C\frac{d^\gz(x,g^{2^j}\cdot x)}{r^\gz}\fint_{3B}|u(x)-u(g^{2^j}\cdot x)|\,d\mu(x)\\
&&\le C\frac{2^{2j\gz}\rho(g)^{2\gz}}{r^{2\gz}}\fint_{4B}|u|\,d\mu.
\end{eqnarray*}
Using the identity
$$u(x)-u(g\cdot x)= \sum_{j=0}^{k}2^{-j-1}[u(x)-2u(g^{2^j}\cdot x)+u(g^{2^{j+1}}\cdot x)]+2^{-k-1}[u(x)-u(g^{2^{k+1}}\cdot x)]$$
together with the above estimate and $2^{k}< r\rho(g)^{-1}\le 2^{k+1}$, we see that for $\gz<\beta<2\gz$ and $\beta\le 1$, it holds that
\begin{eqnarray*}
\left|u(x)-u(g\cdot x)]\right|&&\le \sum_{j=0}^{k}2^{-j-1}|u(x)-2u(g^{2^j}\cdot x)+u(g^{2^{j+1}}\cdot x)|+2^{-k-1}|u(x)-u(g^{2^{k+1}}\cdot x)|\\
&&\le \sum_{j=0}^{k}C2^{-j-1}\frac{2^{2j\gz}\rho(g)^{2\gz}}{r^{2\gz}}\fint_{4B}|u|\,d\mu
+C2^{-k-1}\frac{2^{k\gz}\rho(g)^{\gz}}{r^{\gz}}
\fint_{4B}|u|\,d\mu\\
&&\le  C\fint_{4B}|u|\,d\mu\left(\sum_{j=0}^{k}2^{-j-1+2j\gz-2k\gz}+2^{-k-1}\right)\\
&&\le C\fint_{4B}|u|\,d\mu\left(\sum_{j=0}^{k}2^{j(2\gz-1)-k(2\gz-\beta)} 2^{-k\beta}+2^{-k-1}\right)\\
&&\le C2^{-k\beta}\fint_{4B}|u|\,d\mu\le C\frac{\rho(g)^{\beta}}{r^{\beta}}
\fint_{4B}|u|\,d\mu.
\end{eqnarray*}
Repeating this argument sufficiently many times, we  conclude that \eqref{lipschitz} holds.

{\bf Claim 2:} There exists  a finite set $J\subset G$, with $e\in J$, such that
$$\sup_{x,y\in \cup_{g\in J}g\cdot X}|u(x)-u(y)|\le C\sup_{g\in J,x\in X}|u(x)-u(g\cdot x)|.$$
{\em Proof of Claim 2.}
Take $y_0\in X$ and fix $0<r_1$ such that $X\subset B(y_0,r_1)$. Then by Proposition  \ref{l2.3},
there exists $r_2>r_1$ such that
\begin{equation}\label{a3}
\sup_{x,y\in B(y_0,r_1)}|u(x)-u(y)|\le \frac 12\sup_{x,y\in B(y_0,r_2)}|u(x)-u(y)|.
\end{equation}
Let $J\subset G$ be the collection of $g\in G$ such that $g\cdot X\cap  B(y_0,r_2)\neq \emptyset$.
Then $B(y_0,r_2)\subset \cup_{g\in J}g\cdot X$ and $J$ only has finitely many elements.
For  $x,y\in \cup_{g\in J}g\cdot X$, take $\tilde x, \tilde y\in X$ and $g,h\in G$ such that $x=g\cdot\tilde x$ and $y=h\cdot \tilde y$.
Then by \eqref{a3} we obtain
\begin{eqnarray*}
|u(x)-u(y)|&&\le |u(g\cdot \tilde x)-u(\tilde x)|+|u(h\cdot \tilde y)-u(\tilde y)|+|u(\tilde x)-u(\tilde y)|\nonumber\\
&&\le 2\sup_{g\in J,x\in X}|u(x)-u(g\cdot x)|+\sup_{\tilde x,\tilde y\in X}|u(\tilde x)-u(\tilde y)|\\
&&\le 2\sup_{g\in J,x\in X}|u(x)-u(g\cdot x)|+\frac 12\sup_{\tilde x,\tilde y\in  B(y_0,r_2)}|u(\tilde x)-u(\tilde y)|,
\end{eqnarray*}
which, together with the fact $B(y_0,r_2)\subset \cup_{g\in J}g\cdot X$, implies that
$$\sup_{x,y\in \sup_{g\in J}g\cdot X}|u(x)-u(y)|\le C\sup_{g\in J,x\in X}|u(x)-u(g\cdot x)|.$$

We can now complete the proof.

Recall that  $y_0\in X\subset B$, $X\subset B(x_0,r_1)\subset B(x_0,r_2)\subset \cup_{g\in J}g\cdot X$.
Fix $r_3>r_2$ such that $\cup_{g\in J}g\cdot X\subset B(x_0,r_3)$. Notice $J\subset G$ is a fixed finite set.

Since $(RH_{\infty,\loc})$ holds, we may assume that $r>r_1+r_3$ is large enough, so that for each $h\in \{h\in G:\, h\cdot X \cap B\neq\emptyset\}$, $hg\cdot X\subset 2B$ for each $g\in J$.
By  $(RH_{\infty,\loc})$, together with the previous two claims, we obtain for each $h\in \{h\in G:\, h\cdot X \cap B\neq\emptyset\}$,
\begin{eqnarray*}
\||\nabla u|\|_{L^\infty(h\cdot X)}&&\le \||\nabla (u-u(h\cdot y_0))|\|_{L^\infty(B(h\cdot y_0,r_1))}
\le C\fint_{B(h\cdot y_0,r_2)}|u-u(h\cdot y_0)|\,d\mu\\
&&\le C \fint_{ \cup_{g\in J}hg\cdot X}|u-u(h\cdot y_0)|\,d\mu \le C\sup_{x,y\in \cup_{g\in J}gh\cdot X}|u(x)-u(y)|\\
&&\le C\sup_{g\in J,x\in  X}|u(h\cdot x)-u(hg\cdot x)|\le \frac{C}{r}\fint_{8B}|u|\,d\mu.
\end{eqnarray*}
This implies that
\begin{eqnarray*}
\||\nabla u|\|_{L^\infty(B)}&&\le \sup_{h\in G:h\cdot X \cap B\neq\emptyset}\||\nabla u|\|_{L^\infty(h\cdot X)}\le \frac{C}{r}\fint_{8B}|u|\,d\mu.
\end{eqnarray*}
A covering argument similar to that of {\bf Step 4} in the proof of Theorem \ref{harmonic-pnorm} then gives $(RH_\infty)$.
\end{proof}

\section*{Acknowledgments}
\addcontentsline{toc}{section}{Acknowledgments} \hskip\parindent
T. Coulhon and A. Sikora were partially supported by Australian Research Council Discovery grant DP130101302. This research was  undertaken while T. Coulhon was employed by the Australian National University.
R. Jiang was partially supported by
NNSF of China (11301029 \& 11671039), P. Koskela was partially supported
by the Academy of Finland via the Centre of Excellence in Analysis and Dynamics Research (project No. 307333).

\noindent Thierry Coulhon \\
PSL Research University,
75005 Paris, France.

\

\noindent Renjin Jiang \\
 Center for Applied Mathematics, Tianjin University, Tianjin 300072, China

\

\noindent Pekka Koskela\\
Department of Mathematics and Statistics, University of Jyv\"{a}skyl\"{a}, P.O. Box 35 (MaD), FI-40014, Finland.

\

\noindent Adam Sikora\\
Department of Mathematics, Macquarie University, NSW 2109,
Australia.

\

\noindent{\it E-mail addresses}:  \texttt{thierry.coulhon@univ-psl.fr}

\hspace{2.3cm} \texttt{rejiang@tju.edu.cn}

\hspace{2.3cm} \texttt{pekka.j.koskela@jyu.fi}

\hspace{2.3cm} \texttt{adam.sikora@mq.edu.au}

\end{document}